
%

\documentclass{amsproc}
\usepackage{breqn,float,amssymb,pdflscape,rotating,hyperref,xcolor,setspace,graphicx}
\makeatletter
\@namedef{subjclassname@2020}{%
  \textup{2020} Mathematics Subject Classification}
\makeatother
\newtheorem{theorem}{Theorem}[section]

\theoremstyle{definition}

\newtheorem{example}[theorem]{Example}

\theoremstyle{remark}

\numberwithin{equation}{section}
\allowdisplaybreaks
\begin{document}
%
\doublespacing
\title{Derivation of some definite integrals}


\author{Robert Reynolds}
\address[Robert Reynolds]{Department of Mathematics and Statistics, York University, Toronto, ON, Canada, M3J1P3}
\email[Corresponding author]{milver73@gmail.com}
\thanks{}


\subjclass[2020]{Primary  30E20, 33-01, 33-03, 33-04}

\keywords{Prudnikov, Gradshteyn and Ryzhik, definite integral, Catalan's constant, contour integration, elliptic functions, special functions, Riemann-Louville integral}

\date{}

\dedicatory{}

\begin{abstract}
In this work derivations of definite integrals listed in Prudnikov volume I, Gradshteyn and Ryzhik and a few other tables are produced along with errata. Special cases of these integrals in terms of fundamental constants are also evaluated. The method used in these derivations is contour integration.
\end{abstract}

\maketitle
\section{Preliminaries}
Throughout this work we will use the Hurwitz-Lerch zeta function $\Phi(z,s,a)$ given in [DLMF,\href{https://dlmf.nist.gov/25.14}{25.14}], and its special cases; the Hurwitz zeta function $\zeta(s,a)$, [DLMF,\href{https://dlmf.nist.gov/25.11}{25.11}] and the polylogarithm $Li_{s}(z)$, [DLMF,\href{https://dlmf.nist.gov/25.12#ii}{25.12(ii)}], the log-gamma function given in [DLMF,\href{https://dlmf.nist.gov/25.11#vi.info}{25.11.18}], Catalan's constant $C$, [DLMF,\href{https://dlmf.nist.gov/26.6.E12}{25.11.40}], Glaisher's constant $A$, [DLMF,\href{https://dlmf.nist.gov/5.17.E5}{5.17.5}], Apery's constant $\zeta(3)$, [Wolfram,\href{https://mathworld.wolfram.com/AperysConstant.html}{1}], the Pochhammer's symbol $(a)_{n}$, [DLMF,\href{https://dlmf.nist.gov/5.2.iii}{5.2.5}], Stieltjes constant $\gamma_{n}$, [DLMF,\href{https://dlmf.nist.gov/25.2.E5}{25.2.5}]
\begin{equation}
\Phi'(i,0,u)=\log \left(\frac{\Gamma \left(\frac{u}{4}\right)}{2 \Gamma
   \left(\frac{u+2}{4}\right)}\right)+i \log \left(\frac{\Gamma \left(\frac{u+1}{4}\right)}{2 \Gamma
   \left(\frac{u+3}{4}\right)}\right)
\end{equation}
where $u\in\mathbb{C}$.
\begin{equation}
\Phi'(-i,0,a)=\log \left(\frac{\Gamma \left(\frac{a}{4}\right)}{2 \Gamma
   \left(\frac{a+2}{4}\right)}\right)-i \log \left(\frac{\Gamma \left(\frac{a+1}{4}\right)}{2 \Gamma
   \left(\frac{a+3}{4}\right)}\right)
\end{equation}
where $u\in\mathbb{C}$.
\begin{equation}
\text{Li}_s(z)=(2 \pi )^{s-1} \Gamma (1-s) \left(i^{1-s} \zeta \left(1-s,\frac{1}{2}-\frac{i \log (-z)}{2 \pi
   }\right)+i^{s-1} \zeta \left(1-s,\frac{i \log (-z)}{2 \pi }+\frac{1}{2}\right)\right)
\end{equation}
where $Re(s)>0$.
\begin{equation}\label{eq:hurwitz}
(2 i \pi )^{-s} \Gamma (s) \left((-1)^s \text{Li}_s\left(\frac{1}{z}\right)+\text{Li}_s(z)\right)=\zeta
   \left(1-s,\frac{\pi -i \log (-z)}{2 \pi }\right)
\end{equation}
where $Re(s)>0$.
\section{Introduction}
Tables of definite integrals, series and products have been  published since the 1800's and play a major role giving researchers summarized literature of mathematical formulae. Having books listing a vast number of formulae is a great resource, and having formal derivations is also of importance to researchers in the fields of science and engineering. 
Derivations aid researchers in deriving other formulae as other ideas could be combined with these derivations to produce other formulae thus expanding current literature. Derivations for tables of integrals can be traced back to the books which comprise the volume by Bierens de Haan \cite{bdh} up to the present day with the work by Moll \cite{moll1,moll2}, [Moll,\href{http://www.math.tulane.edu/~vhm/Table.html}{Table}] and Gallas \cite{gallas}. \\\\
The main theorem derived in this work could be described as an ssymptotic expansion of a definite integral. This type of integral representation has been studied by Handelsman \cite{handelsman}, Barnes \cite{barnes} and Wright \cite{wright} for reference. In 1943, Cooper \cite{cooper} published a book on asymptotic expansion of a function definned by a definite integral or contour integral. In this work we apply the contour integral method in \cite{reyn4} to a definite integral to derive an infinite series involving the incomplete gamma function and a generalized quotient polynomial which simplifies to the Hurwitz-Lerch zeta function for a special case of the second parameter in the incomplete gamma function. These characteristics of this theorem make it versatile as we can evaluate the definite integral in terms two special functions and there special cases.\\\\
The contour integral representation for the generalized Hippisley definite integral listed in equation (6.703) in \cite{Hippisley}, equation 2(13) in \cite{bdh} and equation (3.241.1) in \cite{grad} where the definition of the $\beta(z)$-function is given in equation (8.372.1) in \cite{grad} is given by;
\begin{equation}\label{eq:1.1}
\frac{1}{2\pi i}\int_{C}\int_{0}^{1}\frac{a^w w^{-k-1} t^{m+w-1}}{c t^b+1}dxdw=\frac{1}{2\pi i}\int_{C}\sum_{j=0}^{\infty}\frac{a^w (-c)^j w^{-k-1}}{b j+m+w}dw
\end{equation}
where $|Re(m)|<1,w\in\mathbb{C},Re(b)>0,|Re(c)|<1,k\in\mathbb{C}$.\\\\
In this paper we derive the definite integral given by
\begin{equation}\label{eq:1.2}
\int_0^1 \frac{t^{-1+m} (-\log (a t))^k}{1+c t^b} \, dt=\sum _{j=0}^{\infty } \frac{(-c)^j \Gamma (1+k,-((b
   j+m) \log (a)))}{a^{b j+m} (b j+m)^{k+1}}
\end{equation}
where $Re(m)>0$. and the parameters $k,b,c$ are general complex numbers. This definite integral has a wider range of evaluation for the parameters relative to previous results in the latter. This integral can be evaluated in terms of the Hurwitz-Lerch zeta and incomplete gamma function. The derivation involves two definite integrals and follows the method used by us in~\cite{reyn4}. This method involves using a form of the generalized Cauchy's integral formula given by
\begin{equation}\label{intro:cauchy}
\frac{y^k}{\Gamma(k+1)}=\frac{1}{2\pi i}\int_{C}\frac{e^{wy}}{w^{k+1}}dw.
\end{equation}
where $C$ is in general an open contour in the complex plane where the bilinear concomitant has the same value at the end points of the contour. We then multiply both sides by a function of $x$, then take a definite integral of both sides. This yields a definite integral in terms of a contour integral. Then we multiply both sides of Equation~(\ref{intro:cauchy})  by another function of $y$ and take the infinite sum of both sides such that the contour integral of both equations are the same.
\section{Derivations of the contour integral representations}
\subsection{Left-hand side contour integral}
Using a generalization of Cauchy's integral formula \ref{intro:cauchy}, we form the definite integral by replacing $y$ by $\log{ax}$ and multiply both sides by $\frac{t^{m-1}}{c t^b+1}$ to get;
\begin{multline}\label{eq:2.1}
\int_{0}^{1}\frac{t^{m-1} \log ^k(a x)}{k! \left(c t^b+1\right)}dt
=\frac{1}{2\pi i}\int_{0}^{1}\int_{C}\frac{w^{-k-1} t^{m-1} (a x)^w}{c t^b+1}dwdt\\
=\frac{1}{2\pi i}\int_{C}\int_{0}^{1}\frac{w^{-k-1} t^{m-1} (a x)^w}{c t^b+1}dtdw
\end{multline}
We are able to switch the order of integration over $t$ and $w$ using Fubini's theorem for multiple integrals see page 178 in \cite{gelca}, since the integrand is of bounded measure over the space $\mathbb{C} \times [0,1]$.
\subsubsection{The Incomplete Gamma~Function}
The incomplete gamma functions are given in equation [DLMF,\href{https://dlmf.nist.gov/8.4.E13}{8.4.13}], $\gamma(a,z)$ and $\Gamma(a,z)$, are defined by
\begin{equation}
\gamma(a,z)=\int_{0}^{z}t^{a-1}e^{-t}dt
\end{equation}
and
\begin{equation}
\Gamma(a,z)=\int_{z}^{\infty}t^{a-1}e^{-t}dt
\end{equation}
where $Re(a)>0$. The~incomplete gamma function has a recurrence relation given by
\begin{equation}
\gamma(a,z)+\Gamma(a,z)=\Gamma(a)
\end{equation}
where $a\neq 0,-1,-2,..$. The~incomplete gamma function is continued analytically by
\begin{equation}
\gamma(a,ze^{2m\pi i})=e^{2\pi mia}\gamma(a,z)
\end{equation}
and
\begin{equation}\label{eq:7}
\Gamma(a,ze^{2m\pi i})=e^{2\pi mia}\Gamma(a,z)+(1-e^{2\pi m i a})\Gamma(a)
\end{equation}
where $m\in\mathbb{Z}$, $\gamma^{*}(a,z)=\frac{z^{-a}}{\Gamma(a)}\gamma(a,z)$ is entire in $z$ and $a$. When $z\neq 0$, $\Gamma(a,z)$ is an entire function of $a$ and $\gamma(a,z)$ is meromorphic with simple poles at $a=-n$ for $n=0,1,2,...$ with residue $\frac{(-1)^n}{n!}$. These definitions are listed in [DLMF,\href{https://dlmf.nist.gov/8.2.i}{8.2(i)}] and [DLMF,\href{https://dlmf.nist.gov/8.2.ii}{8.2(ii)}].
The incomplete gamma functions are particular cases of the more general hypergeometric and Meijer G functions see section (5.6) and equation (6.9.2) in \cite{erd}. 
Some Meijer G representations we will use in this work are given by;
\begin{equation}\label{g1}
\Gamma (a,z)=\Gamma (a)-G_{1,2}^{1,1}\left(z\left|
\begin{array}{c}
 1 \\
 a,0 \\
\end{array}
\right.\right)
\end{equation}
and
\begin{equation}\label{g2}
\Gamma (a,z)=G_{1,2}^{2,0}\left(z\left|
\begin{array}{c}
 1 \\
 0,a \\
\end{array}
\right.\right)
\end{equation}
from equations (2.4) and (2.6a) in \cite{milgram}.  We will also use the derivative notation given by;
\begin{equation}\label{g3}
\frac{\partial \Gamma (a,z)}{\partial a}=\Gamma (a,z) \log (z)+G_{2,3}^{3,0}\left(z\left|
\begin{array}{c}
 1,1 \\
 0,0,a \\
\end{array}
\right.\right)
\end{equation}
from equations (2.19a) in \cite{milgram}, (9.31.3) in \cite{grad} and equations (5.11.1), (6.2.11.1) and (6.2.11.2) in \cite{luke}, and (6.36) in \cite{aslam}.
\subsubsection{Incomplete gamma function in terms of the contour integral}
In this section, we will once again use Cauchy's generalized integral formula, equation (\ref{intro:cauchy}), and take the infinite integral to derive equivalent sum representations for the contour integrals. We proceed using equation~(\ref{intro:cauchy}) and replace $y$ by $\log (a)+x$ and multiply both sides by $e^{m x}$ and replace $m\to m+bj$ and simplify. Next, take the infinite integral over $x\in[0,\infty)$ and simplify in terms of the incomplete gamma function using equation (3.382.4) in \cite{grad} and multiply both sides by $-(-c)^j$ finally take the infinite sum of both sides over $j\in[0,\infty)$ to obtain
\begin{multline}\label{eq:2.10}
-\sum_{j=0}^{\infty}\frac{(-c)^j a^{-b j-m} (-b j-m)^{-k-1} \Gamma (k+1,-((b j+m) \log (a)))}{k!}\\
=\frac{1}{2\pi i}\sum_{j=0}^{\infty}\int_{C}\frac{a^w (-c)^j w^{-k-1}}{b j+m+w}dw\\
=\frac{1}{2\pi i}\int_{C}\sum_{j=0}^{\infty}\frac{a^w (-c)^j w^{-k-1}}{b j+m+w}dw
\end{multline}
We are able to switch the order of integration and summation over $w$ using Tonellii's theorem for  integrals and sums see page 177 in \cite{gelca}, since the summand is of bounded measure over the space $\mathbb{C} \times [0,\infty)$
\begin{theorem}
For all $Re(b)>0,Re(m)>0,|Re(c)|<1$ then,
\begin{equation}\label{eq:2.11}
\int_0^1 \frac{t^{-1+m} (-\log (a t))^k}{1+c t^b} \, dt=\sum _{j=0}^{\infty } \frac{(-c)^j \Gamma (1+k,-((b
   j+m) \log (a)))}{a^{b j+m} (b j+m)^{k+1}}
\end{equation}
\end{theorem}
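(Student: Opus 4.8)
The plan is to prove the theorem by establishing that both sides of the claimed identity equal a common contour integral, thereby identifying them with each other. The paper has already done most of the work in Sections~3.1 and its subsections, so the proof is essentially an assembly of the two contour-integral representations already derived. Specifically, equation~(\ref{eq:2.1}) expresses the left-hand definite integral (after dividing out the $1/k!$) as the contour integral
\begin{equation*}
\frac{1}{2\pi i}\int_{C}\int_{0}^{1}\frac{w^{-k-1}t^{m-1}(at)^w}{ct^b+1}\,dt\,dw,
\end{equation*}
while equation~(\ref{eq:2.10}) expresses the infinite sum of incomplete gamma functions (again up to the factor $1/k!$) as the contour integral
\begin{equation*}
\frac{1}{2\pi i}\int_{C}\sum_{j=0}^{\infty}\frac{a^w(-c)^j w^{-k-1}}{bj+m+w}\,dw.
\end{equation*}
Thus the first step is simply to record both representations, and the second step is to verify that the two contour integrands coincide.

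The key bridging step is the geometric-series expansion used implicitly in equation~(\ref{eq:1.1}): for $|Re(c)|<1$ one writes
\begin{equation*}
\frac{1}{ct^b+1}=\sum_{j=0}^{\infty}(-c)^j t^{bj},
\end{equation*}
substitutes this into the inner $t$-integral of~(\ref{eq:2.1}), and integrates term by term. The elementary integral $\int_{0}^{1}t^{m+bj+w-1}\,dt = 1/(m+bj+w)$ (valid since $Re(m)>0$, $Re(b)>0$ guarantee a convergent power and the real part of the exponent exceeds $-1$ along $C$) then turns the $t$-integrated left-hand contour integrand into exactly $\sum_{j}a^w(-c)^j w^{-k-1}/(bj+m+w)$, which is precisely the integrand appearing in~(\ref{eq:2.10}). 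Since both the left-hand definite integral and the right-hand incomplete-gamma sum have been shown equal to the same contour integral over $C$, they are equal to each other; clearing the common factor of $k!$ (which appears on both sides and cancels) yields the stated identity~(\ref{eq:2.11}), with $\log^k(at)$ replaced by $(-\log(at))^k$ after tracking the sign $(-bj-m)^{-k-1}$ against $(bj+m)^{-k-1}$ in the final simplification.

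The main obstacle, and the point requiring genuine care rather than bookkeeping, is the justification of interchanging summation, the two integrations, and the contour integral. The paper invokes Fubini's theorem to swap the $t$- and $w$-integrals in~(\ref{eq:2.1}) and Tonelli's theorem to swap the $j$-summation with the $w$-integral in~(\ref{eq:2.10}); I would make these rigorous by checking absolute convergence of $\sum_{j}\int_{0}^{1}|(-c)^j t^{bj+Re(m)+Re(w)-1}|\,dt$ uniformly for $w$ on the contour $C$, which reduces to summability of $\sum_{j}|c|^j/(bj+m+w)$ under $|Re(c)|<1$ and $Re(m)>0$. A secondary delicate point is the sign and branch tracking in the incomplete gamma factor: one must confirm that the analytic continuation relation~(\ref{eq:7}) and the evaluation via Gradshteyn--Ryzhik~(3.382.4) produce $\Gamma(1+k,-((bj+m)\log(a)))$ with the correct argument sign so that the two exponent conventions $(-bj-m)^{-k-1}$ and $(bj+m)^{-k-1}$ match up, absorbing the residual $(-1)^{-k-1}$ phase into the $(-\log(at))^k$ rewriting. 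Once these convergence and branch verifications are in place, the identification of the two contour representations is immediate.
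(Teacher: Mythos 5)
Your proposal is correct and follows essentially the same route as the paper: the paper's proof likewise equates the left-hand sides of equations (\ref{eq:2.1}) and (\ref{eq:2.10}) on the grounds that their right-hand contour integrals coincide via the geometric-series expansion implicit in equation (\ref{eq:1.1}), then simplifies the gamma-function factor. Your write-up merely makes explicit the term-by-term integration and the convergence and sign-tracking details that the paper leaves implicit.
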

\begin{proof}
Since the right-hand sides of equations (\ref{eq:2.1}) and (\ref{eq:2.10}) are equal relative to equation (\ref{eq:1.1}), we may equate the left-hand sides and simplify the gamma function to yield the stated result.
\end{proof}
\section{Table of definite integrals, products and series}
\subsection{Table 4.327 in Gradshteyn and I.M. Ryzhik}
\begin{example}
Equation (4.327.1) in \cite{grad}. Use equation (\ref{eq:2.11}) and set $k=-1,a\to e^a$ then take the indefinite integral with respect to $a$. Next take the first partial derivative with respect to $c$ and set $m=1,t\to x, b=2,c=1$. Next form a second equation by replacing $-a$ and add these two equations and simplify. We also include the derivation listed in equation (4.327).
\begin{multline}\label{eq:3.1}
\int_0^1 \frac{\log \left(a^2+\log ^2(x)\right)}{1+x^2} \, dx\\
=\sum _{j=0}^{\infty } \frac{(-1)^j e^{-i a (1+2 j)}
   \left(E_1(-i a (1+2 j))+e^{2 i a (1+2 j)} E_1(i (a+2 a j))+e^{i (a+2 a j)} 2 \log (a)\right)}{1+2 j}\\
=\pi  \log
   \left(\frac{2 \Gamma \left(\frac{2 a+3 \pi }{4 \pi }\right)}{\Gamma \left(\frac{2 a+\pi }{4 \pi
   }\right)}\right)+\frac{1}{2} \pi  \log \left(\frac{\pi }{2}\right)
\end{multline}
where $Re(a)>0$.
\end{example}
\begin{example}
Here we use the right-hand sides of equation (\ref{eq:3.1}), equate and take the exponential of both sides and simplify.
\begin{multline}
\prod _{j=0}^{\infty } \exp \left(\frac{(-1)^j e^{-i a (1+2 j)} \left(E_1(-i a (1+2 j))+e^{2 i a (1+2 j)} \Gamma (0,i
   (a+2 a j))\right)}{\pi +2 j \pi }\right)\\
=\frac{\sqrt{\frac{1}{a}} \sqrt{2 \pi } \Gamma \left(\frac{3}{4}+\frac{a}{2 \pi
   }\right)}{\Gamma \left(\frac{2 a+\pi }{4 \pi }\right)}
\end{multline}
where $Re(a)>0$.
\end{example}
\begin{example}
Equation (4.327.2) in \cite{grad}. Use equation (\ref{eq:3.1}) and set $a\to a/2$ and simplify.
\begin{multline}\label{eq:3.3}
\int_0^1 \frac{\log \left(a^2+4 \log ^2(x)\right)}{1+x^2} \, dx\\
=\sum _{j=0}^{\infty } \left(\frac{(-1)^j
   e^{-\frac{1}{2} i a (1+2 j)} E_1\left(-\frac{1}{2} i a (1+2 j)\right)}{1+2 j}+\frac{(-1)^j e^{\frac{1}{2} i a (1+2 j)}
   E_1\left(i \left(\frac{a}{2}+a j\right)\right)}{1+2 j}\right)\\+\frac{1}{2} \pi  \log (a)\\
=\pi  \log \left(\frac{2 \Gamma
   \left(\frac{a+3 \pi }{4 \pi }\right)}{\Gamma \left(\frac{a+\pi }{4 \pi }\right)}\right)+\frac{1}{2} \pi  \log (\pi
   )
\end{multline}
where $Re(a)>0$.
\end{example}
\begin{example}
Here we take the exponential function of both sides of equation (\ref{eq:3.3}) and simplify.
\begin{multline}
\prod _{j=0}^{\infty } \exp \left(\frac{2 (-1)^j e^{-\frac{1}{2} i a (1+2 j)} \left(E_1\left(-\frac{1}{2} i a (1+2
   j)\right)+e^{i a (1+2 j)} E_1\left(\frac{1}{2} i (a+2 a j)\right)\right)+(1+2 j) \pi  \log (a)}{2 (\pi +2 j \pi
   )}\right)\\
=\frac{2 \sqrt{\pi } \Gamma \left(\frac{1}{4} \left(3+\frac{a}{\pi }\right)\right)}{\Gamma \left(\frac{a+\pi
   }{4 \pi }\right)}
\end{multline}
where $Re(a)>0$.
\end{example}
\begin{example}
Derivation of equation (4.327.3) in \cite{grad}. Errata. Here we use (\ref{eq:2.11}) and set $b=0$ and simplify the infinite series. Next set $k=0,a\to e^a$. Next form a second equation by replacing $a\to -a$ and add these two equations and simplify.
\begin{multline}
\int_0^1 t^{-1+m} \log \left(a^2+\log ^2(t)\right) \, dt=\frac{e^{-i a m} E_1(-i a m)+e^{i a m} E_1(i a m)+2 \log
   (a)}{m}
\end{multline}
where $Re(a)>0$.
\end{example}
\subsection{Table (4.325) in Gradshteyn and I.M. Ryzhik}
\begin{example}
Equation (4.325.1). Use equation (\ref{eq:2.11}) and set $b=1,c=1,a=1,m=1$ and simplify the series in terms of the Riemann zeta function. Next take the first partial derivative with respect to $k$ and apply l'Hopital's rule as $k\to 0$ and simplify using equations [DLMF\href{https://dlmf.nist.gov/25.11.E2}{25.11.2}] and [DLMF,\href{https://dlmf.nist.gov/25.6.E11}{25.6.11}].
\begin{equation}
\int_0^1 \frac{\log (-\log (t))}{1+t} \, dx=-\frac{1}{2} \log ^2(2)
\end{equation}
\end{example}
\begin{example}
Equation (4.325.2) Extended form. Use equation (\ref{eq:2.11}) and set $b\to 1,c\to e^{-i \lambda },a\to 1,m\to 1$ and simplify the series in terms of the polylogarithm function. Next take the first partial derivative with respect to $k$ and set $k=0$ and simplify using equations [DLMF,\href{https://dlmf.nist.gov/25.14.E3}{25.14.3}] and [DLMF,\href{https://dlmf.nist.gov/25.12.E13}{25.12.12}].
\begin{equation}
\int_0^1 \frac{\log (-\log (t))}{e^{i \lambda }+t} \, dt=-\gamma  \log \left(1+e^{-i \lambda
   }\right)-Li'_{1}\left(-e^{-i \lambda }\right)
\end{equation}
where $\lambda\in\mathbb{C}$.
\end{example}
\begin{example}
Equation (4.325.3). Use equation (\ref{eq:2.11}) and set $b\to 1,a\to 1,m\to 0$ and simplify theseries in terms of the Riemann zeta function. Next take the first partial derivative with respect to $c$ and set $c=1$ and simplify using  [DLMF\href{https://dlmf.nist.gov/25.11.E2}{25.11.2}],  [DLMF,\href{https://dlmf.nist.gov/25.6.E11}{25.6.11}] and [DLMF,\href{https://dlmf.nist.gov/5.2.E3}{5.2.3}].
\end{example}
\begin{example}
Equation (4.325.4). Use equation (\ref{eq:2.11}) and set $b\to 2,c\to 1,a\to 1,m\to 1$ and simplify the series in terms of the Hurwitz zeta function. Next take the first partial derivative with respect to $k$ and apply l'Hopital's rule as $k\to 0$ and replace $t\to x$ and simplify using  [DLMF,\href{https://dlmf.nist.gov/25.2#E5}{25.2.5}] and [DLMF,\href{https://dlmf.nist.gov/5.2.E3}{5.2.3}].
\begin{multline}
\int_0^1 \frac{\log (-\log (x))}{1+x^2} \, dx
=\frac{1}{4} \left(-\pi  (\gamma +\log (4))-\gamma
   _1\left(\frac{1}{4}\right)+\gamma _1\left(\frac{3}{4}\right)\right)\\
=\frac{1}{2} \pi  \log \left(\frac{\sqrt{2 \pi }
   \Gamma \left(\frac{3}{4}\right)}{\Gamma \left(\frac{1}{4}\right)}\right)
\end{multline}
\end{example}
\begin{example}
Equation (4.325.5). Use equation (\ref{eq:2.11}) and set $b\to 1,a\to 1,t\to x,m\to 0$ and simplify the series in terms of the polylogarithm function. Next take the first partial derivative with respect to $k$ and apply l'Hopital's rule as $k\to 0$. Next form to equations by replacing $c\to -(-1)^{2/3}, c\to (-1)^{1/3}$ and take their difference and simplify using [DLMF,\href{https://dlmf.nist.gov/25.14.E3}{25.14.3}] and [DLMF,\href{https://dlmf.nist.gov/25.12.E13}{25.12.12}].
\begin{multline}
\int_0^1 \frac{\log (-\log (x))}{1+x+x^2} \, dx=-\frac{i \left(-\frac{1}{3} i \gamma  \pi
   -Li_{1}\left(-\sqrt[3]{-1}\right)+Li'
_{1}\left((-1)^{2/3}\right)\right)}{\sqrt{3}}
   \\
=\frac{\pi  \log \left(\frac{\sqrt[3]{2 \pi } \Gamma \left(\frac{2}{3}\right)}{\Gamma
   \left(\frac{1}{3}\right)}\right)}{\sqrt{3}}
\end{multline}
\end{example}
\begin{example}
Equation (4.325.6). Use equation (\ref{eq:2.11}) and set $a=b=1$ and simplify the series in terms of the Hurwitz-Lerch zeta function. Next take the first partial derivative with respect to $k$ and set $t\to x, k=0$. Then form two equations by replacing $c\to (-1)^{2/3}$ and multiply both sides by $\frac{\sqrt[3]{-1}}{1+\sqrt[3]{-1}}$ and for the second $c\to -(-1)^{1/3}$ and multiply both sides by $\frac{1}{1+\sqrt[3]{-1}}$ and add these two equations with $m=1$ and simplify using [DLMF,\href{https://dlmf.nist.gov/25.14.i}{25.14.1}], [DLMF,\href{https://dlmf.nist.gov/25.14.E3}{25.14.3}] and [DLMF,\href{https://dlmf.nist.gov/25.2.E5}{25.2.5}].
\begin{multline}
\int_0^1 \frac{\log (-\log (x))}{1-x+x^2} \, dx=-\frac{2 \gamma  \pi +3 i
   Li'_{1}\left(\sqrt[3]{-1}\right)-3 i Li'_{1}\left(-(-1)^{2/3}\right)}{3
   \sqrt{3}}\\
=\frac{(2 \pi ) \left(\frac{5}{6} \log (2 \pi )-\log \left(\Gamma
   \left(\frac{1}{6}\right)\right)\right)}{\sqrt{3}}
\end{multline}
\end{example}
\begin{example}
Equation (4.325.7). Use equation (\ref{eq:2.11}) and set $a=b=m=1$ and simplify the series in terms of the polylogarithm function. Next take the first partial derivative with respect to $k$ and set $k=0,c\to 1/c,t\to x$. Next form two equations by replacing $c\to e^{i t}$ and $c\to e^{-it}$ then take their difference and simplify by multiplying both sides by $\frac{1}{2} i \csc (t)$ and simplifying using [DLMF,\href{https://dlmf.nist.gov/25.2.E5}{25.2.5}] and [DLMF,\href{https://dlmf.nist.gov/25.14.E3}{25.14.3}].
\begin{multline}
\int_0^1 \frac{\log (-\log (x))}{1+x^2+2 x \cos (t)} \, dx
=\frac{1}{2} i \csc (t) \left(i \gamma 
   t-Li'_{1}\left(-e^{-i t}\right)+Li'_{1}\left(-e^{i t}\right)\right)\\
=\frac{\pi  \log
   \left(\frac{(2 \pi )^{t/\pi } \Gamma \left(\frac{1}{2}+\frac{t}{2 \pi }\right)}{\Gamma \left(\frac{1}{2}-\frac{t}{2\pi }\right)}\right)}{2 \sin (t)}
\end{multline}
where $t\in\mathbb{C}$.
\end{example}
\begin{example}
Equation (4.325.8). Use equation (\ref{eq:2.11}) and set $a=1,b=0$ and simplify the series. Next take the first partial derivative with respect to $k$ and replace $k=0,t\to x,m \to v$ and simplify using [DLMF,\href{https://dlmf.nist.gov/25.2.E5}{25.2.5}].
\begin{equation}
\int_0^1 x^{-1+v} \log (-\log (x)) \, dx=-\frac{\gamma +\log (v)}{v}
\end{equation}
where $|Re(v)|<1$.
\end{example}
\begin{example}
Equation (4.325.9). Extended form. Use equation (\ref{eq:2.11}) and set $a=1$ and simplify the series in terms of the Hurwitz-Lerch zeta function. Next set $b\to 2n,c=-1,t\to x$. Next replace $m\to m+1$ and form two equations by replacing $m\to n-2,m\to n$ and take their difference. Next take the first partial derivative with respect to $k$ and set $k=0$ and simplify using [DLMF,\href{https://dlmf.nist.gov/25.2.E5}{25.2.5}], [DLMF,\href{https://dlmf.nist.gov/25.14.E2}{25.14.2}], [DLMF,\href{https://dlmf.nist.gov/25.2.E5}{25.2.4}], and [DLMF,\href{https://dlmf.nist.gov/25.14.i}{25.14.1}].
\begin{multline}
\int_0^1 \frac{x^{-2+\alpha } \left(-1+x^2\right) \log \left(\log \left(\frac{1}{x}\right)\right)}{-1+x^{2 \alpha
   }} \, dx\\
=\frac{(\gamma +\log (2 \alpha )) \left(\psi ^{(0)}\left(\frac{-1+\alpha }{2 \alpha }\right)-\psi
   ^{(0)}\left(\frac{1+\alpha }{2 \alpha }\right)\right)-\gamma _1\left(\frac{-1+\alpha }{2 \alpha }\right)+\gamma
   _1\left(\frac{1+\alpha }{2 \alpha }\right)}{2 \alpha }
\end{multline}
where $\alpha\in\mathbb{C}$.
\end{example}
\begin{example}
Equation (4.325.10). Use equation (\ref{eq:2.11}) and set $c=1,b=2,a=1$ and simplify the series in terms of the Hurwitz zeta function. Next take the first partial derivative with respect to $k$ and set $k=-1/2,m=1,t\to x$ and simplify using [DLMF,\href{https://dlmf.nist.gov/25.14.E2}{25.14.2}] and 
[DLMF,\href{https://dlmf.nist.gov/25.2.E5}{25.2.5}].
\begin{multline}
\int_0^1 \frac{\log (-\log (x))}{\left(1+x^2\right) \sqrt{-\log (x)}} \, dx\\
=\frac{1}{2} \sqrt{\pi }
   \left(-\left(\left(\zeta \left(\frac{1}{2},\frac{1}{4}\right)-\zeta \left(\frac{1}{2},\frac{3}{4}\right)\right)
   (\gamma +\log
   (16))\right)\right. \\ \left.
+\zeta'\left(\frac{1}{2},\frac{1}{4}\right)-\zeta'\left(\frac{1}{
   2},\frac{3}{4}\right)\right)
\end{multline}
\end{example}
\begin{example}
Equation (4.325.11). Use equation (\ref{eq:2.11}) and set $b=0,a->1$ and simplify the series. Next take the first partial derivative with respect to $k$ and set $k=-1/2,t\to x,m\to v$ and simplify.
\begin{equation}
\int_0^1 \frac{x^{-1+v} \log (-\log (x))}{\sqrt{-\log (x)}} \, dx=-\frac{\sqrt{\pi } (\gamma +\log (4)+\log
   (v))}{\sqrt{v}}
\end{equation}
where $|Re(v)|<1$.
\end{example}
\begin{example}
Equation (4.325.12). Use equation (\ref{eq:2.11}) and set $b=0,a=1$ and simplify the series. Next take the first partial derivative with respect to $k$ and set $k\to u-1,t\to x,m\to v$ and simplify.
\begin{multline}
\int_0^1 x^{-1+v} (-\log (x))^{-1+u} \log (-\log (x)) \, dx=-v^{-u} \Gamma (u) (\log (v)-\psi ^{(0)}(u))
\end{multline}
where $|Re(v)|<1$.
\end{example}
\subsection{Derivation of some definite integrals}
\begin{example}
Equation (4.325.12) in \cite{grad}. Errata and generalized form. Use equation (\ref{eq:2.11}) and set $k=-1,a\to e^a,m\to m+b$. Next form a second equation by replacing $a\to -a$ and take their difference and simplify.
\begin{multline}
\int_0^1 \frac{t^{-1+m} \log \left(1+c t^b\right)}{a^2+\log ^2(t)} \, dt\\
=\sum _{j=0}^{\infty } \frac{i (-1)^j
   c^{1+j} e^{-i a (b+b j+m)} \left(-\Gamma (0,-i a (b+b j+m))+e^{2 i a (b+b j+m)} \Gamma (0,i a (b+b j+m))\right)}{2 a
   (1+j)}
\end{multline}
where $|Re(m)|<1,Re(a)>0,Re(b)>0$.
\end{example}
\begin{example}
Equation (2.6.18.2) in \cite{prud1} extended form. Use equation (\ref{eq:2.11}) and set $b=0$ and simplify the series in terms of the incomplete gamma function. Next replace $a\to e^a,k=-1$ and take the $h$-th partial derivative with respect to $a$ and simplify with $h\to n-1,t\to x$ and [DLMF,\href{https://dlmf.nist.gov/15.5.i}{15.5}].
\begin{equation}
\int_0^1 \frac{x^{m-1}}{(a+\log (x))^n} \, dx=-a^{1-n} e^{-a m} E_n(-a m)
\end{equation}
where $Im(a)>0$.
\end{example}
\begin{example}
Equation (2.6.18.6) in \cite{prud1}. Alternate form. Use equation (\ref{eq:2.11}) and set $k=-1,a\to e^{a},c=1$ and simplify the series. Next form two equations by replacing $a\to -a$ and take their difference and simplify with replacing $a\to i a,b\to u,m\to \frac{u}{2},t\to x$ and using equation (11.3.1) on pp.775 in \cite{prud1}.
\begin{multline}
\int_0^1 \frac{x^{-1+\frac{u}{2}}}{\left(1+x^u\right) \left(a^2+\log ^2(x)\right)} \, dx\\
=\sum _{j=0}^{\infty }
   \frac{i (-1)^j e^{-\frac{1}{2} i a (u+2 j u)} \left(-\Gamma \left(0,-\frac{1}{2} i a (u+2 j u)\right)+e^{i a (u+2 j
   u)} \Gamma \left(0,\frac{1}{2} i a (u+2 j u)\right)\right)}{2 a}\\
=\frac{-\psi ^{(0)}\left(\frac{\pi +a u}{4 \pi
   }\right)+\psi ^{(0)}\left(\frac{1}{4} \left(3+\frac{a u}{\pi }\right)\right)}{4 a}
\end{multline}
where $|Re(u)|<1,Re(a)>0,Re(u)>0$.
\end{example}
\begin{example}
Equation (2.6.18.16) in \cite{prud1}. Alternate form. Use equation (\ref{eq:2.11}) and take the indefinite integral with respect to $c$ set $k=-1,a\to e^a$. Next form a second equation by replacing $a\to -a$ and take their difference. Replace $a\to ai,m\to m+b$ and simplify.
\begin{multline}
\int_0^1 \frac{t^{-1+m} \log \left(1+c t^b\right)}{a^2+\log ^2(t)} \, dt\\
=\sum _{j=0}^{\infty } \frac{i (-1)^j
   c^{1+j} e^{-i a (b+b j+m)} \left(-\Gamma (0,-i a (b+b j+m))+e^{2 i a (b+b j+m)} \Gamma (0,i a (b+b j+m))\right)}{2 a
   (1+j)}
\end{multline}
where $|Re(m)|<1,Re(a)>0,Re(b)>0$.
\end{example}
\subsection{Bierens de Haan Table 147}
\begin{example}
Equation 147(1). Use equation (\ref{eq:2.11}) and set $b=0,a=1$ then take the first partial derivative with respect to $k$ and set $k=0,t\to x,m\to q$ and simplify using [Wolfram,\href{https://mathworld.wolfram.com/Euler-MascheroniConstant.html}{47}].
\begin{equation}
\int_0^1 x^{-1+q} \log \left(\log \left(\frac{1}{x}\right)\right) \, dx=-\frac{\gamma +\log (q)}{q}
\end{equation}
where $|Re(q)|<1$.
\end{example}
\begin{example}
Equation 147(2). Use equation (\ref{eq:2.11}) and set $b=0,m\to q,t\to x,k\to p,a=1$, then take the first partial derivative with respect to $p$ and simplify using [Wolfram,\href{https://mathworld.wolfram.com/Euler-MascheroniConstant.html}{48}].
\begin{multline}
\int_0^1 x^{-1+q} \log ^p\left(\frac{1}{x}\right) \log \left(\log \left(\frac{1}{x}\right)\right) \, dx=-q^{-1-p}
   \Gamma (1+p) (\log (q)-\psi ^{(0)}(1+p))
\end{multline}
where $|Re(q)|<1$.
\end{example}
\begin{example}
Equation 147(3). Use equation (\ref{eq:2.11}) and set $b=0,m\to q,t\to x,k\to p, a=1$, then take the first partial derivative with respect to $p$ set $p=-1/2$ and simplify using [Wolfram,\href{https://mathworld.wolfram.com/Euler-MascheroniConstant.html}{48}].
\begin{equation}
\int_0^1 \frac{x^{-1+q} \log \left(\log \left(\frac{1}{x}\right)\right)}{\sqrt{\log \left(\frac{1}{x}\right)}} \,
   dx=-(\gamma +\log (4 q)) \sqrt{\frac{\pi }{q}}
\end{equation}
where $|Re(q)|<1$.
\end{example}
\begin{example}
Equation 147(4). Use equation (\ref{eq:2.11}) and $b=2,c=1,t\to x,a=1$ and simplify the series in terms of the Hurwitz zeta function. Next take the first partial derivative with respect to $k$ and set $k=-1/2,m=1$ and simplify using [DLMF,\href{https://dlmf.nist.gov/25.14.E2}{25.14.2}] and [Wolfram,\href{https://mathworld.wolfram.com/Euler-MascheroniConstant.html}{49}].
\begin{multline}
\int_0^1 \frac{\log \left(\log \left(\frac{1}{x}\right)\right)}{\left(1+x^2\right) \sqrt{\log
   \left(\frac{1}{x}\right)}} \, dx=\frac{1}{2} \sqrt{\pi } \left(-\left(\left(\zeta
   \left(\frac{1}{2},\frac{1}{4}\right)-\zeta \left(\frac{1}{2},\frac{3}{4}\right)\right) (\gamma +\log
   (16))\right)\right. \\ \left.
+\zeta'\left(\frac{1}{2},\frac{1}{4}\right)-\zeta'\left(\frac{1}{2
   },\frac{3}{4}\right)\right)
\end{multline}
\end{example}
\begin{example}
Equation 147(5). Use equation (\ref{eq:2.11}) and set $b=2,c=1,t\to x,m\to m+1$ and simplify the series in terms of the Hurwitz zeta function. Next take the first partial derivative with respect to $k$ and apply l'Hopital's rule as $k\to 0$ and replace $m\to p$ and simplify using [DLMF,\href{https://dlmf.nist.gov/25.11.E1}{25.11.1}] and [DLMF,\href{https://dlmf.nist.gov/25.2.E5}{25.2.4}].
\begin{multline}
\int_0^1 \frac{x^{-p} \left(1+x^{2 p}\right) \log \left(\log \left(\frac{1}{x}\right)\right)}{1+x^2} \,
   dx\\
=\frac{1}{4} \left(-2 \pi  (\gamma +\log (4)) \sec \left(\frac{p \pi }{2}\right)-\gamma
   _1\left(\frac{1-p}{4}\right)+\gamma _1\left(\frac{3-p}{4}\right)-\gamma _1\left(\frac{1+p}{4}\right)\right. \\ \left.+\gamma
   _1\left(\frac{3+p}{4}\right)\right)
\end{multline}
where $|Re(p)|<1$.
\end{example}
\begin{example}
Equation 147(6). Use equation (\ref{eq:2.11}) and set $b=2,c=-1,t\to x,a=1,m\to m+1$ and simplify the series in terms of the Hurwitz zeta function. Next form a second equation by replacing $m\to -m$ and take their difference. Next take the first partial derivative with respect to $k$ and apply l'Hopital's rule as $k\to 0$ and simplify using [DLMF,\href{https://dlmf.nist.gov/25.2.E5}{25.2.5}] and [DLMF,\href{https://dlmf.nist.gov/25.11.i}{25.11.1}].
\begin{multline}
\int_0^1 \frac{\log \left(\log \left(\frac{1}{x}\right)\right) \left(x^p-x^{-p}\right)}{1-x^2} \, dx\\
=\frac{1}{2}
   \left(\gamma _1\left(\frac{1-p}{2}\right)-\gamma _1\left(\frac{1+p}{2}\right)+\pi  (\gamma +\log (2)) \tan
   \left(\frac{p \pi }{2}\right)\right)
\end{multline}
where $|Re(p)|<1$.
\end{example}
\begin{example}
Equation 147(7). Use equation (\ref{eq:2.11}) and set $b=1,t\to x,a=1,m=0$ and simplify the series in terms of the polylogarithm function. Next take the first partial derivative with respect to $c$ and set $c=1$, then take the first partial derivative with respect to $k$ and set $k=0$ and simplify using [DLMF,\href{https://dlmf.nist.gov/25.2.E5}{25.2.5}] and [DLMF,\href{https://dlmf.nist.gov/25.12#ii}{25.14.3}].
\begin{equation}
\int_0^1 \frac{\log \left(\log \left(\frac{1}{x}\right)\right)}{(1+x)^2} \, dx=\frac{1}{2} \left(-\gamma +\log
   \left(\frac{\pi }{2}\right)\right)
\end{equation}
\end{example}
\begin{example}
Equation 147(8). Errata  Use equation (\ref{eq:2.11}) and set $b=1,t\to x,a=1$ and simplify the series in terms of the Hurwitz-Lerch zeta function. Next we form two equations by replacing $c\to -(1)^{1/3},c\to -(-1)^{2/3}$ and take their difference. Next take the first partial derivative with respect to $k$ and set $k=-1/2$ and simplify using [DLMF,\href{https://dlmf.nist.gov/25.14.i}{25.14.1}] and [DLMF,\href{https://dlmf.nist.gov/25.12#ii}{25.14.3}].
\begin{multline}
\int_0^1 \frac{\log \left(\log \left(\frac{1}{x}\right)\right)}{\left(1+x+x^2\right) \sqrt{\log
   \left(\frac{1}{x}\right)}} \, dx\\
=i \sqrt{\frac{\pi }{3}} \left(i (\gamma +\log (4)) \left(\zeta
   \left(\frac{1}{2},\frac{1}{3}\right)-\zeta
   \left(\frac{1}{2},\frac{2}{3}\right)\right)+Li'_{1/2}\left(-\sqrt[3]{-1}\right)-Li'_{1/2}\left((-1)^{2/3}\right)\right)
\end{multline}
\end{example}
\begin{example}
Equation 147(9). Use equation (\ref{eq:2.11}) and set $b=1,t\to x,a=1$ and simplify the series in terms of the Hurwitz-Lerch zeta function. Next we form two equations by replacing $c\to e^{i\lambda},c\to e^{-i\lambda}$ and take their difference set $m=0$ and convert the Hurwitz-Lerch zeta function to the polyloagarithm function using  [DLMF,\href{https://dlmf.nist.gov/25.14.i}{25.14.1}]  and simplify using [DLMF,\href{https://dlmf.nist.gov/25.12#ii}{25.14.3}].
\begin{equation}
\int_0^1 \frac{\log \left(\log \left(\frac{1}{x}\right)\right)}{1+x^2+2 x \cos (\lambda )} \, dx=\frac{1}{2} \pi
    \csc (\lambda ) \log \left(\frac{(2 \pi )^{\lambda /\pi } \Gamma \left(\frac{\pi +\lambda }{2 \pi }\right)}{\Gamma
   \left(\frac{\pi -\lambda }{2 \pi }\right)}\right)
\end{equation}
where $|Re(\lambda)|<\pi/2$
\end{example}
\begin{example}
Equation 147(10). Use equation (\ref{eq:2.11}) and set $c=1,b=2,m=1,a\to e^a$ and form a second equation by replacing $a\to -a$ and adding the two equations and simplifying the term on the left-hand side.
\begin{multline}
\int_0^1 \frac{\log \left(q^2+\log ^2(x)\right)}{1+x^2} \, dx\\
=\sum _{j=0}^{\infty } \frac{(-1)^j \left(e^{-i (1+2
   j) q} E_1(-i (1+2 j) q)+e^{i (q+2 j q)} E_1(i (q+2 j q))+2 \log (q)\right)}{1+2 j}\\
=\pi  \log \left(\frac{2 \Gamma
   \left(\frac{2 q+3 \pi }{4 \pi }\right)}{\Gamma \left(\frac{2 q+\pi }{4 \pi }\right)}\right)+\frac{1}{2} \pi  \log
   \left(\frac{\pi }{2}\right)
\end{multline}
where $|Re(q)|<1$.
\end{example}
\begin{example}
Equation 147(11). Use equation (\ref{eq:2.11}) and set $b=2,t\to x,a\to e^{a},m\to m+1,c=1$ then take the first partial derivative with respect to $k$ and set $k=0$ and simplify. Next form a second equation by replacing $a\to -a$ and add the two equations and simplify with replacing $a\to ai$. Next form another equation by replacing $m\to s$ and add these two equations, finally replace $m\to -b/a, s\to b/a$ and simplify.
\begin{multline}\label{eq:3.32}
\int_0^1 \frac{\left(x^{-\frac{b}{a}}+x^{b/a}\right) \log \left(q^2+\log ^2(x)\right)}{1+x^2} \, dx\\
=\sum
   _{j=0}^{\infty } (-1)^j a \left(\frac{e^{-i \left(1-\frac{b}{a}+2 j\right) q} E_1\left(-i \left(1-\frac{b}{a}+2
   j\right) q\right)}{a-b+2 a j}+\frac{e^{-\frac{i (a+b+2 a j) q}{a}} E_1\left(-\frac{i (a+b+2 a j) q}{a}\right)}{a+b+2
   a j}\right. \\ \left.
+\frac{e^{i \left(1-\frac{b}{a}+2 j\right) q} E_1\left(i \left(1-\frac{b}{a}+2 j\right) q\right)+2 \log
   (q)}{a-b+2 a j}+\frac{e^{\frac{i (a+b+2 a j) q}{a}} E_1\left(\frac{i (a+b+2 a j) q}{a}\right)+2 \log (q)}{a+b+2 a
   j}\right)
\end{multline}
where $Re(q)>0$.
\end{example}
\begin{example}
Equation 147(12). Use equation (\ref{eq:3.32}) and set $q=a/2$ and simplify.
\begin{multline}
\int_0^1 \frac{\left(x^{-\frac{b}{a}}+x^{b/a}\right) \log \left(\frac{a^2 \pi ^2}{4}+\log ^2(x)\right)}{1+x^2}
   \, dx\\
=\sum _{j=0}^{\infty } (-1)^j a \left(\frac{e^{-\frac{1}{2} i a \left(1-\frac{b}{a}+2 j\right) \pi }
   E_1\left(-\frac{1}{2} i a \left(1-\frac{b}{a}+2 j\right) \pi \right)}{a-b+2 a j}\right. \\ \left.
+\frac{e^{-\frac{1}{2} i (a+b+2 a j)
   \pi } E_1\left(-\frac{1}{2} i (a+b+2 a j) \pi \right)}{a+b+2 a j}\right. \\ \left.
+\frac{e^{\frac{1}{2} i a \left(1-\frac{b}{a}+2
   j\right) \pi } E_1\left(\frac{1}{2} i a \left(1-\frac{b}{a}+2 j\right) \pi \right)+2 \log \left(\frac{a \pi
   }{2}\right)}{a-b+2 a j}\right. \\ \left.+\frac{e^{\frac{1}{2} i (a+b+2 a j) \pi } E_1\left(\frac{1}{2} i (a+b+2 a j) \pi \right)+2
   \log \left(\frac{a \pi }{2}\right)}{a+b+2 a j}\right)
\end{multline}
where $Re(b)>0,Re(a)>0$.
\end{example}
\begin{example}
Equation 147(13). Errata. Use equation (\ref{eq:2.11}) and set $m=1,c=1,b=2,\to x,a\to e^{a}$ then take the first partial derivative with respect to $k$ and set $k=0$. Next form a second equation by replacing $a\to -a$ and add these two equations and simplify.
\begin{multline}
\int_0^1 \frac{\log \left(\frac{\pi ^2}{4}+\log ^2(x)\right)}{1+x^2} \, dx\\
=\sum _{j=0}^{\infty } \frac{i
   \left(-\Gamma \left(0,-\frac{1}{2} i (1+2 j) \pi \right)+\Gamma \left(0,\frac{1}{2} i (1+2 j) \pi
   \right)\right)}{1+2 j}+\frac{1}{2} \pi  \log \left(\frac{\pi }{2}\right)
\end{multline}
\end{example}
\begin{example}
Equation 147(14). Use equation (\ref{eq:2.11}) and set $c=-1,b=2,a\to e^{a},t\to x,m\to m+1$ then take the first partial derivative with respect to $k$ and set $k=0$. Next form a second equation by replacing $a\to -a$ and add these two equations and simplify. Next form another equation by replacing $m\to s$ and take their difference.
\begin{multline}\label{eq:3.35}
\int_0^1 \frac{x^{-\frac{b}{a}} \left(-1+x^{\frac{2 b}{a}}\right) \log \left(q^2+\log ^2(x)\right)}{-1+x^2} \,
   dx\\
=\sum _{j=0}^{\infty } a \left(\frac{e^{-i \left(1-\frac{b}{a}+2 j\right) q} E_1\left(-i \left(1-\frac{b}{a}+2
   j\right) q\right)}{a-b+2 a j}+\frac{e^{i \left(1-\frac{b}{a}+2 j\right) q} E_1\left(i \left(1-\frac{b}{a}+2 j\right)
   q\right)}{a-b+2 a j}\right. \\ \left.
-\frac{e^{-\frac{i (a+b+2 a j) q}{a}} E_1\left(-\frac{i (a+b+2 a j) q}{a}\right)+e^{\frac{i
   (a+b+2 a j) q}{a}} E_1\left(\frac{i (a+b+2 a j) q}{a}\right)-\frac{2 b \log \left(q^2\right)}{a-b+2 a j}}{a+b+2 a
   j}\right)
\end{multline}
where $Re(q)>0,Re(a)>0,Re(b)>0$.
\end{example}
\begin{example}
Equation 147(15). Use equation (\ref{eq:3.35}) and replace $q\to a/2$ and simplify.
\begin{multline}
\int_0^1 \frac{\left(x^{b/a}-x^{-\frac{b}{a}}\right) \log \left(\frac{a^2 \pi ^2}{4}+\log ^2(x)\right)}{-1+x^2}
   \, dx\\
=\sum _{j=0}^{\infty } a \left(\frac{e^{-\frac{1}{2} i (a-b+2 a j) \pi } \left(E_1\left(-\frac{1}{2} i (a-b+2 a
   j) \pi \right)+e^{i (a-b+2 a j) \pi } E_1\left(\frac{1}{2} i (a-b+2 a j) \pi \right)\right)}{a-b+2 a
   j}\right. \\ \left.
-\frac{e^{-\frac{1}{2} i (a+b+2 a j) \pi } E_1\left(-\frac{1}{2} i (a+b+2 a j) \pi \right)}{a+b+2 a
   j}-\frac{e^{\frac{1}{2} i (a+b+2 a j) \pi } E_1\left(\frac{1}{2} i (a+b+2 a j) \pi \right)}{a+b+2 a
   j}\right)\\+\frac{1}{2} \pi  \log \left(\frac{a^2 \pi ^2}{4}\right) \tan \left(\frac{b \pi }{2 a}\right)
\end{multline}
where $Re(a)>0,Re(b)>0$.
\end{example}
\begin{example}
Equation 147(16). Use equation (\ref{eq:2.11}) and set $c=1,b=1,a\to e^{q},m\to m+1$ then take the first partial derivative with respect to $k$ and set $k=0$ Next form a second equation by replacing $q\to -q$ and add these two equations and simplify by replacing $q\to qi$.
\begin{multline}\label{eq:3.36}
\int_0^1 \frac{\log \left(q^2+\log ^2(x)\right)}{\sqrt{x} (1+x)} \, dx\\
=\sum _{j=0}^{\infty } \frac{2 (-1)^j
   e^{-i \left(\frac{1}{2}+j\right) q} \left(E_1\left(-i \left(\frac{1}{2}+j\right) q\right)+e^{i (1+2 j) q} E_1\left(i
   \left(\frac{1}{2}+j\right) q\right)+2 e^{i \left(\frac{1}{2}+j\right) q} \log (q)\right)}{1+2 j}\\
=2 \pi  \log
   \left(\frac{2 \Gamma \left(\frac{q+3 \pi }{4 \pi }\right)}{\Gamma \left(\frac{q+\pi }{4 \pi }\right)}\right)+\pi 
   \log (\pi )
\end{multline}
where $Re(q)>0$.
\end{example}
\begin{example}
Infinite product derivation using the exponential function of the right-hand sides of equations 147(16) and (\ref{eq:3.36}).
\begin{multline}
\prod _{j=0}^{\infty } \exp \left(\frac{(-1)^j e^{-i \left(\frac{1}{2}+j\right) q} \left(E_1\left(-i
   \left(\frac{1}{2}+j\right) q\right)+e^{i (1+2 j) q} E_1\left(i \left(\frac{1}{2}+j\right) q\right)+2 e^{i
   \left(\frac{1}{2}+j\right) q} \log (q)\right)}{\pi +2 j \pi }\right)\\
=\frac{2 \sqrt{\pi } \Gamma \left(\frac{1}{4}
   \left(3+\frac{q}{\pi }\right)\right)}{\Gamma \left(\frac{\pi +q}{4 \pi }\right)}
\end{multline}
where $Re(q)>0$.
\end{example}
\begin{example}
Equation 147(17). Generalized form of this equation. Errata. Use equation (\ref{eq:2.11}) and set $a\to e^{a},b=1,m\to m+1$ and take the first partial derivative with respect to $k$ and set $k=0$ and simplify. Next form a second equation by replacing $a\to -a$ and add these two equations. Next replace $a\to qi,t\to x$ and simplify.
\begin{multline}
\int_0^1 \frac{x^m \log \left(q^2+\log ^2(x)\right)}{1+c x} \, dx\\
=\sum
   _{j=0}^{\infty } \frac{(-1)^j c^j e^{-i (1+j+m) q} \left(\Gamma (0,-i (1+j+m)
   q)+e^{2 i (1+j+m) q} \Gamma (0,i (1+j+m) q)\right)}{1+j+m}\\
+\Phi (-c,1,1+m)
   \log \left(q^2\right)
\end{multline}
where $Re(q)>0$.
\end{example}
\subsection{Table 129 in Bierens de Haan}
We will derive a general form definite integral which can be used to evaluate all entries in this Table.
\begin{theorem}
For all $|Re(m)|<1,Re(c)>0,Re(b)>0$ then,
\begin{multline}\label{eq:3.39}
\int_0^1 \frac{x^m \log ^n(x)}{\left(1+c x^b\right) \left(q^2+\log ^2(x)\right)} \, dx\\
=\sum _{j=0}^{\infty }
   \frac{1}{2} (-1)^j c^j e^{-i (1+b j+m) q} (1+b j+m) \left((1+b j+m)^2 q^2\right)^{-n} \\
\left((-i q)^n (-i (1+b j+m)
   q)^n E_n(-i (1+b j+m) q)\right. \\ \left.
+e^{2 i (1+b j+m) q} (i q)^n (i (1+b j+m) q)^n E_n(i (1+b j+m) q)\right) \Gamma (n)
\end{multline}
\end{theorem}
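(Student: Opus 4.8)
The plan is to reduce the two-parameter integral to the one-dimensional Laplace-type transform already controlled in Section 3.1, exactly as in the proof of (\ref{eq:2.11}); the only genuinely new feature is the quadratic denominator $q^2+\log^2(x)$. First I would expand the geometric factor $\frac{1}{1+cx^b}=\sum_{j=0}^\infty(-c)^j x^{bj}$ and interchange the sum with the integral by the same Fubini--Tonelli reasoning used after (\ref{eq:2.1}) and (\ref{eq:2.10}). Writing $\mu_j:=1+bj+m$, this leaves the single family $\int_0^1 \frac{x^{\mu_j-1}\log^n(x)}{q^2+\log^2(x)}\,dx$, and the substitution $x=e^{-u}$ turns each term into $(-1)^n J_n(\mu_j)$, where $J_n(\mu):=\int_0^\infty \frac{u^n e^{-\mu u}}{u^2+q^2}\,du$; here $\mathrm{Re}(\mu_j)=\mathrm{Re}(b)\,j+\mathrm{Re}(m)+1>0$ guarantees convergence under the stated hypotheses.

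The heart of the argument is the evaluation of $J_n(\mu)$. I would split $\frac{1}{u^2+q^2}=\frac{1}{2iq}\bigl(\frac{1}{u-iq}-\frac{1}{u+iq}\bigr)$ and apply the incomplete-gamma Laplace transform $\int_0^\infty \frac{u^n e^{-\mu u}}{u+\alpha}\,du=\mu^{-n}\Gamma(n+1)e^{\alpha\mu}E_{n+1}(\alpha\mu)$, the analogue of the formula (3.382.4) in \cite{grad} already used to produce (\ref{eq:2.10}), combined with $E_{n+1}(z)=z^{n}\Gamma(-n,z)$. Taking $\alpha=-iq$ and $\alpha=iq$ and subtracting gives $J_n(\mu)=\frac{1}{2iq}\mu^{-n}\Gamma(n+1)\bigl(e^{-iq\mu}E_{n+1}(-iq\mu)-e^{iq\mu}E_{n+1}(iq\mu)\bigr)$. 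Collapsing the index by the recurrence $E_{n+1}(z)=\frac1n\bigl(e^{-z}-zE_n(z)\bigr)$ and $\Gamma(n+1)=n\Gamma(n)$ cancels the constant pieces against $e^{\pm iq\mu}$ and converts the difference of $E_{n+1}$'s into the symmetric sum $J_n(\mu)=\frac12\mu^{1-n}\Gamma(n)\bigl(e^{-iq\mu}E_n(-iq\mu)+e^{iq\mu}E_n(iq\mu)\bigr)$.

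Reassembling yields $\int_0^1\frac{x^m\log^n(x)}{(1+cx^b)(q^2+\log^2(x))}\,dx=\sum_{j=0}^\infty \tfrac12(-1)^j(-1)^n c^j\mu_j^{1-n}\Gamma(n)\bigl(e^{-iq\mu_j}E_n(-iq\mu_j)+e^{iq\mu_j}E_n(iq\mu_j)\bigr)$, which is precisely (\ref{eq:3.39}) once the compact prefactors are expanded via $(\mu_j^2 q^2)^{-n}(\pm iq)^n(\pm i\mu_j q)^n=(-1)^n\mu_j^{-n}$; writing them in that unreduced product form is the device that keeps the identity single-valued. I expect the main obstacle to be this branch bookkeeping rather than any analytic difficulty: the Laplace transform and the partial-fraction split each carry $\arg$ restrictions (from $\alpha=\pm iq$ and from the powers $\mu^{-n}$, $(\alpha\mu)^{-n}$), and one must verify that for general $n$ the ``polynomial part'' of $\log^n(x)/(q^2+\log^2(x))$ --- which the recurrence surfaces as $q$-independent pieces, e.g. the $\sum_j(-c)^j/\mu_j$ term at $n=2$ --- is correctly absorbed into the single $E_n$ expression. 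The hypotheses $|\mathrm{Re}(m)|<1$, $\mathrm{Re}(c)>0$, $\mathrm{Re}(b)>0$ should be exactly what validates both the termwise integration and the applicability of the transform at every $j$.
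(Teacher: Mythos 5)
Your proposal is correct and reaches exactly the stated right-hand side (after noting that $(\mu_j^2q^2)^{-n}(\pm iq)^n(\pm i\mu_j q)^n=(-1)^n\mu_j^{-n}$, your closed form for $J_n$ reproduces the theorem term by term), but it takes a genuinely different route from the paper. The paper's proof never evaluates the $q$-integral directly: it starts from the already-proven Theorem at (\ref{eq:2.11}) with $k=-1$ and $a\to e^a$, adds the $a\to-a$ companion to manufacture the quadratic denominator $a^2-\log^2 t$ (then $a\to iq$), and finally generates the $\log^n(x)$ factor by differentiating $n$ times under the integral sign with respect to $m$, so the $E_n$ on the right emerges from repeated $\mu$-differentiation of $e^{-\mu a}\Gamma(0,-\mu a)$ with the shifts $m\to m+1$, $n\to n-1$ absorbing the offsets. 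Your argument instead expands the geometric factor, substitutes $x=e^{-u}$, and evaluates the base transform $J_n(\mu)=\int_0^\infty u^ne^{-\mu u}(u^2+q^2)^{-1}\,du$ in one shot via partial fractions and the Laplace formula $\int_0^\infty u^ne^{-\mu u}(u+\alpha)^{-1}\,du=\mu^{-n}\Gamma(n+1)e^{\alpha\mu}E_{n+1}(\alpha\mu)$, then collapses $E_{n+1}\to E_n$ with the recurrence $nE_{n+1}(z)=e^{-z}-zE_n(z)$. What your route buys is self-containedness and visible hypotheses: the conditions $\mathrm{Re}(\mu_j)>0$ and $\arg(\pm iq)=\pm\pi/2$ appear exactly where they are needed, and the cancellation of the $q$-independent ``polynomial'' pieces is performed explicitly by the recurrence rather than being hidden inside an $n$-fold parameter differentiation whose simplification the paper leaves implicit. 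What the paper's route buys is brevity and reuse of its central theorem, at the cost of obscuring precisely the bookkeeping you flag. The only caveats to record are the ones you already anticipate: the termwise integration really needs $|c|$ small enough that $\sum_j(-cx^b)^j$ converges on $(0,1)$ (the stated $\mathrm{Re}(c)>0$ alone does not guarantee this, an issue the paper shares), and for non-integer $n$ one must fix principal branches so that the unreduced product $(\mu_j^2q^2)^{-n}(\pm iq)^n(\pm i\mu_jq)^n$ genuinely equals $(-1)^n\mu_j^{-n}$.
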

\begin{proof}
Use equation (\ref{eq:2.11}) and set $t\to x,a\to e^{a},k=-1$. Next form a second equation by replacing $q\to -a$ and add the two equations. Next using this new equation we take the $n$-th partial derivative with respect to $m$ and replace $q\to qi,m\to m+1,n\to n-1$ and simplify using [DLMF,\href{https://dlmf.nist.gov/15.5.i}{15.5}].
\end{proof}
\subsection{Table 4.229 in Gradshteyn and I.M. Ryzhik}
\begin{example}
Equation (4.229.1). Use equation (\ref{eq:2.11}) and set $b=0,a=1m=1$, then take the first partial derivative with respect to $k$ and set $k=0$ and simplify using [Wolfram,\href{https://mathworld.wolfram.com/Euler-MascheroniConstant.html}{49}].
\begin{equation}
\int_0^1 \log \left(\log \left(\frac{1}{t}\right)\right) \, dx=-\gamma
\end{equation}
\end{example}
\begin{example}
Equation (4.229.3). Use equation (\ref{eq:2.11}) and set $b=0,a=m=1,t\to x$ then take the first partial derivative with respect to $k$ and set $k=-1/2$ and simplify using [Wolfram,\href{https://mathworld.wolfram.com/Euler-MascheroniConstant.html}{49}].
\begin{equation}
\int_0^1 \frac{\log \left(\log \left(\frac{1}{x}\right)\right)}{\sqrt{\log \left(\frac{1}{x}\right)}} \,
   dx=\sqrt{\pi } (-\gamma -\log (4))
\end{equation}
\end{example}
\begin{example}
Equation (4.229.11). Use equation (\ref{eq:2.11}) and set set $b=0,a=m=1,t\to x$ then take the first partial derivative with respect to $k$ and set $k=u-1$ and simplify using [Wolfram,\href{https://mathworld.wolfram.com/Euler-MascheroniConstant.html}{49}].
\begin{equation}
\int_0^1 \log ^{-1+u}\left(\frac{1}{x}\right) \log \left(\log \left(\frac{1}{x}\right)\right) \, dx=\Gamma (u)
   \psi ^{(0)}(u)
\end{equation}
where $|Re(u)|<1$.
\end{example}
\begin{example}
Equation (4.229.5(7)). Errata.  Use equation (\ref{eq:2.11}) and set $b=0,a\to e^{a},m=1$ then take the first partial derivative with respect to $k$ and set $k=0$ and simplify.
\begin{multline}\label{eq:3.44}
\int_0^1 \log (a+\log (x)) \, dx=i e^{-a} \pi -e^{-a} \text{Ei}(a)+\log (a)=\log (a)-\exp (-a)
   \text{Ei}(a)
\end{multline}
where $Im(a)\geq 0$.
\end{example}
\begin{example}
Equation (4.229.6). Errata. Use equation (\ref{eq:3.44}) and form a second equation by replacing $a\to -a$ and factor and evaluate the $\log(-1)$ on the left-hand side and simplify.
\begin{multline}
\int_0^1 \log (a-\log (x)) \, dx=\begin{cases}
			-i \pi +i e^a \pi -e^a \text{Ei}(-a)+\log (-a), & \text{ $Im(a)<0$}\\
            \log (a)-e^a \text{Ei}(-a), & \text{$Re(a)>0, Im(a)=0$}
		 \end{cases}
\end{multline}
\end{example}
\subsection{Table 4.213 in  Gradshteyn and I.M. Ryzhik}
The entries in this Table can be derived using special cases of equation (\ref{eq:3.45}) . These include (4.213.1); setting $m=0$, (4.213.2); setting $m=0,a\to ai$, (4.213.3) taking the first partial derivative with respect to $m$ then set $m\to 0$, (4.213.4(7)); taking the first partial derivative with respect to $m$ then set $m\to 0,a\to ai$, (4.213.5); taking the first partial derivative with respect to $a$ then setting $m=0$, (4.213.6); taking the first partial derivative with respect to $a$ then set $m\to 0,a\to ai$, (4.213.7); taking the first partial derivatives with respect to $a$ and $m$ then setting $m=0$, and (4.213.8); taking the first partial derivatives with respect to $a$ and $m$ then setting $m=0,a\to ai$
\begin{theorem}
For all $|Re(m)|<1,Re(a)>0$ then,
\begin{multline}\label{eq:3.45}
\int_0^1 \frac{x^m}{a^2+\log ^2(x)} \, dx=\frac{i e^{-i a (1+m)} \left(-\Gamma (0,-i a (1+m))+e^{2 i a (1+m)}
   \Gamma (0,i a (1+m))\right)}{2 a}
\end{multline}
\end{theorem}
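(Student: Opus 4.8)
The plan is to obtain (\ref{eq:3.45}) as a direct specialization of the master formula (\ref{eq:2.11}), followed by a partial-fraction combination of two parameter values. To avoid a notational clash I would read (\ref{eq:2.11}) with its free parameter $a$ temporarily renamed $s$, reserving $a$ for the parameter of (\ref{eq:3.45}). First I would set $b=0$, so that $1+ct^{b}=1+c$ is constant and the geometric sum $\sum_{j\ge 0}(-c)^{j}=\tfrac1{1+c}$ cancels it on the right; all dependence on $b$ and $c$ thereby disappears and only the $j=0$ term remains effectively. Taking $k=-1$ (whence $\Gamma(1+k,\cdot)=\Gamma(0,\cdot)$ and the power $(bj+m)^{k+1}$ reduces to $1$) and replacing $m\to m+1$ to raise the integrand power from $t^{m-1}$ to $t^{m}$ gives
\begin{equation*}
\int_{0}^{1}\frac{t^{m}}{-\log(s t)}\,dt=\frac{\Gamma\!\left(0,-(m+1)\log s\right)}{s^{\,m+1}}.
\end{equation*}

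Next I would replace $s\to e^{s}$, which linearizes the logarithm via $\log(st)\to s+\log t$ and yields
\begin{equation*}
\int_{0}^{1}\frac{t^{m}}{s+\log t}\,dt=-e^{-(m+1)s}\,\Gamma\!\left(0,-(m+1)s\right),
\end{equation*}
and I would form a companion equation by sending $s\to -s$. The decisive algebraic identity is the partial fraction
\begin{equation*}
\frac{1}{s+\log t}-\frac{1}{-s+\log t}=\frac{-2s}{\log^{2}t-s^{2}},
\end{equation*}
which at $s=ia$ becomes $\dfrac{-2ia}{a^{2}+\log^{2}t}$ since $-s^{2}=a^{2}$. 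Hence $\tfrac{i}{2a}$ times the difference of the two equations, evaluated at $s=ia$, recovers the integrand $t^{m}/(a^{2}+\log^{2}t)$ on the left, while the right-hand side becomes $\tfrac{i}{2a}\bigl[-e^{-(m+1)ia}\Gamma(0,-(m+1)ia)+e^{(m+1)ia}\Gamma(0,(m+1)ia)\bigr]$. Factoring out $e^{-ia(1+m)}$ produces exactly the stated closed form (\ref{eq:3.45}) (with the dummy variable $t$ renamed $x$).

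The step I expect to be the main obstacle is the justification of the substitution $s=ia$. The master formula (\ref{eq:2.11}) was established under $Re(b)>0,\,Re(m)>0,\,|Re(c)|<1$, whereas here I take $b=0$ and drive $s=e^{(\cdot)}$ onto the value $e^{ia}$ on the unit circle, outside the range covered by the elementary convergence estimates. The remedy is analytic continuation: both sides of the reduced identity are holomorphic in $(m,s)$ throughout the domain where the defining integral and the incomplete-gamma expression are simultaneously convergent, so agreement on an open set of real parameters forces agreement at $s=ia$. One must additionally check that the arguments $\mp ia(1+m)$ of $\Gamma(0,\cdot)=E_{1}(\cdot)$ stay off the branch cut $(-\infty,0]$; this is precisely what $Re(a)>0$ together with $|Re(m)|<1$ guarantees, and the same range ensures convergence of $\int_{0}^{1}x^{m}/(a^{2}+\log^{2}x)\,dx$ at both endpoints. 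The remaining manipulations — the geometric cancellation, the partial fraction, and the exponential factoring — are entirely routine.
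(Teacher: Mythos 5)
Your proposal is correct and follows essentially the same route as the paper: the paper's proof likewise specializes equation (\ref{eq:2.11}) with $b=0$, $a\to e^a$, $m\to m+1$ (and implicitly $k=-1$), forms a second equation by $a\to -a$, takes the difference, and substitutes $a\to ai$. Your additional remarks on the geometric cancellation of the $c$-dependence, the analytic continuation to $s=ia$, and the branch-cut check for $\Gamma(0,\cdot)$ supply justifications the paper leaves implicit.
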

\begin{proof}
Use equation (\ref{eq:2.11}) and set $b=0,t\to x,a\to e^a,m\to m+1$, then form a second equation by replacing $a\to -a$ and take their difference and simplify with $a\to ai$.
\end{proof}
\subsection{Table 4.214 in Gradshteyn and I.M. Ryzhik}
In this derivation the definite integral is evaluated using the Cauchy principal value, where there exists a singularity at $x=ai$. 
\begin{theorem}
For all $Re(a)>0,|Re(m)|<1$ then,
\begin{multline}\label{eq:3.46}
\int_0^1 \frac{x^{-1+m}}{a^4-\log ^4(x)} \, dx\\
=-\frac{e^{-a m} \Gamma (0,-a m)+i e^{-i a m} \left(\Gamma (0,-i a
   m)-e^{2 i a m} \Gamma (0,i a m)\right)-e^{a m} \Gamma (0,a m)}{4 a^3}
\end{multline}
\end{theorem}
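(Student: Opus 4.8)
The plan is to reduce the quartic denominator to the two quadratic pieces already handled in the previous subsection, integrate each against $x^{-1+m}$, and recombine. First I would factor $a^4-\log^4(x)=\left(a^2-\log^2(x)\right)\left(a^2+\log^2(x)\right)$ and split by partial fractions,
\begin{equation*}
\frac{1}{a^4-\log^4(x)}=\frac{1}{2a^2}\left(\frac{1}{a^2-\log^2(x)}+\frac{1}{a^2+\log^2(x)}\right),
\end{equation*}
so that the problem decouples into two integrals of exactly the type evaluated in equation~(\ref{eq:3.45}).

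The piece with $a^2+\log^2(x)$ is immediate: replacing $m\to m-1$ in equation~(\ref{eq:3.45}) produces $\int_0^1 x^{-1+m}/\!\left(a^2+\log^2(x)\right)\,dx$ directly in terms of $e^{-iam}$ and the incomplete gamma functions $\Gamma(0,\pm iam)$. For the piece with $a^2-\log^2(x)$ I would use the identity $(ia)^2+\log^2(x)=-\left(a^2-\log^2(x)\right)$, i.e.\ perform the substitution $a\to ia$ in the same formula; since $e^{-i(ia)m}=e^{am}$ and $\Gamma\!\left(0,\mp i(ia)m\right)=\Gamma(0,\pm am)$, this converts the exponential-integral data into $e^{\pm am}\Gamma(0,\pm am)$. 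Equivalently, one may bypass (\ref{eq:3.45}) and work directly from the master theorem: setting $b=0$, $k=-1$, $a\to e^{a}$ in equation~(\ref{eq:2.11}) collapses the geometric series and yields the single-term evaluation $\int_0^1 x^{-1+m}/(a+\log x)\,dx=-e^{-am}\Gamma(0,-am)$, whereupon the four shifts $a\to\pm a,\pm ia$ supply exactly the four partial-fraction terms of $1/\!\left(a^4-\log^4 x\right)$. Adding the two pieces with the weight $1/(2a^2)$ and regrouping the $\pm iam$ terms as $ie^{-iam}\bigl(-\Gamma(0,-iam)+e^{2iam}\Gamma(0,iam)\bigr)$ reproduces the stated right-hand side after dividing by $4a^3$.

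The main obstacle is the singularity on the path of integration. For real $Re(a)>0$ the factor $a^2-\log^2(x)$ vanishes at $\log x=-a$, i.e.\ at the interior point $x=e^{-a}\in(0,1)$, so $\int_0^1 x^{-1+m}/\!\left(a^2-\log^2(x)\right)\,dx$ exists only as a Cauchy principal value. The delicate point is to verify that the analytic continuation $a\to ia$ of formula~(\ref{eq:3.45})---equivalently the choice of branch of $\Gamma(0,z)$ (the exponential integral $E_1$) across the continuation---delivers precisely this principal value, with the half-residue contributions at $x=e^{-a}$ cancelling in the symmetric combination $e^{am}\Gamma(0,am)-e^{-am}\Gamma(0,-am)$ so that no spurious imaginary term survives. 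Once the branch bookkeeping is pinned down, the remaining work is a routine simplification of incomplete gamma functions, and the conditions $Re(a)>0$, $|Re(m)|<1$ carry over from the hypotheses of equation~(\ref{eq:3.45}).
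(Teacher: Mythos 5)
Your proposal is correct and follows essentially the same route as the paper: the paper also starts from equation~(\ref{eq:2.11}) with $b=0$, $k=-1$, $a\to e^{a}$ to get the single-term evaluation of $\int_0^1 x^{m-1}/(a+\log x)\,dx$, then builds the quartic denominator by successive differences over $a\to -a$ and $a\to ia$, which is exactly your four-shift partial-fraction decomposition in different clothing. Your explicit attention to the principal-value interpretation at the interior singularity $x=e^{-a}$ is a point the paper only asserts (and mislocates as ``$x=ai$''), so that remark is a welcome addition rather than a deviation.
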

\begin{proof}
Use equation (\ref{eq:2.11}) and set $t\to x,a\to e^a, k=-1,b=0$, then form a second equation by replacing $a\to -a$ and take their difference which forms an integral with a quadratic form in the denominator. Next using this equation we form another equation by replacing $a\to ai$ and taking their difference and simplify.
\end{proof}
\begin{example}
Equation  (4.214.1). Use equation (\ref{eq:3.46}) and set $m=1$.
\end{example}
\begin{example}
Equation  (4.214.2). Use equation (\ref{eq:3.46}) and take the first partial derivative with respect to $m$ and set $m=1$.
\end{example}
\begin{example}
Equation  (4.214.3). Use equation (\ref{eq:3.46}) and take the second partial derivative with respect to $m$ and set $m=1$.
\end{example}
\begin{example}
Equation  (4.214.4). Use equation (\ref{eq:3.46}) and take the third partial derivative with respect to $m$ and set $m=1$.
\end{example}
\subsection{Miscellaneous entries and example}
\begin{example}
An example in terms of Catalan's constant $C$. Use equation (\ref{eq:2.11}) and set $a=1$ and simplify the series in terms of the Hurwitz-Lerch zeta function. Next set $c\to z,k\to s-1$, then set $z=1,s=2,m\to b/2$ and simplify using [Wolfram,\href{https://mathworld.wolfram.com/LerchTranscendent.html}{7}].
\begin{equation}\label{eq:3.47}
\int_0^1 \frac{t^{-1+s} \log \left(\frac{1}{t}\right)}{1+t^{2 s}} \, dt=\frac{C}{s^2}
\end{equation}
\end{example}
\begin{example}
Use equation (\ref{eq:3.47}) and take the infinite sum of both sides $s\in[1,\infty)$ and simplify in terms of the theta function using [Wolfram,\href{https://mathworld.wolfram.com/JacobiThetaFunctions.html}{4}].
\begin{equation}
\int_0^1 \frac{\left(-1+\vartheta _3(0,t){}^2\right) \log \left(\frac{1}{t}\right)}{4 t} \, dt=\frac{C \pi ^2}{6}
\end{equation}
\end{example}
\begin{example}
Use equation (\ref{eq:2.11}) and take the $n$-th derivative with respect to $c$ and simplify the gamma function. This formula can be used to derive entries (4.272.1-9,11-14,17,19) in Gradshteyn and Ryzhik \cite{grad} using parameter substitutions and algebraic methods.
\begin{multline}
\int_0^1 \frac{t^{-1+m} (-\log (a t))^k}{\left(1+c t^b\right)^{n+1}} \, dt=\sum _{j=0}^{\infty } \frac{a^{-m+b (-j+n)}
   (-c)^{j-n} j^{(n)} \Gamma (1+k,-((m+b (j-n)) \log (a)))}{n! (m+b (j-n))^{k+1}}
\end{multline}
where $Re(m)>0$.
\end{example}
\begin{example}
Equation (4.285) in \cite{grad}. in this example we extend the given definite integral originally published by Bierens de Haan to the complex plane for the parameter $q$. Use equation (\ref{eq:2.11}) and set $b=0$ and simplify the series. Next set $k=-1,a\to e^a,t\to x,m\to p$. Then take the $n$-th partial derivative with respect to $a$ and simplify the gamma function.
\begin{multline}
\int_0^1 \frac{x^{-1+p}}{(q+\log (x))^n} \, dx=\begin{cases}
			-(-1)^{-n} e^{-p q} (-p)^n \left(\frac{1}{p q}\right)^n q E_n(-p
   q), & \text{if $q\in\mathbb{C}$}\\
            \frac{p^{n-1} \exp (-p q) \text{Ei}(p q)}{(n-1)!}-\frac{\sum _{k=1}^{n-1} (n-k-1)! (p q)^{k-1}}{(n-1)!
   q^{n-1}}, & \text{$q<0$}
		 \end{cases}
\end{multline}
where $|Re(p)|<1,Re(q)<0$
\end{example}
\begin{example}
Equation (2.6.4.11) in \cite{prud1}. Use equation (\ref{eq:2.11}) and set $c=1,k\to 2k,a=1$ and take the $p$-th derivative with respect to $c$, then replace $m\to m-bp$ and simplify. Next replace $p\to p-1$ and simplify. Next replace $m\to up/2,t\to x,b\to u,k\to n,j\to k$ and simplify.
\begin{multline}
\int_0^1 \frac{x^{-1+\frac{p u}{2}} \log ^{2 n}(x)}{\left(1+x^u\right)^p} \, dx=-\frac{2^{1+2 n} u^{-1-2 n}
   (-1)^{-2 n-p} \Gamma (1+2 n)}{\Gamma
   (p)}\sum _{k=0}^{\infty } \frac{(-1)^k (2+k-p)_{-1+p}}{(2+2 k-p)^{2 n+1}}
\end{multline}
where $|Re(m)|<1,Re(p)>0,Re(u)>0$.
\end{example}
\begin{example}
Use equation (\ref{eq:2.11}) and set $m\to m+1,a=1c=-1,b=1$ then replace $m\to m+p$. Next multiply both sides by $(-1)^{a-l}$ and replace $p\to p(a-l)$ then multiply both sides by $\binom{a}{l}$ then take the sum of both sides over $l\in[0,a]$ and simplify. Next form another equation by replacing $m\to q$ and take the difference. Next we apply l'Hopital's rule as $k\to -1$ and simplify.
\begin{equation}
\int_0^1 \frac{\left(1-x^p\right)^a \left(-x^m+x^q\right)}{(-1+x) x \log (x)} \, dx=\sum _{j=0}^{\infty } \sum
   _{l=0}^a (-1)^{a-l} \binom{a}{l} \log \left(\frac{j+m+a p-l p}{j+a p-l p+q}\right)
\end{equation}
where $Re(m)>0,Re(q)>0$.
\end{example}
\begin{example}
Use equation (\ref{eq:2.11}) and set $a=1,c=1$ and simplify in terms of the Hurwitz-Lerch zeta function. Next form a second equation by replacing $m\to s$ and take their difference. Next apply l'Hopital's rule as $k\to -1$ and $m\to m+1,s\to s+1$ using [Wolfram,\href{https://mathworld.wolfram.com/HurwitzZetaFunction.html}{17}].
\begin{equation}
\int_0^1 \frac{t^m-t^s}{t \left(1+t^b\right) \log (t)} \, dt=\log \left(\frac{\Gamma \left(\frac{b+m}{2
   b}\right) \Gamma \left(\frac{s}{2 b}\right)}{\Gamma \left(\frac{m}{2 b}\right) \Gamma \left(\frac{b+s}{2
   b}\right)}\right)
\end{equation}
where $Re(m)>0,Re(s)>0$.
\end{example}
\begin{example}
Use equation (\ref{eq:2.11}) and set $k=-1/2,a\to e^a$ and take the indefinite integral with respect to $c$ and replace $t\to x,m\to m+b+1$ and simplify. Next take the $n$-th derivative with respect to $a$ and simplify.
\begin{multline}\label{eq:3.54}
\int_0^1 \frac{x^m \log \left(1+c x^b\right)}{(a+\log (x))^{n+\frac{1}{2}}} \, dx\\
=-\sum _{j=0}^{\infty }
   \frac{(-1)^{j+n} a^{\frac{1}{2}-n} c^{1+j} e^{-a (1+b+b j+m)} E_{\frac{1}{2}+n}(-a (1+b+b j+m)) \sec (n \pi
   )}{1+j}
\end{multline}
where $Re(a)>0,|Re(c)|<1,|Re(m)|<1$ and singularity at $x=\pm ai$.
\end{example}
\begin{example}
Use equation (\ref{eq:3.54}) and set $a=1,b=1,n=0,c=1/2,m=0$ and simplify.
\begin{multline}
\int_0^1 \frac{\log (x+2)}{\sqrt{1+\log (x)}} \, dx=\sum _{j=0}^{\infty }
   \frac{\left(-\frac{1}{2}\right)^{1+j} e^{-2-j} E_{\frac{1}{2}}(-2-j)}{1+j}+\frac{\sqrt{\pi } (i+\text{erfi}(1))
   \log (2)}{e}
\end{multline}
\end{example}
\begin{example}
Use equation (\ref{eq:2.11}) and set $a=1$ and simplify the series in terms of the Hurwitz-Lerch zeta function using [DLMF,\href{https://dlmf.nist.gov/25.14.i}{25.14.1}] and replace $c\to z, k\to s,t\to x$.
\begin{equation}\label{eq:3.57}
\int_0^1 \frac{x^{-1+m} \log ^s\left(\frac{1}{x}\right)}{1+x^b z} \, dx=b^{-1-s} \Gamma (1+s) \Phi
   \left(-z,1+s,\frac{m}{b}\right)
\end{equation}
where $|Re(m)|<1,Re(b)>0,Re(uz>0$.
\end{example}
\begin{example}
Fermi-Dirac Distribution. Generalized form for equations (2.6.4.4-5) in \cite{prud1}. This example could also be used to expand and derive the definite integrals listed in equations (2.6.4.6-16),  (2.6.5.6-18), (2.6.6.1), and (2.6.6.4-24) in \cite{prud1}. Use equation (\ref{eq:3.57}) and set $m\to b$ and simplify using [DLMF,\href{https://dlmf.nist.gov/25.14.E2}{25.14.3}].
\begin{equation}
\int_0^1 \frac{x^{-1+b} \log ^s\left(\frac{1}{x}\right)}{1+e^z x^b} \, dx=-b^{-1-s} e^{-z} \Gamma (1+s)
   \text{Li}_{1+s}\left(-e^z\right)
\end{equation}
where $|Re(b)|<1,Re(z)>0$.
\end{example}
\begin{example}
Use equation (\ref{eq:3.54}) and set $b=0$ and simplify using [DLMF,\href{https://dlmf.nist.gov/8.4.E13}{8.4.13}]  and [Wolfram,\href{http://functions.wolfram.com/06.34.03.0006.01}{01}].
\begin{multline}
\int_0^1 \frac{x^m}{(a+\log (x))^{n+\frac{1}{2}}} \, dx\\
=\frac{\left((-1)^{n+1} a^{\frac{1}{2}-n} e^{-a
   (1+m)}\right) E_{n+\frac{1}{2}}(-a (1+m))}{\cos (n \pi )}\\
=(-1)^{1+n} a^{\frac{1}{2}-n} e^{-a (1+m)}
   \left(\frac{(-1)^n (-a (1+m))^{-\frac{1}{2}+n} \sqrt{\pi } \text{erfc}\left(\sqrt{-a
   (1+m)}\right)}{\left(\frac{1}{2}\right)_n}\right. \\ \left.
-\sum _{k=0}^{n-1} \frac{e^{a (1+m)} (-a
   (1+m))^k}{\left(\frac{1}{2}-n\right)_{1+k}}\right) \sec (n \pi )
\end{multline}
where $|Re(m)|<1,Re(a)>0,n\in\mathbb{Z_{+}}$ and a singularity exists at $x=ai$.
\end{example}
\begin{example}
Use equation (\ref{eq:2.11}) and set $k=-1,a\to e^a,t\to x,m\to s$ then take the $m$-th derivative with respect to $c$ and $n$-th derivative with respect to $a$ and simplify the gamma function.
\begin{multline}
\int_0^1 \frac{x^{-1+b (-1+m)+s}}{\left(1+c x^b\right)^m (a+\log (x))^n} \, dx\\
=\sum _{j=0}^{\infty }
   \frac{(-1)^{j-m} a^{1-n} c^{1+j-m} e^{-a (b j+s)} E_n(-a (b j+s)) \Gamma (1+j)}{\Gamma (2+j-m) \Gamma (m)}
\end{multline}
where $|Re(s)|<1,Re(a)>0,m,n\in\mathbb{Z_{+}}$ and a singularity exists at $x=ai$.
\end{example}
\begin{example}
Derivation of Equation (3.2.4.109) in \cite{brychkov}. Use equation (\ref{eq:3.39}) and replace $m\to m-1$ and take the $n$-th derivative with respect to $c$ and simplify the gamma function. Next replace $c\to 1/z$ and simplify. Next we split the infinite integral in equation (3.2.4.109) in \cite{brychkov} over $[0,1]$ and $[1,\infty)$. Then we substitute our current equation and simplify.
\begin{multline}
\sum _{j=0}^{\infty } i e^{-2 i n \pi } z^{-1-j-n} (-1-j)^{(n)}
   \left(e^{i n \pi } \Gamma (0,-i (1+j) \pi )-e^{i (2 j+n) \pi } \Gamma (0,i
   (1+j) \pi )\right. \\ \left.
+z^{1+2 j+n} \left(-\Gamma (0,-i (j+n) \pi )+e^{2 i (j+n) \pi }
   \Gamma (0,i (j+n) \pi )\right)\right)\\
=2 (-1)^n \pi  \frac{\partial
   ^n}{\partial z^n}\frac{1}{\log (z)}-2 \pi  (-1+z)^{-1-n} n!
\end{multline}
where $|\arg(z)|<\pi,z\neq 1$.
\end{example}
\begin{example}
Derivation of equation (4.1.5.126) in \cite{brychkov}. Use equation  (\ref{eq:2.11}) and set $k=a=b=m=1$ and take the indefinite integral with respect to $c$. Next form a second equation by replacing $c\to -c$ and take their difference and simplify using [DLMF,\href{https://dlmf.nist.gov/25.12.i}{25.12.1}] and replacing $c\to 1/a, t\to x$.
\begin{equation}
\int_0^1 \frac{\log (x) \log \left(\frac{a+x}{a-x}\right)}{x} \,
   dx=\text{Li}_3\left(-\frac{1}{a}\right)-\text{Li}_3\left(\frac{1}{a}\right)
\end{equation}
where $Re(a)>1$.
\end{example}
\begin{example}
The hypergeometric function. Use equation  (\ref{eq:2.11}) and set $a=1$ and simplify in terms of the Hurwitz-Lerch zeta function using [DLMF,\href{https://dlmf.nist.gov/25.14.i}{25.14.1}]. Next replace $c\to c^b$ and multiply both sides by $c^{m-1}$ and take the indefinite integral with respect to $c$. Next replace $t\to x, c\to z, k\to s$
\begin{multline}\label{eq:3.62}
\int_0^1 x^{-1+m} \, _2F_1\left(1,\frac{m}{b};1+\frac{m}{b};-x^b z^b\right) \log ^s\left(\frac{1}{x}\right) \,
   dx=b^{-2-s} m \Gamma (1+s) \Phi \left(-z^b,2+s,\frac{m}{b}\right)
\end{multline}
where $|Re(s)|<1$.
\end{example}
\begin{example}
Use equation  (\ref{eq:3.62}) take the first partial derivative with respect to $s$ and set $s=0,z=1,b=1,m=1/4$ and simplify using [Wolfram,\href{https://mathworld.wolfram.com/HurwitzZetaFunction.html}{20}] and [Wolfram,\href{https://mathworld.wolfram.com/Euler-MascheroniConstant.html}{1}].
\begin{multline}
\int_0^1 \frac{\, _2F_1\left(\frac{1}{8},1;\frac{9}{8};-x^2\right) \log \left(\log
   \left(\frac{1}{x}\right)\right)}{x^{3/4}} \, dx\\
=\frac{1}{64} \left(-\left((\gamma +\log (2)) \left(\psi
   ^{(1)}\left(\frac{1}{16}\right)-\psi ^{(1)}\left(\frac{9}{16}\right)\right)\right)+4
   \Phi'\left(-1,2,\frac{1}{8}\right)\right)
\end{multline}
\end{example}
\begin{example}
Use equation (\ref{eq:3.62}) and replace $b\to m$ and simplify in terms of the polylogarithm function using [DLMF,\href{https://dlmf.nist.gov/25.14.E2}{25.14.3}]. Next replace the polylogarithm function using [DLMF,\href{https://dlmf.nist.gov/25.12.E13}{25.12.13}] and simplify the gamma function.
\begin{multline}\label{eq:3.64}
\int_0^1 \frac{z^{-m} \log ^s\left(\frac{1}{x}\right) \log \left(1+x^m z^m\right)}{x} \, dx\\
=\frac{i 2^{1+s}
   e^{-\frac{1}{2} i \pi  s} m^{-1-s} \pi ^{2+s} z^{-m} \csc (\pi  s) \left(\zeta \left(-1-s,\frac{\pi -i m \log (z)}{2
   \pi }\right)-e^{i \pi  s} \zeta \left(-1-s,\frac{\pi +i m \log (z)}{2 \pi }\right)\right)}{1+s}
\end{multline}
where $|Re(m)|<1,Re(z)>0$.
\end{example}
\begin{example}
Use equation (\ref{eq:3.64}) take the first partial derivative with respect to $s$ and apply l'Hopital's rule as $s\to 0$ and simplify.
\begin{multline}\label{eq:3.65}
\int_0^1 \frac{z^{-m} \log \left(1+x^m z^m\right) \log \left(\log \left(\frac{1}{x}\right)\right)}{x} \,
   dx\\
=\frac{z^{-m} }{12
   m}\left((-1+\log (2)-\log (m)+\log (\pi )) \left(\pi ^2+3 m^2 \log ^2(z)\right)\right. \\ \left.
-12 \pi  \left((\pi -2 i
   \log (m)+2 i (-1+\log (2)+\log (\pi ))) \zeta'\left(-1,\frac{\pi -i m \log (z)}{2 \pi }\right)\right.\right. \\ \left.\left.
+(2
   i+\pi +2 i \log (m)-2 i \log (2 \pi )) \zeta'\left(-1,\frac{\pi +i m \log (z)}{2 \pi }\right)\right.\right. \\ \left.\left.
-i
   \left(\zeta''\left(-1,\frac{\pi -i m \log (z)}{2 \pi
   }\right)-\zeta''\left(-1,\frac{\pi +i m \log (z)}{2 \pi }\right)\right)\right)\right)
\end{multline}
where $Re(m)>0$.
\end{example}
\begin{example}
Use equation (\ref{eq:3.65}) set $z=1$ and simplify using [Wolfram,\href{https://mathworld.wolfram.com/LerchTranscendent.html}{8}].
\begin{equation}
\int_0^1 \frac{\log \left(1+x^m\right) \log \left(\log \left(\frac{1}{x}\right)\right)}{x} \, dx=\frac{\pi ^2
  }{12 m} \log \left(\frac{4 \pi }{A^{12} m}\right)
\end{equation}
where $Re(m)>0$.
\end{example}
\begin{example}
Use equation (\ref{eq:3.65}) and set $z=-1,m=1$ and simplify using [Wolfram,\href{https://mathworld.wolfram.com/HurwitzZetaFunction.html}{4}] and [Wolfram,\href{https://mathworld.wolfram.com/Glaisher-KinkelinConstant.html}{3}]. Note that in Figure 1, the singularity lies outside the domain of evaluation.
\begin{multline}
\int_0^1 \frac{\log (1-x) \log \left(\log \left(\frac{1}{x}\right)\right)}{x} \, dx=\pi ^2 \log
   \left(\frac{A^2}{\sqrt[6]{2 \pi }}\right)+i \pi  \zeta ''(-1)-i \pi  \zeta''(-1,0)
\end{multline}
\end{example}
\begin{figure}[h]
\caption{Plot of $f(x)=Re\left(\frac{\log (1-x) \log \left(\log \left(\frac{1}{x}\right)\right)}{x}\right)$}
\centering
\includegraphics[width=0.6\textwidth]{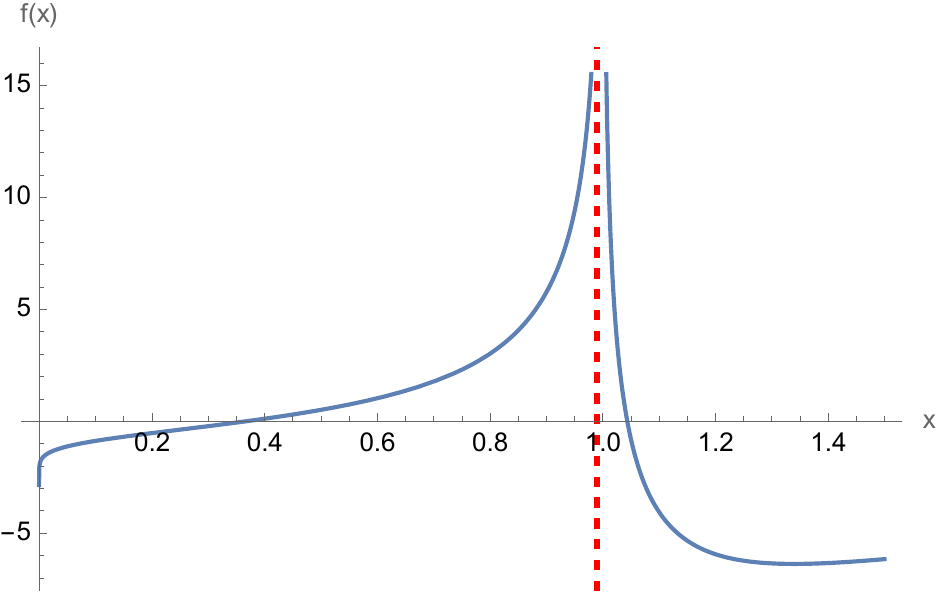}
\end{figure}
\begin{example}
Use equation (\ref{eq:3.64}) and replace the Hurwitz zeta function using [DLMF,\href{https://dlmf.nist.gov/25.12.E13}{25.12.13}] and simplify the gamma function.
\begin{multline}
\int_0^1 \frac{z^{-m} \log ^s\left(\frac{1}{x}\right) \log \left(1+x^m z^m\right)}{x} \, dx\\
=-\frac{i m^{-1-s}
   z^{-m} \Gamma (2+s)}{2 (1+s)} \left((-i+\cot (\pi  s)) \text{Li}_{2+s}\left(\frac{\pi -i m \log (z)}{2 \pi }\right)\right. \\ \left.
-(i+\cot
   (\pi  s)) \text{Li}_{2+s}\left(\frac{2 \pi }{\pi +i m \log (z)}\right)\right. \\ \left.
+\csc (\pi  s)
   \left(\text{Li}_{2+s}\left(\frac{2 \pi }{\pi -i m \log (z)}\right)-\text{Li}_{2+s}\left(\frac{\pi +i m \log (z)}{2
   \pi }\right)\right)\right)
\end{multline}
where $Re(m)>0$.
\end{example}
%
%
\begin{example}
Use equation (\ref{eq:2.11}) and take the indefinite integral with respect to $c$ and set $k=-1,t\to x,a\to e^a,m\to m+b+1$. Next take the first partial derivative with respect to $m$ and set $m=0,a=b=c=1$ and simplify. Note the singularity lies inside the domain of evaluation.
\begin{equation}
\int_0^1 \frac{\log (x) \log (1+x)}{1+\log (x)} \, dx=\log (4)-1+\frac{1}{e^{2}}\sum _{j=0}^{\infty } \frac{ \Gamma (0,-2-j)}{1+j}\left(-\frac{1}{e}\right)^j
\end{equation}
where there exists a singularity at $x=-i$.
\end{example}
\begin{figure}[H]
\caption{Plot of $f(x)=Re\left(\frac{\log (x) \log (x+1)}{\log (x)+1}\right)$}
\centering
\includegraphics[width=0.6\textwidth]{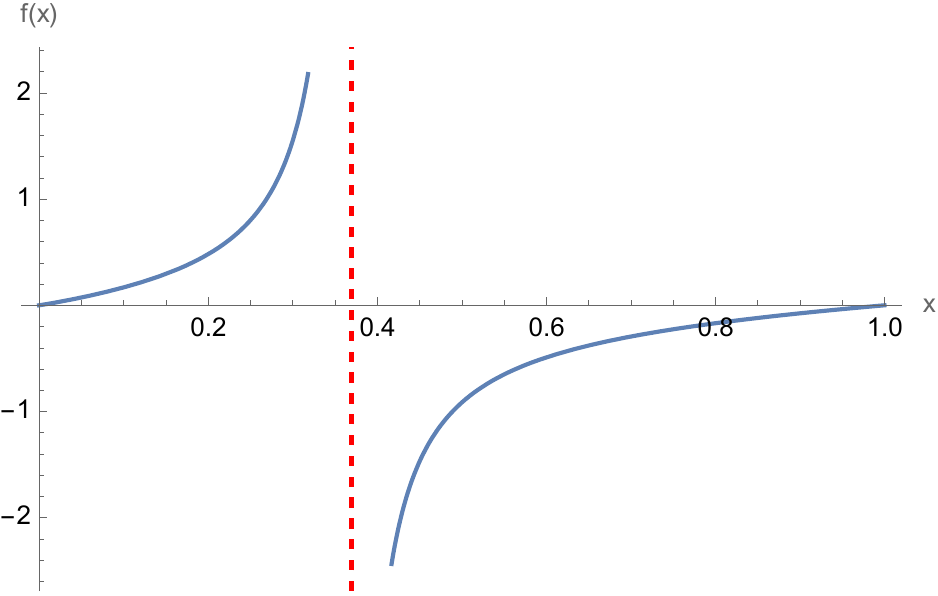}
\end{figure}
\begin{example}
Use equation (\ref{eq:3.62}) and replace $z\to z^{1/b},x\to x^{1/b}$. Next replace $z\to -z,m\to bu,s\to s-2,u\to a$ and simplify.
\begin{multline}\label{eq:3.70}
\int_0^1 x^{-1+a} \, _2F_1(1,a;1+a;x z) \log ^{s-2}(x) \, dx=-(-1)^{1+s} a \Gamma (s-1) \Phi (z,s,a)
\end{multline}
where $Re(s)>1,|Re(a)|<1$.
\end{example}
\begin{example}
Use equation (\ref{eq:3.70}) and set $z=1,s\to n+1$ and simplify using [Wolfram,\href{https://mathworld.wolfram.com/PolygammaFunction.html}{4}] and [Wolfram,\href{http://functions.wolfram.com/10.06.03.0020.02}{02}].
\begin{equation}
\int_0^1 \frac{B_x(a,0) \log ^{-1+n}(x)}{x} \, dx=\frac{\psi ^{(n)}(a)}{n}
\end{equation}
where $Re(a)>1,n\in\mathbb{Z_{+}}$.
\end{example}
\begin{example}
Use equation (\ref{eq:2.11}) and set $a=1,t\to x,m\to m+p-q$ and multiply both sides by $\binom{p}{q} d^{p-q}$ and take the finite sum over $q\in[0,p]$ and simplify.
\begin{multline}
\int_0^1 \frac{x^{m-1} (1+d x)^p \log ^k\left(\frac{1}{x}\right)}{1+c x^b} \, dx=d^p b^{-1-k} \Gamma (1+k)
   \sum _{q=0}^p d^{-q} \Phi \left(-c,1+k,\frac{m+p-q}{b}\right) \binom{p}{q}
\end{multline}
where $|Re(m)|<1,Re(b)>0,Re(c)>0,Re(d)>0,p\in\mathbb{Z_{+}}$.
\end{example}
\section{Tables 4.293 - 4.318 in Gradshteyn and Ryzhik}
The derivation of these entries is achieved by algebraic manipulations and parameter substitutions of equation (\ref{eq:4.1}). The entries are (4.293.1-10), (4.292.1), (4.292.12-13), (4.295.2,3,9,11,13,37(12),39,40), (4.296.1-3,7), (4.298.6,7,21,22), (4.299.1,4), (4.311.1(11)), (4.313.1,4,5), (4.314.1(11)), (4.315), and (4.318)  in the case of infinite integrals in Gradshteyn and Ryzhik, we can split these integrals over $[0,1]$ and use the below formula. Where $|Re(c)| \leq 1, Re(m)>-1/6$. 
\begin{theorem}
For all $|Re(c)|\leq 1,Re(b)>0,Re(m)>-1/6$ then,
\begin{multline}\label{eq:4.1}
\int_0^1 x^{-1+m} (-\log (a x))^k \log \left(1+c x^b\right) \, dx\\
=\sum _{j=0}^{\infty } \frac{a^{-b (1+j)-m} (-c)^j c (b+b j+m)^{-1-k} \Gamma (1+k,-((b+b j+m) \log
   (a)))}{1+j}
\end{multline}
\end{theorem}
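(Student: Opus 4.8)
The plan is to obtain equation (\ref{eq:4.1}) directly from the master formula (\ref{eq:2.11}) by a single integration in the parameter $c$, exactly in the spirit of the earlier examples that derive (4.325.12) and (4.1.5.126). The key algebraic observation is the identity $\int_0^c \frac{t^b}{1+s t^b}\,ds = \log\left(1+c t^b\right)$, so that integrating in $c$ converts a rational integrand of the form $\frac{t^b}{1+c t^b}$ into the logarithm $\log(1+c t^b)$ appearing on the left of (\ref{eq:4.1}). To manufacture the needed factor $t^b$ in the numerator, I would first replace $m \to m+b$ in (\ref{eq:2.11}), obtaining $\int_0^1 \frac{t^{m+b-1}(-\log(at))^k}{1+c t^b}\,dt = \sum_{j=0}^\infty \frac{(-c)^j \Gamma(1+k,-((b+bj+m)\log a))}{a^{b+bj+m}(b+bj+m)^{k+1}}$, since $bj+(m+b)=b+bj+m$.

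Next I would integrate both sides with respect to $c$ over $[0,c]$. On the left, interchanging the $c$- and $t$-integrations and writing $t^{m+b-1}/(1+st^b)=t^{m-1}\cdot t^b/(1+st^b)$ together with the identity above yields $\int_0^1 t^{m-1}(-\log(at))^k \log(1+c t^b)\,dt$, which is the left-hand side of (\ref{eq:4.1}) after relabeling $t\to x$. On the right, integrating term by term gives $\int_0^c (-s)^j\,ds = \frac{(-c)^j c}{1+j}$, and collecting the remaining $j$-dependent factors reproduces exactly the summand $\frac{a^{-b(1+j)-m}(-c)^j c\,(b+bj+m)^{-1-k}\Gamma(1+k,-((b+bj+m)\log a))}{1+j}$. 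The constant of integration vanishes because both sides equal $0$ at $c=0$ (on the left $\log 1 = 0$, on the right every term carries $c^{j+1}$).

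The \emph{main obstacle} is justifying the two interchanges of operations rather than the algebra, which is routine. I would invoke Fubini's theorem to swap the $c$-integral with the $t$-integral and a dominated- or monotone-convergence argument (as in the Tonelli step used for (\ref{eq:2.10})) to integrate the series term by term; both require convergence of the resulting series, which is controlled by the hypotheses $|Re(c)|\le 1$, $Re(b)>0$, $Re(m)>-1/6$. The condition $Re(b)>0$ together with the shift $m\to m+b$ ensures absolute integrability of $t^{m+b-1}$ near $t=0$, while $|Re(c)|\le 1$ governs the geometric-type tail near $t=1$; the precise lower bound $Re(m)>-1/6$ should emerge from estimating the worst-case term of the series uniformly in $s$ over $[0,c]$.

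An alternative, arguably cleaner, route that I would keep in reserve is to expand $\log(1+ct^b) = -\sum_{n=1}^\infty \frac{(-c)^n t^{bn}}{n}$ directly, integrate term by term using the $c=0$ specialization of (\ref{eq:2.11}), namely $\int_0^1 t^{\mu-1}(-\log(at))^k\,dt = \Gamma(1+k,-\mu\log a)/(a^\mu \mu^{k+1})$ with $\mu=m+bn$, and then reindex $n=j+1$. This reproduces the same series and makes the role of $|Re(c)|\le 1$ in the convergence near $t=1$ entirely transparent.
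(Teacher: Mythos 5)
Your proposal is correct and is essentially the paper's own argument, which simply says to take the indefinite integral of equation (\ref{eq:2.11}) with respect to $c$ and simplify; you have in fact made the paper's one-line proof more precise by identifying the needed shift $m\to m+b$ (so that the factor $t^{b}$ is available to convert $\frac{t^{b}}{1+st^{b}}$ into $\log\left(1+ct^{b}\right)$ upon integration in the parameter) and by noting that the constant of integration vanishes at $c=0$. The algebra on the right-hand side, $\int_0^c(-s)^j\,ds=\frac{(-c)^jc}{1+j}$, reproduces the stated summand exactly, and your reserve route via the Mercator expansion of $\log\left(1+ct^{b}\right)$ with $n=j+1$ gives the same series.
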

\begin{proof}
Use equation (\ref{eq:2.11}) and take the indefinite integral with respect to $c$ and simplify.
\end{proof}
\subsection{Derivation of equation (4.382.5) in Gradshteyn and Ryzhik, its generalized form and evaluations}
The contour integral representation for equation (4.382.5) is given by;
\begin{multline}\label{eq:4.2}
\frac{1}{2\pi i}\int_{0}^{\infty}\int_{C}a^w w^{-k-1} \sin (x (m+w)) \log \left(\frac{\gamma ^2+(\beta +x)^2}{\gamma ^2+(x-\beta )^2}\right)dwdx\\
=\frac{1}{2\pi i}\int_{C}\frac{2 \pi 
   a^w w^{-k-1} e^{-(\gamma  (m+w))} \sin (\beta  (m+w))}{m+w}dwdx
\end{multline}
\subsubsection{Right-hand side contour integral}
Using a generalization of Cauchy's integral formula \ref{intro:cauchy}, we replace $y\to y+xi+\log(a)$ and multiply both sides by $e^{mxi}$. Then form a second equation by replacing $a\to -a$ and take their difference to get;
 \begin{multline}\label{eq:4.3}
\frac{i e^{-i m x} \left((\log (a)-i x+y)^k-e^{2 i m x} (\log (a)+i x+y)^k\right)}{2 k!}\\
=\frac{1}{2\pi i}\int_{C}a^w w^{-k-1} e^{w y}
   \sin (x (m+w))dw
\end{multline}
Next we take the indefinite integral with respect to $y$ to get;
\begin{multline}\label{eq:4.4}
-\frac{i (-1)^{k+1} a^{-m} m^{-k-1} (\Gamma (k+1,-m (-i x+y+\log (a)))-\Gamma (k+1,-m (i x+y+\log (a))))}{2 k!}\\
=\frac{1}{2\pi i}\int_{C}\frac{a^w w^{-k-1} e^{y (m+w)} \sin (x (m+w))}{m+w}dw
\end{multline}
from equation equation (2.325.6(12i)) in \cite{grad}. Finally we replace $y\to -\gamma, x\to \beta$, multiply both sides by $2\pi$ and simplify to get;
\begin{multline}\label{eq:4.5}
\frac{i \pi  (-1)^{k+1} a^{-m} m^{-k-1} (\Gamma (k+1,m (-i \beta +\gamma -\log (a)))-\Gamma (k+1,m (i \beta
   +\gamma -\log (a))))}{k!}\\
=\frac{1}{2\pi i}\int_{C}\frac{2 \pi  a^w w^{-k-1} e^{-(\gamma  (m+w))} \sin (\beta  (m+w))}{m+w}dw
\end{multline}
\subsubsection{Left-hand side contour integral}
Using a equation (\ref{eq:4.3}) we set $y=0$ multiply both sides by $\log \left(\frac{\gamma ^2+(\beta +x)^2}{\gamma ^2+(x-\beta )^2}\right)$ and simplify to get;
\begin{multline}\label{eq:4.6}
\int_{0}^{\infty}\frac{i e^{-i m x} \log \left(\frac{\gamma ^2+(\beta +x)^2}{\gamma ^2+(x-\beta )^2}\right) \left((\log (a)-i
   x)^k-e^{2 i m x} (\log (a)+i x)^k\right)}{2 k!}dx\\
=\frac{1}{2\pi i}\int_{0}^{\infty}\int_{C}a^w w^{-k-1} \sin (x (m+w)) \log \left(\frac{\gamma ^2+(\beta
   +x)^2}{\gamma ^2+(x-\beta )^2}\right)dwdx\\
   =\frac{1}{2\pi i}\int_{C}\int_{0}^{\infty}a^w w^{-k-1} \sin (x (m+w)) \log \left(\frac{\gamma ^2+(\beta
   +x)^2}{\gamma ^2+(x-\beta )^2}\right)dxdw
\end{multline}
We are able to switch the order of integration over $x$ and $w$ using Fubini's theorem for multiple integrals see page 178 in \cite{gelca}, since the integrand is of bounded measure over the space $\mathbb{C} \times [0,\infty)$.
\begin{theorem}
For all $Im(m)=0,Re(a)>0,Re(k)<0$ then,
\begin{multline}\label{eq:4.7}
\int_0^{\infty } e^{-i m x} \left((a-i x)^k-e^{2 i m x} (a+i x)^k\right) \log \left(\frac{(x+\beta
   )^2+\gamma ^2}{(x-\beta )^2+\gamma ^2}\right) \, dx\\
=2 (-1)^{-k} e^{-a m} m^{-1-k} \pi  \left(\Gamma (1+k,m (-a+i
   \beta +\gamma ))-(-1)^{2 k} \Gamma (1+k,m (-a-i \beta +\gamma ))\right)
\end{multline}
\end{theorem}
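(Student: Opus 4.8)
The plan is to exploit the fact that the two preceding subsections express one and the same double contour integral in two different ways. The right-hand-side computation evaluates the inner $x$-integral first and lands on the incomplete-gamma expression in equation (\ref{eq:4.5}), while the left-hand-side computation keeps the logarithmic weight under the $x$-integral and lands on the definite integral in equation (\ref{eq:4.6}). Both are set equal to the same contour integral $\frac{1}{2\pi i}\int_{C}\frac{2\pi a^{w}w^{-k-1}e^{-\gamma(m+w)}\sin(\beta(m+w))}{m+w}\,dw$ through equation (\ref{eq:4.2}), so the mechanism of the proof is simply to transitively equate the two left-hand sides.

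Concretely, I would first confirm that the contour integral appearing on the right of equation (\ref{eq:4.6}) (after the Fubini interchange) coincides with the one on the right of equation (\ref{eq:4.5}); this is exactly the assertion of equation (\ref{eq:4.2}), resting on the inner Fourier-sine evaluation $\int_{0}^{\infty}\sin(xs)\log\!\left(\frac{\gamma^{2}+(\beta+x)^{2}}{\gamma^{2}+(x-\beta)^{2}}\right)dx=\frac{2\pi}{s}e^{-\gamma s}\sin(\beta s)$ applied with $s=m+w$ along the contour. Granting equation (\ref{eq:4.2}), the right-hand sides of (\ref{eq:4.5}) and (\ref{eq:4.6}) agree, whence their left-hand sides must agree as well.

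Next I would equate the left-hand side of (\ref{eq:4.6}) with the left-hand side of (\ref{eq:4.5}), replace $a\to e^{a}$ so that $\log(a)\mapsto a$ and $a^{-m}\mapsto e^{-am}$, and clear the common prefactor $\frac{i}{2\,k!}$ from both sides. What remains is to reconcile the two incomplete-gamma expressions: I would use $\Gamma(k+1,z)=\Gamma(1+k,z)$, rewrite the arguments as $m(\gamma-a\mp i\beta)=m(-a\mp i\beta+\gamma)$, and rearrange the difference of the two $\Gamma$'s together with the sign factor $(-1)^{k+1}$ into the form $(-1)^{-k}\bigl(\Gamma(1+k,m(-a+i\beta+\gamma))-(-1)^{2k}\Gamma(1+k,m(-a-i\beta+\gamma))\bigr)$, which yields equation (\ref{eq:4.7}).

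I expect the genuine obstacle to sit not in this final bookkeeping but in the two steps I am taking for granted from the earlier equations: justifying the Fubini/Tonelli interchange of the contour $C$ with the semi-infinite $x$-axis (the integrand is only conditionally convergent in $x$ because of the $\sin$ factor against a slowly decaying logarithm), and controlling the branch of $w^{-k-1}$ and of $(-1)^{k}$ for general complex $k$ so that the sign manipulation turning $(-1)^{k+1}$ into $(-1)^{-k}$ together with $(-1)^{2k}$ is legitimate; indeed the very coefficient of $P=\Gamma(1+k,m(-a+i\beta+\gamma))$ forces the identity $(-1)^{k}=(-1)^{-k}$ unless one keeps the corrective factor $(-1)^{2k}$, so the author's $(-1)^{2k}$ is precisely what encodes the branch choice. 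Convergence at the origin and at infinity of the original definite integral under the hypotheses $\operatorname{Im}(m)=0$, $\operatorname{Re}(a)>0$, $\operatorname{Re}(k)<0$ would also need to be checked so that the identity holds as an equality of convergent quantities rather than merely formally.
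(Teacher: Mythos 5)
Your proposal follows exactly the paper's own route: the paper's proof is the single sentence that the right-hand sides of equations (\ref{eq:4.5}) and (\ref{eq:4.6}) coincide via (\ref{eq:4.2}), so the left-hand sides may be equated and the gamma function simplified. Your additional remarks on the Fourier--sine evaluation, the substitution $a\to e^{a}$, the prefactor $\tfrac{i}{2\,k!}$, and the branch bookkeeping behind $(-1)^{2k}$ simply spell out what the paper leaves implicit under ``simplify,'' so the argument is the same one, carried out in more detail.
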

\begin{proof}
Since the right-hand sides of equations (\ref{eq:4.5}) and (\ref{eq:4.6}) are equal relative to equation (\ref{eq:4.2}) we can equate the left-hand sides and simplify the gamma function to yield the stated result.
\end{proof}
\begin{example}
Derivation of equation (4.382.5) in \cite{grad}. Use equation (\ref{eq:4.7}) and set $k=0$ and simplify.
\begin{equation}
\int_0^{\infty } \log \left(\frac{(x+\beta )^2+\gamma ^2}{(x-\beta )^2+\gamma ^2}\right) \sin (m x) \,
   dx=\frac{2 e^{-m \gamma } \pi  \sin (m \beta )}{m}
\end{equation}
where $Re(\gamma) >0,|Im(\beta) < Re(\gamma),Re(b)>0$.
\end{example}
\begin{example}
In this example we use (\ref{eq:4.7})  and replace $a\to e^a,\gamma=0,k=-1$ and simplify. Next take the $n$-th partial derivative with respect to $a$ and simplify.
\begin{multline}
\int_0^{\infty } \log \left(\left(\frac{x+\beta }{x-\beta }\right)^2\right) \left(\frac{e^{i m x}}{(-a-i
   x)^{n+1}}-\frac{e^{-i m x}}{(-a+i x)^{n+1}}\right) \, dx\\
=\frac{2 \pi  (-1)^n }{e^{a m}}\left(\frac{E_{1+n}(-m
   (a+i \beta ))}{(a+i \beta )^n}-\frac{E_{1+n}(-m (a-i \beta ))}{(a-i \beta )^n}\right)
\end{multline}
where $Im(m)=0,Re(a)>0$.
\end{example}
\section{Definite integrals involving the product of logarithm and a polynomial}
In this example we look at a definite integral with nicely convergent properties. The main theorem derived in this section will be used to derive entries in the book of Gradshteyn and Ryzhik along with some other definite integrals which canbe added to current literature. The contour integral representation for the definite integral to be derived is given by;
\begin{multline}\label{eq:5.1}
\frac{1}{2\pi i}\int_{0}^{1}\int_{C}a^w w^{-k-1} (b x+1)^n x^{m+w}dxdw=\frac{1}{2\pi i}\sum_{j=1}^{n+1}\int_{C}\frac{a^w b^{j-1} w^{-k-1} \binom{n}{j-1}}{j+m+w}dw
\end{multline}
where $Re(m)>0,n>0$.
\subsection{The left-hand contour integral representation}
Using a generalization of Cauchy's integral formula \ref{intro:cauchy} we replace $y\to \log(ax)$ and multiply both sides by $x^m (1+bx)^n$ and simplify to get;
\begin{multline}\label{eq:5.2}
\int_{0}^{1}\frac{x^m (b x+1)^n \log ^k(a x)}{k!}dx=\frac{1}{2\pi i}\int_{0}^{1}\int_{C}w^{-k-1} x^m (a x)^w (b x+1)^ndwdx\\
=\frac{1}{2\pi i}\int_{C}\int_{0}^{1}w^{-k-1} x^m (a x)^w (b x+1)^ndwdx
\end{multline}
We are able to switch the order of integration over $x$ and $w$ using Fubini's theorem for multiple integrals see page 178 in \cite{gelca}, since the integrand is of bounded measure over the space $\mathbb{C} \times [0,\infty)$.
\subsection{The right-hand contour integral representation}
Using a generalization of Cauchy's integral formula \ref{intro:cauchy} we replace $y\to x+\log(a)$ and multiply both sides by $e^{mx}$ then take the infinite integral over $x\in[0,\infty)$ and simplify using [DLMF,\href{https://dlmf.nist.gov/8.6.E5}{8.6.5}]. Next we take the finite sum of both sides over $j\in[1,n+1]$ to get;
\begin{multline}\label{eq:5.3}
-\sum_{j=1}^{n+1}\frac{b^{j-1} a^{-j-m} \binom{n}{j-1} (-j-m)^{-k-1} \Gamma (k+1,-((j+m) \log (a)))}{k!}\\
=\frac{1}{2\pi i}\sum_{j=1}^{n+1}\int_{C}\frac{a^w b^{j-1}
   w^{-k-1} \binom{n}{j-1}}{j+m+w}dw\\
   =\frac{1}{2\pi i}\int_{C}\sum_{j=1}^{n+1}\frac{a^w b^{j-1}
   w^{-k-1} \binom{n}{j-1}}{j+m+w}dw
\end{multline}
We are able to switch the order of integration and summation over $w$ using Tonellii's theorem for  integrals and sums see page 177 in \cite{gelca}, since the summand is of bounded measure over the space $\mathbb{C} \times [0,\infty)$
\begin{theorem}
For all $Re(m)>0,b\in\mathbb{C},a\in\mathbb{C},k\in\mathbb{C},n>0$ then,
\begin{multline}\label{eq:5.4}
\int_0^1 x^m (1+b x)^n \log ^k\left(\frac{1}{a x}\right) \, dx=\sum _{j=1}^{n+1}\frac{b^{j-1}}{a^{j+m} } \frac{ \Gamma (1+k,-((j+m) \log (a)))
  }{(j+m)^{k+1}} \binom{n}{j-1}
\end{multline}
\end{theorem}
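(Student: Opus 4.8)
The plan is to reuse the equate-the-contour-integral template already applied to the main theorem in equation~(\ref{eq:2.11}). Equation~(\ref{eq:5.1}) asserts that the iterated contour-then-definite integral on the right of~(\ref{eq:5.2}) coincides with the finite sum of contour integrals on the right of~(\ref{eq:5.3}), both being equal to the single object $\frac{1}{2\pi i}\int_{C}\int_{0}^{1} a^w w^{-k-1} x^{m+w}(bx+1)^n\,dx\,dw$. The interchanges needed to produce these two representations—the $x$ and $w$ integrations in~(\ref{eq:5.2}) and the finite summation with the $w$ integration in~(\ref{eq:5.3})—have already been justified by the Fubini and Tonelli arguments recorded in the surrounding text. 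Since the two right-hand sides are literally the same contour integral, their left-hand sides must agree.

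First I would equate the left-hand side of~(\ref{eq:5.2}), namely $\int_0^1 \frac{x^m(bx+1)^n\log^k(ax)}{k!}\,dx$, with the left-hand side of~(\ref{eq:5.3}), namely $-\sum_{j=1}^{n+1}\frac{b^{j-1}a^{-j-m}\binom{n}{j-1}(-j-m)^{-k-1}\Gamma(k+1,-((j+m)\log a))}{k!}$. Multiplying through by $k!$ clears the factorial denominators on both sides at once, leaving an integral against $\log^k(ax)$ equal to a bare finite sum.

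Next I would carry out the sign normalisation that converts this into the stated form. Writing $(-j-m)^{-k-1}=(-1)^{-k-1}(j+m)^{-k-1}$ pulls a factor $-(-1)^{-k-1}=(-1)^{-k}$ out of the sum, so the integral against $\log^k(ax)$ equals $(-1)^{-k}$ times the clean series. To match the integrand in the statement I would replace $\log^k(ax)$ by $\log^k\!\left(\frac{1}{ax}\right)=(-\log(ax))^k$, using $\log(1/(ax))=-\log(ax)$, which introduces the compensating factor $(-1)^{k}$. The product $(-1)^{k}(-1)^{-k}=1$ cancels exactly, yielding precisely $\sum_{j=1}^{n+1}\frac{b^{j-1}}{a^{j+m}}\frac{\Gamma(1+k,-((j+m)\log a))}{(j+m)^{k+1}}\binom{n}{j-1}$.

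The main obstacle is not analytic but purely a matter of bookkeeping: one must track the branch factors $(-1)^{\pm k}$ consistently for general complex $k$ and verify that they cancel rather than accumulate. Because the sum over $j$ runs only to $n+1$ there is no convergence question at that stage, and the order-of-operation interchanges have already been licensed under $Re(m)>0$ and $n>0$, so once the sign accounting is settled the identity~(\ref{eq:5.4}) follows immediately.
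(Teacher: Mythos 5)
Your proposal is correct and follows essentially the same route as the paper: the paper's proof simply equates the left-hand sides of equations (\ref{eq:5.2}) and (\ref{eq:5.3}) on the grounds that their right-hand sides are the same contour integral (\ref{eq:5.1}), then "simplifies the gamma function," which is exactly the $k!$-clearing and $(-1)^{\pm k}$ branch-factor cancellation you spell out. Your version is in fact more explicit about the sign bookkeeping than the paper's one-line proof, but the underlying argument is identical.
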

\begin{proof}
Since the right-hand sides of equations (\ref{eq:5.2}) and (\ref{eq:5.3}) are equal relative to (\ref{eq:5.1}) we may equate the left-hand sides and simplify the gamma function to yield the stated result.  
\end{proof}
\begin{example}
Use equation (\ref{eq:5.4}) and set $k=-1,a\to e^a$. Next form a second equation by replacing $a\to -a$ and take their difference. Next replace $a\to ai$ and simplify.
\end{example}
\begin{example}
Use equation (\ref{eq:5.4}) 
\begin{multline}
\int_0^1 \frac{x^m (1+b x)^n}{a^2+\log ^2(x)} \, dx\\
=\frac{i }{2 a b}\sum _{j=1}^{n+1} b^j \binom{n}{j-1} \left(e^{i a
   (j+m)} \Gamma (0,i a (j+m))-e^{-i a (j+m)} \Gamma (0,-i a (j+m))\right)
\end{multline}
where $n>0$.
\end{example}
\begin{example}
Use equation (\ref{eq:5.4}) and set $a=1,m\to m+p$ and take the infinite sum of both sides over $p\in[0,\infty)$ and simplify in terms of the Hurwitz-Lerch zeta function using [DLMF,\href{https://dlmf.nist.gov/25.14.i}{25.14.2}];
\begin{multline}\label{eq:5.6}
\int_0^1 \frac{x^m (1+b x)^n \log ^k\left(\frac{1}{x}\right)}{1+x z} \, dx=\frac{\Gamma (1+k) }{b}\sum _{j=1}^{n+1}
   b^j \binom{n}{j-1} \Phi (-z,1+k,j+m)
\end{multline}
where $Re(z)\geq -1$.
\end{example}
\begin{example}
Use equation (\ref{eq:5.6}) and take the $q$-th derivative with respect to $z$ then replace $m\to m-q$ and simplify.
 \begin{multline}
\int_0^1 \frac{x^m (1+b x)^n \log ^k\left(\frac{1}{x}\right)}{(1+x z)^{q+1}} \, dx=\frac{\Gamma (1+k)}{b (-1)^q q!} \sum
   _{j=1}^{n+1} b^j \binom{n}{-1+j} \frac{\partial ^q}{\partial z^q}\Phi (-z,1+k,j+m-q)
\end{multline}
where $Re(z)\geq -1$.
\end{example}
\begin{example}
Use equation (\ref{eq:5.4}) and replace $a\to e^a$ then take the first partial derivative with respect to $k$ and set $k=0$ and simplify using [DLMF,\href{https://dlmf.nist.gov/15.2.i}{15.2.1}].
\begin{multline}\label{eq:5.8}
\int_0^1 x^m (1+b x)^n \log (a+\log (x)) \, dx
=\frac{1}{b}\sum _{j=1}^{n+1} \frac{b^{j} e^{-a (j+m)} }{j+m}\binom{n}{-1+j}
   E_1(-a (j+m))\\
+\frac{\, _2F_1(1+m,-n;2+m;-b) \log (a)}{1+m}
\end{multline}
where $Re(a)>0,n>0$. Note when $Im(a)=0$ there is a singularity at $x=ai$.
\end{example}
\begin{example}
Use equation (\ref{eq:5.8}) and form another equation by $a\to -a$ and add these two equations and simplify the logarithm on the left-hand side.
 \begin{multline}
\int_0^1 x^m (1+b x)^n \log \left(a^2+\log ^2(x)\right) \, dx\\
=\frac{1}{b}\sum _{j=1}^{n+1} \frac{b^j 
  }{j+m}\binom{n}{j-1} \left(e^{-i a (j+m)} E_1(-i a (j+m))+e^{i a (j+m)} E_1(i a (j+m))\right)\\
+\frac{2 \, _2F_1(1+m,-n;2+m;-b)
   \log (a)}{1+m}
\end{multline}
where $n>0$.
\end{example}
\begin{example}
Use equation (\ref{eq:5.4}) and set $a=1$ then take the first partial derivative with respect to $k$ and set $k=0$ and simplify using [DLMF,\href{https://dlmf.nist.gov/15.2.i}{15.2.1}].
\begin{multline}
\int_0^1 x^m (1+b x)^n \log \left(\log \left(\frac{1}{x}\right)\right) \, dx=-\frac{\gamma  \,
   _2F_1(1+m,-n;2+m;-b)}{1+m}\\
-\frac{1}{b}\sum _{j=1}^{n+1}b^{j} \binom{n}{-1+j} \frac{ \log (j+m)}{j+m}
\end{multline}
where $n>0$.
\end{example}
\begin{example}
Use equation (\ref{eq:5.4}) set $a=1$ and form a second equation by replacing $m\to s$ and take their difference and simplify. Next apply l'Hopitals' rule as $k\to -1$ and simplify.
\begin{equation}
\int_0^1 \frac{(1+b x)^n \left(x^s-x^m\right)}{\log \left(\frac{1}{x}\right)} \, dx=\frac{1}{b}\sum _{j=1}^{n+1} b^j
   \binom{n}{j-1} \log \left(\frac{j+m}{j+s}\right)
\end{equation}
where $n>0$.
\end{example}
\begin{example}
Use equation (\ref{eq:5.4}) set $a=1$ and form a second equation by replacing $m\to s$ and take their difference and simplify. Next take the first partial derivative with respect to $k$ and apply l'Hopital's rule as $k\to -1$ and simplify.
\begin{multline}
\int_0^1 \frac{(1+b x)^n \left(-x^m+x^s\right) \log \left(\log \left(\frac{1}{x}\right)\right)}{\log
   \left(\frac{1}{x}\right)} \, dx\\
=-\frac{1}{2b}\sum _{j=1}^{n+1} \frac{b^{j} \Gamma (1+n)  }{ \Gamma (j) \Gamma (2-j+n)}\log \left(\frac{j+m}{j+s}\right)(2
   \gamma +\log ((j+m) (j+s)))
\end{multline}
where $n>0$.
\end{example}
\begin{example}
Use equation (\ref{eq:5.4}) and take the $q$-the derivative with respect to $b$, next form a second equation by replacing $b\to -b$ and take their difference and simplify.
\begin{multline}
\int_0^1 \frac{x^{m-q} \left((1-b x)^{n+q}-(1+b x)^{n+q}\right) \log
   ^k\left(\frac{1}{a x}\right)}{(1+n)_q} \, dx\\
=-\frac{b^{q-1} }{a^m}\sum _{j=1}^{n+1}
   \frac{\left((-1)^{j+q}+1\right) \left(\frac{b}{a}\right)^j \binom{n}{-1+j}
   \Gamma (j) \Gamma (1+k,-((j+m) \log (a)))}{\Gamma (j+q)
   (j+m)^{k+1}}
\end{multline}
where $q < 0, |q| < n$ and a singularity at $x=1/a$.
\end{example}
\begin{example}
Derivation of equation (4.272.10) in \cite{grad}. Errata.  Use equation (\ref{eq:5.4}) and set $k\to u-1,m\to a-1,a=1,b=-1$ and simplify. Next set $m\to n-1,a\to a+1$ for the first equation and multiply both sides by $n$ for the second equation. Then add these two equations and simplify.
\begin{multline}
\int_0^1 \log ^{u-1}\left(\frac{1}{x}\right) (x-1)^n \left(a+\frac{n x}{x-1}\right) x^{a-1} \, dx\\
=(-1)^n \Gamma
   (u) \sum _{j=1}^{n+1} (-1)^j \left(\frac{n }{(a+j)^u}\binom{n-1}{j-1}-\frac{a
   }{(a+j-1)^u}\binom{n}{j-1}\right)
\end{multline}
where $Re(p)>0,Re(q)>0$.
\end{example}
\begin{example}
Generalized form representation for equations (4.272.15) and (4.272.16) in \cite{grad} where $Re(p)>0, Re(q)>0$. Use equation (\ref{eq:5.4}) and set $a=1$. Next replace $x\to x^q$ and simplify. Next replace $m\to (m-q)/q$ and simplify. Next replace $j\to k+1,k\to n,m\to p, n\to m$ and simplify.
\begin{equation}
\int_0^1 x^{p-1} \left(1+b x^q\right)^m \log ^n\left(\frac{1}{x}\right) \, dx=\Gamma (1+n) \sum _{k=0}^m\binom{m}{k}\frac{ b^k}{(p+k q)^{n+1}}
\end{equation}
where $Re(p)>0,Re(q)>0$.
\end{example}
\begin{example}
Use equation (\ref{eq:5.4}) and set $k=-1,a\to e^a$. Next form a second equation by replacing $a\to -a$ and add the two equations and simplify with $a\to ai$.
\begin{multline}\label{eq:5.16}
\int_0^1 \frac{x^m (1+b x)^n \log (x)}{a^2+\log ^2(x)} \, dx\\
=-\frac{1}{2b}\sum _{j=1}^{n+1}  b^{j} e^{-i a
   (j+m)} \binom{n}{j-1} \left(\Gamma (0,-i a (j+m))+e^{2 i a (j+m)} \Gamma (0,i a (j+m))\right)
\end{multline}
where $Re(m)>0$.
\end{example}
\begin{example}
Use equation (\ref{eq:5.16}) and form a second equation by replacing $a\to ai$ and take their difference and simplify with $a\to a\sqrt{i}$.
\begin{multline}
\int_0^1 \frac{x^m (1+b x)^n \log (x)}{a^4+\log ^4(x)} \, dx\\
=-\sum _{j=1}^{n+1} \frac{i b^{-1+j} e^{-i \sqrt{2}
   a (j+m)} }{4 a^2}\binom{n}{-1+j} \left(-e^{(-1)^{3/4} a (j+m)} E_1\left(-\sqrt[4]{-1} a (j+m)\right)\right. \\ \left.
-e^{\sqrt{-4+3 i} a
   (j+m)} E_1\left(\sqrt[4]{-1} a (j+m)\right)+e^{\sqrt[4]{-1} a (j+m)} \left(E_1\left(-(-1)^{3/4} a (j+m)\right)\right.\right. \\ \left.\left.
+e^{2
   (-1)^{3/4} a (j+m)} E_1\left((-1)^{3/4} a (j+m)\right)\right)\right)
\end{multline}
where $Re(m)>0$.
\end{example}
\begin{example}
Use equation (\ref{eq:5.4}) and set $a=1,m\to m+p$ and multiply both sides by $(-z)^p$ then take the infinite sum of both sides over $p\in[0,\infty)$ and simplify using [DLMF,\href{https://dlmf.nist.gov/25.14.i}{25.14.2}]. Next set $z=1$ and simplify using [DLMF,\href{https://dlmf.nist.gov/25.11.iv}{25.111.8}]. Next set $b=0$ and apply l'Hopital's rule as $k\to 0$ and simplify using [DLMF,\href{https://dlmf.nist.gov/25.11.E12}{25.11.12}].
\begin{multline}\label{eq:5.18}
2 (-1)^{-m} B_{-1}(1+m,n)=\sum _{j=1}^{1+n} \binom{n}{j-1} \left(\psi ^{(0)}\left(\frac{j+m}{2}\right)-\psi
   ^{(0)}\left(\frac{1}{2} (1+j+m)\right)\right)
\end{multline}
where $m\in\mathbb{C}$.
\end{example}
\begin{example}
Use equation (\ref{eq:5.18}) and take the exponential of both sides and simplify.
\begin{multline}
\exp \left(\int_{-z}^z 2 (-1)^{-m} B_{-1}(1+m,n) \, dm\right)=\prod _{j=1}^{n+1} \left(\frac{\Gamma
   \left(\frac{1}{2} (1+j-z)\right) \Gamma \left(\frac{j+z}{2}\right)}{\Gamma \left(\frac{j-z}{2}\right) \Gamma
   \left(\frac{1}{2} (1+j+z)\right)}\right)^{2 \binom{n}{-1+j}}
\end{multline}
where $m\in\mathbb{C}$.
\end{example}
\begin{example}
Use equation (\ref{eq:5.4}) and set $a=1$, then form a second equation by replacing $m\to s$ and take their difference. Next replace $m\to m+p$ and multiply both sides by $(-z)^p$ and take the infinite sum of both sides over $p\in[0,\infty)$ and simplify using [DLMF,\href{https://dlmf.nist.gov/25.14.i}{25.14.2}]. Next take the first partial derivative with respect to $k$ and apply l'hopital's rule as $k\to -1$ and simplify using [DLMF,\href{https://dlmf.nist.gov/25.11.E8}{25.11.8}].
\begin{multline}\label{eq:5.20}
\int_0^1 \frac{(1+b x)^n \left(-2 x^m+x^s (1+x)\right) \log \left(\log \left(\frac{1}{x}\right)\right)}{(1+x)
   \log \left(\frac{1}{x}\right)} \, dx\\
=\frac{\Gamma (1+n)}{2b}\sum _{j=1}^{n+1} \frac{b^{j}  }{ \Gamma (j) \Gamma (2-j+n)}\left(-\log ^2(2)-\gamma  \log
   (4)+4 (\gamma +\log (2)) (\log (-2+j+m)\right. \\ \left.
-\log (-1+j+m))+2 \gamma  \log (j+s)+\log ^2(j+s)+4 (\gamma +\log (2))
   \left(\text{log$\Gamma $}\left(\frac{1}{2} (-2+j+m)\right)\right.\right. \\ \left.\left.
-\text{log$\Gamma $}\left(\frac{1}{2}
   (-1+j+m)\right)\right)-2 \zeta''\left(0,\frac{j+m}{2}\right)+2
   \zeta''\left(0,\frac{1}{2} (1+j+m)\right)\right)
\end{multline}
where $|Re(m)|<1,Re(s)|<1$.
\end{example}
\begin{example}
Use equation (\ref{eq:5.20}) and set $m=1/2,s=-1/2,b=1,n=0,j=1$ and simplify.
\begin{multline}
\int_0^1 \frac{(1-x) \log \left(\log \left(\frac{1}{x}\right)\right)}{\sqrt{x} (1+x) \log
   \left(\frac{1}{x}\right)} \, dx\\
=2 i \pi  (\gamma +\log (2))-\gamma  \log (4)+2 (\gamma +\log (2))
   \left(\text{log$\Gamma $}\left(-\frac{1}{4}\right)-\text{log$\Gamma
   $}\left(\frac{1}{4}\right)\right)\\-\zeta''\left(0,\frac{3}{4}\right)+\zeta''\left(0,\frac{5}{4}\right)
\end{multline} 
\end{example}
\begin{example}
Use equation (\ref{eq:5.4}) and multiply both sides by $(-\beta)^m$, then replace $m\to m+l$ and take the infinite sum over $l\in[1,\infty)$ and simplify.
\begin{multline}\label{eq:5.22}
\int_0^1 x^m (1+b x)^n \log ^k\left(\frac{1}{a x}\right) \log (1+x \beta ) \, dx\\
=-\sum _{l=1}^{\infty } \sum
   _{j=1}^{n+1} \frac{(-1)^l a^{-j-l-m} b^{-1+j} (j+l+m)^{-1-k} \beta ^l \binom{n}{-1+j} \Gamma (1+k,-((j+l+m) \log
   (a)))}{l}
\end{multline}
where $n>0$.
\end{example}
\begin{example}
Use equation (\ref{eq:5.22}) and set $k=-1,a\to e^a$ and form a second equation by replacing $a\to -a$ and add these two equations and simplify the logarithm terms on the left-hand side with $a\to ai$.
\begin{multline}
\int_0^1 \frac{x^m (1+b x)^n \log (x) \log (1+x \beta )}{a^2+\log ^2(x)} \, dx\\
=\sum _{l=1}^{\infty } \sum
   _{j=1}^{n+1} \frac{(-1)^l b^{-1+j} e^{-i a (j+l+m)} }{2 l}\beta ^l \binom{n}{-1+j} \\\left(\Gamma (0,-i a (j+l+m))+e^{2 i a
   (j+l+m)} \Gamma (0,i a (j+l+m))\right)
\end{multline}
where $n>0,Re(\beta)\leq 1$.
\end{example}
\begin{example}
Use equation (\ref{eq:5.22}) and set $a=1$ then take the first partial derivative with respect to $k$ and set $k=0$ and simplify.
\begin{multline}
\int_0^1 x^m (1+b x)^n \log (1+x \beta ) \log \left(\log \left(\frac{1}{x}\right)\right) \, dx\\
=\sum
   _{l=1}^{\infty } \frac{(-1)^l \gamma  \beta ^l \, _2F_1(1+l+m,-n;2+l+m;-b)}{l (1+l+m)}\\
+\sum _{l=1}^{\infty } \sum
   _{j=1}^{n+1} \frac{(-1)^l b^{-1+j} \beta ^l \binom{n}{-1+j} \log (j+l+m)}{l (j+l+m)}
\end{multline}
where $n>0,Re(\beta)\leq 1$.
\end{example}
\begin{example}
Use equation (\ref{eq:5.22}) and form a second equation by replacing $m\to s$ and take their difference. Next take the first partial derivative with respect to $k$ and apply l'Hopital's rule as $k\to -1$ and simplify.
\begin{multline}
\int_0^1 \frac{\left(x^2-1\right) \log (1+x) \log \left(\log \left(\frac{1}{x}\right)\right)}{\sqrt{x} \log
   \left(\frac{1}{x}\right)} \, dx\\
=\sum _{l=1}^{\infty } \frac{(-1)^l \log \left(\frac{1+2 l}{3+2 l}\right) \left(2
   \gamma +\log \left(-\frac{1}{4}+(1+l)^2\right)\right)}{2 l}\\
+\sum _{l=1}^{\infty } \frac{(-1)^l \log \left(\frac{3+2
   l}{5+2 l}\right) \left(2 \gamma +\log \left(-\frac{1}{4}+(2+l)^2\right)\right)}{2 l}
\end{multline}
\end{example}
\section{Definite integrals involving the Bessel function}
In this section we derive definite integrals of the Bessel function and the product of Bessel functions of the first kind in terms of special functions. We also evaluate special cases along with providing derivations for the definite integrals listed in current literature. The contour integral representation involving the Bessel function of the first kind $J_{v}(x)$, used in the derivation of the main theorem in this section is given by;
\begin{multline}\label{eq:6.1}
\frac{1}{2\pi i}\int_{C}\int_{0}^{1}\frac{(-1)^l a^w w^{-k-1} 2^{-2 l-v} (b x+1)^n z^{2 l+v} x^{2 l+m+v+w}}{l! \Gamma (l+v+1)}dxdw\\
=\frac{1}{2\pi i}\int_{C}\sum_{j=1}^{n+1}\sum_{l=0}^{\infty}\frac{(-1)^l a^w
   b^{j-1} w^{-k-1} 2^{-2 l-v} \binom{n}{j-1} z^{2 l+v}}{l! \Gamma (l+v+1) (j+2 l+m+v+w)}dw
\end{multline}
where $|Re(b)|<1$.
\subsection{Left-hand side contour integral representation}
Using a generalization of Cauchy's integral formula \ref{intro:cauchy} we replace $y\to \log(ax)$ and multiply both sides by $\frac{(-1)^l 2^{-2 l-v} (b x+1)^n z^{2 l+v} x^{2 l+m+v}}{l! \Gamma (l+v+1)}$ then take the definite integral over $x\in[0,1]$ and simplify to get;
\begin{multline}\label{eq:6.2}
\int_{0}^{1}\frac{(-1)^l 2^{-2 l-v} (b x+1)^n z^{2 l+v} \log ^k(a x) x^{2 l+m+v}}{k! l! \Gamma (l+v+1)}dx\\
=\frac{1}{2\pi i}\int_{0}^{1}\int_{C}\frac{(-1)^l w^{-k-1}
   2^{-2 l-v} (a x)^w (b x+1)^n z^{2 l+v} x^{2 l+m+v}}{l! \Gamma (l+v+1)}dwdx\\
   =\frac{1}{2\pi i}\int_{C}\int_{0}^{1}\frac{(-1)^l w^{-k-1}
   2^{-2 l-v} (a x)^w (b x+1)^n z^{2 l+v} x^{2 l+m+v}}{l! \Gamma (l+v+1)}dxdw
\end{multline}
where $|Re(m)|<1$.
We are able to switch the order of integration over $x$ and $w$ using Fubini's theorem for multiple integrals see page 178 in \cite{gelca}, since the integrand is of bounded measure over the space $\mathbb{C} \times [0,\infty)$.
\subsection{Right-hand side contour integral representation}
Using a generalization of Cauchy's integral formula \ref{intro:cauchy} we replace $y\to x+\log(a)$ and multiply both sides by $e^{mx}$ then take the infinite integral over $x\in[0,\infty)$ and simplify using [DLMF,\href{https://dlmf.nist.gov/8.6.E5}{8.6.5}] to get;
\begin{equation}
\frac{a^{-m} (-m)^{-k-1} \Gamma (k+1,-m \log (a))}{k!}=-\frac{1}{2\pi i}\int_{C}\frac{a^w w^{-k-1}}{m+w}dw
\end{equation}
 Next we replace $m\to m + j + 2 l + v$ and multiply both sides by $-\frac{(-1)^l b^{j-1} 2^{-2 l-v} \binom{n}{j-1} z^{2 l+v}}{l! \Gamma (l+v+1)}$ and take the double sum of both sides over $j\in[1,n+1],l\in[0,\infty)$ to get;
\begin{multline}\label{eq:6.4}
-\sum_{j=1}^{n+1}\sum_{l=0}^{\infty}\frac{(-1)^l b^{j-1} 2^{-2 l-v} \binom{n}{j-1} z^{2 l+v} \Gamma (k+1,-((j+2
   l+m+v) \log (a)))}{k! l! \Gamma (l+v+1)a^{j+2 l+m+v} (-j-2 l-m-v)^{k+1} }\\
=\sum_{j=1}^{n+1}\sum_{l=0}^{\infty}\frac{1}{2\pi i}\int_{C}\frac{(-1)^l a^w b^{j-1} w^{-k-1} 2^{-2 l-v} \binom{n}{j-1} z^{2 l+v}}{l!
   \Gamma (l+v+1) (j+2 l+m+v+w)}dw\\
=\frac{1}{2\pi i}\int_{C}\sum_{j=1}^{n+1}\sum_{l=0}^{\infty}\frac{(-1)^l a^w b^{j-1} w^{-k-1} 2^{-2 l-v} \binom{n}{j-1} z^{2 l+v}}{l!
   \Gamma (l+v+1) (j+2 l+m+v+w)}dw
\end{multline}
We are able to switch the order of integration and summation over $w$ using Tonellii's theorem for  integrals and sums see page 177 in \cite{gelca}, since the summand is of bounded measure over the space $\mathbb{C} \times [0,\infty)$
\begin{theorem}
For all $|Re(m)|<1$ then,
\begin{multline}\label{eq:6.1}
\int_0^1 x^m (1+b x)^n J_v(x z) (-\log (a x))^k \, dx\\
=\frac{z^v }{b a^m (2 a)^v}\sum _{l=0}^{\infty } \sum _{j=1}^{n+1}\binom{n}{j-1} 
   \frac{(-1)^l \left(\frac{z}{2}\right)^{2 l} b^j \Gamma (1+k,-(j+2 l+m+v) \log (a))}{a^{j+2 l}
   \Gamma (1+l) \Gamma (1+l+v) (j+2 l+m+v)^{k+1}}
\end{multline}
where $n>0$.
\end{theorem}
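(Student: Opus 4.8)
The plan is to follow the template established in the proofs of Theorems~(\ref{eq:2.11}) and (\ref{eq:5.4}): recognize that both the desired definite integral and the target double series equal one and the same contour integral, namely the right-hand side of the contour representation~(\ref{eq:6.1}), and then equate their remaining (non-contour) sides.

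First I would assemble the left-hand side. Equation~(\ref{eq:6.2}) expresses the definite integral over $x$ of the single-$l$ integrand as a contour integral. Summing both sides over $l\in[0,\infty)$ and invoking the power-series definition of the Bessel function of the first kind,
\begin{equation}
J_v(xz)=\sum_{l=0}^{\infty}\frac{(-1)^l}{l!\,\Gamma(l+v+1)}\left(\frac{xz}{2}\right)^{2l+v},
\end{equation}
the coefficient $\frac{(-1)^l 2^{-2l-v}z^{2l+v}x^{2l+v}}{l!\,\Gamma(l+v+1)}$ collapses to $J_v(xz)$, so the summed left-hand side of (\ref{eq:6.2}) becomes $\frac{1}{k!}\int_0^1 x^m(1+bx)^nJ_v(xz)\log^k(ax)\,dx$, while the summed right-hand side is precisely the contour integral appearing in~(\ref{eq:6.1}). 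The right-hand contour representation is already in hand: after the substitution $m\to m+j+2l+v$, multiplication by the prescribed factor, and the double summation over $j\in[1,n+1]$ and $l\in[0,\infty)$, equation~(\ref{eq:6.4}) identifies the incomplete-gamma double series with the same contour integral, the interchange of summation and integration being licensed by Tonelli's theorem as in the cited passage of \cite{gelca}.

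Next I would equate these two non-contour sides and simplify. Multiplying through by $k!=\Gamma(1+k)$ removes the factorial from (\ref{eq:6.4}); writing $(-j-2l-m-v)^{k+1}=(-1)^{k+1}(j+2l+m+v)^{k+1}$ and combining the resulting $(-1)^{-k-1}$ with the overall minus sign of (\ref{eq:6.4}) produces a factor $(-1)^{-k}$. Absorbing this factor together with $\log^k(ax)$ into $(-\log(ax))^k=(-1)^k\log^k(ax)$ converts the left-hand integrand to the form stated in the theorem and fixes the sign of the series. Finally, pulling the common factor $\frac{z^v}{b\,a^m(2a)^v}$ out of the powers $2^{-2l-v}$, $z^{2l+v}$, $b^{j-1}$ and $a^{-(j+2l+m+v)}$ rewrites the summand in the displayed form, yielding the stated result.

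The main obstacle will be justifying the two interchanges of limit operations rather than the algebra: summing the Bessel series under the $x$-integral and swapping the $l$-summation with the contour integration both rely on the convergence conditions $|Re(b)|<1$ and $|Re(m)|<1$ carried through (\ref{eq:6.2}) and (\ref{eq:6.4}), and the careful sign bookkeeping in the ``simplify the gamma function'' step---in particular the branch of $(-1)^{-k-1}$ for general complex $k$---is what must be controlled to recover the precise factor $(-\log(ax))^k$ and the correct overall sign of the double sum.
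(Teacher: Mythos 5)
Your proposal follows exactly the paper's own route: equating the left-hand sides of (\ref{eq:6.2}) and (\ref{eq:6.4}) through their common contour integral (\ref{eq:6.1}), summing the left-hand side over $l\in[0,\infty)$ to reassemble the Bessel series, and simplifying the gamma function and sign factors. Your additional bookkeeping of the $(-1)^{k+1}$ branch and the extraction of the common prefactor $\frac{z^v}{b\,a^m(2a)^v}$ is a correct and more explicit account of what the paper compresses into the phrase ``simplify the gamma function.''
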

\begin{proof}
Since the right-hand sides of equations (\ref{eq:6.2}) and (\ref{eq:6.4}) are equal relative to (\ref{eq:6.1}) we may equate the left-hand sides and taking the infinite sum of the left-hand side over $l\in[0,\infty)$ and simplify the gamma function to yield the stated result.
\end{proof}
The contour integral representation involving the product of Bessel functions of the first kind $J_{u}(x)J_{v}(x)$, used in the derivation of the main theorem in this section is given by;
\begin{multline}\label{eq:6.6}
\frac{1}{2\pi i}\int_{0}^{1}\int_{C}\frac{(-1)^l (b x+1)^n x^{2 l+m} 2^{-2 l-\mu -v} \log ^k(a x) (l+v+\mu +1)_l z^{2 l+\mu +v}}{k! l! \Gamma (l+\mu
   +1) \Gamma (l+v+1)}dwdx\\
=\frac{1}{2\pi i}\sum_{j=1}^{n+1}\sum_{l=0}^{\infty}\frac{(-1)^l w^{-k-1} (a x)^w (b x+1)^n x^{2 l+m} 2^{-2 l-\mu -v} (l+v+\mu +1)_l z^{2 l+\mu
   +v}}{l! \Gamma (l+\mu +1) \Gamma (l+v+1)}
\end{multline}
where $|Re(m)|<1$.
\subsection{Left-hand side contour integral representation}
Using a generalization of Cauchy's integral formula \ref{intro:cauchy} we replace $y\to \log(ax)$ and multiply both sides by $\frac{(-1)^l (b x+1)^n x^{2 l+m} 2^{-2 l-\mu -v} (l+v+\mu +1)_l z^{2 l+\mu +v}}{l! \Gamma (l+\mu +1) \Gamma (l+v+1)}$ then take the definite integral over $x\in[0,1]$ and simplify to get;
\begin{multline}\label{eq:6.7}
\int_{0}^{1}\frac{(-1)^l (b x+1)^n x^{2 l+m} 2^{-2 l-\mu -v} \log ^k(a x) (l+v+\mu +1)_l z^{2 l+\mu +v}}{k! l! \Gamma (l+\mu
   +1) \Gamma (l+v+1)}dx\\
=\frac{1}{2\pi i}\int_{0}^{1}\int_{C}\frac{(-1)^l w^{-k-1} (a x)^w (b x+1)^n x^{2 l+m} 2^{-2 l-\mu -v} (l+v+\mu +1)_l z^{2 l+\mu
   +v}}{l! \Gamma (l+\mu +1) \Gamma (l+v+1)}dwdx\\
   =\frac{1}{2\pi i}\int_{C}\int_{0}^{1}\frac{(-1)^l w^{-k-1} (a x)^w (b x+1)^n x^{2 l+m} 2^{-2 l-\mu -v} (l+v+\mu +1)_l z^{2 l+\mu
   +v}}{l! \Gamma (l+\mu +1) \Gamma (l+v+1)}dwdx
\end{multline}
We are able to switch the order of integration over $x$ and $w$ using Fubini's theorem for multiple integrals see page 178 in \cite{gelca}, since the integrand is of bounded measure over the space $\mathbb{C} \times [0,\infty)$.
\subsection{Right-hand side contour integral representation}
Using a generalization of Cauchy's integral formula \ref{intro:cauchy} we replace $y\to x+\log(a)$ and multiply both sides by $e^{mx}$ then take the infinite integral over $x\in[0,\infty)$ and simplify using [DLMF,\href{https://dlmf.nist.gov/8.6.E5}{8.6.5}] to get;
\begin{equation}
\frac{a^{-m} (-m)^{-k-1} \Gamma (k+1,-m \log (a))}{k!}=-\frac{1}{2\pi i}\int_{C}\frac{a^w w^{-k-1}}{m+w}dw
\end{equation}
 Next we replace $m\to m + j + 2 l + m$ and multiply both sides by\\ $-\frac{(-1)^l b^{j-1} \binom{n}{j-1} 2^{-2 l-\mu -v} (l+v+\mu +1)_l z^{2 l+\mu +v}}{l! \Gamma (l+\mu +1) \Gamma (l+v+1)}$ and take the double sum of both sides over $j\in[1,n+1],l\in[0,\infty)$ to get;
 \begin{multline}\label{eq:6.9}
-\sum_{j=1}^{n+1}\sum_{l=0}^{\infty}\frac{(-1)^l b^{j-1} \binom{n}{j-1} 2^{-2 l-\mu -v}  (l+v+\mu +1)_l z^{2 l+\mu +v}
   \Gamma (k+1,-((j+2 l+m) \log (a)))}{k! l! \Gamma (l+\mu +1) \Gamma (l+v+1)a^{j+2 l+m}(-j-2 l-m)^{k+1} }\\
=\frac{1}{2\pi i}\sum_{j=1}^{n+1}\sum_{l=0}^{\infty}\int_{C}\frac{(-1)^l a^w b^{j-1} w^{-k-1}
   \binom{n}{j-1} 2^{-2 l-\mu -v} (l+v+\mu +1)_l z^{2 l+\mu +v}}{l! \Gamma (l+\mu +1) \Gamma (l+v+1) (j+2
   l+m+w)}dw
\end{multline}
We are able to switch the order of integration and summation over $w$ using Tonellii's theorem for  integrals and sums see page 177 in \cite{gelca}, since the summand is of bounded measure over the space $\mathbb{C} \times [0,\infty)$
\begin{theorem}
For all $|Re(m)|<1$ then,
\begin{multline}\label{eq:6.10}
\int_0^1 x^m (1+b x)^n J_v(x z) J_{\mu }(x z) (-\log (a x))^k \, dx\\
=\frac{z^{v+\mu }}{2^{v+\mu } a^{m+v+\mu } b} \sum _{l=0}^{\infty } \sum
   _{j=1}^{n+1} \frac{(-1)^l \left(\frac{z}{2}\right)^{2 l} b^j }{a^{j+2 l} \left(\Gamma (l+1) \Gamma (1+l+v) \Gamma (1+l+\mu ) (j+2 l+m+v+\mu
   )^{k+1}\right)}\\
\binom{n}{j-1} \Gamma (1+k,-((j+2 l+m+v+\mu ) \log
   (a))) (1+l+v+\mu )_l
\end{multline}
\end{theorem}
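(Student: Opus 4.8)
The plan is to follow the same contour-integral matching scheme used to establish equation (\ref{eq:2.11}) and the single Bessel function result (\ref{eq:6.1}), now with the single factor $J_v(xz)$ replaced by the product $J_v(xz)J_\mu(xz)$. The one genuinely new ingredient is the power-series expansion of this product,
\[
J_v(xz)J_\mu(xz)=\sum_{l=0}^{\infty}\frac{(-1)^l\,(l+v+\mu+1)_l}{l!\,\Gamma(l+v+1)\,\Gamma(l+\mu+1)}\left(\frac{xz}{2}\right)^{2l+v+\mu},
\]
which supplies exactly the Pochhammer factor $(1+l+v+\mu)_l$, the powers $2^{-2l-\mu-v}z^{2l+\mu+v}$, and the reciprocal-gamma weights appearing in the target sum. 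I would treat this series term by term: each summand is a monomial $x^{2l+v+\mu}$ against which the Cauchy-formula machinery behaves exactly as in the $J_v$ case.

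First I would take the generalized Cauchy formula (\ref{intro:cauchy}) with $y\to\log(ax)$, multiply by the $l$-th coefficient of the product series together with $x^{m}(1+bx)^n$, and integrate over $x\in[0,1]$; summing over $l$ reassembles the integrand $x^m(1+bx)^nJ_v(xz)J_\mu(xz)\log^k(ax)/k!$ and produces the left-hand contour representation (\ref{eq:6.7}). Independently I would take (\ref{intro:cauchy}) with $y\to x+\log(a)$, multiply by $e^{mx}$, and integrate over $x\in[0,\infty)$, evaluating the resulting integral through [DLMF,8.6.5] to obtain an incomplete gamma function $\Gamma(k+1,\cdot)$. Expanding $(1+bx)^n=\sum_{j=1}^{n+1}\binom{n}{j-1}(bx)^{j-1}$ and replacing the free parameter by $j+2l+m+v+\mu$, then double-summing over $j$ and $l$ with the same product-series coefficient attached, yields the right-hand contour representation (\ref{eq:6.9}).

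Since (\ref{eq:6.7}) and (\ref{eq:6.9}) both reduce the problem to the same contour integral in (\ref{eq:6.6}), I would equate their left-hand sides. The remaining work is purely algebraic: replace $k!$ by $\Gamma(k+1)$, fold the factor $(-j-2l-m-v-\mu)^{-k-1}$ into $(j+2l+m+v+\mu)^{-k-1}$ with the sign absorbed by the incomplete gamma function, and factor out the $l$-independent prefactor $z^{v+\mu}2^{-v-\mu}a^{-m-v-\mu}b^{-1}$ to land on (\ref{eq:6.10}).

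The main obstacle is justifying the interchanges of operations rather than any single computation. Switching the $x$-integration with the contour integration in (\ref{eq:6.7}) rests on Fubini's theorem and the boundedness of the integrand over $\mathbb{C}\times[0,1]$, while switching the $w$-integration with the $j,l$ double sum in (\ref{eq:6.9}) rests on Tonelli's theorem, as already noted in the excerpt. The more delicate point, which I would verify first, is that the Bessel-product series may be integrated term by term against $x^m(1+bx)^n\log^k(ax)$ on $[0,1]$: because the series converges uniformly on the compact interval for fixed $z$ and the polynomial and logarithmic factors are integrable there, dominated convergence licenses interchanging $\sum_l$ with $\int_0^1$. Once these interchanges are secured, the identity follows at once from the matched contour integral.
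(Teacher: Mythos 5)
Your proposal follows essentially the same route as the paper: expand the product $J_v(xz)J_\mu(xz)$ via the series with coefficients $\frac{(-1)^l(l+v+\mu+1)_l}{l!\,\Gamma(l+v+1)\Gamma(l+\mu+1)}$, build the left-hand contour representation (\ref{eq:6.7}) from Cauchy's formula with $y\to\log(ax)$, build the right-hand representation (\ref{eq:6.9}) from $y\to x+\log(a)$ with the binomial expansion of $(1+bx)^n$ and the shift to $j+2l+m+v+\mu$, and equate through (\ref{eq:6.6}). This matches the paper's argument, and your explicit attention to the term-by-term integration of the Bessel-product series is a reasonable tightening of the same proof rather than a departure from it.
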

\begin{proof}
Since the right-hand sides of equations (\ref{eq:6.7}) and (\ref{eq:6.9}) are equal relative to (\ref{eq:6.6}) we may equate the left-hand sides and taking the infinite sum of the left-hand side over $l\in[0,\infty)$ and simplify the gamma function to yield the stated result.
\end{proof}
\subsection{Evaluations involving the Bessel function}
\begin{example}
Derivation of entry (6.561.1) in \cite{grad}. Use equation (\ref{eq:6.1}) and set $k=0,n=0,j=1,m\to v,z\to a$ and simplify.
\begin{multline}
\int_0^1 x^v J_v(a x) \, dx=\frac{2^{-v} a^v \,
   _1F_2\left(\frac{1}{2}+v;1+v,\frac{3}{2}+v;-\frac{a^2}{4}\right)}{(1+2 v) \Gamma (1+v)}\\
=2^{v-1} a^{-v} \sqrt{\pi }
   \Gamma \left(v+\frac{1}{2}\right) (J_v(a) \pmb{H}_{v-1}(a)-\pmb{H}_v(a) J_{v-1}(a))
\end{multline}
where $Re(v)>-1/2$.
\end{example}
\begin{example}
Derivation of entry (6.561.5) in \cite{grad}. Errata. Use equation (\ref{eq:6.1}) and set $k=0,n=0,j=1,m\to v+1,z\to a,v\to v+1$ and simplify.
\end{example}
\begin{example}
Derivation of entry (6.561.9) in \cite{grad}. Use equation (\ref{eq:6.1}) and set $k=0,n=0,j=1,m\to 1-v,z\to a$ and simplify.
\begin{equation}
\int_0^1 x^{1+v} J_{1+v}(a x) \, dx=\frac{2^{-1-v} a^{1+v} \,
   _1F_2\left(\frac{3}{2}+v;2+v,\frac{5}{2}+v;-\frac{a^2}{4}\right)}{(1+2 (1+v)) \Gamma (2+v)}
\end{equation}
where $|Re(v)<-1$.
\end{example}
%
%
\begin{example}
Derivation of entry (6.561.13(7)) in \cite{grad}. Use equation (\ref{eq:6.1}) and set $k=0,n=0,j=1,m\to u,z\to a$ and simplify using [Wolfram,\href{https://mathworld.wolfram.com/LommelFunction.html}{2}] in terms of the Lommel $S^{(2)}$ function.
\begin{multline}\label{eq:6.14}
\int_0^1 x^u J_v(a x) \, dx=\frac{2^{-v} a^v \,
   _1F_2\left(\frac{1}{2}+\frac{u}{2}+\frac{v}{2};\frac{3}{2}+\frac{u}{2}+\frac{v}{2},1+v;-\frac{a^2}{4}\right)}{(1+u+v)
   \Gamma (1+v)}\\
=\frac{2^u \Gamma \left(\frac{1}{2} (v+u+1)\right)}{a^{u+1} \Gamma \left(\frac{1}{2}
   (v-u+1)\right)}+a^{-u} ((u+v-1) J_v(a) S^{(2)}_{u-1,v-1}(a)-J_{v-1}(a) S^{(2)}_{u,v}(a))
\end{multline}
where $Re(a)>0,Re(u+v)>1$.
\end{example}
\begin{example}
From equation (\ref{eq:6.14}).
\begin{multline}
\frac{2^{-v} a^v \,
   _1F_2\left(\frac{1}{2}+\frac{u}{2}+\frac{v}{2};\frac{3}{2}+\frac{u}{2}+\frac{v}{2},1+v;-\frac{a^2}{4}\right)}{(1+u+v)
   \Gamma (1+v)}\\
=\frac{2^u \Gamma \left(\frac{1}{2} (v+u+1)\right)}{a^{u+1} \Gamma \left(\frac{1}{2}
   (v-u+1)\right)}+a^{-u} ((u+v-1) J_v(a) S^{(2)}_{u-1,v-1}(a)-J_{v-1}(a) S^{(2)}_{u,v}(a))
\end{multline}
where $Re(a)>0$.
\end{example}
\begin{example}
Use equation (\ref{eq:6.10}) and set $k=0,n\to \beta$ and simplify.
\begin{multline}
\int_0^1 x^m (1+b x)^{\beta } J_v(x z) J_{\mu }(x z) \, dx\\
=\sum _{l=0}^{\infty } \frac{(-1)^l 2^{-2 l-v-\mu }
   z^{2 l+v+\mu } \Gamma (1+2 l+v+\mu ) \, _2F_1(-\beta ,1+2 l+m+v+\mu ;2+2 l+m+v+\mu ;-b)}{(1+2 l+m+v+\mu ) \Gamma
   (1+l) \Gamma (1+l+v) \Gamma (1+l+\mu ) \Gamma (1+l+v+\mu )}
\end{multline}
where $Re(m)>0$.
\end{example}
\begin{example}
Use equation (\ref{eq:6.10}) and set $a=1,n=0,j=1,m=0$ and take the first partial derivative with respect to $k$ and set $k=0$ and simplify.
\begin{multline}
\int_0^1 J_v(x z) J_{\mu }(x z) \log \left(\log \left(\frac{1}{x}\right)\right) \, dx\\
=-\frac{2^{-v-\mu } \gamma 
   z^{v+\mu } \, _3F_4\left(\frac{1}{2}+\frac{v}{2}+\frac{\mu }{2},\frac{1}{2}+\frac{v}{2}+\frac{\mu
   }{2},1+\frac{v}{2}+\frac{\mu }{2};1+v,\frac{3}{2}+\frac{v}{2}+\frac{\mu }{2},1+\mu ,1+v+\mu ;-z^2\right)}{(1+v+\mu )
   \Gamma (1+v) \Gamma (1+\mu )}\\
-\sum _{l=0}^{\infty } \frac{(-1)^l 2^{-2 l-v-\mu } z^{2 l+v+\mu } \log (1+2 l+v+\mu )
   (1+l+v+\mu )_l}{(1+2 l+v+\mu ) \Gamma (1+l) \Gamma (1+l+v) \Gamma (1+l+\mu )}
\end{multline}
where $|Re(v)>|<-1,|Re(\mu)>|<-1$.
\end{example}
\begin{example}
Use equation (\ref{eq:6.10}) and set $k=-1,a\to e^a$ and form a second equation by replacing $a\to -a$ and take their difference with $a\to ai,n=m=0,j=1$ and simplify.
\begin{multline}
\int_0^1 \frac{J_v(x z) J_{\mu }(x z)}{a^2+\log ^2(x)} \, dx
=-\sum _{j=1}^{n+1} \sum _{l=0}^{\infty } \frac{i
   (-1)^l 2^{-1-2 l-v-\mu } e^{-i a (1+2 l+v+\mu )} z^{2 l+v+\mu } }{a \Gamma (1+l) \Gamma (1+l+v) \Gamma (1+l+\mu
   )}\\\left(\Gamma (0,-i a (1+2 l+v+\mu ))-e^{2 i a (1+2
   l+v+\mu )} \Gamma (0,i a (1+2 l+v+\mu ))\right) (1+l+v+\mu )_l
\end{multline}
where $|Re(v)>|<-1,|Re(\mu)>|<-1$.
\end{example}
\section{Definite integrals involving the hypergeometric function}
The contour integral representation used in this section is given by;
\begin{multline}
\frac{1}{2\pi i}\int_{C}\int_{0}^{1}\sum_{h=0}^{\infty}\sum_{l=0}^{\infty}\frac{a^w s^h w^{-k-1} z^l (b x+1)^n (\alpha )_l (\beta )_l x^{h+l+m+w}}{h! l! (\gamma )_l}dxdw\\
=\frac{1}{2\pi i}\int_{C}\sum_{h=0}^{\infty}\sum_{j=1}^{n+1}\sum_{l=0}^{\infty}\frac{a^w b^{j-1}
   s^h w^{-k-1} z^l \binom{n}{j-1} (\alpha )_l (\beta )_l}{h! l! (\gamma )_l (h+j+l+m+w)}
\end{multline}
where $|Re(m)|<1$.
We repeat the procedure in section 6. The left-hand side contour integral is derived using the approach in section 6. We multiply both sides of Cauchy's integral formula \ref{intro:cauchy} by $\frac{s^h z^l (b x+1)^n (\alpha )_l (\beta )_l x^{h+l+m}}{h! l! (\gamma )_l}$ and simplify the sums over $l\in[0,\infty),h\in[0,\infty)$. For the right-hand side contour integral representation we replace $m\to h + j + l + m$ and multiply both sides by $\frac{b^{j-1} s^h z^l \binom{n}{j-1} (\alpha )_l (\beta )_l}{h! l! (\gamma )_l}$ and simplify.
\begin{theorem}
For all $Re(\gamma)>1$ then,
\begin{multline}\label{eq:7.1}
\int_0^1 e^{s x} x^m (1+b x)^n \, _2F_1(\alpha ,\beta ;\gamma ;x z) \log ^k(a x) \, dx\\
=\sum _{l=0}^{\infty }
   \sum _{h=0}^{\infty } \sum _{j=1}^{n+1} \frac{a^{-h-j-l-m} b^{-1+j}  s^h z^l \binom{n}{-1+j} \Gamma
   (1+k,-((h+j+l+m) \log (a))) (\alpha )_l (\beta )_l}{h! l! (\gamma )_l(-h-j-l-m)^{1+k}}
\end{multline}
\end{theorem}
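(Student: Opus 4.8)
The plan is to follow the two-sided contour-integral template established in Sections 5 and 6, now carrying three auxiliary series at once. First I would expand the three factors multiplying $\log^k(ax)$ into their defining series: the exponential $e^{sx}=\sum_{h=0}^\infty s^h x^h/h!$, the binomial $(1+bx)^n=\sum_{j=1}^{n+1}\binom{n}{j-1}(bx)^{j-1}$, and the Gauss series $_2F_1(\alpha,\beta;\gamma;xz)=\sum_{l=0}^\infty \frac{(\alpha)_l(\beta)_l}{(\gamma)_l}\frac{(xz)^l}{l!}$. Substituting $y\to\log(ax)$ into (\ref{intro:cauchy}), multiplying through by $\frac{s^h z^l (bx+1)^n(\alpha)_l(\beta)_l x^{h+l+m}}{h!\,l!\,(\gamma)_l}$, and integrating over $x\in[0,1]$ would reproduce the left-hand side of the contour identity stated just before the theorem, once the sums over $h$ and $l$ and the finite binomial sum over $j$ are collected. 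Fubini (page 178 of \cite{gelca}) justifies exchanging the $x$-integration with the contour integration in $w$.

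For the right-hand side I would return to (\ref{intro:cauchy}) with $y\to x+\log(a)$, multiply by $e^{mx}$, integrate over $x\in[0,\infty)$, and evaluate the elementary exponential integral in terms of the incomplete gamma function via [DLMF, 8.6.5], exactly as in equation (\ref{eq:6.4}). This yields the building block $\frac{a^{-m}(-m)^{-k-1}\Gamma(k+1,-m\log a)}{k!}$. Replacing $m\to h+j+l+m$, multiplying both sides by $\frac{b^{j-1}s^h z^l\binom{n}{j-1}(\alpha)_l(\beta)_l}{h!\,l!\,(\gamma)_l}$, and taking the triple sum over $j\in[1,n+1]$, $h\in[0,\infty)$, $l\in[0,\infty)$ produces a series whose contour representation matches the right-hand side of the contour identity; Tonelli (page 177 of \cite{gelca}) legitimizes moving the triple summation through the contour integral. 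Since the two contour integrals coincide, I would equate the left-hand sides and cancel the common factor $\Gamma(k+1)=k!$ to obtain (\ref{eq:7.1}).

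The main obstacle is controlling the convergence needed for these repeated interchanges. The standing hypothesis $|Re(m)|<1$ governs the $x$-integration, but the genuinely new difficulty is the triple series: the coefficient $\frac{(\alpha)_l(\beta)_l}{(\gamma)_l\,l!}$ grows like $l^{\,\alpha+\beta-\gamma-1}$ by Stirling, so absolute summability of the incomplete-gamma terms (and hence applicability of Tonelli and term-by-term integration) is exactly what forces the constraint $Re(\gamma)>1$ recorded in the hypothesis. I would therefore verify that, after inserting the $\Gamma(1+k,\cdot)/(h+j+l+m)^{1+k}$ factor, the combined triple sum converges absolutely in the stated parameter range. Once that measure-theoretic step is secured, the algebraic collapse of the series back into $e^{sx}$, $(1+bx)^n$, and $_2F_1$ on the left is immediate, and equality of the left-hand sides delivers the stated closed form.
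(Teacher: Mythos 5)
Your proposal matches the paper's own derivation: the paper likewise forms the left-hand contour integral by multiplying Cauchy's formula (\ref{intro:cauchy}) with $y\to\log(ax)$ by $\frac{s^h z^l (bx+1)^n(\alpha)_l(\beta)_l x^{h+l+m}}{h!\,l!\,(\gamma)_l}$ and summing, and the right-hand one by the substitution $m\to h+j+l+m$ in the incomplete-gamma building block, then equates the two sides relative to the stated contour identity. Your added remark tying the hypothesis $Re(\gamma)>1$ to the growth rate of $\frac{(\alpha)_l(\beta)_l}{(\gamma)_l\,l!}$ is a welcome justification the paper omits, but it does not change the route.
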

\begin{example}
Derivation of equation (20.2.8) in \cite{erdt2}. Use equation (\ref{eq:7.1}) and set $k=0,z=1,s\to -z,m\to \gamma-1$ and simplify the sum over $j\in[1,n+1]$. Next replace $n\to \rho-1,b=-1$ and simplify the sum over $h\in[0,\infty)$. Note equation  (\ref{eq:7.1}) can be used to derive equations (7.512.1-6) in \cite{grad}.
\begin{multline}\label{eq:7.2}
\int_0^1 e^{-x z} (1-x)^{-1+\rho } x^{-1+\gamma } \, _2F_1(\alpha ,\beta ;\gamma ;x) \, dx\\
=\sum _{l=0}^{\infty }
   \frac{\Gamma (l+\alpha ) \Gamma (l+\beta ) \Gamma (\gamma ) \Gamma (\rho ) \, _1F_1(l+\gamma ;l+\gamma +\rho
   ;-z)}{\Gamma (1+l) \Gamma (\alpha ) \Gamma (\beta ) \Gamma (l+\gamma +\rho )}\\
=\frac{(\Gamma (\gamma ) \Gamma (\rho )
   \Gamma (\gamma +\rho -\alpha -\beta )) \exp (-z) \, _2F_2(\rho ,\gamma +\rho -\alpha -\beta ;\gamma +\rho -\alpha
   ,\gamma +\rho -\beta ;z)}{\Gamma (\gamma +\rho -\alpha ) \Gamma (\gamma +\rho -\beta )}
\end{multline}
\end{example}
\begin{example}
A Series functional equation from equation (\ref{eq:7.2}) and equation (20.2.8) in \cite{erdt2}.
\begin{multline}
\sum _{l=0}^{\infty } \frac{(\Gamma (l+\alpha ) \Gamma (l+\beta )) }{\Gamma (1+l) \Gamma (l+\gamma +\rho )}\, _1F_1(l+\gamma ;l+\gamma +\rho
   ;-z)\\
=\frac{\Gamma (\gamma +\rho -\alpha -\beta ) \Gamma (\alpha ) \Gamma
   (\beta ) \exp (-z) }{\Gamma (\gamma +\rho -\alpha ) \Gamma (\gamma +\rho -\beta )}\, _2F_2(\rho ,\gamma +\rho -\alpha -\beta ;\gamma +\rho -\alpha ,\gamma +\rho -\beta
   ;z)
\end{multline}
where $Re(\gamma)>1$.
\end{example}
\section{Definite integrals over a finite interval}
The integrals derived in the proceeding sections can be considered as generalized forms of the Riemann-Louville integral of order $\alpha$ see \cite{erdeyli} pages 151-170 and \cite{erdt2} pages 185-200 (Chapter VIII). The Riemann-Louville integral of order $\alpha > 0$, of a function $f\in [0,\infty)$ is defined as 
\begin{equation}
I^{\alpha}f(x)=\frac{1}{\Gamma(\alpha)}\int_{0}^{x}(x-y)^{\alpha -1}f(y)dy
\end{equation}
where $Re(x) > 0$. If $\alpha$ is an integer, this is simply the $\alpha$ times repeated integral of $f$ with fixed lower limit $0$. $I^{\alpha}f$ can be regarded as the convolution of $f$ (assumed to vanish for $Re(x) < 0$) with the function $P_{\alpha}$ defined by;
\begin{equation}
P_{\alpha}=\begin{cases}
			\frac{x^{\alpha-1}}{\Gamma(\alpha)}, & \text{if $Re(x)>0$}\\
            0, & \text{if $Re(x)\leq 0$}
		 \end{cases}
\end{equation}
and the formula $I^{\alpha}f = P_{\alpha}\times f$ is capable of considerable expansion.
\\\\
In this example, the definite integral contour representation over a finite interval is given by;
\begin{multline}\label{eq:8.1}
\frac{1}{2\pi i}\int_{C}\int_{0}^{b}a^w z^j w^{-k-1} \binom{-d}{j} x^{c j+m+w}dxdw
=\frac{1}{2\pi i}\int_{C}\frac{a^w z^j w^{-k-1} \binom{-d}{j} b^{c j+m+w+1}}{c
   j+m+w+1}dw
\end{multline}
where $|Re(m)|<1$.
\subsection{Left-hand side contour integral representation}
Use equation (\ref{intro:cauchy}) and replace $y\to \log(ax)$ and multiply both sides by $z^j \binom{-d}{j} x^{c j+m}$  and simplify. 
\begin{multline}\label{eq:8.2}
\frac{1}{2\pi i}\int_{C}\int_{0}^{b}\frac{z^j \binom{-d}{j} \log ^k(a x) x^{c j+m}}{k!}dxdw=\frac{1}{2\pi i}\int_{C}z^j w^{-k-1} (a x)^w \binom{-d}{j} x^{c j+m}dw
\end{multline}
\subsection{Right-hand side contour integral representation}
Use equation (\ref{eq:4.4}) and replace $m\to 1+c j+m,a\to a b$ and multiply both sides by $z^j \binom{-d}{j} b^{c j+m+1}$ and simplify. 
\begin{multline}\label{eq:8.3}
\frac{z^j \binom{-d}{j} b^{c j+m+1} (-c j-m-1)^{-k-1} (a b)^{-c j-m-1} \Gamma (k+1,-((c j+m+1) \log (a
   b)))}{k!}\\
=-\frac{1}{2\pi i}\int_{C}\frac{z^j w^{-k-1} (a b)^w \binom{-d}{j} b^{c j+m+1}}{c j+m+w+1}dw
\end{multline}
\begin{theorem}
For all $|Re(m)|<1$ then,
\begin{multline}\label{eq:8.4}
\int_0^b \frac{x^m (-\log (a x))^k}{\left(1+x^c z\right)^d} \,
   dx\\
=\frac{1}{a^{1+m}}\sum _{j=0}^{\infty } \frac{\left(\frac{z}{a^c}\right)^j
  }{(1+c
   j+m)^{k+1}} \binom{-d}{j} \Gamma (1+k,-((1+c j+m) \log (a b)))
\end{multline}
\end{theorem}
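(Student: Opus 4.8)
The plan is to follow the same contour-integral scheme used for the earlier theorems in this paper, the one new ingredient being the generalized binomial expansion $(1+x^{c}z)^{-d}=\sum_{j\ge 0}\binom{-d}{j}(zx^{c})^{j}$. The three displayed relations (\ref{eq:8.1}), (\ref{eq:8.2}) and (\ref{eq:8.3}) already carry out the work for a single index $j$, so my task is to assemble them for fixed $j$ and then sum over $j$.

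First I would confirm the left-hand contour representation. Starting from Cauchy's formula (\ref{intro:cauchy}) with $y\to\log(ax)$, I multiply both sides by $z^{j}\binom{-d}{j}x^{cj+m}$ and integrate over $x\in[0,b]$, which produces (\ref{eq:8.2}); its right-hand side is exactly the left-hand side of (\ref{eq:8.1}). The interchange of the $x$-integration with the $w$-contour integration is justified by Fubini's theorem (page 178 of \cite{gelca}), since the integrand is of bounded measure on $\mathbb{C}\times[0,b]$.

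Next I would confirm the right-hand contour representation (\ref{eq:8.3}), obtained from (\ref{eq:4.4}) under the substitutions $m\to 1+cj+m$ and $a\to ab$ and multiplication by $z^{j}\binom{-d}{j}b^{cj+m+1}$. A short simplification of the prefactor, $b^{cj+m+1}(ab)^{-(cj+m+1)}=a^{-(cj+m+1)}$, shows that the right-hand side of (\ref{eq:8.3}) equals $-1$ times the right-hand side of (\ref{eq:8.1}) for each fixed $j$. Hence, for every $j$, the integrand appearing in (\ref{eq:8.2}) equals the incomplete-gamma expression in (\ref{eq:8.3}), since both coincide with the common contour integral (\ref{eq:8.1}).

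Summing this termwise identity over $j\in[0,\infty)$ then finishes the argument: on the left the series collapses through the binomial expansion into the target integral $\int_{0}^{b}x^{m}(-\log(ax))^{k}(1+x^{c}z)^{-d}\,dx$, while on the right the terms assemble into the stated series. The only cosmetic steps are rewriting $\log^{k}(ax)/k!$ as $(-\log(ax))^{k}$ and converting $(-(1+cj+m))^{-k-1}$ into $(1+cj+m)^{-k-1}$ by means of the analytic continuation (\ref{eq:7}) of the incomplete gamma function, which absorbs the factors of $(-1)^{k}$ consistently. The main obstacle is justifying the interchange of the $j$-summation with the $x$-integration and with the contour integral; this requires the binomial series to converge on the range of integration (ensured under $|Re(m)|<1$ together with $|x^{c}z|<1$ on $[0,b]$) and is then secured by Tonelli's theorem (page 177 of \cite{gelca}), together with the usual check that the bilinear concomitant vanishes at the endpoints of $C$ uniformly in the shifted parameter $1+cj+m$.
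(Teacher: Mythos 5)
Your proposal is correct and follows essentially the same route as the paper: equating the left-hand sides of (\ref{eq:8.2}) and (\ref{eq:8.3}) via the common contour integral (\ref{eq:8.1}), summing over $j$ so the binomial series reassembles into $(1+x^cz)^{-d}$, invoking Fubini/Tonelli for the interchanges, and simplifying the prefactor and the sign in $(-1-cj-m)^{-k-1}$. Your added remarks on the prefactor cancellation and the convergence of the binomial expansion are slightly more explicit than the paper's one-line "simplify the gamma function," but the argument is the same.
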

\begin{proof}
Since the right-hand sides of equations (\ref{eq:8.2}) and (\ref{eq:8.3}) are equal relative to (\ref{eq:8.1}) we may equate the left-hand sides and taking the infinite sum of the left-hand side over $j\in[0,\infty)$ and simplify the gamma function. We are able to switch the order of integration over $x$ and $w$ using Fubini's theorem for multiple integrals see page 178 in \cite{gelca}, since the integrand is of bounded measure over the space $\mathbb{C} \times [0,b]$. We are able to switch the order of integration and summation over $w$ using Tonellii's theorem for  integrals and sums see page 177 in \cite{gelca}, since the summand is of bounded measure over the space $\mathbb{C} \times [0,\infty)$. Simplifying the sums will yield the stated result. 
\end{proof}
\subsection{Derivations and evaluations}
The definite integral form in equation (\ref{eq:8.4}) can be used to derive entries 3.191.1, 3.191.3, 3.192.1-3, 3.193, 3.194.1, 3.194.5, 3.194.8, 3.216, 3.222.1, 3.226, 3.231, 3.234,  3.237, 4.243, 4.244, 4.245, 4.246, 4.247, 4.251, 4.253, 4.254, 4.256, 4.261, 4.262, 4.263, 4.264, 4.265, 4.266, 4.267, 4.268, 4.269, 4.271, 4.272, 4.273, 4.274, 4.275, 4.281, 4.282, 4.283, 4.285, 4.291, 4.293, 4.294, 4.295, 4.296, 4.297, 4.298 in \cite{grad}, and 4.15.1-3 in \cite{brychkov} using parameter substitutions and algebraic methods.
\begin{example}
Derivation of equation (4.241.11) in \cite{grad}. Use equation (\ref{eq:8.4}) and set $k\to 1,d\to \frac{1}{2},c\to 2,z\to -1,m\to -\frac{1}{2},a\to 1,b\to 1$ and simplify the series on the right-hand side.
\begin{equation}
\int_0^1 \frac{\log (x)}{\sqrt{x} \sqrt{1-x^2}} \, dx=-\frac{\pi ^{3/2} \Gamma \left(\frac{5}{4}\right)}{\Gamma
   \left(\frac{3}{4}\right)}=\frac{1}{8} \left(-\sqrt{2 \pi }\right) \Gamma \left(\frac{1}{4}\right)^2
\end{equation}
\end{example}
\begin{example}
Derivation of equation (4.247.2(6)) in \cite{grad}. Use equation (\ref{eq:8.4}) and set $k\to 1,d\to \frac{1}{n},c\to 2,z\to -1,m\to -\frac{n-1}{n},a\to 1,b\to 1$ and simplify the series.
\begin{multline}
\int_0^1 \frac{\log (x)}{\left(x^{n-1} \left(1-x^2\right)\right)^{1/n}} \, dx=-2^{-2-\frac{1}{n}} \sqrt{\pi }
   \cot \left(\frac{\pi }{2 n}\right) \Gamma \left(\frac{1}{2 n}\right) \Gamma \left(\frac{-1+n}{2
   n}\right)\\
   =-\frac{\pi  B\left(\frac{1}{2 n}\frac{1}{2 n}\right)}{8 \sin \left(\frac{\pi }{2 n}\right)}
\end{multline}
\end{example}
\begin{example}
Derivation of equation (4.274) in \cite{grad}. Use equation (\ref{eq:8.4}) and set $k\to -\frac{1}{2},a\to e,m\to \frac{1}{q}-1,b\to \frac{1}{e},d\to 0,j\to 0$ and simplify.
\begin{equation}
\int_0^{\frac{1}{e}} \frac{x^{-1+\frac{1}{q}}}{\sqrt{-\log (e x)}} \, dx=e^{-1/q} \sqrt{q \pi }
\end{equation}
\end{example}
\begin{example}
Derivation of equation (4.275.1) in \cite{grad}. Use equation (\ref{eq:8.4}) and form two equations. The first equation is given when $d\to 0,m\to 0,a\to 1,b\to 1,j\to 0,k\to q-1$ and the second when $k\to 0,c\to 1,z\to -1,d\to 1-q,m\to p-1,b\to 1$ and taking their difference to get;
\begin{multline}
\int_0^1 \left(-(1-x)^{-1+q} x^{-1+p}+(-\log (x))^{-1+q}\right) \, dx=\Gamma (q) \left(1-\frac{\Gamma
   (p)}{\Gamma (p+q)}\right)
\end{multline}
\end{example}
\begin{example}
Derivation of equation (4.261.21) in \cite{grad}. Use equation (\ref{eq:8.4}) and set $k\to 2,a\to 1,b\to 1$ and simplify the series. Then replace $m\to p-1,d\to 1-q,c\to 1,z\to -1$ to get;
\begin{multline}
\int_0^1 (1-x)^{-1+q} x^{-1+p} \log ^2(x) \, dx\\
=\frac{\Gamma (p) \Gamma (q) \left(\psi ^{(0)}(p)^2-2 \psi
   ^{(0)}(p) \psi ^{(0)}(p+q)+\psi ^{(0)}(p+q)^2+\psi ^{(1)}(p)-\psi ^{(1)}(p+q)\right)}{\Gamma (p+q)}
\end{multline}
\end{example}
\begin{example}
Derivation of equation (4.253.1(8)) in \cite{grad}. Use equation (\ref{eq:8.4}) and set $a\to 1,b\to 1,z\to -1,c\to r,m\to u-1,d\to 1-v,k\to 1$ and simplify;
\begin{multline}
\int_0^1 x^{-1+u} \left(1-x^r\right)^{-1+v} \log (x) \, dx=\frac{\Gamma \left(\frac{r+u}{r}\right) \Gamma (v)
   \left(\psi ^{(0)}\left(\frac{u}{r}\right)-\psi ^{(0)}\left(\frac{u}{r}+v\right)\right)}{r u \Gamma \left(\frac{u+r
   v}{r}\right)}
\end{multline}
\end{example}
\begin{example}
Derivation of equation (4.246) in \cite{grad}. Use equation (\ref{eq:8.4}) and set $k\to 1,a\to 1,b\to 1,z\to -1,c\to 2,m\to 0,d\to \frac{1}{2}-n$ and simplify;
\begin{multline}
\int_0^1 \left(1-x^2\right)^{-\frac{1}{2}+n} \log (x) \, dx=\frac{\sqrt{\pi } \Gamma \left(\frac{1}{2} (1+2
   n)\right) \left(\psi ^{(0)}\left(\frac{1}{2}\right)-\psi ^{(0)}(1+n)\right)}{4 \Gamma (1+n)}\\
=-\frac{\pi  (-1+2
   n)\text{!!} \left(H_n+2 \log (2)\right)}{4 (2 n)\text{!!}}
\end{multline}
\end{example}
\begin{example}
Derivation of equation (4.1.5.179) in \cite{brychkov}. Use equation (\ref{eq:8.4}) and set $a\to 1,b\to 1,z\to 1,d\to n+1,m\to a-1,c\to b,k\to m+n$ and then take the first partial derivative with respect to $m$ and simplify to get;
\begin{multline}
\int_0^1 \frac{x^{-1+a} \log ^{m+n}(x) \log \left(\log
   \left(\frac{1}{x}\right)\right)}{\left(1+x^b\right)^{n+1}} \, dx\\
=\sum _{j=0}^{\infty } (-1)^{-m-n} (a+b j)^{-1-m-n}
   \binom{-1-n}{j} \Gamma (1+m+n) (-\log (a+b j)+\psi ^{(0)}(1+m+n))\\
=\frac{(-1)^n}{n!}D_{a}^mD_{b}^n\left[\frac{C+\log(2b)}{2b}\left(\psi ^{(0)}\frac{a}{2b}-\psi ^{(0)}\frac{a+b}{2b}\right)\right. \\ \left.
+\frac{1}{2b}\left(\zeta'\left(1,\frac{a}{2b}\right)-\zeta'\left(1,\frac{a+b}{2b}\right) \right) \right]
\end{multline}
where $Re(a)>0$.
\end{example}
\begin{example}
Derivation of equation (4.256) in \cite{grad}. Use equation (\ref{eq:8.4}) and set $a\to 1,b\to 1,d\to 1-\frac{m}{n},m\to 0,z\to -1,c\to n,k\to 1$ and simplify.
\begin{equation}
\int_0^1 \left(1-x^n\right)^{-1+\frac{m}{n}} \log (x) \, dx=\frac{\Gamma \left(\frac{1}{n}\right) \Gamma
   \left(\frac{m}{n}\right) \left(\psi ^{(0)}\left(\frac{1}{n}\right)-\psi
   ^{(0)}\left(\frac{1+m}{n}\right)\right)}{n^2 \Gamma \left(\frac{1+m}{n}\right)}
\end{equation}
\end{example}
\begin{example}
The Beta function. Use equation (\ref{eq:8.4}) and set $k\to 0,d\to 1-q,m\to p-1,c\to 1,z\to -1,b\to 1$ and simplify.
\begin{equation}
\int_0^1 (1-x)^{-1+q} x^{-1+p} \, dx=\frac{\Gamma (p) \Gamma (q)}{\Gamma (p+q)}
\end{equation}
\end{example}
\begin{example}
Derivation of equation (4.267.12) in \cite{grad}. Use equation (\ref{eq:8.4}) and set $a\to 1,b\to 1,c\to 1,z\to -a,d\to n$. Then form a second equation by replacing $m\to s$ and take their difference with $m\to p-1,s\to q-1$ and simplify.
\begin{equation}
\int_0^1 \frac{x^{-1+p}-x^{-1+q}}{(1-a x)^n \log (x)} \, dx=\sum _{j=0}^{\infty } (-a)^j \binom{-n}{j} \log
   \left(\frac{j+p}{j+q}\right)
\end{equation}
where $|Re(a)|<1,Re(q)>0,Re(p)>0$.
\end{example}
\begin{example}
Derivation of equation (4.267.14) in \cite{grad}. Use equation (\ref{eq:8.4}) and set $a\to 1,b\to 1,z\to 1,c\to 1,d\to 1$ and simplify in terms of the Hurwitz zeta function. Next form a second equation by replacing $m\to s$ and take their difference. Next use l'Hopital's rule as $k\to -1$ and $m\to p-1,s\to q-1$. Next form another equation by replacing $p\to 2 n+p+1,q\to 2 n+q+1$ and take their difference and simplify the log-gamma function.
\begin{multline}
\int_0^1 \frac{\left(1+x^{1+2 n}\right) \left(x^p-x^q\right)}{x (1+x) \log (x)} \, dx=\log \left(\frac{\Gamma
   \left(1+n+\frac{p}{2}\right) \Gamma \left(\frac{1+p}{2}\right) \Gamma \left(\frac{1}{2}+n+\frac{q}{2}\right) \Gamma
   \left(\frac{q}{2}\right)}{\Gamma \left(\frac{p}{2}\right) \Gamma \left(\frac{1}{2} (1+2 n+p)\right) \Gamma
   \left(1+n+\frac{q}{2}\right) \Gamma \left(\frac{1+q}{2}\right)}\right)
\end{multline}
where $Re(q)>0,Re(p)>0$.
\end{example}
\begin{example}
Derivation of equation (4.267.30) in \cite{grad}. Errata. Use equation (\ref{eq:8.4}) and set $a\to 1,b\to 1,d\to 1,z\to -1,c\to p+q+2 s$ and simplify in terms of the Hurwitz zeta function. Next form four equations by replacing $m\to s-1,m\to p+s-1, m\to m\to q+s-1, m\to p+q+s-1$ and add these equations. Next apply l'Hopital's rule as $k\to -1$ and simplify.
\begin{multline}
\int_0^1 \frac{x^{-1+s} \left(1-x^p\right) \left(1-x^q\right)}{\left(1-x^{p+q+2 s}\right) \log (x)} \, dx=\log
   \left(-\frac{\Gamma \left(-\frac{q+s}{p+q+2 s}\right) \Gamma \left(\frac{p+2 q+3 s}{p+q+2 s}\right) \sin
   \left(\frac{\pi  s}{p+q+2 s}\right)}{\pi }\right)\\
\neq2 \log \left(\sin \left(\frac{\pi  s}{p+q+2 s}\right) \csc
   \left(\frac{\pi  (p+s)}{p+q+2 s}\right)\right)
\end{multline}
where $Re(q)>0,Re(p)>0$.
\end{example}
\begin{example}
Derivation of equation (4.267.38) in \cite{grad}. Errata. Use equation (\ref{eq:8.4}) and set $a\to 1,b\to 1,c\to 1,z\to -1,d\to 1$ and simplify in terms of the Hurwitz zeta function. Next form five equations with $m\to 1, m\to 2, m\to p, m\to q, m\to p+q$ and add. Next apply l'Hopital's rule as $k\to -1$ and simplify.
\begin{multline}
\int_0^1 \frac{2 x-x^2-x^p-x^q+x^{p+q}}{(1-x) \log (x)} \, dx=\log \left(\frac{2 p q \Gamma (p) \Gamma
   (q)}{\Gamma (1+p+q)}\right)\neq \log (B(p,q))
\end{multline}
where $Re(q)>0,Re(p)>0$.
\end{example}
\begin{example}
Derivation of equation (4.267.39) in \cite{grad}. Use equation (\ref{eq:8.4}) and set $a\to 1,b\to 1,c\to p,z\to -1,d\to -n,m\to 0$ and simplify. Next apply l'Hopital's rule as $k\to -1$ and simplify.
\begin{multline}
\int_0^1 \frac{\left(-1+x^p\right)^n}{\log (x)} \, dx=\sum _{j=0}^n (-1)^{j-n} \binom{n}{j} \log (1+j p)=\sum
   _{j=0}^n (-1)^{n-j} \binom{n}{n-j} \log (1+j p)
\end{multline}
where $n>0,Re(p)>0$.
\end{example}
\begin{example}
Derivation of equation (3.237) in \cite{grad}. Errata. Use equation (\ref{eq:8.4}) and set $k\to 0,c\to 1,z\to \frac{1}{u},d\to 1,a\to 1,m\to 0$ and simplify. Form to equation with $b\to n,b\to n+1$ and take their difference.
\begin{multline}
\sum _{n=0}^{\infty } (-1)^{1+n} \int_n^{n+1} \frac{1}{x+u} \, dx=\sum _{n=0}^{\infty } (-1)^{1+n} \log
   \left(1+\frac{1}{n+u}\right)=-\log \left(\frac{u \Gamma \left(\frac{u}{2}\right)^2}{2 \Gamma
   \left(\frac{u+1}{2}\right)^2}\right)
\end{multline}
where $Re(u)>0$.
\end{example}
\section{Derivation of entries in Prudnikov Volume I.}
In this section we will derive a few entries in \cite{prud1} and list which entries and Tables can be derived using equation (\ref{eq:8.4}) with parameter substitutions and algebraic methods. The following entries can be derived; 2.6.3.1-2, 2.6.4, 2.6.6, 2.6.7, 2.6.9.1-4,2.6.9.7-12,13,17,19,20,21,22, 2.6.13.1,5-9,15-17,21,22-24, 2.6.14.1,18, 2.6.15.1-3, 10,13-15, 2.6.17.1-7,15-22,24,26-30,39,41-45, 2.6.18, 2.6.19.1-2,2.6.19.4-7,9,10-14, 2.6.20
\begin{example}
Derivation of equation (2.6.3.1) in \cite{prud1}. Use equation (\ref{eq:8.4}) and set $d\to 0,j\to 0,m\to \alpha -1,k\to \sigma ,a\to \frac{1}{a},b\to a$ and simplify.
\begin{equation}
\int_0^{\alpha } x^{-1+\alpha } \log ^{\sigma }\left(\frac{a}{x}\right) \, dx=a^{\alpha } \alpha ^{-1-\sigma }
   \Gamma (1+\sigma )
\end{equation}
where $Re(a)>0,Re(\alpha)>0,Re(\sigma)>0$.
\end{example}
\begin{example}
Derivation of equation (2.6.4.1 ) in \cite{prud1}. Use equation (\ref{eq:8.4}) and set $a\to \frac{1}{a},c\to \mu ,z\to \alpha ^{-\mu },d\to \rho ,k\to \sigma ,m\to \alpha -1,b\to a$ and simplify.
\begin{equation}
\int_0^a \frac{x^{-1+\alpha } \log ^{\sigma }\left(\frac{a}{x}\right)}{\left(\alpha ^{\mu }+x^{\mu
   }\right)^{\rho }} \, dx=a^{\alpha } \alpha ^{-\mu  \rho } \Gamma (1+\sigma ) \sum _{j=0}^{\infty }
   \frac{\left(\frac{a}{\alpha }\right)^{j \mu } \binom{-\rho }{j}}{(\alpha +j \mu )^{\sigma +1}}
\end{equation}
where $Re(a)>0,Re(\alpha)>0,Re(\sigma)>0$.
\end{example}
\begin{example}
Derivation of equation (2.6.5.2). Errata. in \cite{prud1}. Use equation (\ref{eq:8.4}) and set $m\to \alpha -1,d\to -m,c\to \mu ,z\to -\alpha ^{-\mu },a\to \frac{1}{a},k\to n,b\to a$ and simplify.
\begin{multline}
\int_0^a x^{-1+\alpha } \left(\alpha ^{\mu }-x^{\mu }\right)^m \log ^n\left(\frac{x}{a}\right) \, dx\\
=\alpha
   ^{\mu  m} \sum _{j=0}^{\infty } (-1)^{-n} a^{\alpha } \left(-a^{\mu } \alpha ^{-\mu }\right)^j (\alpha +j \mu
   )^{-1-n} \binom{m}{j} \Gamma (1+n)\\
\neq(-1)^n n! a^{\alpha +\mu  m} \sum _{k=0}^m \frac{\binom{m}{k} (-1)^k}{(\alpha
   +\mu  k)^{n+1}}
\end{multline}
where $Re(a)>0,Re(u)>0$.
\end{example}
\begin{example}
Derivation of equation (2.6.19.5) in \cite{prud1}, generalized form over complex range for the parameters.. Use equation (\ref{eq:8.4}) and set $a=b=c=d=1,k\to 2n$. Then take the indefinite integral of both sides with respect to $z$ and set $m=0$ simplify the series in terms of the polylogarithm function. The Bernoulli form can be derived by using equations [Wolfram,\href{https://mathworld.wolfram.com/HurwitzZetaFunction.html}{9}].
\begin{equation}
\int_0^1 \frac{\log ^{2 n}(x) \log (1+x z)}{x} \, dx=-(-1)^{2 n} \Gamma (1+2 n) \text{Li}_{2+2 n}(-z)
\end{equation}
where $Re(n)>-1$.
\end{example}
\begin{example}
Extended Nielsen definite integral \cite{nielsen}. Use equation (\ref{eq:8.4}) and take the $n$-th partial derivative with respect to $d$ and simplify the right-hand side derivative using equation [Wolfram,\href{http://functions.wolfram.com/06.03.20.0007.02}{02}] and [Wolfram,\href{http://functions.wolfram.com/06.10.20.0007.01}{01}].
\begin{multline}
\int_0^b \frac{x^m \left(\log ^k\left(\frac{1}{a x}\right) \log ^n\left(1+x^c z\right)\right)}{\left(1+x^c z\right)^d} \, dx\\
=\frac{1}{a^{1+m}}\sum _{j=0}^{\infty } \sum _{l=1}^j \frac{(-1)^{j+l} z^j
   \Gamma \left(1+k,(1+c j+m) \log \left(\frac{1}{a b}\right)\right) (1+l-n)_n S_j^{(l)}}{a^{c j} (1-d-j)^{n-l} (1+c j+m)^{k+1} j!}
\end{multline}
where $n>0,Re(m)>0,Re(c)\geq 0$.
\end{example}
\begin{example}
Prime counting function in terms of a definite integral and the incomplete gamma function. See  [Wolfram,\href{https://mathworld.wolfram.com/PrimeCountingFunction.html}{1}] and 
[Wolfram,\href{https://mathworld.wolfram.com/PrimeCountingFunction.html}{13}]. Use equation (\ref{eq:8.4}) and set $c=0,a=1,k=-1,m=0$ and form a second equation by replacing $b\to 2$ and take their difference. We also look at a plot of the closed form (blue) versus the built-in Wolfram Mathematica PrimePi function.
\begin{equation}
\int_{2}^{b} \frac{1}{\log (x)} \, dx=\Gamma (0,-\log (2))-\Gamma (0,-\log(b))
\end{equation}
where $Re(b)>2$.
\end{example}
\begin{figure}[H]
\caption{Plot of $\Gamma (0,-\log (2))-\Gamma (0,-\log(b))$}
\includegraphics[width=8cm]{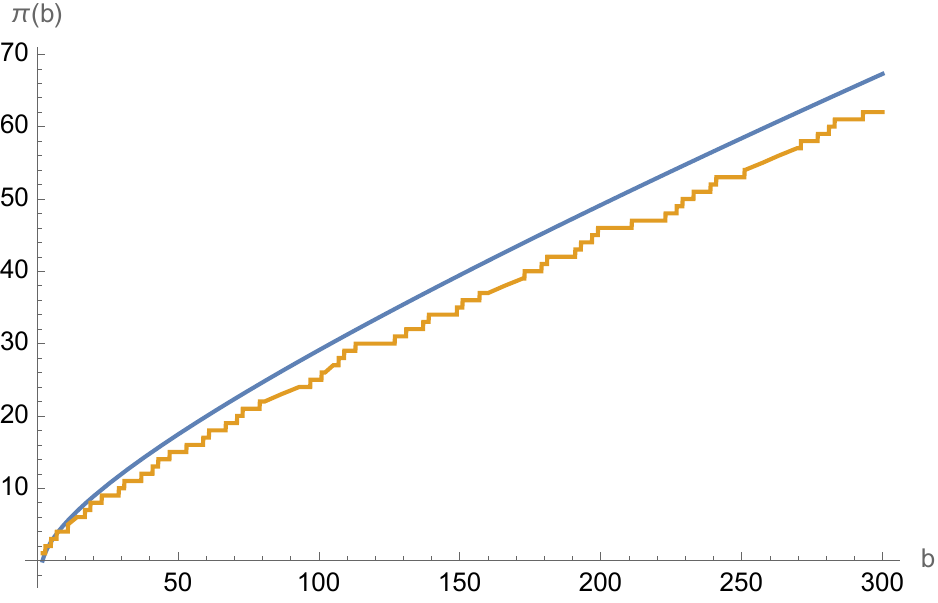}
\end{figure}
%
%
%
%
\section{Definite integrals of the product of generalized rational and logarithmic functions}
The main theorem used in this section has sub-functions after parameter substitutions which yield closed forms equivalent to elliptic functions. A few example will be evaluated using this theorem to illustrate the procedure for derivation of the examples listed. The entries in Gradshteyn and Ryzhik which can be derived using this theorem are; 3.121, 3.143, 3.146, 3.152, 3.153, 3.154, 3.155, 3.156, 3.158, 3.159, 3.161, 3.162, 3.163, 3.164, 3.166, 3.169, 3.171, 3.172, 3.173, 3.181, 3.182, 3.183, 3.184, 3.185, 3.191, 3.196, 3.197, 3.198, 3.212, 3.213, 3.214, 3.215, 3.223, 3.224, 3.227, 3.254, 3.259, 4.234, 4.267.43(10),  4.268. Equation (\ref{eq:10.4}) can also be used to derive power series representations for Inverse Jacobi Functions [Wolfram,\href{https://functions.wolfram.com/EllipticFunctions/}{1}], where previous work on power series was conducted by Carlson (2008)  \cite{carlson}.
The contour integral representation used to derive the main theorem in this section is given by;
\begin{multline}\label{eq:10.1}
\frac{1}{2\pi i}\int_{0}^{b}\int_{C}\sum_{j=0}^{\infty}\sum_{l=0}^{\infty}a^w z^j w^{-k-1} \alpha ^l \binom{-d}{j} \binom{-f}{l} x^{c j+l s+m+w}dwdx\\
=\frac{1}{2\pi i}\int_{C}\sum_{j=0}^{\infty}\sum_{l=0}^{\infty}\frac{a^w z^j w^{-k-1} \alpha ^l
   \binom{-d}{j} \binom{-f}{l} b^{c j+l s+m+w+1}}{c j+l s+m+w+1}dw
\end{multline}
where $0< Re(b)$.
\subsection{Left-hand side contour integral representation}
Use equation (\ref{intro:cauchy}) and replace $y\to \log(ax)$ and multiply both sides by $z^j \alpha ^l \binom{-d}{j} \binom{-f}{l} x^{c j+l s+m}$ to get;
\begin{multline}\label{eq:10.2}
\frac{1}{2\pi i}\int_{0}^{1}\int_{C}\frac{z^j \alpha ^l \binom{-d}{j} \binom{-f}{l} \log ^k(a x) x^{c j+l s+m}}{k!}dwdx\\
=\frac{1}{2\pi i}\int_{0}^{1}\int_{C}z^j w^{-k-1} \alpha ^l (a x)^w
   \binom{-d}{j} \binom{-f}{l} x^{c j+l s+m}dwdx\\
   =\frac{1}{2\pi i}\int_{C}\int_{0}^{1}z^j w^{-k-1} \alpha ^l (a x)^w
   \binom{-d}{j} \binom{-f}{l} x^{c j+l s+m}dwdx
\end{multline}
where $Re(b)>0$.
\subsection{Right-hand side contour integral representation}
Use equation (\ref{eq:4.5}) and set $\beta=\gamma=0$ and replace $m\to c j+l s+m+1,a\to a b$ and multiply both sides by $z^j \alpha ^l \binom{-d}{j} \binom{-f}{l} b^{c j+l s+m+1}$ to get;
\begin{multline}\label{eq:10.3}
\frac{z^j \alpha ^l \binom{-d}{j} \binom{-f}{l} b^{c j+l s+m+1}  (a b)^{-c j-l s-m-1}
   \Gamma (k+1,-((c j+m+l s+1) \log (a b)))}{k!(-c j-l s-m-1)^{k+1}}\\
=-\frac{1}{2\pi i}\int_{C}\frac{z^j w^{-k-1} \alpha ^l (a b)^w \binom{-d}{j} \binom{-f}{l}
   b^{c j+l s+m+1}}{c j+l s+m+w+1}dw
\end{multline}
where $Re(b)>0$.
\begin{theorem}
For all $Re(a)>0,Re(b)>0$ then,
\begin{multline}\label{eq:10.4}
\int_0^b x^m \left(1+x^c z\right)^{-d} \left(1+x^s \alpha \right)^{-f} (-\log (a x))^k \, dx\\
=\sum
   _{j=0}^{\infty } \sum _{l=0}^{\infty } \frac{\left(a^{-1-c j-m-l s} z^j \alpha ^l\right) \binom{-d}{j}
   \binom{-f}{l} \Gamma (1+k,-((1+c j+m+l s) \log (a b)))}{(1+c j+m+l s)^{k+1}}
\end{multline}
\end{theorem}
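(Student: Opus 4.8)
The plan is to prove equation~(\ref{eq:10.4}) by the same two-sided contour-integral comparison used for the earlier theorems: the representation~(\ref{eq:10.1}) is evaluated along each of its two equal sides, and its common value links the definite integral to the double incomplete-gamma series. Everything rests on the generalized Cauchy formula~(\ref{intro:cauchy}).

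First I would build the left-hand side, as sketched in~(\ref{eq:10.2}). Replacing $y\to\log(ax)$ in~(\ref{intro:cauchy}), multiplying both sides by $z^j\alpha^l\binom{-d}{j}\binom{-f}{l}x^{cj+ls+m}$, and summing over $j,l\in[0,\infty)$, the two binomial series collapse via $\sum_{j=0}^{\infty}\binom{-d}{j}(x^c z)^j=(1+x^c z)^{-d}$ and $\sum_{l=0}^{\infty}\binom{-f}{l}(x^s\alpha)^l=(1+x^s\alpha)^{-f}$. Integrating over $x\in[0,b]$ then produces $\frac{1}{k!}\int_0^b x^m(1+x^c z)^{-d}(1+x^s\alpha)^{-f}\log^k(ax)\,dx$ as the left member, equal to the contour integral appearing on the right of~(\ref{eq:10.2}).

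Next I would build the right-hand side, as in~(\ref{eq:10.3}), by specializing the incomplete-gamma contour identity~(\ref{eq:4.5}) to $\beta=\gamma=0$, substituting $m\to cj+ls+m+1$ and $a\to ab$, and multiplying through by $z^j\alpha^l\binom{-d}{j}\binom{-f}{l}b^{cj+ls+m+1}$. This matches each $(j,l)$-indexed summand with the incomplete gamma function $\Gamma(k+1,-((1+cj+ls+m)\log(ab)))$ divided by $(1+cj+ls+m)^{k+1}$, together with the power $a^{-1-cj-m-ls}$. Summing over $j,l$ gives precisely the stated double series as the left member of~(\ref{eq:10.3}), again equal to the contour integral in~(\ref{eq:10.1}). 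Equating the two left-hand sides, the common factor $1/k!$ cancels against $\Gamma(k+1)=k!$, and the result~(\ref{eq:10.4}) follows after writing the surviving $\Gamma(1+k,\cdot)$.

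The main obstacle is rigorously justifying the interchanges of operations. Swapping $\int_0^b$ with the contour integral $\int_C$ uses Fubini's theorem, and swapping the double sum with $\int_C$ uses Tonelli's theorem, exactly as invoked in the preceding sections; both require the integrand and summand to be of bounded measure on $\mathbb{C}\times[0,b]$. The delicate point specific to this theorem is the convergence of the double binomial expansion throughout the interval: one needs $|x^c z|<1$ and $|x^s\alpha|<1$ for $x\in(0,b]$ so that the two series actually represent $(1+x^c z)^{-d}$ and $(1+x^s\alpha)^{-f}$, and the resulting double sum must converge absolutely to license its interchange with the $x$- and $w$-integrations. Controlling the endpoint behaviour near $x=0$ (where $x^m$ and $\log(ax)$ may be singular) and fixing the branch of $\log(ax)$ under the hypotheses $Re(a)>0$, $Re(b)>0$ is where those assumptions are genuinely used.
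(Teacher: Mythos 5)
Your proposal follows the paper's own proof essentially verbatim: the left member is assembled from the generalized Cauchy formula as in~(\ref{eq:10.2}), the right member from~(\ref{eq:4.5}) with $\beta=\gamma=0$ and the substitutions $m\to cj+ls+m+1$, $a\to ab$ as in~(\ref{eq:10.3}), and the two are equated through~(\ref{eq:10.1}) with Fubini/Tonelli justifying the interchanges. Your added remarks on the radius of convergence of the binomial expansions and the branch of $\log(ax)$ go slightly beyond what the paper states but do not alter the argument.
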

\begin{proof}
We use equations (\ref{eq:10.2}) and (\ref{eq:10.3}) and take the infinite sums over $j\in[0,\infty),l\in[0,\infty)$ and note the right-hand sides are equal relative to equation (\ref{eq:10.1}). We are able to switch the order of integration over $x$ and $w$ using Fubini's theorem for multiple integrals see page 178 in \cite{gelca}, since the integrand is of bounded measure over the space $\mathbb{C} \times [0,b]$. We are able to switch the order of integration and summations over $w$ using Tonellii's theorem for  integrals and sums see page 177 in \cite{gelca}, since the summand is of bounded measure over the space $\mathbb{C} \times [0,\infty) \times [0,\infty)$. Simplifying the sums and gamma function will yield the stated result.
\end{proof}
\subsection{Definite integrals in terms of elliptic functions}
In this section we derive definite integrals over a finite domain in terms of elliptic functions and infinite series involving the hypergeometric function. We derive a few examples to show the parameter substitution and algebraic method involved with many other examples listed for similar evaluation. The main theorem in this section can be used derived entries 3.138 in \cite{grad}. Complete elliptic integrals find applications in various fields, including geometry, physics, mechanics, electrodynamics, statistical mechanics, astronomy, geodesy, geodesics on conics, and magnetic field calculations [Wolfram,\href{https://functions.wolfram.com/EllipticIntegrals/EllipticPi/introductions/CompleteEllipticIntegrals/ShowAll.html}{Applications}].
\begin{example}
Derivation of equation (2.2.5.18) in \cite{prud1} in terms of the complete elliptic integral $E(m)$. Use equation (\ref{eq:10.4}) and set $k\to 0,m\to 0,a\to 1,b\to 1,d\to -\frac{1}{2},f\to \frac{1}{2},c\to 2,\alpha \to 1,s\to 2,z\to -z^2$ and simplify.
\begin{equation}
-i E\left(i \sinh ^{-1}(1)|-z^2\right)=\sum _{l=0}^{\infty } \frac{\binom{-\frac{1}{2}}{l} \,
   _2F_1\left(-\frac{1}{2},\frac{1}{2}+l;\frac{3}{2}+l;z^2\right)}{1+2 l}
\end{equation}
where $0< Re(z)<1$.
\end{example}
\begin{example}
Derivation of equation (2.2.5.19) in \cite{prud1}. Use equation (\ref{eq:10.4}) and set $k\to 0,a\to 1,d\to \frac{1}{2},f\to 1,z\to \frac{1}{a},c\to 1,s\to 1,\alpha \to k^2,b\to a,m\to
   \frac{1}{2}$ and simplify.
   \begin{multline}
\int_0^a \frac{\sqrt{\frac{x}{a+x}}}{1+k^2 x} \, dx=\sum _{l=0}^{\infty } \frac{2 (-1)^l a^{1+l} k^{2 l} \,
   _2F_1\left(\frac{1}{2},\frac{3}{2}+l;\frac{5}{2}+l;-1\right)}{3+2 l}\\
=\frac{\sqrt{2 \left(1+k^2 a\right)}-2
   E\left(\frac{\pi }{4}|\sqrt{1-k^2 a}\right)}{k^2}
\end{multline}
where $0< Re(k)<1,Re(a)>0$.
\end{example}
\begin{example}
Derivation of  Elliptic Integral of the First Kind. Use equation (\ref{eq:10.4}) and set $k\to 0,m\to 0,d\to \frac{1}{2},f\to \frac{1}{2},c\to 2,s\to 2,\alpha \to -1,z\to -k^2,a\to 1,b\to \sin (\phi
   )$ and simplify.
   \begin{multline}
\int_0^{\sin (\phi )} \frac{1}{\sqrt{1-x^2} \sqrt{1-k^2 x^2}} \, dx\\
=\sum
   _{l=0}^{\infty } \frac{(-1)^l \binom{-\frac{1}{2}}{l} \,
   _2F_1\left(\frac{1}{2},\frac{1}{2}+l;\frac{3}{2}+l;k^2 \sin ^2(\phi )\right)
   \sin ^{1+2 l}(\phi )}{1+2 l}\\
=F\left(\phi \left|k^2\right.\right)
\end{multline}
where $0< Re(k)<1,|Re(\phi)|<\pi/2$.
\end{example}
\begin{example}
Derivation of equation (4.242.1) in \cite{grad}. Use equation (\ref{eq:10.4}) and set $m\to 0,k\to 1,a\to 1,d\to \frac{1}{2},c\to 2,f\to \frac{1}{2},s\to 2,\alpha \to -b^2,z\to a^2,b\to
   \frac{1}{b}$ and simplify.
   \begin{multline}
\int_0^b \frac{\log (x)}{\sqrt{1+a^2 x^2} \sqrt{1-b^2 x^2}} \, dx\\
=-\sum _{j=0}^{\infty } \sum _{l=0}^{\infty }
   a^{2 j} (-1)^l b^{2 l} \binom{-\frac{1}{2}}{j} \binom{-\frac{1}{2}}{l} E_{-1}(-((1+2 j+2 l) \log (b))) \log
   ^2(b)
\end{multline}
where $0< Re(b)<1,Re(a)>0$.
\end{example}
\begin{example}
Derivation of Example 1 in in \cite{paul}. Use equation (\ref{eq:10.4}) and set $d\to -\frac{1}{2},f\to \frac{1}{2},k\to 0,a\to 1,m\to 0,c\to 2,s\to 2,z\to \alpha ^2,\alpha \to \beta
   ^2$ and simplify.
   \begin{multline}
E\left(i \sinh ^{-1}(b \beta )|\frac{\alpha ^2}{\beta ^2}\right)=\sum _{l=0}^{\infty } \frac{i b^{1+2 l} \beta
   ^{1+2 l} \binom{-\frac{1}{2}}{l} \, _2F_1\left(-\frac{1}{2},\frac{1}{2}+l;\frac{3}{2}+l;-b^2 \alpha ^2\right)}{1+2
   l}
\end{multline}
where $0< Re(\beta)<1,Re(\alpha)>0$.
\end{example}
\begin{example}
Derivation of equation (3.138.1 ) in \cite{grad}. Use equation (\ref{eq:10.4}) and set $d\to \frac{1}{2},f\to \frac{1}{2},m\to -\frac{1}{2},k\to 0,a\to 1,c\to 1,s\to 1,z\to -1,\alpha \to -k^2,b\to
   u$ and simplify.
   \begin{multline}
\int_0^u \frac{1}{\sqrt{x (1-x) \left(1-k^2 x\right)}} \, dx=\sum _{l=0}^{\infty } \frac{2 (-1)^l k^{2 l}
   u^{\frac{1}{2}+l} \binom{-\frac{1}{2}}{l} \, _2F_1\left(\frac{1}{2},\frac{1}{2}+l;\frac{3}{2}+l;u\right)}{1+2 l}\\
=2 F\left(\sin ^{-1}\left(\sqrt{u}\right)|k^2\right)
\end{multline}
where $0< Re(\beta)<1,Re(\alpha)>0$.
\end{example}
\begin{example}
Derivation of equation (4.267.43(10) ) in \cite{grad}. Use equation (\ref{eq:10.4}) and set $b\to 1,a\to 1,z\to -1,\alpha \to -1,c\to p,s\to q,d\to -n,f\to -m,m\to r-1$ and apply l'Hopital's rule as $k\to -1$ and simplify.
\begin{multline}
\int_0^1 \frac{x^{-1+r} \left(1-x^p\right)^n \left(1-x^q\right)^m}{\log (x)} \, dx=\sum _{j=0}^n \sum _{l=0}^m
   (-1)^{j+l} \binom{m}{l} \binom{n}{j} \log (j p+l q+r)
\end{multline}
where $Re(r)>0$.
\end{example}
\begin{example}
Derivation of equation (3.121.1) in \cite{grad}. Use equation (\ref{eq:10.4}) and set $k\to 0,a\to 1,d\to 1,f\to 1,c\to 1,s\to 1,z\to -e^{-i t},\alpha \to -e^{i t},m\to -\frac{1}{2},b\to 1$ and simplify.
\begin{multline}
\int_0^1 \frac{1}{\sqrt{x} \left(1+x^2-2 x \cos (t)\right)} \, dx\\
=-i e^{\frac{i t}{2}} \left(-\coth
   ^{-1}\left(e^{\frac{i t}{2}}\right)+e^{i t} \tanh ^{-1}\left(e^{\frac{i t}{2}}\right)\right) (-i+\cot (t))\\
=2 \csc
   (t) \sum _{k=1}^{\infty } \frac{\sin (k t)}{2 k-1}
\end{multline}
where $Re(t)<\pi/2$.
\end{example}
\begin{example}
Derivation of equation (3.138.4) in \cite{grad}. Use equation (\ref{eq:10.4}) and set $k\to 0,a\to 1,m\to -\frac{1}{2},c\to 1,z\to 1,s\to 1,\alpha \to k^2,d\to \frac{1}{2},f\to \frac{1}{2},b\to
   u$ and simplify.
   \begin{multline}
\int_0^u \frac{1}{\sqrt{x} \sqrt{1+x} \sqrt{1+k^2 x}} \, dx=\sum _{l=0}^{\infty } \frac{2 k^{2 l}
   u^{\frac{1}{2}+l} \binom{-\frac{1}{2}}{l} \, _2F_1\left(\frac{1}{2},\frac{1}{2}+l;\frac{3}{2}+l;-u\right)}{1+2
   l}\\
=2 F\left(\tan ^{-1}\left(\sqrt{u}\right)|1-k^2\right)
\end{multline}
where $0< Re(k)<1,0< Re(u)<1$.
\end{example}
\begin{example}
Derivation of equation (2.6.10.4) in \cite{prud1}. Use equation (\ref{eq:10.4}) and set $k\to 0,a\to 1,f\to 1,d\to \rho ,s\to 1,c\to 1,z\to \frac{1}{a}$ then take the indefinite integral with respect to $\alpha$ and simplify.
\begin{multline}
\int_0^b \frac{x^{-1+m} \log (a+x)}{(a+x)^{\rho }} \, dx=\frac{a^{-\rho
   } b^m \, _2F_1\left(m,\rho ;1+m;-\frac{b}{a}\right) \log (a+b)}{m}\\
-\sum
   _{j=0}^{\infty } \frac{a^{-1-j-\rho } b^{1+j+m} \Gamma (1-\rho ) \,
   _2F_1\left(1,1+j+m;2+j+m;-\frac{b}{a}\right)}{(j+m) (1+j+m) \Gamma (1+j)
   \Gamma (1-j-\rho )}
\end{multline}
where $0< Re(m)<1,0< Re(\rho)>0$.
\end{example}
\begin{example}
Derivation of equation (2.6.13.14) in \cite{prud1}. Use equation (\ref{eq:10.4}) and set $k\to 0,a\to 1,f\to \frac{1}{2},d\to 1,s\to 2,c\to 1,\alpha \to -\frac{1}{a^2}$ and take the definite integral with respect to $z\in[-z,z]$ and simplify.
\begin{multline}\label{eq:10.15}
\int_0^b \frac{x^{-1+m} \tanh ^{-1}(x z)}{\sqrt{a^2-x^2}} \, dx=\sum _{j=0}^{\infty } \sum _{l=0}^{\infty }
   \frac{(-1)^l a^{-2 l} b^{1+j+2 l+m} z \left((-z)^j+z^j\right) \binom{-1}{j} \binom{-\frac{1}{2}}{l}}{2 a (1+j)
   (1+j+2 l+m)}
\end{multline}
where $-1< Im(a)<1,Re(a)>1$.
\end{example}
\begin{example}
Use equation (\ref{eq:10.15}) and set $m=1,z\to b$ and simplify.
\begin{multline}
\int_0^b \frac{\tanh ^{-1}(b x)}{\sqrt{a^2-x^2}} \, dx=2 \left(\sin ^{-1}\left(\frac{b}{a}\right)-\sin
   ^{-1}\left(\frac{\sqrt{1-\sqrt{1-\frac{b^2}{a^2}}}}{\sqrt{2}}\right)\right) \tanh ^{-1}\left(b^2\right)\\
+\sum
   _{l=0}^{\infty } \frac{(-1)^l a^{-1-2 l} b^{3+2 l} \sqrt{\pi } \, _2F_1\left(1,1+l;2+l;b^4\right)}{(1+2 l)^2
   \Gamma \left(-\frac{1}{2}-l\right) \Gamma (2+l)}
\end{multline}
where $-1< Im(a)<1,Re(a)>1$.
\end{example}
\begin{example}
Derivation of equation (2.6.13.14) in \cite{prud1}. Use equation (\ref{eq:10.15}) and set $b\to a$ and simplify. 
\begin{multline}
\int_0^a \frac{\tanh ^{-1}(a x)}{\sqrt{(a-x) (a+x)}} \, dx=\frac{1}{2} \pi  \tanh ^{-1}\left(a^2\right)+\sum
   _{l=0}^{\infty } \frac{(-1)^l a^2 \sqrt{\pi } \, _2F_1\left(1,1+l;2+l;a^4\right)}{(1+2 l)^2 \Gamma
   \left(-\frac{1}{2}-l\right) \Gamma (2+l)}
\end{multline}
where $-1< Im(a)<1,Re(a)>1$.
\end{example}
\begin{example}
Power series derivation for equation 145(35) in \cite{bdh}. Use equation (\ref{eq:10.4}) and set $d\to \frac{1}{2},f\to \frac{1}{2},c\to 2,s\to 2,k\to 1,a\to 1,m\to 0,z\to -z^2,\alpha \to -\alpha ^2$. Next form a second equation by replacing $b\to q$ and take their difference. Then replace $z\to p,\alpha \to p$ and simplify.
\begin{multline}
\int_p^q \frac{\log (x)}{\sqrt{1-p^2 x^2} \sqrt{1-q^2 x^2}} \, dx\\
=\sum _{j=0}^{\infty } \sum _{l=0}^{\infty }
   \left(-p^2\right)^j \left(-q^2\right)^l \binom{-\frac{1}{2}}{j} \binom{-\frac{1}{2}}{l} \left(E_{-1}(-((1+2 j+2 l)
   \log (p))) \log ^2(p)\right. \\ \left.
-E_{-1}(-((1+2 j+2 l) \log (q))) \log ^2(q)\right)
\end{multline}
where $Re(p)>0,Re(q)>0$.
\end{example}
\begin{example}
Derivation of equation 145(5) in \cite{bdh}. Use equation (\ref{eq:10.4}) and set $d\to 0,f\to 0,z\to 1,\alpha \to 1,c\to 0,s\to 0$ and replace $a\to e^a$. Next take the first partial derivative with respect to $k$ and set $k=0$. Next set $a=1/2,b=e^{-1}$. Take the indefinite integral with respect to $m$ and set $m\to 2q-1$ and simplify.
\begin{equation}
\int_0^{\frac{1}{e}} \frac{x^{2 q-1} \log \left(2 \log \left(\frac{1}{x}\right)-1\right)}{\log (x)} \,
   dx=-\frac{1}{2}E_1(q){}^2
\end{equation}
where $Re(q)>0$.
\end{example}
\begin{example}
Extended form of equation 123(2) in \cite{bdh}. Use equation (\ref{eq:10.4}) and set $a\to 1,b\to 1,d\to -p,f\to -q,c\to 1,s\to 1,k=-1$ then form a second equation by replacing $m\to s$ and their difference.
\begin{multline}
\int_0^1 \frac{\left(-x^m+x^s\right) (1+x z)^p (1+x \alpha )^q}{\log (x)} \, dx=\sum _{j=0}^{\infty } \sum
   _{l=0}^{\infty } z^j \alpha ^l \binom{p}{j} \binom{q}{l} \log \left(\frac{1+j+l+m}{1+j+l+s}\right)
\end{multline}
where $Re(p)>0,Re(q)>0$.
\end{example}
\begin{example}
Derivation equation 33(10)  in \cite{bdh}. Use equation (\ref{eq:10.4}) and set $d\to 0,f\to 0,a\to e,c\to 0,s\to 0,z\to 1,\alpha \to 1$ then take the first partial derivative with respect to $m$ and set $m=0$. Next form two equations by replacing $b=e,b=1$ and take their difference and set $k=-2$ and simplify.
\begin{equation}
\int_1^e \frac{\log (x)}{(1+\log (x))^2} \, dx=\frac{1}{2} (-2+e)
\end{equation}
\end{example}
\begin{example}
Derivation of (4.1.5.7) in \cite{brychkov}. Use equation (\ref{eq:10.4}) and set $d\to 1,c\to 1,z\to 1,f\to 1,k\to 0,a\to 1,b\to 1$. Next take the indefinite integral with respect to $\alpha$ and set $s=2+\sqrt{3}$. Then set $m=2+\sqrt{3},\alpha=1$ and simplify. Note entries (4.1.5.8-15) can be derived in a similar manner with parameter substitutions and algebraic methods being applied to equation (\ref{eq:10.4}).
\begin{multline}\label{eq:10.22}
\int_0^1 \frac{\log \left(1+x^{2+\sqrt{3}}\right)}{1+x} \, dx\\
=\sum _{j=0}^{\infty } \frac{(-1)^j \left(-\psi
   ^{(0)}\left(\frac{1}{2} \left(3+\sqrt{3}+\left(2+\sqrt{3}\right) j\right)\right)+\psi ^{(0)}\left(\frac{1}{2}
   \left(4+\sqrt{3}+\left(2+\sqrt{3}\right) j\right)\right)\right)}{2 (1+j)}\\
=\frac{1}{12} \pi ^2
   \left(1-\sqrt{3}\right)+\log (2) \log \left(1+\sqrt{3}\right)
\end{multline}
\end{example}
\begin{example}
Derivation of  (4.1.5.7) in \cite{brychkov}.  Repeat the steps in equation (\ref{eq:10.22}) with $s=3+\sqrt{8},\alpha=1$ and simplify.
\begin{multline}
\int_0^1 \frac{\log \left(1+x^{3+2 \sqrt{2}}\right)}{1+x} \, dx\\
=\sum _{l=0}^{\infty } \frac{(-1)^l \left(-\psi
   ^{(0)}\left(\frac{1}{2} \left(4+2 \sqrt{2}+\left(3+2 \sqrt{2}\right) l\right)\right)+\psi ^{(0)}\left(\frac{1}{2}
   \left(5+2 \sqrt{2}+\left(3+2 \sqrt{2}\right) l\right)\right)\right)}{2 (1+l)}\\
=\frac{1}{24} \pi ^2
   \left(3-\sqrt{32}\right)+\frac{1}{2} \log (2) \left(\log (2)+\frac{3}{2} \log
   \left(3+\sqrt{8}\right)\right)
\end{multline}
\end{example}
\begin{example}
Derivation of Euler's first integral, see page 146 in \cite{hymers} page 146. Use equation (\ref{eq:10.4}) and set $k\to 0,f\to 0,s\to 0,\alpha \to 1,z\to -1,c\to n,d\to 1-\frac{q}{n},m\to p-1,a\to 1$ and simplify the series with $b=1$ and simplify.
\begin{equation}
\int_0^1 x^{-1+p} \left(1-x^n\right)^{-1+\frac{q}{n}} \, dx=\frac{\Gamma \left(1+\frac{p}{n}\right) \Gamma
   \left(\frac{q}{n}\right)}{p \Gamma \left(\frac{p}{n}+\frac{q}{n}\right)}
\end{equation}
where $Re(p)>0,Re(q)>0$.
\end{example}
\begin{example}
Derivation of equation (4.2.6) in \cite{nahin}. Use equation (\ref{eq:10.4}) and set $k\to 0,f\to 0,s\to 0,\alpha \to 1,z\to -1,c\to 1,d\to -n,a\to 1,m\to n,b\to 1$ and simplify.
\begin{equation}
\int_0^1 (1-x)^n x^n \, dx=\frac{2^{-1-2 n} \sqrt{\pi } \Gamma (2+n)}{(1+n) \Gamma \left(\frac{1}{2} (3+2
   n)\right)}=\frac{(n!)^2}{(2 n+1)!}
\end{equation}
where $Re(n)>0$.
\end{example}
\begin{example}
Derivation of equation  (4.5) in \cite{nahin}, challenge problem (C4.1). Use equation (\ref{eq:10.4}) and set $k\to 0,f\to 0,s\to 0,\alpha \to 1,z\to -1,c\to \frac{1}{2},d\to -n,a\to 1,m\to 0,b\to 1$ and simplify.
\begin{equation}
\int_0^1 \left(1-\sqrt{x}\right)^n \, dx=\frac{2}{(1+n) (2+n)}
\end{equation}
where $Re(n)>0$.
\end{example}
\begin{example}
Derivation of equation (5.1.2) in \cite{nahin}. Use equation (\ref{eq:10.4}) and set $k\to 1,a\to 1,f\to 0,\alpha \to 1,s\to 0,m\to 0,d\to 1,b\to 1,z\to 1,c\to 2$ and simplify.
\begin{equation}
\int_0^1 \frac{\log (x)}{1+x^2} \, dx=-C
\end{equation}

\end{example}
\begin{example}
Derivation of equation (5.5) in \cite{nahin}, extended form. Use equation (\ref{eq:10.4}) and set $m\to 0,d\to -1,f\to 1,k\to 0,a\to 1,c\to m,s\to n,b\to 1$ and simplify.
\begin{equation}
\int_0^1 \frac{1+x^m z}{1+x^n \alpha } \, dx=\frac{\Phi \left(-\alpha ,1,\frac{1}{n}\right)+z \Phi
   \left(-\alpha ,1,\frac{1+m}{n}\right)}{n}
\end{equation}
where $Re(m)>0,Re(n)>0,Re(z)>0,Re(\alpha)>0$.
\end{example}
\begin{example}
Derivation of Extended form of a powerful elementary integral equation (3.1) in \cite{valean}. Use equation (\ref{eq:10.4}) and set $k\to 0,a\to 1,d\to 1,f\to \frac{1}{2},s\to 2,\alpha \to -1,c\to 1,b\to 1$ and simplify.
\begin{multline}
\int_0^1 \frac{x^m}{\sqrt{1-x^2} (1+x z)} \, dx=\frac{\sqrt{\pi } \Gamma \left(\frac{1}{2}+\frac{m}{2}\right) \,
   _2F_1\left(1,\frac{1+m}{2};\frac{2+m}{2};z^2\right)}{m \Gamma \left(\frac{m}{2}\right)}\\
-\frac{\sqrt{\pi } z \Gamma
   \left(1+\frac{m}{2}\right) \, _2F_1\left(1,\frac{2+m}{2};\frac{3+m}{2};z^2\right)}{2 \Gamma
   \left(\frac{3}{2}+\frac{m}{2}\right)}
\end{multline}
where $Re(m)>0,Re(z)>0$.
\end{example}
\begin{example}
Derivation of equation (1.32) in \cite{nahin} over the complex plane. An infinite series is also derived from a double series two ways. Use equation (\ref{eq:10.4}) and set $k\to 0,a\to 1,d\to 1,f\to m,c\to 1,z\to 1,m\to 2 m,s\to 2,\alpha \to 1$ and compare when the infinite sums are reversed.
\begin{multline}
\sum _{l=0}^{\infty } b^{2 l} \binom{-m}{l} \Phi (-b,1,1+2 l+2 m)=\sum
   _{j=0}^{\infty } \frac{(-1)^j b^j \,
   _2F_1\left(m,\frac{1}{2}+\frac{j}{2}+m;\frac{3}{2}+\frac{j}{2}+m;-b^2\right)
   }{1+j+2 m}
\end{multline}
where $Re(m)>0,|Im(b)|\leq 1$.
\end{example}
\begin{example}
Derivation of generalized form of equation (1.10) in \cite{nahin}.. Use equation (\ref{eq:10.4}) and set $d\to 1,f\to 1,c\to 1,s\to 1,a\to 1,b\to 1$ and take the indefinite integral over $z$ and $\alpha$ then set $m=2$ and simplify.
\begin{multline}
\int_0^1 \log ^k\left(\frac{1}{x}\right) \log (1+x z) \log (1+x \alpha ) \, dx\\
=z \alpha  \Gamma (k+1) \sum
   _{j=0}^{\infty } \sum _{l=0}^{\infty } \frac{(-1)^{j+l} z^j \alpha ^l}{(1+j) (1+l) (3+j+l)^{k+1}}
\end{multline}
where $k,z,\alpha\in\mathbb{C}$.
\end{example}
\begin{example}
Derivation of Example (17) page 486 in \cite{duncan}. Use equation (\ref{eq:10.4}) and set $d\to n,f\to n-\frac{1}{2},z\to \frac{1}{z},c\to 1,s\to 2,\alpha \to -1,k\to 0,a\to 1,m\to 0$ and simplify. Next replace $z\to -\frac{a^2+1}{2 a}$. Next form a second equation by replacing $b\to -1$ and take their difference and simplify.
\begin{multline}
\int_{-1}^1 \frac{\left(1-x^2\right)^{\frac{1}{2}-n}}{\left(1+a^2-2 a x\right)^n} \, dx=\begin{cases}
			-\frac{(-1)^{-2 n}
   \sqrt{\pi } \Gamma \left(\frac{1}{2} (3-2 n)\right) \, _2F_1\left(\frac{1}{2}+\frac{n}{2},\frac{n}{2};2-n;\frac{4
   a^2}{\left(1+a^2\right)^2}\right)}{\Gamma (2-n) \left(1+a^2\right)^n}, & \text{if $n \leq -m/2$}\\
            \frac{\sqrt{\pi } (-1)^{-2 n} \left(a^2+1\right)^{-n} \Gamma \left(\frac{1}{2} (3-2 n)\right) \,
   _2F_1\left(\frac{n}{2}+\frac{1}{2},\frac{n}{2};2-n;\frac{4 a^2}{\left(a^2+1\right)^2}\right)}{\Gamma (2-n)}, & \text{if $n <0$}
		 \end{cases}
\end{multline}
where $Re(a)<1,m\in\mathbb{Z}$.
\end{example}
\begin{example}
Derivation of Extended Nielsen-Bowman integral \cite{bowman}. Use equation (\ref{eq:10.4}) and set $a\to 1,b\to 1,d\to 0,z\to 1,c\to 0,f\to \frac{1}{2}$ and simplify.
\begin{equation}
\int_0^1 \frac{x^m \log ^k\left(\frac{1}{x}\right)}{\sqrt{1+x^s \alpha }} \, dx=\Gamma (1+k) \sum
   _{l=0}^{\infty } \frac{\alpha ^l \binom{-\frac{1}{2}}{l}}{(1+m+l s)^{k+1}}
\end{equation}
where $|Re(m)|<1$.
\end{example}
\begin{example}
Extended form for Nielsen Generalized Polylogarithm see [Wolfram,\href{https://mathworld.wolfram.com/NielsenGeneralizedPolylogarithm.html}{1}]. Use equation (\ref{eq:10.4}) and set $a\to 1,b\to 1,d\to 0,z\to 1,c\to 0,s\to 1$. Then take the $h$-th derivative of both sides with respect to $f$ and simplify using [Wolfram,\href{http://functions.wolfram.com/06.03.20.0007.02}{02}]. Then replace $n\to -f,k\to l,m\to h$ and simplify,
\begin{multline}
\int_0^1 x^m (1+x \alpha )^{-f} (-\log (x))^k \log ^h(1+x \alpha ) \, dx\\
=\sum _{l=0}^{\infty }
   \frac{(1+l+m)^{-1-k} \alpha ^l \Gamma (1+k)\left((-f-l+1)_l\right){}^h}{l!}
\end{multline}
where $Re(\alpha)>1,h\geq 0$.
\end{example}
\begin{example}
Derivation of equation () in \cite{}. Use equation (\ref{eq:10.4}) and set 
\end{example}
\begin{example}
Derivation of an extended Bateman integral \cite{bateman}. Use equation (\ref{eq:10.4}) and set $k\to 0,a\to 1,c\to 1,s\to 1,z\to \frac{1}{z},\alpha \to \frac{1}{\alpha }$ next replace $z\to -z,\alpha\to -\alpha, z\to t-\sqrt{t^2-1},\alpha \to \sqrt{t^2-1}+t,f\to \frac{n+2}{2},d\to \frac{n+2}{2}$ and simplify.
\begin{multline}
\int_0^b \frac{x^{m-1}}{\left(1-2 t x+x^2\right)^{\frac{n+2}{2}}} \, dx\\
=\sum _{l=0}^{\infty } \frac{b^{l+m}
   \left(-t+\sqrt{-1+t^2}\right)^l \binom{-1-\frac{n}{2}}{l} \,
   _2F_1\left(l+m,1+\frac{n}{2};1+l+m;-\frac{b}{-t+\sqrt{-1+t^2}}\right)}{l+m}
\end{multline}
where $Re(m)>0$.
\end{example}
\section{Definite integrals with singularities}
An integral is considered singular if, at one or more points within the domain of integration, its integrand has an infinite value. Nevertheless, such integrals are said to exist if they converge. (They are considered to be nonexistent if they do not converge.) The Hilbert transform is the most widely used illustration of a singular integral. (Note that the logarithmic integral converges in the classical Riemann sense, therefore it is not singular.) Singular integrals are generally defined by taking the limit when the singularity vanishes and then removing the domain-space that contains the singularity.  [Wolfram,\href{https://mathworld.wolfram.com/SingularIntegral.html}{SingularIntegral}].
\begin{example}
Use equation (\ref{eq:10.4}) and set $a\to 2,b\to 1,m\to 1,z\to \frac{1}{4},c\to 4,\alpha \to \frac{1}{2},s\to 1,f\to 1,d\to 1$ and take the first partial derivative with respect to $k$ and set $k=0$ and simplify. Singularity at $x=1/2$.
\begin{multline}
\int_0^1 \frac{x \log \left(\log \left(\frac{1}{2 x}\right)\right)}{(2+x) \left(4+x^4\right)} \, dx
=\sum _{j=0}^{\infty } \sum _{l=0}^{\infty } \frac{(-1)^{j+l} 2^{-5-6 j-2 l} E_1(-((2+4
   j+l) \log (2)))}{2+4 j+l}\\
+\sum _{l=0}^{\infty } (-1)^l 2^{-5-l} \Phi \left(-\frac{1}{4},1,\frac{2+l}{4}\right) (i \pi +\log (\log (2)))
\end{multline}
\end{example}
\begin{figure}[H]
\caption{Plot of $Re(\frac{x \log (-\log (2 x))}{(x+2) \left(x^4+4\right)})$}
\includegraphics[width=8cm]{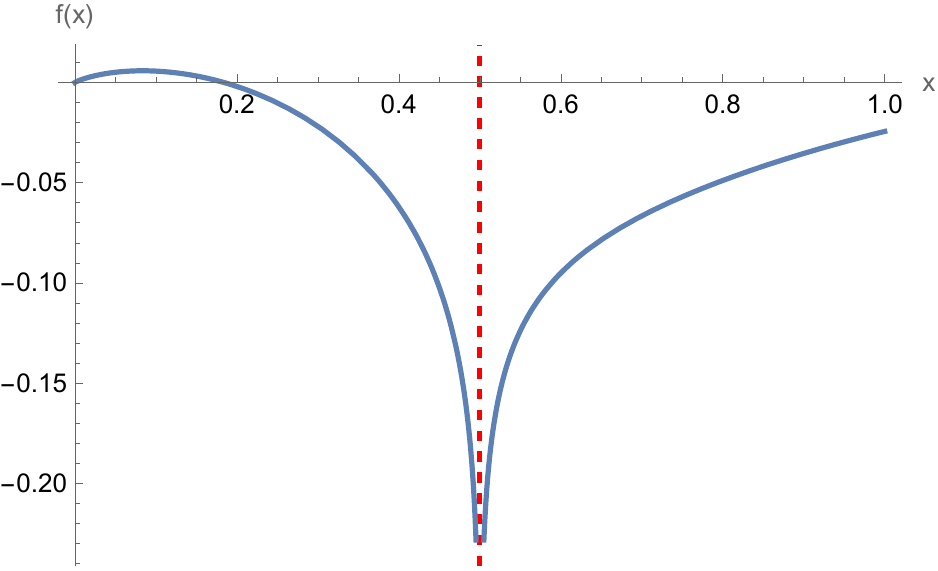}
\end{figure}
\begin{figure}[H]
\caption{Plot of $Im(\frac{x \log (-\log (2 x))}{(x+2) \left(x^4+4\right)})$}
\includegraphics[width=8cm]{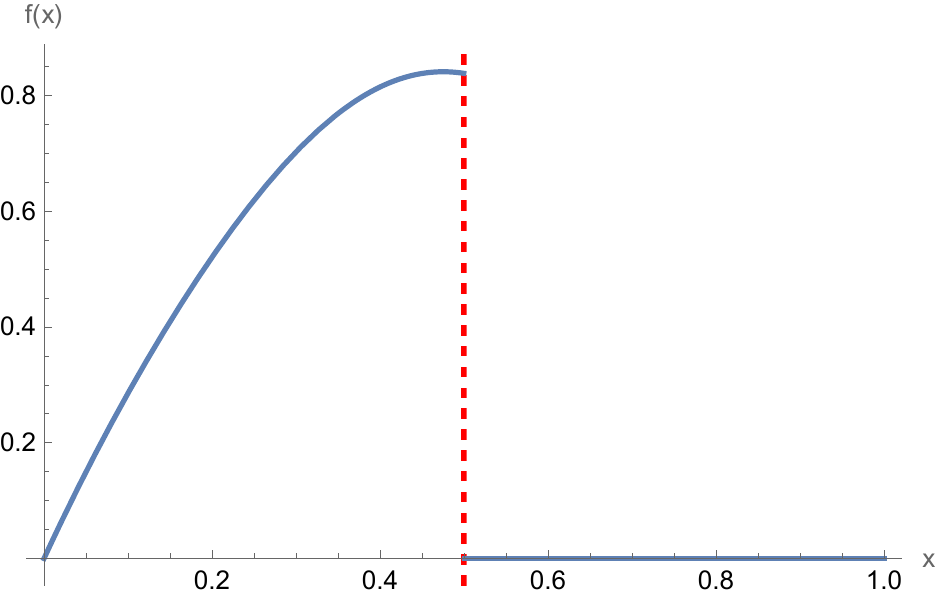}
\end{figure}
\begin{example}
Use equation (\ref{eq:10.4}) and set $a\to 2,b\to 1,m\to 1,z\to \frac{1}{4},c\to 4,\alpha \to \frac{1}{2},s\to 2,f\to \frac{1}{2},d\to \frac{1}{2}$ and take the first partial derivative with respect to $k$ and set $k=0$ and simplify. Singularity at $x=1/2$.
\begin{multline}
\int_0^1 \frac{x \log (\log (\frac{1}{2x}))}{\sqrt{2+x^2} \sqrt{4+x^4}} \, dx\\
=\sum _{j=0}^{\infty } \sum
   _{l=0}^{\infty } \frac{2^{\frac{1}{2} (-3) (3+4 j+2 l)} \binom{-\frac{1}{2}}{j} \binom{-\frac{1}{2}}{l}
   \left(E_1(-((1+2 j+l) \log (4)))+4^{1+2 j+l} (i \pi +\log (\log (2)))\right)}{1+2 j+l}
\end{multline}
\end{example}
\begin{example}
Use equation (\ref{eq:10.4}) and set $a\to \frac{3}{2},b\to 1,m\to 1,z\to -\frac{1}{5},c\to 4,\alpha \to -\frac{1}{2},s\to 2,f\to 3,d\to-\frac{1}{2}$ and take the first partial derivative with respect to $k$ and set $k=0$ and simplify. Singularity at $x=2/3$.
\begin{multline}
\int_0^1 \frac{x \sqrt{1-\frac{x^4}{5}} }{\left(-2+x^2\right)^3}\log \left(\log \left(\frac{2}{3
   x}\right)\right) \, dx\\
=-\sum _{j=0}^{\infty } \sum _{l=0}^{\infty } \frac{(-1)^{j+2 l}
   2^{-5-l} 5^{-j} 9^{-1-2 j-l} (1+l) (2+l) \binom{\frac{1}{2}}{j} }{1+2 j+l}\\
\left(4^{1+2 j+l} E_1\left(-2 (1+2 j+l) \log
   \left(\frac{3}{2}\right)\right)+9^{1+2 j+l} \left(i \pi +\log \left(\frac{1}{2} \log
   \left(\frac{9}{4}\right)\right)\right)\right)
\end{multline}
\end{example}
\begin{figure}[H]
\caption{Plot of $Re(\frac{x \sqrt{1-\frac{x^4}{5}} \log \left(\log \left(\frac{2}{3 x}\right)\right)}{\left(x^2-2\right)^3})$}
\includegraphics[width=8cm]{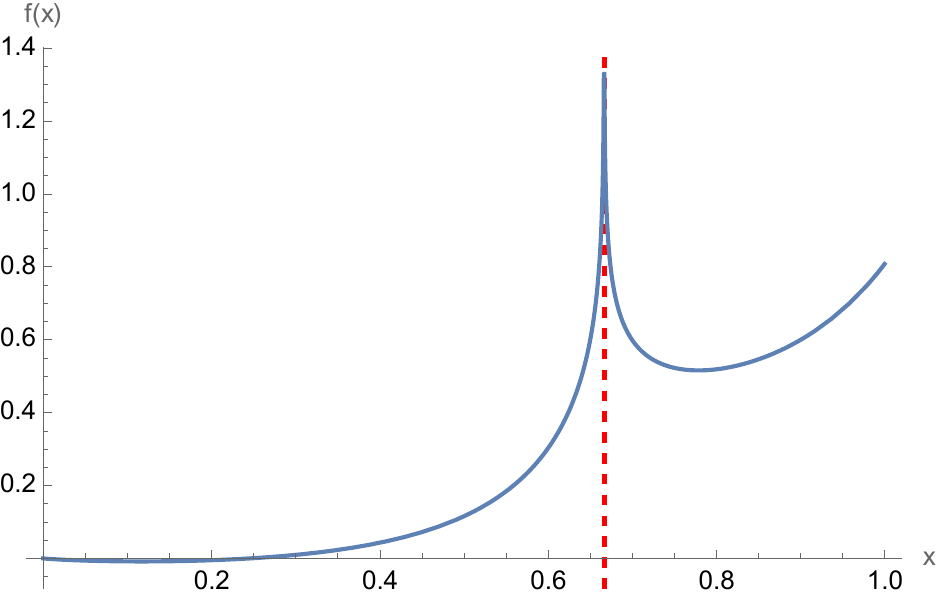}
\end{figure}
\begin{figure}[H]
\caption{Plot of $Im(\frac{x \sqrt{1-\frac{x^4}{5}} \log \left(\log \left(\frac{2}{3 x}\right)\right)}{\left(x^2-2\right)^3})$}
\includegraphics[width=8cm]{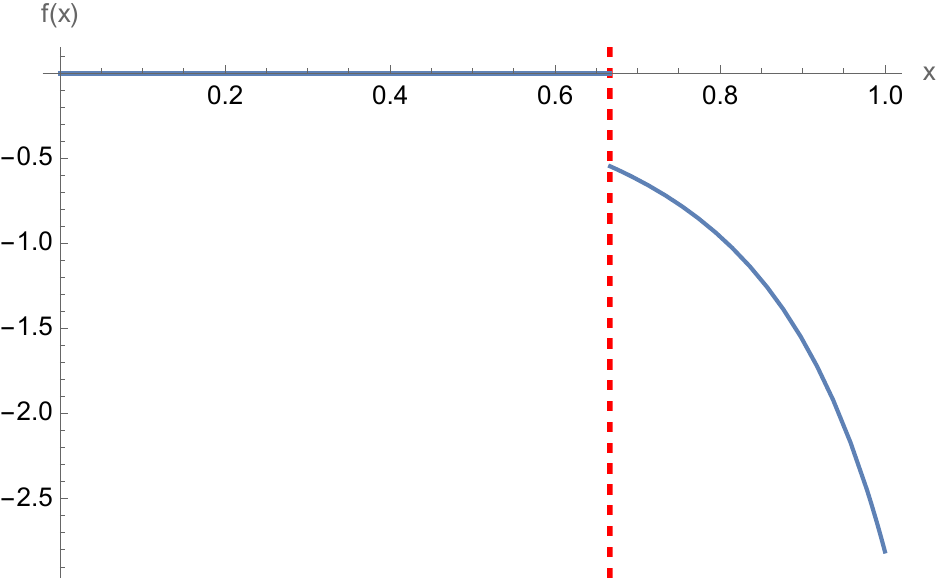}
\end{figure}
\begin{example}
Use equation (\ref{eq:10.4}) and set $a\to 3,b\to 1,m\to 1,z\to -\frac{1}{5},c\to 4,\alpha \to -\frac{1}{2},s\to 2,f\to 2,d\to -\frac{1}{2}$ and take the first partial derivative with respect to $k$ and set $k=0$ and simplify. Singularity at $x=1/3$.
\begin{multline}
\int_0^1 \frac{x \sqrt{1-\frac{x^4}{5}} \log \left(\log \left(\frac{1}{3
   x}\right)\right)}{\left(-2+x^2\right)^2} \, dx
=\sum _{j=0}^{\infty } \sum _{l=0}^{\infty } \frac{(-1)^{j+2 l}
   2^{-3-l} 5^{-j} 9^{-1-2 j-l} (1+l) \binom{\frac{1}{2}}{j} }{1+2 j+l}\\
\left(E_1(-2 (1+2 j+l) \log (3))+9^{1+2 j+l} (i \pi
   +\log (\log (3)))\right)
\end{multline}
\end{example}
\begin{figure}[H]
\caption{Plot of $Re(\frac{x \sqrt{1-\frac{x^4}{5}} \log (\log (3 x))}{\left(x^2-2\right)^2})$}
\includegraphics[width=8cm]{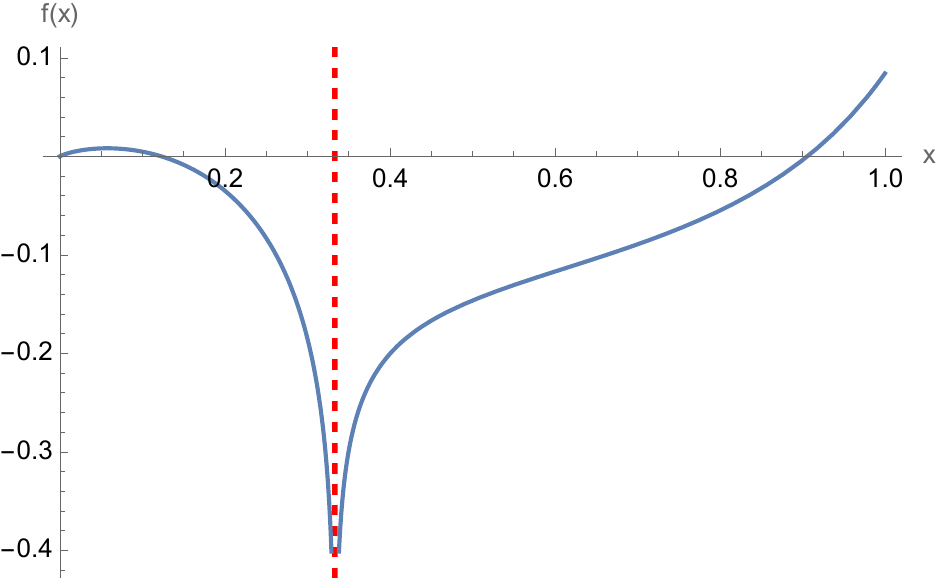}
\end{figure}
\begin{figure}[H]
\caption{Plot of $Im(\frac{x \sqrt{1-\frac{x^4}{5}} \log (\log (3 x))}{\left(x^2-2\right)^2})$}
\includegraphics[width=8cm]{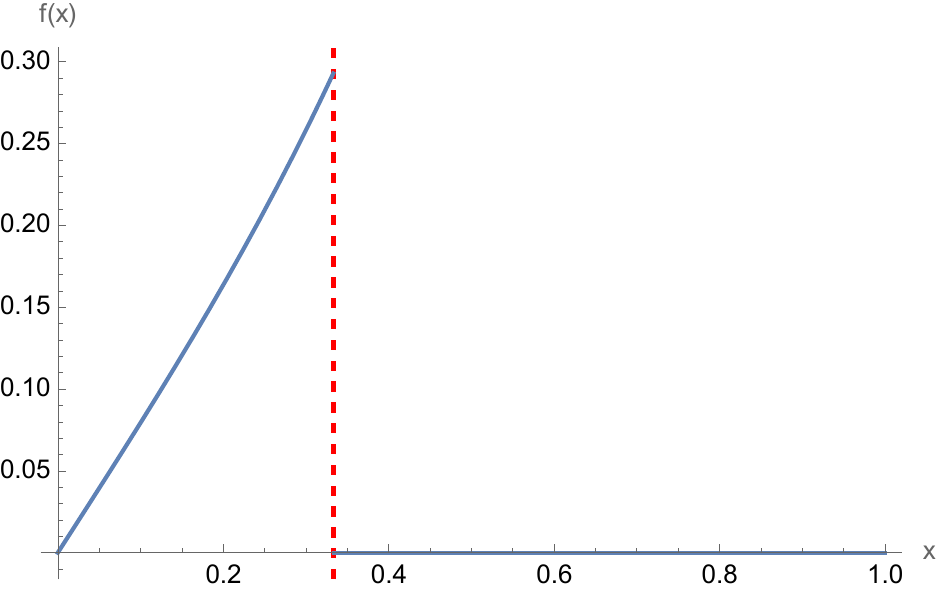}
\end{figure}
\begin{example}
Use equation (\ref{eq:10.4}) and set $a\to 3,b\to 1,m\to 1,z\to \frac{1}{7},c\to 7,\alpha \to \frac{1}{3},s\to 2,f\to \frac{3}{2},d\to-\frac{5}{2}$ and take the first partial derivative with respect to $k$ and set $k=0$ and simplify. Singularity at $x=1/3$.
\begin{multline}
\int_0^1 \frac{x \left(1+\frac{x^7}{7}\right)^{5/2} \log \left(\log \left(\frac{1}{3
   x}\right)\right)}{\left(1+\frac{x^2}{3}\right)^{3/2}} \, dx\\
=\sum _{j=0}^{\infty } \sum _{l=0}^{\infty }
   \frac{3^{-2-7 j-3 l} 7^{-j} \binom{-\frac{3}{2}}{l} \binom{\frac{5}{2}}{j} \left(E_1(-((2+7 j+2 l) \log
   (3)))+3^{2+7 j+2 l} (i \pi +\log (\log (3)))\right)}{2+7 j+2 l}
\end{multline}
\end{example}
\begin{figure}[H]
\caption{Plot of $Re(\frac{x \left(\frac{x^7}{7}+1\right)^{5/2} \log \left(\log \left(\frac{1}{3
   x}\right)\right)}{\left(\frac{x^2}{3}+1\right)^{3/2}})$}
\includegraphics[width=8cm]{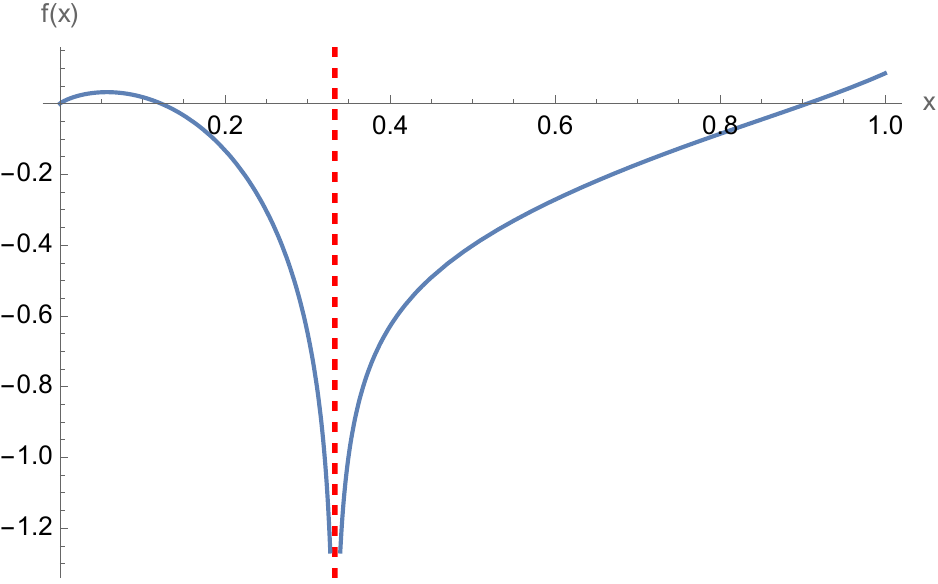}
\end{figure}
\begin{figure}[H]
\caption{Plot of $Im(\frac{x \left(\frac{x^7}{7}+1\right)^{5/2} \log \left(\log \left(\frac{1}{3
   x}\right)\right)}{\left(\frac{x^2}{3}+1\right)^{3/2}})$}
\includegraphics[width=8cm]{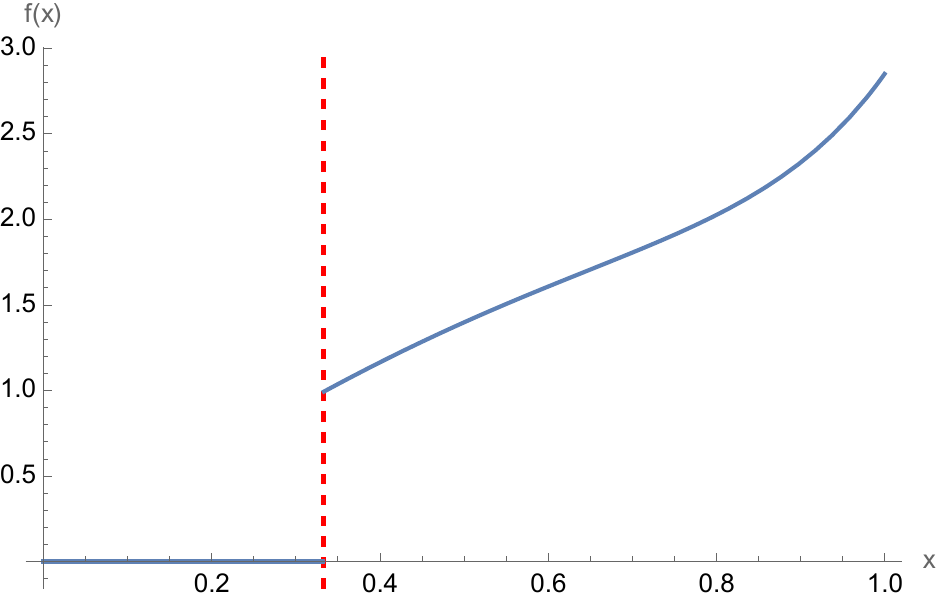}
\end{figure}
\begin{example}
Use equation (\ref{eq:10.4}) and set $a\to 4,b\to 1,d\to -\frac{1}{2},f\to \frac{1}{2},c\to 1,z\to \frac{1}{2},s\to 1,\alpha \to -\frac{1}{3},m\to0$ and take the first partial derivative with respect to $k$ and set $k=0$ and simplify. Singularity at $x=1/4$.
\begin{multline}
\int_0^1 \frac{\sqrt{2+x} \log \left(\log \left(\frac{1}{4 x}\right)\right)}{\sqrt{2-\frac{2 x}{3}}} \,
   dx\\
=\sum _{j=0}^{\infty } \sum _{l=0}^{\infty } \frac{\left(-\frac{1}{3}\right)^l 2^{-2-3 j-2 l}
   \binom{-\frac{1}{2}}{l} \binom{\frac{1}{2}}{j} \left(E_1(-((1+j+l) \log (4)))+4^{1+j+l} (i \pi +\log (\log
   (4)))\right)}{1+j+l}
\end{multline}
\end{example}
\begin{figure}[H]
\caption{Plot of $Re(\frac{\sqrt{x+2} \log \left(\log \left(\frac{1}{4 x}\right)\right)}{\sqrt{2-\frac{2 x}{3}}})$}
\includegraphics[width=8cm]{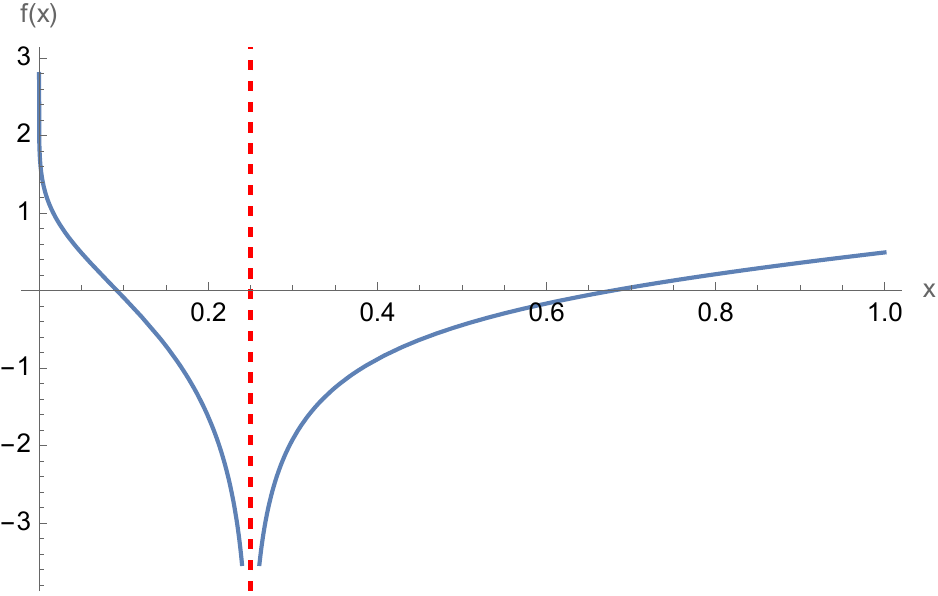}
\end{figure}
\begin{figure}[H]
\caption{Plot of $Im(\frac{\sqrt{x+2} \log \left(\log \left(\frac{1}{4 x}\right)\right)}{\sqrt{2-\frac{2 x}{3}}})$}
\includegraphics[width=8cm]{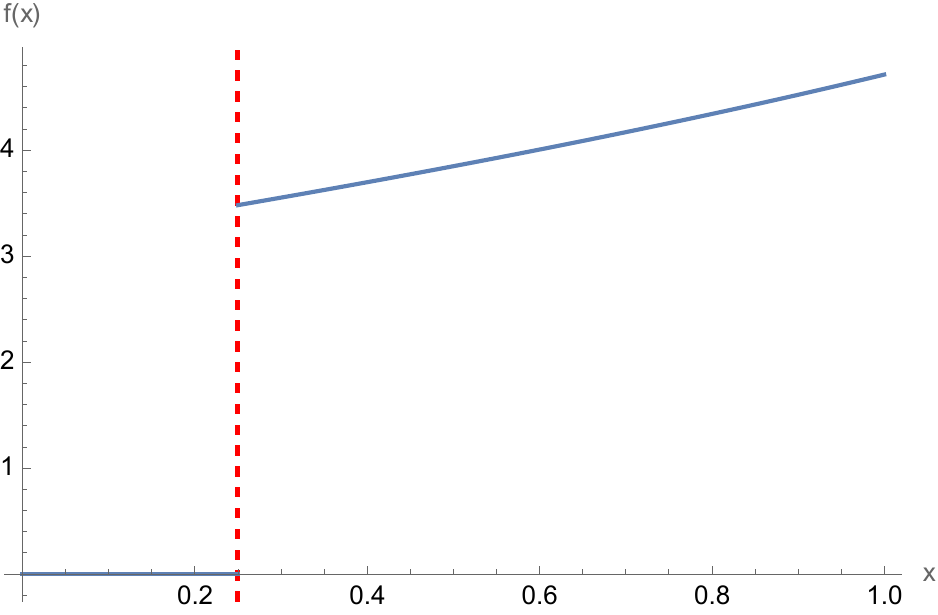}
\end{figure}
\subsection{Extended Sylvester integral}
In 1860, JJ. Sylvester \cite{sylvester} investigated a few definite integrals involving the product of the logarithmic and rational functions. These forms represent the logarithm function of an elliptic function form over the interval [0,1]. Alternate less complicated forms were previously studied by Euler in his Institutiones calculi integralis. In this section we derive an alternate closed form solution for one of Slyvester's integrals and extend the logarithmic function form and produced some plots with singularities. 
\begin{example}
 Alternate form for a Sylvester definite integral on pp. 298 in \cite{sylvester}. Use equation (\ref{eq:10.4}) and set $k\to 1,a\to 1,b\to 1,d\to \frac{1}{2},f\to 1,c\to 2,s\to 2,z\to -1,m\to 0,\alpha \to -c^2$ and simplify.
 \begin{multline}
\int_0^1 \frac{\log (x)}{\sqrt{1-x^2} \left(1-c^2 x^2\right)} \, dx=\frac{1}{2} \sqrt{\pi } \sum
   _{l=0}^{\infty } \frac{c^{2 l} \Gamma \left(\frac{3}{2}+l\right) \left(\psi ^{(0)}\left(\frac{1}{2}+l\right)-\psi
   ^{(0)}(1+l)\right)}{(1+2 l) \Gamma (1+l)}
\end{multline}
where $|Re(c)|<1$.
\end{example}
\begin{example}
A generalized logarithm definite integral. Use equation (\ref{eq:10.4}) and set $b\to 1,d\to \frac{1}{2},f\to 1,c\to 2,s\to 2,z\to -1,\alpha \to -c^2$ and simplify.
\begin{multline}
\int_0^1 \frac{x^m (-\log (a x))^k}{\sqrt{1-x^2} \left(1-c^2 x^2\right)} \, dx\\
=\sum _{j=0}^{\infty } \sum
   _{l=0}^{\infty } (-1)^j a^{-1-2 j-2 l-m} \left(-c^2\right)^l (1+2 j+2 l+m)^{-1-k} (-1)^l\\
 \binom{-\frac{1}{2}}{j}
   \Gamma (1+k,-((1+2 j+2 l+m) \log (a)))
\end{multline}
where $Re(k)>1, |Im(k)|>2\pi,  Re(a)\geq\pi, |Re(c)|<1$.
\end{example}
\begin{example}
Nested logarithm form. Use equation (\ref{eq:10.4}) and set $b\to 1,d\to \frac{1}{2},f\to 1,c\to 2,s\to 2,z\to -z^2,\alpha \to -c^2$ next take the first partial derivative with respect to $k$ and set $k=0$ and simplify.
\begin{multline}
\int_0^1 \frac{x^m \log \left(\log \left(\frac{1}{a x}\right)\right)}{\left(1-c^2 x^2\right) \sqrt{1-x^2 z^2}}
   \, dx\\
=\sum _{j=0}^{\infty } \sum _{l=0}^{\infty } \frac{(-1)^l a^{-1-2 j-2 l-m} \left(-c^2\right)^l
   \left(-z^2\right)^j \binom{-\frac{1}{2}}{j} }{1+2 j+2 l+m}\\
\left(E_1(-((1+2 j+2 l+m) \log (a)))+a^{1+2 j+2 l+m} \log (-\log
   (a))\right)
\end{multline}
where $Re(a)>\pi, |Re(c)|<1$.
\end{example}
\begin{figure}[H]
\caption{Plot of $Re\left(\frac{\sqrt{x} \log \left(\log \left(\frac{1}{2 x}\right)\right)}{\sqrt{1-\frac{x^2}{2}} \left(1-c^2 x^2\right)}\right)$}
\includegraphics[width=8cm]{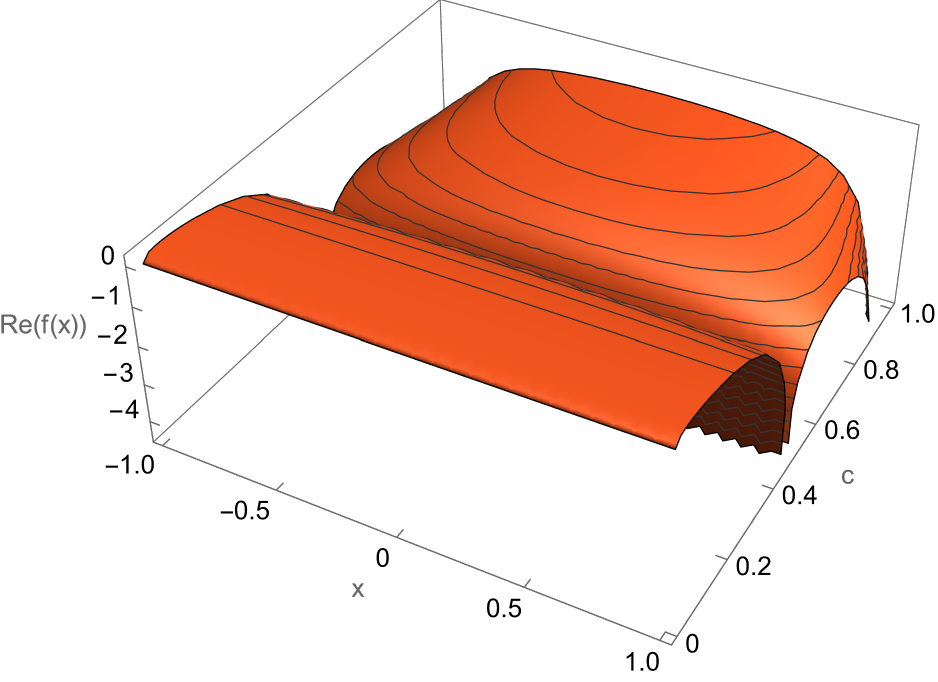}
\end{figure}
\begin{figure}[H]
\caption{Plot of $Im\left(\frac{\sqrt{x} \log \left(\log \left(\frac{1}{2 x}\right)\right)}{\sqrt{1-\frac{x^2}{2}} \left(1-c^2 x^2\right)}\right)$}
\includegraphics[width=8cm]{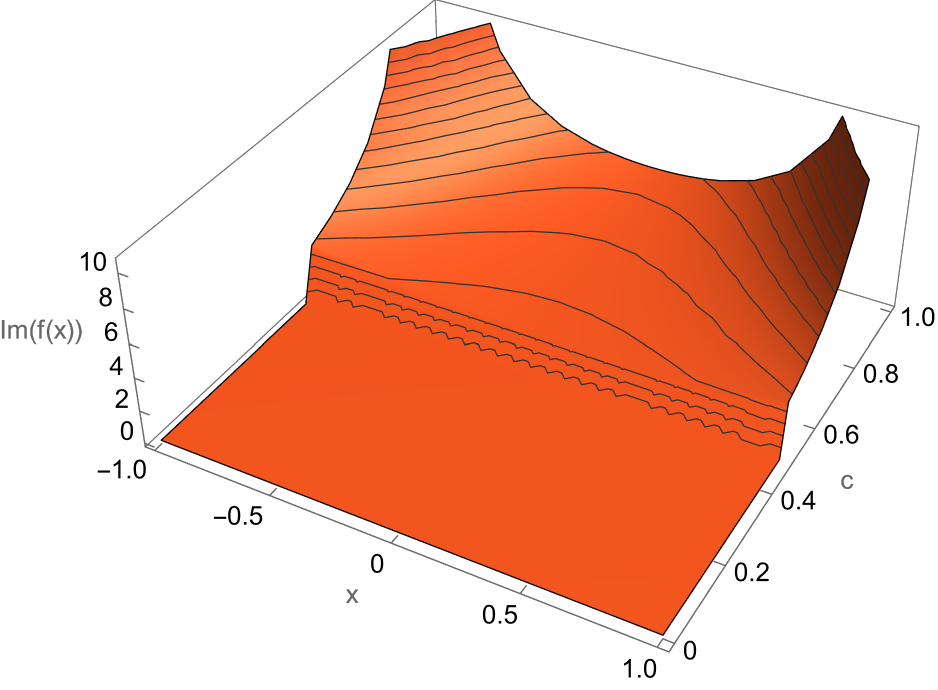}
\end{figure}
\begin{example}
Singularity at $x=1/2$. Use equation (\ref{eq:10.4}) and set $c\to 3,s\to 3,d\to \frac{1}{3},f\to -\frac{1}{3},m\to 0,a\to 2,b\to 1$ next take the first partial derivative with respect to $k$ and set $k=0$ and simplify.
\begin{multline}
\int_0^1 \frac{\sqrt[3]{1+x^3 \alpha } \log \left(\log \left(\frac{1}{2 x}\right)\right)}{\sqrt[3]{1+x^3 z}}
   \, dx\\
=\sum _{j=0}^{\infty } \sum _{l=0}^{\infty } \frac{2^{-1-3 j-3 l} z^j \alpha ^l \binom{-\frac{1}{3}}{j}
   \binom{\frac{1}{3}}{l} \left(E_1(-((1+3 j+3 l) \log (2)))+2^{1+3 j+3 l} (i \pi +\log (\log (2)))\right)}{1+3 j+3
   l}
\end{multline}
where $|Re(z)|<1,Re(\alpha)|<1$.
\end{example}
\begin{figure}[H]
\caption{Plot of $Re\left(\frac{\sqrt{1-\frac{x^3}{2}} \log \left(\log \left(\frac{1}{2 x}\right)\right)}{\sqrt{\frac{x^3}{2}+1}}\right)$}
\includegraphics[width=8cm]{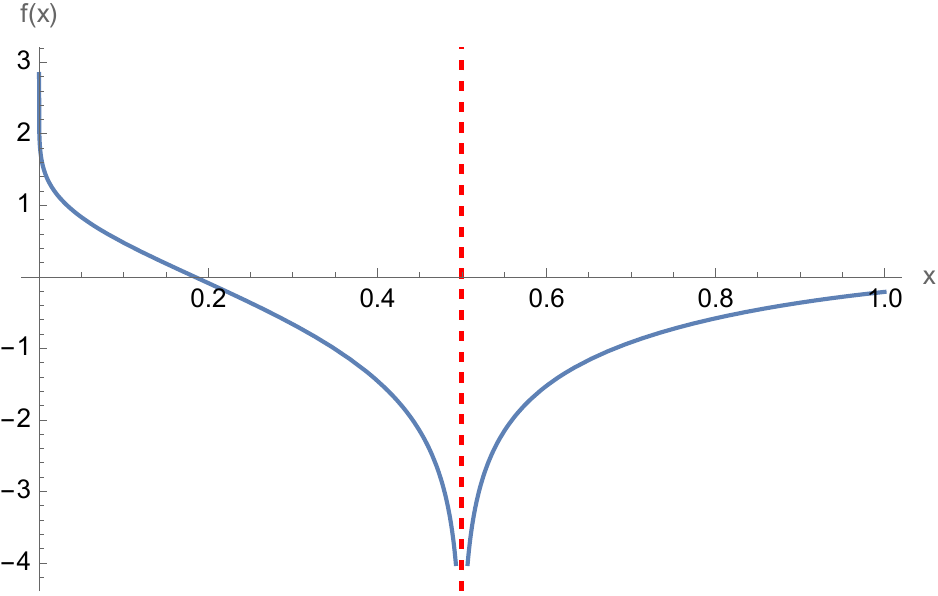}
\end{figure}
\begin{figure}[H]
\caption{Plot of $Im\left(\frac{\sqrt{1-\frac{x^3}{2}} \log \left(\log \left(\frac{1}{2 x}\right)\right)}{\sqrt{\frac{x^3}{2}+1}}\right)$}
\includegraphics[width=8cm]{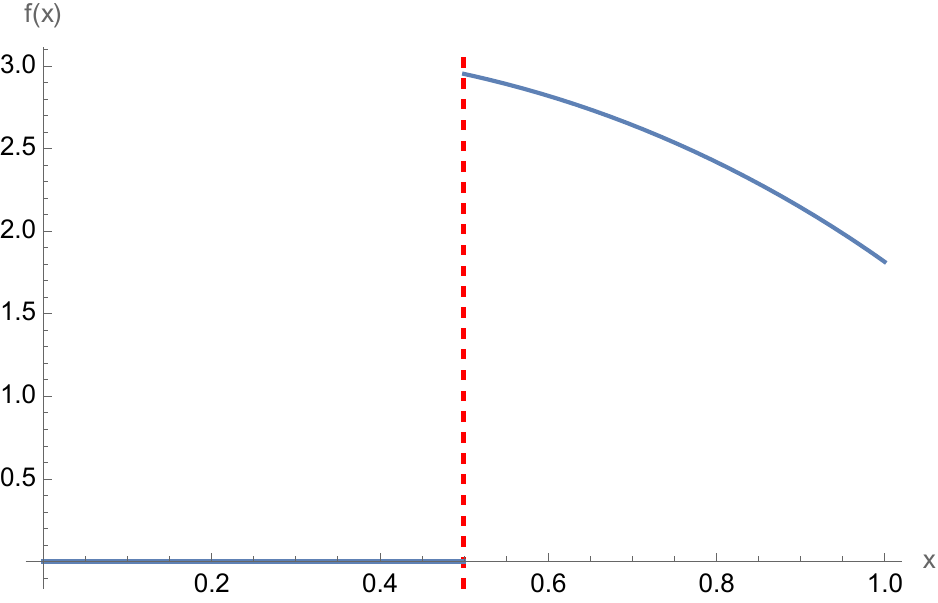}
\end{figure}
\begin{example}
Use equation (\ref{eq:10.4}) and set $d\to \frac{1}{2},f\to \frac{1}{2},c\to 2,s\to 2,b\to 1,a\to 2,z\to \frac{1}{z^2},\alpha \to \frac{1}{\alpha^2},k\to 0,a\to 1$ and simplify.
\begin{equation}\label{eq:11.11}
\int_0^1 \frac{x^{-1+s}}{\sqrt{a^2+x^2} \sqrt{b^2+x^2}} \, dx=\sum _{l=0}^{\infty } \frac{b^{-1-2 l}
   \binom{-\frac{1}{2}}{l} \, _2F_1\left(\frac{1}{2},l+\frac{s}{2};1+l+\frac{s}{2};-\frac{1}{a^2}\right)}{a (2
   l+s)}
\end{equation}
where $0< Re(s)<1,Re(b)>1$.
\end{example}
\begin{example}
The elliptic integral of the first kind $F(z|m)$ see [Wolfram, \href{https://mathworld.wolfram.com/EllipticIntegraloftheFirstKind.html}{1}]. Use equation (\ref{eq:11.11}) and set $m=1$ and simplify.
\begin{equation}
\sum _{l=0}^{\infty } \frac{\binom{-\frac{1}{2}}{l} \,
   _2F_1\left(\frac{1}{2},\frac{1}{2}+l;\frac{3}{2}+l;-\frac{1}{a^2}\right)}{(2 l+1) b^{2 l}}=-i a F\left(i
   \text{csch}^{-1}(a)|\frac{a^2}{b^2}\right)
\end{equation}
where $a,b\in\mathbb{C}$.
\end{example}
\begin{example}
 Extended Jellett integral exhibited by a geometrical construction, see pp.402 example 18 in \cite{williamson}. Use equation (\ref{eq:10.4}) and set $k\to 0,a\to 1,d\to \frac{1}{2},z\to \frac{1}{z},c\to m,m\to 0,f\to 0,\alpha \to 1,s\to 0$ and form a second equation by replacing $b\to a$ and take their difference and simplify.
\begin{equation}
\int_a^b \frac{1}{\sqrt{x^m+z}} \, dx=\frac{-a \,
   _2F_1\left(\frac{1}{2},\frac{1}{m};1+\frac{1}{m};-\frac{a^m}{z}\right)+b \,
   _2F_1\left(\frac{1}{2},\frac{1}{m};1+\frac{1}{m};-\frac{b^m}{z}\right)}{\sqrt{z}}
\end{equation}
where $Re(m)>0$.
\end{example}
\begin{figure}[H]
\caption{Plot of $Re\left(\frac{1}{\sqrt{x^m-1}}\right)$}
\includegraphics[width=8cm]{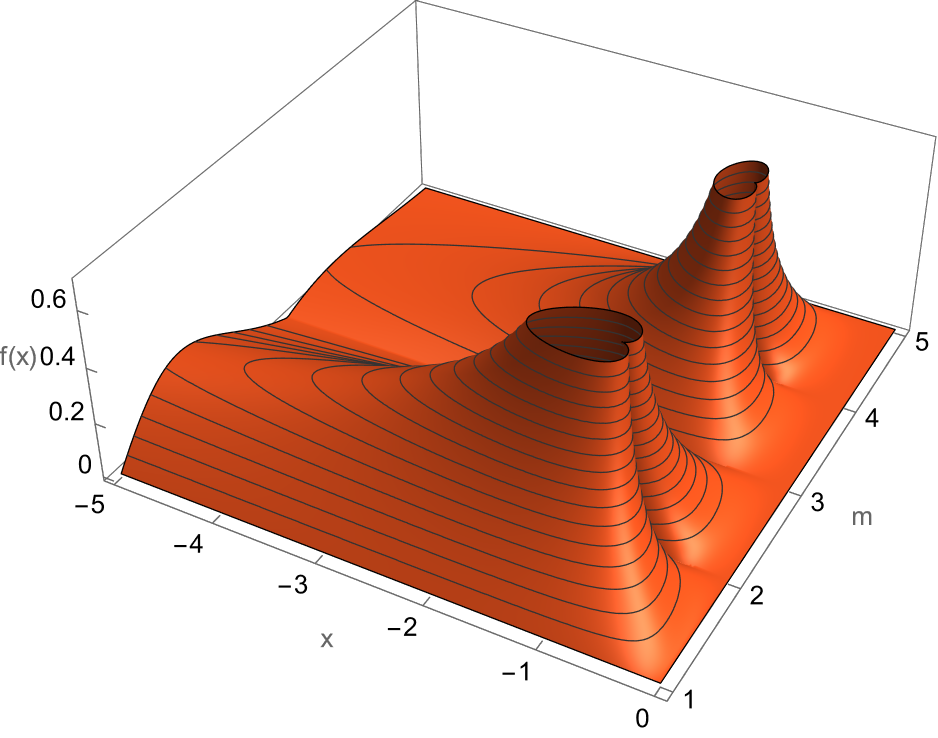}
\end{figure}
\begin{figure}[H]
\caption{Plot of $Im\left(\frac{1}{\sqrt{x^m-1}}\right)$}
\includegraphics[width=8cm]{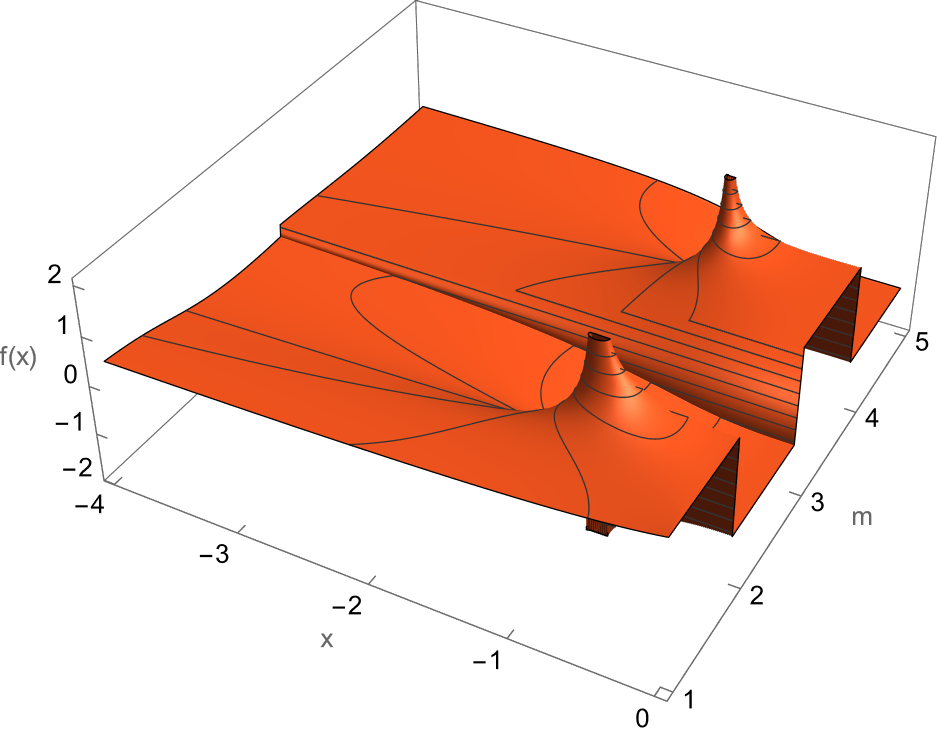}
\end{figure}
\begin{example}
 Generalized form of equation (2.6.8.1) in \cite{prud1}. Use equation (\ref{eq:10.4}) and set $k\to 1,a\to 1,f\to \frac{1}{2},d\to \frac{1}{2},c\to 2,s\to 2,m\to 0$ and form a second equation by replacing $b\to a$ and take their difference and simplify.
 \begin{multline}\label{eq:11.14}
\int_a^b \frac{\log (x)}{\sqrt{1+x^2 z} \sqrt{1+x^2 \alpha }} \, dx\\
=\sum _{j=0}^{\infty } \sum _{l=0}^{\infty
   } \frac{z^j \alpha ^l \binom{-\frac{1}{2}}{j} \binom{-\frac{1}{2}}{l} (\Gamma (2,-((1+2 j+2 l) \log (a)))-\Gamma
   (2,-((1+2 j+2 l) \log (b))))}{(1+2 j+2 l)^2}
\end{multline}
where $Re(z)>0,Re(\alpha)>0$.
\end{example}
\begin{example}
Use equation (\ref{eq:11.14}) and rewrite in the form as the left-hand side as equation (2.6.8.1) in \cite{prud1}. 
\begin{multline}
\int_a^b \frac{\log (x)}{\sqrt{x^2-z^2} \sqrt{\alpha ^2-x^2}} \, dx\\
=-\sum _{j=0}^{\infty } \sum _{l=0}^{\infty } \frac{i (-1)^{-j+l} z^{-1-2 j} \alpha ^{-1-2 l}
   \binom{-\frac{1}{2}}{j} \binom{-\frac{1}{2}}{l} }{(1+2 j+2 l)^2}\\
(\Gamma (2,-((1+2 j+2 l) \log (a)))-\Gamma (2,-((1+2 j+2 l) \log (b))))
\end{multline}
where $Re(a)>Re(b), Re(\alpha)>Re(z)>3/2$.
\end{example}
\begin{example}
A functional equation in terms of the complete elliptic integral of the first kind [Wolfram,\href{https://mathworld.wolfram.com/CompleteEllipticIntegraloftheFirstKind.html}{1}], using equation (2.6.8.10) in \cite{prud1}. 
Use equation (\ref{eq:10.4}) and set $k\to 1,a\to 1,f\to \frac{1}{2},d\to \frac{1}{2},c\to 2,s\to 2,m\to 0,b\to 1$ and simplify.
\begin{multline}\label{eq:11.16}
\int_0^1 \frac{\log (x)}{\sqrt{1+x^2 y^2} \sqrt{1+x^2 z^2}} \, dx=-\sum _{l=0}^{\infty } \frac{y^{2 l} \binom{-\frac{1}{2}}{l} \,
   _3F_2\left(\frac{1}{2},\frac{1}{2}+l,\frac{1}{2}+l;\frac{3}{2}+l,\frac{3}{2}+l;-z^2\right)}{(1+2 l)^2}
\end{multline}
where $Re(y)<1$.
\begin{multline}\label{eq:11.17}
\int_0^1 \frac{\log (x)}{\sqrt{x^2+y^2} \sqrt{x^2+z^2}} \, dx=-\sum _{j=0}^{\infty } \frac{z^{-1-2 j} \binom{-\frac{1}{2}}{j} \,
   _3F_2\left(\frac{1}{2},\frac{1}{2}+j,\frac{1}{2}+j;\frac{3}{2}+j,\frac{3}{2}+j;-\frac{1}{y^2}\right)}{(1+2 j)^2 y}
\end{multline}
where $Re(y)<1$.
Next rewrite equation (2.6.8.10) in \cite{prud1} over the domain $x\in[0,1]$ and simplify to get;
\begin{multline}\label{eq:11.18}
\int_0^1 \left(-\frac{1}{\sqrt{x^2+y^2} \sqrt{x^2+z^2}}+\frac{1}{\sqrt{1+x^2 y^2} \sqrt{1+x^2 z^2}}\right) \log (x) \, dx=-\frac{ \log
   (y z)}{2 y}K\left(1-\frac{z^2}{y^2}\right)
\end{multline}
where $Re(y)<1,Re(z)<1$. Then we substitute equations (\ref{eq:11.16}) and (\ref{eq:11.17}) into (\ref{eq:11.18}) and simplify.
\begin{multline}\label{eq:11.19}
\sum _{l=0}^{\infty } \frac{y^{2 l} \binom{-\frac{1}{2}}{l} \, _3F_2\left(\frac{1}{2},\frac{1}{2}+l,\frac{1}{2}+l;\frac{3}{2}+l,\frac{3}{2}+l;-z^2\right)}{(1+2
   l)^2}\\
-\sum _{j=0}^{\infty } \frac{z^{-1-2 j} \binom{-\frac{1}{2}}{j} \,
   _3F_2\left(\frac{1}{2},\frac{1}{2}+j,\frac{1}{2}+j;\frac{3}{2}+j,\frac{3}{2}+j;-\frac{1}{y^2}\right)}{(1+2 j)^2 y}=\frac{K\left(1-\frac{z^2}{y^2}\right) \log (y z)}{2
   y}
\end{multline}
where $Re(y)<1$.
\end{example}
\begin{example}
Use equation (\ref{eq:11.19}) and set $z=i,y=1/\sqrt{2}$ and simplify.
\begin{multline}
-\sum _{j=0}^{\infty } \frac{i^{-1-2 j} \sqrt{2} \binom{-\frac{1}{2}}{j} \,
   _3F_2\left(\frac{1}{2},\frac{1}{2}+j,\frac{1}{2}+j;\frac{3}{2}+j,\frac{3}{2}+j;-2\right)}{(1+2 j)^2}\\
-\sum _{l=0}^{\infty } \frac{2^{-1-l} \sqrt{\pi }
   \binom{-\frac{1}{2}}{l} \Gamma \left(\frac{3}{2}+l\right) \left(\psi ^{(0)}\left(\frac{1}{2}+l\right)-\psi ^{(0)}(1+l)\right)}{(1+2 l) \Gamma (1+l)}\\
=\frac{K(3) \log
   \left(\frac{i}{\sqrt{2}}\right)}{\sqrt{2}}
\end{multline}
\end{example}
\begin{example}
Appell hypergeometric function of two variables [Wolfram,\href{https://mathworld.wolfram.com/AppellHypergeometricFunction.html}{1}], in terms of the hypergeometric function [Wolfram,\href{https://mathworld.wolfram.com/HypergeometricFunction.html}{8}]. Use equation (\ref{eq:10.4}) and set $k\to 0,a\to 1,m\to 0,d\to -m,f\to m+1,c\to 2,s\to 1$ and simplify.
\begin{multline}
\sum _{l=0}^{\infty } \frac{b^{1+l} \alpha ^l \binom{-1+m}{l} \,
   _2F_1\left(\frac{1+l}{2},m;\frac{3+l}{2};-s^2\right)}{1+l}\\
=\frac{1}{m \alpha }\left( -\left(\frac{b^2 \alpha ^2}{s^2+b^2 \alpha
   ^2}\right)^m F_1\left(m;m,m;1+m;\frac{i s}{i s-b \alpha },\frac{i s}{i s+b \alpha }\right)\right. \\ \left.
+\left(\frac{s^2+b^2
   \alpha ^2}{b^2 \alpha ^2 (1+b \alpha )}\right)^{-m} F_1\left(m;m,m;1+m;\frac{i s+i b s \alpha }{i s-b \alpha
   },\frac{i s+i b s \alpha }{i s+b \alpha }\right)\right)
\end{multline}
where $Re(z)>0,Re(\alpha)>0$.
\end{example}
\section{Special case of a generalized definite integral}
In this section we look at a special case of equation (\ref{eq:10.4}) expressed in terms of the Hurwitz-Lerch zeta function [Wolfram,\href{https://mathworld.wolfram.com/LerchTranscendent.html}{2}].
\begin{example}
Use equation (\ref{eq:10.4}) and set $a\to 1,b\to 1,f\to 1,d\to -d$ and simplify.
\begin{multline}\label{eq:12.1}
\int_0^1 \frac{x^m \left(1+x^c z\right)^d }{1+x^s \alpha }\log ^k\left(\frac{1}{x}\right) \,
   dx=\frac{\Gamma (1+k) }{s^{k+1}}\sum _{j=0}^{\infty } z^j \binom{d}{j} \Phi \left(-\alpha ,1+k,\frac{1+c
   j+m}{s}\right)
\end{multline}
where $Re(c)>0, Re(s)>0, |Re(m)|<1$.
\end{example}
\begin{example}
Use equation (\ref{eq:12.1}) and set $\alpha=1$ and simplify the right-hand side using equation [DLMF,\href{https://dlmf.nist.gov/25.14.E2}{25.14.2}].
\begin{multline}\label{eq:12.2}
\int_0^1 \frac{x^m \left(1+x^c z\right)^d \log ^k\left(\frac{1}{x}\right)}{1+x^s} \, dx=\frac{\Gamma (1+k)
   }{(2 s)^{k+1}}\\
\sum _{j=0}^{\infty } z^j \binom{d}{j} \left(\zeta \left(1+k,\frac{1+c j+m}{2 s}\right)-\zeta \left(1+k\frac{1+c j+m+s}{2 s}\right)\right)
\end{multline}
where $Re(c)>0, Re(s)>0, |Re(m)|<1$.
\end{example}
\begin{example}
Use equation (\ref{eq:12.2}) and simplify using [DLMF,\href{https://dlmf.nist.gov/25.12.E13}{25.12.13}]
\begin{multline}
\int_0^1 \frac{x^m \left(1+x^c z\right)^d \log ^k\left(\frac{1}{x}\right)}{1+x^s} \, dx\\
=\frac{\left(\frac{\pi
   }{s}\right)^{1+k} }{2 i^k \sin (k \pi )}\sum _{j=0}^{\infty } z^j \binom{d}{j} \left(\text{Li}_{-k}\left(-e^{-\frac{i (1+c j+m) \pi}{s}}\right)-\text{Li}_{-k}\left(e^{-\frac{i (1+c j+m) \pi }{s}}\right)\right. \\ \left.
+e^{i k \pi }
   \left(\text{Li}_{-k}\left(-e^{\frac{i (1+c j+m) \pi }{s}}\right)-\text{Li}_{-k}\left(e^{\frac{i (1+c j+m) \pi }{s}}\right)\right)\right)
\end{multline}
where $Re(c)>0, Re(s)>2, |Re(m)|<1$.
\end{example}
\begin{example}
Use equation (\ref{eq:12.2}) and take the first partial derivative with respect to $k$ and set $k=0$ by applying l'Hopital's rule.
 \begin{multline}
\int_0^1 \frac{x^m \left(1+x^c z\right)^d \log \left(\log \left(\frac{1}{x}\right)\right)}{1+x^s} \, dx\\
=\sum
   _{j=0}^{\infty } \frac{z^j \Gamma (1+d) }{2 s \Gamma (1+d-j) \Gamma (1+j)}\left((\gamma +\log (2 s)) \left(\psi ^{(0)}\left(\frac{1+c j+m}{2
   s}\right)\right.\right. \\ \left.\left.
-\psi ^{(0)}\left(\frac{1+c j+m+s}{2 s}\right)\right)-\gamma _1\left(\frac{1+c j+m}{2 s}\right)+\gamma
   _1\left(\frac{1+c j+m+s}{2 s}\right)\right)
\end{multline}
where $Re(c)>0, Re(s)>2, |Re(m)|<1$.
\end{example}
\begin{example}
Use equation (\ref{eq:12.2}) and form a second equation by replacing $m\to r$ and take their difference and take the first partial derivative with respect to $k$ and apply l'Hopital's rule as $k\to -1$ and simplify using [DLMF,\href{https://dlmf.nist.gov/25.11.E18}{25.11.18}] .
\begin{multline}
\int_0^1 \frac{\left(-x^m+x^r\right) \left(1+x^c z\right)^d \log \left(\log
   \left(\frac{1}{x}\right)\right)}{\left(1+x^s\right) \log \left(\frac{1}{x}\right)} \, dx\\
=\sum _{j=0}^{\infty }
   \frac{z^j \Gamma (1+d) }{2 \Gamma (1+d-j) \Gamma
   (1+j)}\left(2 (\gamma +\log (2)+\log (s)) \left(\log \left(-2+\frac{1+c j+m}{s}\right)\right.\right. \\ \left.\left.
-\log \left(-1+\frac{1+c j+m}{s}\right)-\log \left(-2+\frac{1+c j+r}{s}\right)+\log \left(-1+\frac{1+c
   j+r}{s}\right)\right.\right. \\ \left.\left.
+\text{log$\Gamma $}\left(-1+\frac{1+c j+m}{2 s}\right)-\text{log$\Gamma $}\left(-1+\frac{1+cj+r}{2 s}\right)-\text{log$\Gamma $}\left(\frac{1+c j+m-s}{2 s}\right)\right.\right. \\ \left.\left.
+\text{log$\Gamma $}\left(\frac{1+c j+r-s}{2 s}\right)\right)-\zeta''\left(0,\frac{1+c j+m}{2 s}\right)\right. \\ \left.
+\zeta''\left(0,\frac{1+c j+r}{2s}\right)+\zeta''\left(0,\frac{1+c j+m+s}{2 s}\right)\right. \\ \left.
-\zeta''\left(0,\frac{1+c j+r+s}{2 s}\right)\right)
\end{multline}
where $Re(c)>0, Re(s)>2, |Re(m)|<1,|Re(r)|<1$.
\end{example}
\begin{theorem}
For all $|Re(m)|<1$ then,
\begin{multline}\label{eq:12.6}
\int_0^b x^m \left(1+x^c z\right)^{-d} \left(1+x^s \alpha \right)^{-f} \left(1+x^{\gamma } \beta \right)^{-g}
   \log ^k(a x) \, dx\\
=\sum _{j=0}^{\infty } \sum _{l=0}^{\infty } \sum _{h=0}^{\infty } b^{1+c j+m+l s+h \gamma } (a
   b)^{-1-c j-m-l s-h \gamma } z^j \alpha ^l \beta ^h (-1-c j-m-l s-h \gamma )^{-1-k}\\
 \binom{-d}{j} \binom{-f}{l}
 \binom{-g}{h} \Gamma (1+k,-((1+c j+m+l s+h \gamma ) \log (a b)))
\end{multline}
\end{theorem}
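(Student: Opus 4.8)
The plan is to follow verbatim the contour-integral scheme already used to prove equation (\ref{eq:10.4}), extending it from two binomial factors to three. First I would take the generalized Cauchy formula (\ref{intro:cauchy}), replace $y \to \log(ax)$, and multiply both sides by the monomial weight $z^j \alpha^l \beta^h \binom{-d}{j}\binom{-f}{l}\binom{-g}{h} x^{cj+ls+h\gamma+m}$. Integrating over $x\in[0,b]$ on the left produces a single contour integral whose integrand carries the factor $x^{cj+ls+h\gamma+m+w}$; this is the three-index analogue of (\ref{eq:10.2}). The recognition of the closed form on the left then rests on the three binomial series $(1+x^c z)^{-d}=\sum_j \binom{-d}{j}(x^c z)^j$, $(1+x^s\alpha)^{-f}=\sum_l \binom{-f}{l}(x^s\alpha)^l$, and $(1+x^\gamma\beta)^{-g}=\sum_h \binom{-g}{h}(x^\gamma\beta)^h$, whose Cauchy product reconstitutes the integrand $x^m(1+x^c z)^{-d}(1+x^s\alpha)^{-f}(1+x^\gamma\beta)^{-g}(-\log(ax))^k$ once the $\Gamma(k+1)$ from (\ref{intro:cauchy}) is cleared.

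Next, for the right-hand representation I would use equation (\ref{eq:4.5}) with $\beta=\gamma=0$, apply the substitutions $m\to 1+cj+m+ls+h\gamma$ and $a\to ab$, and multiply both sides by the same weight $z^j\alpha^l\beta^h\binom{-d}{j}\binom{-f}{l}\binom{-g}{h} b^{1+cj+ls+h\gamma+m}$; this yields the triple-index analogue of (\ref{eq:10.3}), in which the non-contour side is already the desired incomplete-gamma summand. Summing both representations over $j,l,h\in[0,\infty)$ and invoking the master contour identity (the three-sum analogue of (\ref{eq:10.1})), the two contour integrals coincide, so I would equate the remaining sides and close the argument by simplifying the gamma factor exactly as in the proof of (\ref{eq:10.4}).

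The main obstacle is the justification of the now triple interchange of the three summations with the contour and the $x$-integration. Following the paper's convention I would appeal to Fubini's theorem (page 178 in \cite{gelca}) to exchange the $x$- and $w$-integrations on each finite piece, and to Tonelli's theorem (page 177 in \cite{gelca}) to pull the triple sum through the contour, noting the summand is of bounded measure over $\mathbb{C}\times[0,\infty)\times[0,\infty)\times[0,\infty)$. The accompanying constraint $|Re(m)|<1$ together with $|x^c z|,|x^s\alpha|,|x^\gamma\beta|<1$ on $[0,b]$ guarantees the absolute convergence that makes the termwise manipulation legitimate, so that no new analytic difficulty beyond that of Theorem (\ref{eq:10.4}) actually arises; the extra index $h$ only lengthens the bookkeeping.
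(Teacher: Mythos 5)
Your proposal is correct and is essentially the paper's own proof: the paper's argument for this theorem is literally ``apply the method in (\ref{eq:10.4}) to include another binomial coefficient and rational function,'' and you have simply written out that extension explicitly --- the three-index weight on Cauchy's formula (\ref{intro:cauchy}), the right-hand side via (\ref{eq:4.5}) with the shifted parameters, and the Fubini/Tonelli interchanges. No divergence from the paper's route and no gap.
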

\begin{proof}
Apply the method in (\ref{eq:10.4}) to include another binomial coefficient and rational function.
\end{proof}
\begin{example}
Extended Byrd-Elliptic integral form. (800.07) in \cite{byrd}. Use equation (\ref{eq:12.6}) and set $k\to 0,a\to 1,d\to \frac{1}{2},z\to -1,c\to 2,g\to \frac{1}{2},\gamma \to 2,\beta \to -k^2,f\to 1,s\to
   2$ then take the indefinite integral with respect to $\alpha$ and replace $\alpha\to -\alpha^2,m=2$ and simplify. Derivation of equations (4.295.31-33) in \cite{grad} are special cases of equation (\ref{eq:12.7}).
   \begin{multline}\label{eq:12.7}
\int_0^b \frac{\log \left(1-x^2 \alpha ^2\right)}{\sqrt{1-x^2} \sqrt{1-k^2 x^2}} \, dx\\
=-\sum _{l=0}^{\infty }
   \sum _{h=0}^{\infty } \frac{(-1)^{h+2 l} b^{3+2 h+2 l} k^{2 h} \alpha ^{2+2 l} \binom{-\frac{1}{2}}{h} \,
   _2F_1\left(\frac{1}{2},\frac{3}{2}+h+l;\frac{5}{2}+h+l;b^2\right)}{(1+l) (3+2 h+2 l)}\\
=-\sum _{j=0}^{\infty } \sum
   _{h=0}^{\infty } \frac{(-1)^{h+j} b^{1+2 h+2 j} k^{2 h} \pi  \left(-\frac{4 b^2 \alpha ^2 \,
   _2F_1\left(1,\frac{3}{2}+h+j;\frac{5}{2}+h+j;b^2 \alpha ^2\right)}{(1+2 h+2 j) (3+2 h+2 j)}-\frac{2 \log
   \left(1-b^2 \alpha ^2\right)}{1+2 h+2 j}\right)}{2 \Gamma \left(\frac{1}{2}-h\right) \Gamma (1+h) \Gamma
   \left(\frac{1}{2}-j\right) \Gamma (1+j)}
\end{multline}
where $0< \alpha^2 < k^2$.
\end{example}
\begin{example}
Generalized form of equation (4.295.34) in \cite{grad}. Use equation (\ref{eq:12.6}) and set $k\to 0,a\to 1,d\to \frac{1}{2},z\to -1,c\to 2,g\to -\frac{1}{2},\gamma \to 2,\beta \to -k^2,f\to 1,s\to
   2$ then take the indefinite integral with respect to $\alpha$ and replace $\alpha\to -\alpha^2,m=2$ and simplify.
   \begin{multline}
\int_0^b \frac{\sqrt{1-k^2 x^2} \log \left(1-x^2 \alpha ^2\right)}{\sqrt{1-x^2}} \, dx\\
=-\sum _{l=0}^{\infty }
   \sum _{h=0}^{\infty } \frac{(-1)^l b^{3+2 h+2 l} \left(-k^2\right)^h \alpha ^2 \left(-\alpha ^2\right)^l
   \binom{\frac{1}{2}}{h} \, _2F_1\left(\frac{1}{2},\frac{3}{2}+h+l;\frac{5}{2}+h+l;b^2\right)}{(1+l) (3+2 h+2
   l)}
\end{multline}
where $0< \alpha^2 < k^2$.
\end{example}
\begin{example}
Generalized form of equation (4.295.35)  in \cite{grad}. Use equation (\ref{eq:12.6}) and set $k\to 0,a\to 1,d\to -\frac{1}{2},c\to 2,g\to \frac{1}{2},\gamma \to 2,\beta \to -k^2,f\to 1,s\to 2,b\to
   1$ then take the indefinite integral with respect to $\alpha$ and replace $\alpha\to -\alpha^2,m=2,z=-1$ and simplify.
   \begin{multline}
\int_0^1 \frac{\sqrt{1-x^2} \log \left(1-x^2 \alpha ^2\right)}{\sqrt{1-k^2 x^2}} \, dx=-\frac{\alpha ^2 \sqrt{\pi }}{2}
   \sum _{l=0}^{\infty } \frac{\alpha ^{2 l} \Gamma
   \left(\frac{1}{2} (5+2 l)\right) \, _2F_1\left(\frac{1}{2},\frac{3}{2}+l;3+l;k^2\right)}{(1+l) (3+2 l) \Gamma
   (3+l)}
\end{multline}
where $0< \alpha^2 < k^2$.
\end{example}
\begin{example}
Generalized form of equation (4.297.9) in \cite{grad}. Use equation (\ref{eq:12.6}) and set $k\to 0,a\to 1,d\to \frac{1}{2},c\to 2,f\to \frac{1}{2},s\to 2,g\to 1,\gamma \to 1$ then take the indefinite integral with respect to $\beta$ and replace $m=1$ and form a second equation by replacing $\beta\to -\beta$ and take their difference and simplify.
\begin{multline}\label{eq:12.10}
\int_0^b \frac{\tanh ^{-1}(x \beta )}{\sqrt{1+x^2 z} \sqrt{1+x^2 \alpha }} \, dx=\sum _{l=0}^{\infty }
   \frac{b^{1+2 l} \sqrt{\pi } \alpha ^l \tanh ^{-1}(b \beta ) \,
   _2F_1\left(\frac{1}{2},\frac{1}{2}+l;\frac{3}{2}+l;-b^2 z\right)}{(1+2 l) \Gamma \left(\frac{1}{2} (1-2 l)\right)
   \Gamma (1+l)}\\
-\sum _{j=0}^{\infty } \sum _{l=0}^{\infty } \frac{b^{2+2 j+2 l} \pi  z^j \alpha ^l \beta  }{4 (1+j+l) (1+2 j+2 l) \Gamma
   \left(\frac{1}{2}-j\right) \Gamma (1+j) \Gamma \left(\frac{1}{2}-l\right) \Gamma (1+l)}\\ \times
(\,
   _2F_1(1,2 (1+j+l);3+2 j+2 l;-b \beta )+\, _2F_1(1,2 (1+j+l);3+2 j+2 l;b \beta ))
\end{multline}
where $|Re(k)|<1,|Re(\alpha)|<1$.
\end{example}
\begin{example}
Generalized form of equation (4.297.9) in \cite{grad}. Use the method in equation (\ref{eq:12.10}) and simplify the series on the right-hand side starting with $j\in[0,\infty)$.
\begin{multline}
\int_0^b \frac{\tanh ^{-1}(x \beta )}{\sqrt{1+x^2 z} \sqrt{1+x^2 \alpha }} \, dx\\
=\sum _{h=0}^{\infty } \sum
   _{l=0}^{\infty } \frac{(-1)^h b^{2+h+2 l} \alpha ^l \beta  \left((-\beta )^h+\beta ^h\right)
   \binom{-\frac{1}{2}}{l} \, _2F_1\left(\frac{1}{2},1+\frac{h}{2}+l;2+\frac{h}{2}+l;-b^2 z\right)}{2 (1+h) (2+h+2
   l)}
\end{multline}
where $|Re(k)|<1,|Re(\alpha)|<1$.
\end{example}
\begin{example}
Generalized form of equation (4.297.9) in \cite{grad}. Use equation (\ref{eq:12.6}) and set $k\to 0,a\to 1,d\to \frac{1}{2},c\to 2,f\to \frac{1}{2},s\to 2,g\to 1,\gamma \to 1$ and form a second equation by replacing $\beta \to -\beta$ and take their difference and simplify.
\begin{multline}\label{eq:12.12}
\int_0^b \frac{\tanh ^{-1}(x \beta )}{\sqrt{1+x^2 z} \sqrt{1+x^2 \alpha }} \, dx\\
=\sum _{h=0}^{\infty } \sum
   _{l=0}^{\infty } \frac{(-1)^h b^{2+h+2 l} \alpha ^l \beta  \left((-\beta )^h+\beta ^h\right)
   \binom{-\frac{1}{2}}{l} \, _2F_1\left(\frac{1}{2},1+\frac{h}{2}+l;2+\frac{h}{2}+l;-b^2 z\right)}{2 (1+h) (2+h+2
   l)}
\end{multline}
where $|Re(k)|<1,|Re(\alpha)|<1$.
\end{example}
\begin{example}
Generalized form of equation (4.297.11) in \cite{grad}.  Use equation (\ref{eq:12.10}) and set $k\to 0,a\to 1,d\to \frac{1}{2},c\to 2,f\to \frac{1}{2},s\to 2,g\to 1,\gamma \to 1$ then take the indefinite integral with respect to $\beta$ and form a second equation by replacing $\beta \to -\beta$ and take the difference and simplify.
\begin{multline}\label{eq:12.13}
\int_0^b \frac{x^{-1+m} \tanh ^{-1}(x \beta )}{\sqrt{1+x^2 z} \sqrt{1+x^2 \alpha }} \, dx\\
=\sum _{j=0}^{\infty
   } \sum _{l=0}^{\infty } \frac{b^{2 j+2 l+m} \pi  z^j \alpha ^l }{(2 j+2 l+m) (1+2
   j+2 l+m) \Gamma \left(\frac{1}{2}-j\right) \Gamma (1+j) \Gamma \left(\frac{1}{2}-l\right) \Gamma (1+l)}\\ \times 
\left(\tanh ^{-1}(b \beta )+2 j \tanh ^{-1}(b
   \beta )+2 l \tanh ^{-1}(b \beta )+m \tanh ^{-1}(b \beta )\right. \\ \left. -b \beta  \,
   _2F_1\left(1,\frac{1}{2}+j+l+\frac{m}{2};\frac{3}{2}+j+l+\frac{m}{2};b^2 \beta ^2\right)\right)
\end{multline}
where $|Re(k)|<1,|Re(\alpha)|<1,|Re(m)|<1$.
\end{example}
\begin{example}
Use equation (\ref{eq:12.13}) and simplify the right-hand side with $j\in[0,\infty)$ and simplify.
\begin{multline}\label{eq:thm_gen}
\int_0^b \frac{x^{-1+m} \tanh ^{-1}(x \beta )}{\sqrt{1+x^2 z} \sqrt{1+x^2 \alpha }} \, dx\\
=\sum _{h=0}^{\infty
   } \sum _{l=0}^{\infty } \frac{(-1)^h \left(1+(-1)^h\right) b^{1+h+2 l+m} \sqrt{\pi } \alpha ^l \beta ^{1+h} }{2 (1+h) (1+h+2 l+m) \Gamma
   \left(\frac{1}{2}-l\right) \Gamma (1+l)}\\ \times
\,
   _2F_1\left(\frac{1}{2},\frac{1}{2} (1+h+2 l+m);\frac{1}{2} (3+h+2 l+m);-b^2 z\right)
\end{multline}
where $|Re(k)|<1,|Re(\alpha)|<1,|Re(m)|<1$.
\end{example}
\subsection{The incomplete elliptic integral of the third kind}
 This example is an extension of the Ward form in \cite{ward}. In 1960, Morgan Ward published work on the evaluation of an elliptic integral of the third kind and the difficulties associated with these types of integrals. In this example we present a generalized closed form solution for this elliptic integral in terms of the hypergeometric function allowing for easy evaluations after parameter substitutions. A similar form is also published on page 523 of Watson \cite{watson}. In 2013, Fukushima \cite{Fukushima} published work on a novel method to calculate an associate complete elliptic integral of the third kind. In 2017, Anakhev \cite{Anakhaev}, published work entitled, "Complete elliptic integrals of the third kind in problems of mechanics", where elementary analytical expressions for determining the values of the specified integrals, which is of importance for revealing the physical cause effect relationships in the problems under solution and in their subsequent mathematical transformations. In 1904, Greenhill \cite{greenhill} published work on the elliptic integral of the third kind, which makes its appearance in a dynamical problem, is of the circular form in Legendre's classification, and thus the Jacobian parameter is a fraction of the imaginary period, so that the expression by-means of the theta function can no longer be considered as reducing the variable elements from three to two. In 1965, Fettis \cite{fettis} worked on the "Calculation of Elliptic Integrals of the Third Kind by Means of Gauss' Transformation", where this work is  based on Gauss' modulus-reducing transformation, enabling one to write an efficient program for direct computation of the integral for any given values of its three arguments. In 1988, Carlson \cite{carlson1} published work on a collection of 72 elliptic integrals of the third kind in terms of $R$-functions. Some methods of computing elliptic integrals include the duplication method [DLMF,\href{https://dlmf.nist.gov/19.36.i}{19.36(i)}], quadratic transformations [DLMF,\href{https://dlmf.nist.gov/19.36.ii}{19.36(ii)}], theta functions [DLMF,\href{https://dlmf.nist.gov/19.36.iii}{19.36(iii)}], other methods [DLMF,\href{https://dlmf.nist.gov/19.36.iv}{19.36(iv)}], such as numerical quadrature, series expansions of Legendre's integrals, and addition theorems. In 1991, Jha et al., \cite{jha}, produced work on the "Computation of elliptic integrals of the third kind in high-frequency EM scattering problems," where they determined the computational efficiency of various formulae in relation to the nature of the scattering structures.
\begin{example}
The incomplete elliptic integral of the third kind see [Wolfram,\href{https://mathworld.wolfram.com/EllipticIntegraloftheThirdKind.html}{2}]. Use equation (\ref{eq:12.6}) and set $k\to 0,a\to 1,d\to 1,c\to 2,f\to \frac{1}{2},s\to 2,g\to \frac{1}{2},\gamma \to 2,m\to 0$ and simplify.\\\\
\begin{multline}\label{eq:12.15}
\int_0^b \frac{1}{\left(1+x^2 z\right) \sqrt{1+x^2 \alpha } \sqrt{1+x^2 \beta }} \, dx\\
=\sum _{h=0}^{\infty }
   \sum _{l=0}^{\infty } \frac{b^{1+2 h+2 l} \alpha ^l \beta ^h \binom{-\frac{1}{2}}{h} \binom{-\frac{1}{2}}{l} \,
   _2F_1\left(1,\frac{1}{2}+h+l;\frac{3}{2}+h+l;-b^2 z\right)}{1+2 h+2 l}
\end{multline}
where $|Re(z)|<1,|Re(\alpha)|<1,|Re(\beta)|<1$.
\end{example}

\begin{example}
Use equation (\ref{eq:12.15}) and simplify the right-hand side series starting with $h\in[0,\infty)$ and simplify.
\begin{multline}
\int_0^b \frac{1}{\left(1+x^2 z\right) \sqrt{1+x^2 \alpha } \sqrt{1+x^2 \beta }} \, dx\\
=\sum _{j=0}^{\infty }
   \sum _{l=0}^{\infty } \frac{(-1)^j b^{1+2 j+2 l} z^j \alpha ^l \binom{-\frac{1}{2}}{l} \,
   _2F_1\left(\frac{1}{2},\frac{1}{2}+j+l;\frac{3}{2}+j+l;-b^2 \beta \right)}{1+2 j+2 l}
\end{multline}
where $|Re(z)|<1,|Re(\alpha)|<1,|Re(\beta)|<1$.
\end{example}
\section{A generalized form of definite integral over a finite domain}
In this theorem we iterated the process of deriving the contour integral method in Theorem (\ref{eq:10.4}).
\begin{theorem}
For all $|Re(m)|<1$ then,
\begin{multline}
\int_0^b \sum\limits_{j_{1},..,j_{n}\geq 0} x^{m+\sum _{l=1}^n a_l j_l} \log ^k(a x) \prod _{l=1}^n \binom{-b_l}{j_l} c_l^{j_l} \,
   dx\\
=-\sum\limits_{j_{1},..,j_{n}\geq 0} a^{-1-m-\sum _{l=1}^n a_l j_l} \Gamma \left(1+k,-\log (a b) \left(1+m+\sum _{l=1}^n a_l j_l\right)\right)
   \left(\prod _{l=1}^n \binom{-b_l}{j_l} c_l^{j_l}\right)\\ \left(-1-m-\sum _{l=1}^n a_l j_l\right){}^{-1-k}
\end{multline}
\end{theorem}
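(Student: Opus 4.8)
The plan is to iterate the contour-integral construction already used to establish Theorem~(\ref{eq:10.4}), simply replacing the two binomial factors $\binom{-d}{j}\binom{-f}{l}$ appearing there by the $n$-fold product $\prod_{l=1}^n\binom{-b_l}{j_l}c_l^{j_l}$. Concretely, I would begin from the generalized Cauchy integral formula~(\ref{intro:cauchy}), namely $\frac{y^k}{\Gamma(k+1)}=\frac{1}{2\pi i}\int_{C}\frac{e^{wy}}{w^{k+1}}dw$, and build up the multi-index sum one factor at a time, so that the whole argument is an induction on $n$ whose base case $n=2$ is precisely~(\ref{eq:10.4}).

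For the left-hand contour representation I would replace $y\to\log(ax)$ in~(\ref{intro:cauchy}), multiply both sides by $x^{m+\sum_{l=1}^n a_l j_l}\prod_{l=1}^n\binom{-b_l}{j_l}c_l^{j_l}$, integrate over $x\in[0,b]$, and then take the sum over all $j_1,\dots,j_n\geq 0$. This reproduces the integrand on the left-hand side of the stated identity, carrying an overall factor $1/\Gamma(k+1)$ coming from~(\ref{intro:cauchy}), and after interchanging $\int_0^b$ with $\int_C$ it is expressed as a single contour integral. For the right-hand representation I would invoke the evaluation already recorded as~(\ref{eq:4.5}) with $\beta=\gamma=0$ (equivalently the finite-interval computation leading to~(\ref{eq:10.3})): replacing $m\to 1+m+\sum_{l=1}^n a_l j_l$ and $a\to ab$ produces the incomplete gamma factor $\Gamma\!\left(1+k,-\log(ab)\left(1+m+\sum_{l=1}^n a_l j_l\right)\right)$, and multiplying by $\prod_{l=1}^n\binom{-b_l}{j_l}c_l^{j_l}$ and summing over the multi-index yields a contour integral identical to the one obtained on the left. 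Equating the two left-hand sides relative to the common contour integral, and cancelling the factor $\Gamma(k+1)$ against $k!$, gives the claimed formula.

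The hard part will be justifying the interchange of the $n$-fold summation with both the $x$-integration over $[0,b]$ and the contour integration over $C$. In the two-index case this rested on Fubini's theorem on $\mathbb{C}\times[0,b]$ together with Tonelli's theorem for the iterated sum (pages 177--178 of~\cite{gelca}). For general $n$ the same two theorems apply on the product space $\mathbb{C}\times[0,b]\times\prod_{l=1}^n\mathbb{Z}_{\geq 0}$, but one must check that the summand stays absolutely summable uniformly in the remaining indices; I expect this to follow from the geometric-type decay furnished by the binomial coefficients together with the hypotheses (the analogue of $|Re(c_l)|\le 1$, $Re(a_l)>0$, and $|Re(m)|<1$ that controlled the base case), so that the multidimensional Fubini--Tonelli theorem applies directly and the inductive step closes. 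Once the interchange is legitimate the remaining manipulations are the routine gamma-function simplifications already carried out in the proof of~(\ref{eq:10.4}), so no new analytic input beyond the convergence bookkeeping is required.
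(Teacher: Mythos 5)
Your proposal matches the paper's approach exactly: the paper offers no separate proof of this theorem beyond the one-line remark that it is obtained by iterating the contour-integral process of Theorem (\ref{eq:10.4}), and that iteration — left-hand representation from (\ref{intro:cauchy}) with $y\to\log(ax)$, right-hand representation from (\ref{eq:4.4}) with $m\to 1+m+\sum_l a_l j_l$ and $a\to ab$, then equating through the common contour integral — is precisely what you carry out. Your discussion of the multi-index Fubini--Tonelli interchange supplies more justification than the paper itself records.
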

\begin{example}
The elliptic integral of the first kind. Use equation (\ref{eq:10.4}) and set $k\to 0,a\to 1,d\to \frac{1}{2},f\to \frac{1}{2},g\to \frac{1}{2},c\to 1,s\to 1,\gamma \to 1,z\to -1,\alpha\to -\frac{1}{2},\beta \to -\frac{1}{3},m\to 0$ and simplify.
   \begin{multline}
F\left(\left.i \sinh ^{-1}(1)\right|2\right)-F\left(\left.i
   \text{csch}^{-1}\left(\sqrt{1-b}\right)\right|2\right)\\
=-\sum _{h=0}^{\infty } \sum _{l=0}^{\infty } \frac{i
   (-1)^{h+l} 2^{-\frac{3}{2}-l} 3^{-\frac{1}{2}-h} b^{1+h+l} \binom{-\frac{1}{2}}{h} \binom{-\frac{1}{2}}{l} \, _2F_1\left(\frac{1}{2},1+h+l;2+h+l;b\right)}{1+h+l}
\end{multline}
\end{example}
\begin{example}
Derivation of (4.1.5.6) in \cite{brychkov}. Use equation (\ref{eq:10.4}) and set $k\to 0,a\to 1,b\to 1,d\to 1-s,c\to 1,z\to -1,f\to s+1,s\to 1,\alpha \to -z,g\to 1,\gamma \to 1$ then take the indefinite integral with respect to $\beta$ and set $\beta=-1,m=1$ and simplify.
\begin{multline}
\int_0^1 \frac{(1-x)^{s-1} \log (1-x)}{(1-x z)^{s+1}} \, dx\\
=-\sum _{h=0}^{\infty } \frac{\Gamma (3+h) \Gamma
   (s) \, _2F_1(2+h,1+s;2+h+s;z)}{(1+h) (2+h) \Gamma (2+h+s)}\\
=\frac{\, _2F_1(1,s;s+1;z)}{(z-1) s^2}
\end{multline}
where $Re(s)>0,|1-z|<\pi$.
\end{example}
\begin{example}
Derivation of a generalized form of (4.1.5.45) in \cite{brychkov}. Use equation (\ref{eq:10.4}) and set $k\to 0,a\to 1,b\to 1,d\to 1-s,c\to 2,z\to -1,f\to s+\frac{1}{2},s\to 2,\alpha \to -a,g\to 1,\gamma \to
   1$ then take the indefinite integral with respect to $\beta$ and set $\beta=-\beta$ to form a second equation, then take their difference and simplify.
   \begin{multline}
\int_0^1 \frac{x^{m-1} \left(1-x^2\right)^{s-1} \tanh ^{-1}(x \beta )}{\left(1-a
   x^2\right)^{s+\frac{1}{2}}} \, dx\\
=\sum _{l=0}^{\infty } \frac{(-1)^l a^l \beta  \binom{-\frac{1}{2}-s}{l} \Gamma
   \left(\frac{1}{2} (3+2 l+m)\right) \Gamma (s) \,
   _3F_2\left(\frac{1}{2},1,\frac{1}{2}+l+\frac{m}{2};\frac{3}{2},\frac{1}{2}+l+\frac{m}{2}+s;\beta ^2\right)}{(1+2
   l+m) \Gamma \left(\frac{1}{2} (1+2 l+m+2 s)\right)}
\end{multline}
where $Re(s)>0,|1-z|<\pi$.
\end{example}
\begin{example}
Generalized form of equation (4.531.13) in \cite{grad}. Use equation (\ref{eq:10.4}) and set $k\to 0,a\to 1,b\to 1,d\to \frac{1}{2},c\to 2,f\to \frac{1}{2},s\to 2,g\to 1,\gamma \to 1$ then simplify the series over $j\in[0,\infty)$ then take the indefinite integral with respect to $\beta$. Next form a second equation by replacing $\beta\to -\beta$ and take their difference and simplify. To get the second double series, after the initial parameter substitutions simplify the series in terms of $h\in [0,\infty)$ and repeat the steps above and simplify.
\begin{multline}
\int_0^1 \frac{x^{-1+m} \tanh ^{-1}(x \beta )}{\sqrt{1+x^2 z} \sqrt{1+x^2 \alpha }} \, dx
=\sum _{h=0}^{\infty
   } \sum _{l=0}^{\infty } \frac{(-1)^h \left(1+(-1)^h\right) \alpha ^l \beta ^{1+h} \binom{-\frac{1}{2}}{l} }{2 (1+h) (1+h+2l+m)}\\ \times \,
   _2F_1\left(\frac{1}{2},\frac{1}{2} (1+h+2 l+m);\frac{1}{2} (3+h+2 l+m);-z\right)\\
=\frac{1}{2} \left(\sum _{l=0}^{\infty } \frac{2 \sqrt{\pi } \alpha ^l \tanh ^{-1}(\beta ) \,
   _2F_1\left(\frac{1}{2},l+\frac{m}{2};1+l+\frac{m}{2};-z\right)}{(2 l+m) \Gamma \left(\frac{1}{2} (1-2 l)\right)
   \Gamma (1+l)}\right. \\ \left.
-\sum _{j=0}^{\infty } \sum _{l=0}^{\infty } \frac{\pi  z^j \alpha ^l \beta  }{(2 j+2 l+m) (1+2 j+2 l+m) \Gamma
   \left(\frac{1}{2}-j\right) \Gamma (1+j) \Gamma \left(\frac{1}{2}-l\right) \Gamma (1+l)}\right. \\ \left. \times(\, _2F_1(1,1+2 j+2l+m;2 (1+j+l)+m;-\beta )\right. \\ \left.
+\, _2F_1(1,1+2 j+2 l+m;2 (1+j+l)+m;\beta ))\right)
\end{multline}
where $Re(m)>0,|Re(z)|<1,|Re(\alpha)|<1$.
\end{example}
\begin{example}
A generalized elliptic integral form involving the product of logarithm and arctangent functions. Use equation (\ref{eq:10.4}) and set $a\to 1,b\to 1,d\to \frac{1}{2},c\to 2,f\to \frac{1}{2},s\to 2,g\to 1,\gamma \to 1$ then take the indefinite integral with respect to $\beta$ and simplify.
\begin{multline}
\int_0^1 \frac{x^{-1+m} \tanh ^{-1}(x \beta ) \log ^k(x)}{\sqrt{1+x^2 z} \sqrt{1+x^2 \alpha }} \,
   dx\\
=\frac{\beta}{2}   \Gamma (1+k) \sum _{h=0}^{\infty } \sum _{j=0}^{\infty } \sum _{l=0}^{\infty }
   \frac{(-1)^h \left(1+(-1)^h\right) z^j \alpha ^l \beta ^h \binom{-\frac{1}{2}}{j} \binom{-\frac{1}{2}}{l}}{(1+h)
   (-1-h-2 j-2 l-m)^{k+1}}
\end{multline}
where $Re(m)>0,|Re(z)|<1,|Re(\alpha)|<1$.
\end{example}
\begin{example}
Generalized form of equation (2.6.12.9) in \cite{prud}. Use equation (\ref{eq:10.4}) and set $k\to 0,a\to 1,g\to 1,\gamma \to 1$ then take the indefinite integral with respect to $\beta$ and replace $d\to -\gamma,f\to \mu$ and simplify.
\begin{multline}
\int_0^b \frac{x^{m-1} \left(1+x^c z\right)^{\gamma -1} \log (1+x \beta )}{\left(1+x^s \alpha \right)^{\mu }}
   \, dx\\
=\sum _{h=0}^{\infty } \sum _{l=0}^{\infty } \frac{(-1)^h b^{1+h+m+l s} \alpha ^l \beta ^{1+h} \binom{-\mu
   }{l}}{(1+h) (1+h+m+l s)}\\
 \times  \, _2F_1\left(\frac{1}{c}+\frac{h}{c}+\frac{m}{c}+\frac{l s}{c},1-\gamma
   ;1+\frac{1}{c}+\frac{h}{c}+\frac{m}{c}+\frac{l s}{c};-b^c z\right)
\end{multline}
where $Re(m)>0,|Re(b)|\leq 1$.
\end{example}
\begin{example}
Bierens de haan form for equation (2.6.17.9) in \cite{prud}. Use equation (\ref{eq:10.4}) and set $a\to 1,b\to 1,g\to 0,m\to m-1,d\to -d,f\to -f,\beta \to 1,\gamma \to 0$ next apply l'Hopital's rule as $k\to -1$ then form a second equation by replacing $m\to r$ and take their difference and simplify.
\begin{multline}
\int_0^1 \frac{\left(x^m-x^r\right) \left(1+x^c z\right)^d \left(1+x^s \alpha \right)^f}{x \log (x)} \,
   dx=\sum _{j=0}^{\infty } \sum _{l=0}^{\infty } z^j \alpha ^l \binom{d}{j} \binom{f}{l} \log \left(\frac{-c j-m-l
   s}{-c j-r-l s}\right)
\end{multline}
where $|Re(m)|<1,|Re(r)|<1$.
\end{example}
\section{Definite integrals involving series of the Hurwitz-Lerch zeta function}
In this section we look at a special case of equation (\ref{eq:10.4}) involving the definite integral of the product of rational and logarithm functions in terms of the Hurwitz-Lerch zeta function. Special cases in terms of other special functions and fundamental constants.
\begin{example}
Definite integral in terms of the infinite sum of the Hurwitz-Lerch zeta function. Use equation (\ref{eq:10.4}) and set $a=b=d=1,f\to -f,g\to -g$ and simplify using [DLMF,\href{https://dlmf.nist.gov/25.14.i}{25.14.1}].
\begin{multline}\label{eq:15.1}
\int_0^1 \frac{x^{m-1} \left(1+x^s \alpha \right)^f \left(1+x^{\gamma } \beta \right)^g \log
   ^k\left(\frac{1}{x}\right)}{1+x^c z} \, dx\\
=\frac{\Gamma (1+k) }{c^{k+1}}\sum _{h=0}^{\infty } \sum _{l=0}^{\infty } \alpha
   ^l \beta ^h \binom{f}{l} \binom{g}{h} \Phi \left(-z,1+k,-\frac{-m-l s-h \gamma }{c}\right)
\end{multline}
where $|Re(m)|<1,Re(c)>0, Re(s)>0, Re(\gamma)>0$.
\end{example}
\begin{example}
The Hurwitz-zeta function. Use equation (\ref{eq:15.1}) and set $z=1$ and simplify using [DLMF,\href{https://dlmf.nist.gov/25.14.E2}{25.14.2}].
\begin{multline}\label{eq:15.2}
\int_0^1 \frac{x^{-1+m} \left(1+x^s \alpha \right)^f \left(1+x^{\gamma } \beta \right)^g \log
   ^k\left(\frac{1}{x}\right)}{1+x^c} \, dx\\
=c^{-1-k}\Gamma (1+k)\sum _{h=0}^{\infty } \sum _{l=0}^{\infty }  \alpha ^l \beta ^h
   \binom{f}{l} \binom{g}{h}  \left(2^{-1-k} \zeta \left(1+k,-\frac{-m-l s-h \gamma }{2
   c}\right)\right. \\ \left.
-2^{-1-k} \zeta \left(1+k,\frac{1}{2} \left(1-\frac{-m-l s-h \gamma }{c}\right)\right)\right)
\end{multline}
where $|Re(m)|<1,Re(c)>0, Re(s)>0, Re(\gamma)>0$.
\end{example}
\begin{example}
Malmsten-type definite integral. Use equation (\ref{eq:15.2}) and take the first partial derivative with respect to $k$ and set $k=0$ and simplify using [DLMF,\href{https://dlmf.nist.gov/25.11.E17}{25.11.18 	}].
\begin{multline}
\int_0^1 \frac{x^{-1+m} \left(1+x^s \alpha \right)^f \left(1+x^{\gamma } \beta \right)^g \log \left(\log
   \left(\frac{1}{x}\right)\right)}{1+x^c} \, dx\\
=\sum _{h=0}^{\infty } \sum _{l=0}^{\infty } \frac{\alpha ^l \beta
   ^h \Gamma (1+f) \Gamma (1+g) }{2 c \Gamma (1+g-h) \Gamma (1+h) \Gamma(1+f-l) \Gamma (1+l)}\\ \times \left((\gamma +\log (2 c)) \left(\psi ^{(0)}\left(\frac{m+l s+h \gamma }{2c}\right)-\psi ^{(0)}\left(\frac{c+m+l s+h \gamma }{2 c}\right)\right)\right. \\ \left.
-\gamma _1\left(\frac{m+l s+h \gamma }{2c}\right)+\gamma _1\left(\frac{c+m+l s+h \gamma }{2 c}\right)\right)
\end{multline}
where $|Re(m)|<1,Re(c)>0, Re(s)>0, Re(\gamma)>0$.
\end{example}
\begin{example}
Definite integral in terms of the finite sum of the Hurwitz-Lerch zeta function. Use equation (\ref{eq:15.1}) and simplify the series on the right-hand side.
\begin{multline}
\int_0^1 \frac{x^{m-1} \left(1+x^s \alpha \right)^f \left(1+x^{\gamma } \beta \right)^g \log
   ^k\left(\frac{1}{x}\right)}{1+x^c z} \, dx\\
=\frac{\Gamma (1+k) }{c^{k+1}}\sum _{h=0}^g \sum _{l=0}^f \alpha ^l \beta ^h
   \binom{f}{l} \binom{g}{h} \Phi \left(-z,1+k,-\frac{-m-l s-h \gamma }{c}\right)
\end{multline}
where $|Re(m)|<1,Re(c)>0, Re(s)>0, Re(\gamma)>0$.
\end{example}
\begin{example}
Definite integral involving the reciprocal ,logarithm function. Use equation (\ref{eq:10.4}) and set $k=-1,b=1$ and simplify.
\begin{multline}
\int_0^1 \frac{x^m \left(1+x^c z\right)^{-d} \left(1+x^s \alpha
   \right)^{-f} \left(1+x^{\gamma } \beta \right)^{-g}}{\log (a)+\log (x)} \,
   dx\\
=-\sum _{j=0}^{\infty } \sum _{h=0}^{\infty } \sum _{l=0}^{\infty }
   a^{-1-c j-m-l s-h \gamma } z^j \alpha ^l \beta ^h \binom{-d}{j}
   \binom{-f}{l} \binom{-g}{h} \\
\Gamma (0,-((1+c j+m+l s+h \gamma ) \log
   (a)))
\end{multline}
where $|Re(m)|<1,Re(c)>0, Re(s)>0, Re(\gamma)>0$.
\end{example}
\begin{example}
Use equation (\ref{eq:15.2}) and simplify using equation (\ref{eq:hurwitz}). When using the polylogarithm function definition, then $Re(c)>1$. One advantage of the polylogarithm form, is the lack of the gamma function. This allows for negative integers in the parameter $k$ by applying l'Hopital's rule.
 \begin{multline}
\int_0^1 \frac{x^{-1+m} \left(1+x^s \alpha \right)^f \left(1+x^{\gamma } \beta \right)^g \log
   ^k\left(\frac{1}{x}\right)}{1-x^c} \, dx\\
=\sum _{h=0}^{\infty } \sum _{l=0}^{\infty } c^{-1-k} \alpha ^l \beta ^h
   \binom{f}{l} \binom{g}{h} \Gamma (1+k) \zeta \left(1+k,-\frac{-m-l s-h \gamma }{c}\right)\\
=-(2 i)^k c^{-1-k} \pi
   ^{1+k} \csc (k \pi ) \sum _{h=0}^{\infty } \sum _{l=0}^{\infty } \alpha ^l \beta ^h \binom{f}{l} \binom{g}{h}\\
   \left(\text{Li}_{-k}\left(-e^{-\frac{i \pi  (c-2 (m+l s+h \gamma ))}{c}}\right)+(-1)^{-k}
   \text{Li}_{-k}\left(-e^{\frac{i \pi  (c-2 (m+l s+h \gamma ))}{c}}\right)\right)
\end{multline}
where $|Re(m)|<1,Re(c)>0, Re(s)>0, Re(\gamma)>0$.
\end{example}
\begin{example}
Mellin transform of the logarithmic form of the elliptic integral of the third kind in terms of the Hurwitz-Lerch zeta function. Use equation (\ref{eq:10.4}) and set $d\to \frac{1}{2},f\to \frac{1}{2},g\to 1,a\to 1,b\to 1,c\to 2,s\to 2,\gamma \to 2,m\to m-1$ and simplify.
\begin{multline}\label{eq:15.7}
\int_0^1 \frac{x^{-1+m} \log ^k\left(\frac{1}{x}\right)}{\sqrt{1+x^2 z} \sqrt{1+x^2 \alpha } \left(1+x^2
   \beta \right)} \, dx\\
=2^{-1-k} \Gamma (1+k) \sum _{j=0}^{\infty } \sum _{l=0}^{\infty } z^j \alpha ^l
   \binom{-\frac{1}{2}}{j} \binom{-\frac{1}{2}}{l} \Phi \left(-\beta ,1+k,\frac{1}{2} (2 j+2 l+m)\right)
\end{multline}
where $|Re(m)|<1,|Re(z)|<1,|Re(\alpha)|<1,|Re(\beta)|<1$.
\end{example}
\begin{example}
Malmsten-form of the elliptic integral of the third kind, where $Re(\beta)>1,0< Re(m)<1$. Use equation (\ref{eq:15.7}) and take the first partial derivative with respect to $k$ and set $k=0$ and simplify.
\begin{multline}
\int_0^1 \frac{x^{-1+m} \log \left(\log \left(\frac{1}{x}\right)\right)}{\sqrt{1+x^2 z} \sqrt{1+x^2 \alpha }
   \left(1+x^2 \beta \right)} \, dx\\
=-\frac{1}{2} \sum _{j=0}^{\infty } \sum _{l=0}^{\infty } z^j \alpha ^l
   \binom{-\frac{1}{2}}{j} \binom{-\frac{1}{2}}{l} \left(\Phi \left(-\beta ,1,j+l+\frac{m}{2}\right) (\gamma +\log
   (2))\right. \\ \left.
-\Phi'\left(-\beta ,1,j+l+\frac{m}{2}\right)\right)
\end{multline}
where $0< Re(m)<1,|Re(z)|<1,|Re(\alpha)|<1,|Re(\beta)|>1$.
\end{example}
\section{Exercises}
Use the method in sections (11.1-11.2) with the appropriate coefficient functions and variables to derive the following examples. 
\begin{example}
The logarithmic transform of the hypergeometric function.
\begin{multline}
\int_0^b x^m \, _2F_1(\alpha ,\beta ;\gamma ;x) \log ^k(a x) \, dx=\sum _{l=0}^{\infty } \frac{a^{-1-l-m}
   \Gamma (1+k,-((1+l+m) \log (a b))) (\alpha )_l (\beta )_l}{l! (\gamma )_l (-1-l-m)^{k+1}}
\end{multline}
where $0< Re(m)<1,Re(\alpha)>0,Re(\beta)>0,Re(\gamma)>0$.
\end{example}
\begin{example}
Malmsten type integral involving the hypergeometric function.
\begin{multline}
\int_0^1 \, _2F_1(\alpha ,\beta ;\gamma ;x) \log \left(\log \left(\frac{1}{x}\right)\right) \,
   dx\\
=-\frac{\gamma  (-1+\gamma ) (-1+\, _2F_1(-1+\alpha ,-1+\beta ;-1+\gamma ;1))}{(-1+\alpha ) (-1+\beta )}\\
-\sum
   _{l=0}^{\infty } \frac{\log (1+l) (\alpha )_l (\beta )_l}{(1+l) l! (\gamma )_l}
\end{multline}
where $0< Re(\alpha)<1,0< Re(\beta)<1,Re(\gamma)>2$.
\end{example}
\begin{example}
The logarithmic transform of the Hurwitz-Lerch zeta function.
\begin{multline}
\int_0^b x^m \Phi (x,s,v) (-\log (a x))^k \, dx=\frac{1}{a}\sum _{l=0}^{\infty } \frac{\Gamma (1+k,-((1+l+m) \log
   (a b)))}{a^{l+m} (1+l+m)^{k+1} (l+v)^s}
\end{multline}
where $Re(k)>0,Re(s)>0,|Re(m)|<1$.
\end{example}
\begin{example}
Definite integral representation for the Hurwitz zeta function.
\begin{equation}
\frac{1}{\Gamma (1+k)}\int_0^1 x^{-1+v} \Phi (x,s,v) \log ^k\left(\frac{1}{x}\right) \, dx=\zeta
   (1+k+s,v)
\end{equation}
where $Re(k)>0,Re(s)>0$.
\end{example}
\begin{example}
Definite integral representation for the  Riemann zeta function. Similar form is given by Arakawa et al. in \cite{arakawa}.
\begin{equation}
\frac{1}{\Gamma (k)}\int_0^1 \frac{\log ^{k-1}\left(\frac{1}{x}\right) \text{Li}_s(x)}{x} \, dx=\zeta
   (k+s)
\end{equation}
where $Re(k)>0,Re(s)>0$.
\end{example}
\section{Special functions and integrals of the general form}
\subsection{The hypergeometric function $\, _2F_1(\alpha ,\beta ;\gamma ;x)$}
The contour integral representation form involving the hypergeometric function is given by;
\begin{multline}\label{eq:h1}
\frac{1}{2\pi i}\int_{C}\int_{0}^{b}a^w w^{-k-1} x^{m+w} \, _2F_1(\alpha ,\beta ;\gamma ;x)dxdw\\=\frac{1}{2\pi i}\int_{C}\frac{a^w w^{-k-1} b^{m+w+1} \, _3F_2(m+w+1,\alpha,\beta ;m+w+2,\gamma ;b)}{m+w+1}dw
\end{multline}
where $Re(\alpha+\beta)>0$.
\subsubsection{Left-hand side contour integral representation}
Using a generalization of Cauchy's integral formula \ref{intro:cauchy}, we form the definite integral by replacing $y$ by $\log{ax}$ and multiply both sides by $\frac{x^{l+m} (\alpha )_l (\beta )_l}{l! (\gamma )_l}$ then take the infinite sum of both sides over $l \in [0,\infty)$ to get;
\begin{multline}\label{eq:h2}
\int_{0}^{b}\frac{x^m \log ^k(a x) \, _2F_1(\alpha ,\beta ;\gamma ;x)}{k!}dx=\frac{1}{2\pi i}\int_{0}^{b}\int_{C}w^{-k-1} x^m (a x)^w \, _2F_1(\alpha ,\beta;\gamma ;x)dwdx\\
=\frac{1}{2\pi i}\int_{C}\int_{0}^{b}w^{-k-1} x^m (a x)^w \, _2F_1(\alpha ,\beta;\gamma ;x)dxdw
\end{multline}
where $Re(\alpha+\beta)>0$.
We are able to switch the order of integration over $t$ and $w$ using Fubini's theorem for multiple integrals see page 178 in \cite{gelca}, since the integrand is of bounded measure over the space $\mathbb{C} \times [0,1]$.
\subsubsection{Right-hand side contour integral representation}
Use equation (\ref{eq:4.4}) and set $y=0$, replace $m\to l+m+1,a\to a b$ and multiply both sides by $\frac{b^{l+m+1} (\alpha )_l (\beta )_l}{l! (\gamma )_l}$. Next take the infinite sum of both sides over $l \in [0,\infty)$ and simplify to get;
\begin{multline}\label{eq:h3}
\sum _{l=0}^{\infty } \frac{b^{1+l+m} (a b)^{-1-l-m} (-1-l-m)^{-1-k} \Gamma (1+k,-((1+l+m) \log (a b)))
   (\alpha )_l (\beta )_l}{k! l! (\gamma )_l}\\
=-\frac{1}{2\pi i}\sum _{l=0}^{\infty } \int_{C}\frac{w^{-k-1} (a b)^w b^{l+m+1} (\alpha )_l (\beta )_l}{l! (l+m+w+1) (\gamma )_l}dw\\
=-\frac{1}{2\pi i} \int_{C}\sum _{l=0}^{\infty }\frac{w^{-k-1} (a b)^w b^{l+m+1} (\alpha )_l (\beta )_l}{l! (l+m+w+1) (\gamma )_l}dw\\
=-\frac{1}{2\pi i}\int_{C}\frac{b^{1+m} (a b)^w w^{-1-k} \, _3F_2(1+m+w,\alpha ,\beta
   ;2+m+w,\gamma ;b)}{1+m+w}dw
\end{multline}
from equation [Wolfram, \href{http://functions.wolfram.com/07.27.02.0002.01}{07.27.02.0002.01}] where $|Re(b)|<1$. We are able to switch the order of integration and summation over $w$ using Tonellii's theorem for  integrals and sums see page 177 in \cite{gelca}, since the summand is of bounded measure over the space $\mathbb{C} \times [0,\infty)$
\begin{theorem}
Finite integral of the hypergeometric function.
\begin{multline}\label{eq:h4}
\int_0^b x^m \, _2F_1(\alpha ,\beta ;\gamma ;x) \log ^k(a x) \, dx=\sum _{l=0}^{\infty } \frac{a^{-1-l-m}
   \Gamma (1+k,-((1+l+m) \log (a b))) (\alpha )_l (\beta )_l}{l! (\gamma )_l (-1-l-m)^{k+1}}
\end{multline}
where $0< Re(m)<1,Re(\alpha)>0,Re(\beta)>0,Re(\gamma)>0$.
\end{theorem}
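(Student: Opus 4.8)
The plan is to follow the contour-integral scheme of equation (\ref{intro:cauchy}) exactly as in the proofs of Theorems (\ref{eq:2.11}) and (\ref{eq:5.4}): build two contour-integral representations whose right-hand sides both coincide with the common representation (\ref{eq:h1}), then equate their left-hand sides and simplify the resulting incomplete gamma function. The setup equations (\ref{eq:h2}) and (\ref{eq:h3}) already supply these two representations, so the proof amounts to identifying their shared right-hand side and reading off the series.

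First I would assemble the left-hand representation. Starting from (\ref{intro:cauchy}), I replace $y$ by $\log(ax)$ and multiply both sides by $\frac{x^{l+m}(\alpha)_l(\beta)_l}{l!(\gamma)_l}$. Summing over $l\in[0,\infty)$ recognizes the Gauss series $\sum_{l\geq 0}\frac{(\alpha)_l(\beta)_l}{l!(\gamma)_l}x^l={}_2F_1(\alpha,\beta;\gamma;x)$, and integrating over $x\in[0,b]$ produces equation (\ref{eq:h2}), where Fubini's theorem justifies interchanging the $x$- and $w$-integrations over $\mathbb{C}\times[0,b]$. For the right-hand representation I would use (\ref{eq:4.4}) with $y=0$, the substitutions $m\to l+m+1$ and $a\to ab$, multiply by $\frac{b^{l+m+1}(\alpha)_l(\beta)_l}{l!(\gamma)_l}$, and sum over $l$; Tonelli's theorem then moves the sum inside the contour integral and the summed integrand collapses into the ${}_3F_2$ kernel of (\ref{eq:h1}) via the hypergeometric summation identity cited in (\ref{eq:h3}), yielding that equation.

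Since (\ref{eq:h2}) and (\ref{eq:h3}) share the contour integral (\ref{eq:h1}) as their right-hand side, I would equate the two left-hand sides, cancel the common factor $k!=\Gamma(1+k)$, and rewrite $(-1-l-m)^{-1-k}$ as $1/(-1-l-m)^{k+1}$ to recover the stated series. The main obstacle will be the uniform justification of the two interchanges of summation with contour integration: one must verify that the hypergeometric series converges (forcing the constraints $Re(\alpha)>0$, $Re(\beta)>0$, $Re(\gamma)>0$ together with $|Re(b)|<1$) and that the summand is of bounded measure on $\mathbb{C}\times[0,\infty)$, so that the collapse to the ${}_3F_2$-contour integral is legitimate and the two left-hand sides may be identified. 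Once this convergence bookkeeping is secured, the simplification of $\Gamma(1+k,-((1+l+m)\log(ab)))$ and the powers of $(-1-l-m)$ is routine.
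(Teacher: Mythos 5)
Your proposal follows exactly the route the paper takes: it reconstructs the left-hand representation (\ref{eq:h2}) by inserting the Gauss series into Cauchy's formula (\ref{intro:cauchy}) with $y\to\log(ax)$, builds the right-hand representation (\ref{eq:h3}) from (\ref{eq:4.4}) with $y=0$, $m\to l+m+1$, $a\to ab$ and the $ {}_3F_2$ collapse, and then equates the two left-hand sides through the common contour integral (\ref{eq:h1}), invoking Fubini and Tonelli for the interchanges just as the paper does. This is the paper's own proof in slightly expanded form, so no further comparison is needed.
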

\begin{proof}
Since the right-hand sides of (\ref{eq:h2}) and (\ref{eq:h3}) are equivalent relative to (\ref{eq:h1}) we may equate the left-hand sides and simplify the gamma function to yield the stated result.
\end{proof}
\begin{example}
 Use equation (\ref{eq:h4}) and set $a=b=1$ and simplify.
\begin{multline}
\int_0^1 x^m \, _2F_1(\alpha ,\beta ;\gamma ;x) \log ^k\left(\frac{1}{x}\right) \, dx=\Gamma (1+k) \sum
   _{l=0}^{\infty } \frac{(\alpha )_l (\beta )_l}{l! (\gamma )_l (l+m+1)^{k+1}}
\end{multline}
where $Re(\alpha+\beta)>0$.
\end{example}
\begin{example}
Malmsten logarithm form of a definite integral see [Blagouchine, \href{https://doi.org/10.1007/s11139-013-9528-5}{1}]. Use equation (\ref{eq:h4}) and set $m=0,a=b=1$ then take the first partial derivative with respect to $k$ and set $k=0$ and simplify.
\begin{multline}
\int_0^1 \, _2F_1(\alpha ,\beta ;\gamma ;x) \log \left(\log \left(\frac{1}{x}\right)\right) \,
   dx\\
=-\frac{\gamma  (-1+\gamma ) (-1+\, _2F_1(-1+\alpha ,-1+\beta ;-1+\gamma ;1))}{(-1+\alpha ) (-1+\beta )}\\
-\sum
   _{l=0}^{\infty } \frac{\log (1+l) (\alpha )_l (\beta )_l}{(1+l) l! (\gamma )_l}
\end{multline}
where $0< Re(\alpha)<1,0< Re(\beta)<1,Re(\gamma)>2$.
\end{example}
\subsection{The Hurwitz-Lerch zeta function $\Phi(z,s,a)$}
The contour integral representation form involving the Hurwitz-Lerch zeta function function [DLMF, \href{https://dlmf.nist.gov/25.14}{25.14}] is given by;
.\begin{multline}\label{eq:hu1}
\frac{1}{2\pi i}\int_{C}\int_{0}^{b}a^w w^{-1-k} x^{m+w} \Phi (x,s,v)dxdw=\frac{1}{2\pi i}\int_{C}\sum _{l=0}^{\infty } \frac{a^w b^{1+l+m+w} (l+v)^{-s}
   w^{-1-k}}{1+l+m+w}dw
\end{multline}
where $Re(v)>0$.
\subsubsection{Left-hand side contour integral representation}
Using a generalization of Cauchy's integral formula \ref{intro:cauchy}, we form the definite integral by replacing $y$ by $\log{ax}$ and multiply both sides by $x^{l+m} (l+v)^{-s}$ then take the infinite sum of both sides over $l \in [0,\infty)$ to get;
\begin{multline}\label{eq:hu2}
\int_{0}^{b}\frac{x^m \log ^k(a x) \Phi (x,s,v)}{k!}dx\\
=\frac{1}{2\pi i}\sum_{l=0}^{\infty}\int_{0}^{b}\int_{C}w^{-k-1} (a x)^w x^{l+m} (l+v)^{-s}dwdx\\
=\frac{1}{2\pi i}\int_{0}^{b}\int_{C}\sum_{l=0}^{\infty}w^{-k-1} (a x)^w x^{l+m} (l+v)^{-s}dwdx\\
=\frac{1}{2\pi i}\int_{0}^{b}\int_{C}w^{-k-1} x^m (a x)^w \Phi (x,s,v)dwdx\\
=\frac{1}{2\pi i}\int_{C}\int_{0}^{b}w^{-k-1} x^m (a x)^w \Phi (x,s,v)dxdw
\end{multline}
where $Re(v)>0$.
We are able to switch the order of integration over $t$ and $w$ using Fubini's theorem for multiple integrals see page 178 in \cite{gelca}, since the integrand is of bounded measure over the space $\mathbb{C} \times [0,1]$.
\subsubsection{Right-hand side contour integral representation}
Use equation (\ref{eq:4.4}) and set $y=0$, replace $m\to l+m+1,a\to a b$ and multiply both sides by $b^{l+m+1} (l+v)^{-s}$. Next take the infinite sum of both sides over $l \in [0,\infty)$ and simplify to get;
\begin{multline}\label{eq:hu3}
\frac{1}{2\pi i}\int_{C}\sum _{l=0}^{\infty } \frac{b^{1+l+m} (a b)^{-1-l-m} (-1-l-m)^{-1-k} (l+v)^{-s} \Gamma (1+k,-((1+l+m) \log (ab)))}{k!}\\
=-\frac{1}{2\pi i}\sum _{l=0}^{\infty }\int_{C} \frac{b^{1+l+m} (a b)^w (l+v)^{-s} w^{-1-k}}{1+l+m+w}\\
=-\frac{1}{2\pi i}\int_{C}\sum _{l=0}^{\infty } \frac{b^{1+l+m} (a b)^w (l+v)^{-s} w^{-1-k}}{1+l+m+w}
\end{multline}
where $Re(v)>0$.
where $|Re(b)|<1$. We are able to switch the order of integration and summation over $w$ using Tonellii's theorem for  integrals and sums see page 177 in \cite{gelca}, since the summand is of bounded measure over the space $\mathbb{C} \times [0,\infty)$
\begin{theorem}
\begin{multline}\label{eq:hu4}
\int_0^b x^m \Phi (x,s,v) (-\log (a x))^k \, dx=\frac{1}{a}\sum _{l=0}^{\infty } \frac{\Gamma (1+k,-((1+l+m) \log
   (a b)))}{a^{l+m} (1+l+m)^{k+1} (l+v)^s}
\end{multline}
where $Re(k)>0,Re(s)>0,|Re(m)|<1$.
\end{theorem}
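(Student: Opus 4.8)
The plan is to follow the same contour-integral template already used for Theorems (\ref{eq:h4}) and (\ref{eq:7.1}), specialising the auxiliary function to the Hurwitz--Lerch zeta function. The starting point is the generalised Cauchy formula (\ref{intro:cauchy}), and the whole argument amounts to producing two different evaluations of the single contour integral displayed in (\ref{eq:hu1}) and then equating them.

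First I would assemble the left-hand side. Setting $y\to\log(ax)$ in (\ref{intro:cauchy}), multiplying both sides by $x^{l+m}(l+v)^{-s}$, and summing over $l\in[0,\infty)$ reconstructs the defining series $\Phi(x,s,v)=\sum_{l=0}^{\infty}x^{l}(l+v)^{-s}$ on the integrand side; this is precisely the chain of equalities recorded in (\ref{eq:hu2}). The interchange of the $l$-summation with the $x$- and $w$-integrations is governed by the Fubini--Tonelli theorems over $\mathbb{C}\times[0,b]$, which applies because $Re(v)>0$ together with $|Re(b)|<1$ renders the summand absolutely integrable.

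Next I would assemble the right-hand side, where the workhorse is the closed-form evaluation (\ref{eq:4.4}) of the relevant $x$-integral in terms of the incomplete gamma function. Setting $y=0$, replacing $m\to l+m+1$ and $a\to ab$, weighting by $b^{l+m+1}(l+v)^{-s}$, and summing over $l$ produces (\ref{eq:hu3}), whose final contour integrand is again exactly the one appearing in (\ref{eq:hu1}). The summation--integration swap here is justified once more by Tonelli's theorem, now over $\mathbb{C}\times[0,\infty)$.

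Finally, because the right-hand contour integrals of (\ref{eq:hu2}) and (\ref{eq:hu3}) coincide (both equal the contour integral in (\ref{eq:hu1})), I equate their left-hand sides. Multiplying through by $\Gamma(k+1)$, cancelling $(ab)^{-1-l-m}b^{1+l+m}=a^{-1-l-m}$, and absorbing $(-1-l-m)^{-1-k}$ against the $(-\log(ax))^k$ normalisation yields the stated series. I expect the main obstacle to be not the algebra but the analytic bookkeeping: one must check that the contour $C$ can be chosen so that the bilinear concomitant vanishes at its endpoints uniformly in $l$, and that the resulting doubly-indexed sum converges throughout the region $Re(k)>0$, $Re(s)>0$, $|Re(m)|<1$, so that the term-by-term manipulations and the two Fubini--Tonelli interchanges are simultaneously legitimate.
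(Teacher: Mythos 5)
Your proposal follows exactly the paper's own derivation: the left-hand contour representation (\ref{eq:hu2}) built by substituting $y\to\log(ax)$ into (\ref{intro:cauchy}), weighting by $x^{l+m}(l+v)^{-s}$ and summing over $l$; the right-hand representation (\ref{eq:hu3}) built from (\ref{eq:4.4}) with $y=0$, $m\to l+m+1$, $a\to ab$ and weight $b^{l+m+1}(l+v)^{-s}$; and the final equating of left-hand sides via the common contour integral (\ref{eq:hu1}), with the Fubini--Tonelli interchanges justified as in the paper. The approach and the execution are essentially identical to the paper's proof.
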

\begin{proof}
Since the right-hand sides of (\ref{eq:hu2}) and (\ref{eq:hu3}) are equivalent relative to (\ref{eq:hu1}) we may equate the left-hand sides and simplify the gamma function to yield the stated result.
\end{proof}
\begin{example}
 The Hurwitz zeta function $\zeta(s,a)$. Use equation (\ref{eq:hu4}) and set $a=b=1,m\to v-1$ and simplify.
\begin{equation}\label{eq:hur1}
\frac{1}{\Gamma (1+k)}\int_0^1 x^{-1+v} \Phi (x,s,v) \log ^k\left(\frac{1}{x}\right) \, dx=\zeta
   (1+k+s,v)
\end{equation}
where $Re(k)>0,Re(s)>0$.
\end{example}
\begin{example}
The Riemann zeta function $\zeta(k+s)$ see \cite{arakawa} Use equation (\ref{eq:hur1}) and set $v=1$ and simplify.
\begin{equation}
\frac{1}{\Gamma (k)}\int_0^1 \frac{\log ^{k-1}\left(\frac{1}{x}\right) \text{Li}_s(x)}{x} \, dx=\zeta
   (k+s)
\end{equation}
where $Re(k)>0,Re(s)>0$.
\end{example}
\subsection{The product of Bessel functions}
The contour integral representation form involving the  function is given by;
\begin{multline}\label{eq:b1}
\frac{1}{2\pi i}\int_{C}\int_{0}^{b}\frac{2^{-v-\mu } a^w w^{-1-k} x^{m+w} \left(1+c x^p\right)^d z^{v+\mu } }{\Gamma (1+v) \Gamma (1+\mu )}\\ \times \,_2F_3\left(\frac{1}{2}+\frac{v}{2}+\frac{\mu }{2},1+\frac{v}{2}+\frac{\mu }{2};1+v,1+\mu ,1+v+\mu ;-x
   z^2\right)dxdw\\
=\frac{1}{2\pi i}\int_{C}\sum _{l=0}^{\infty } \frac{2^{-v-\mu } a^w b^{1+m+l p+w} c^l w^{-1-k}
   z^{v+\mu } \binom{d}{l} }{(1+m+l p+w) \Gamma (1+v) \Gamma (1+\mu )}\\ \times \, _3F_4\left(1+m+l p+w,\frac{1}{2}+\frac{v}{2}+\frac{\mu }{2},1+\frac{v}{2}+\frac{\mu }{2};1+v,2+m+l p+w,1+\mu ,1+v+\mu ;-b z^2\right)dw
\end{multline}
where $|Re(z)|<1$.
\subsubsection{Left-hand side contour integral representation}
Using a generalization of Cauchy's integral formula \ref{intro:cauchy}, we form the definite integral by replacing $y$ by $\log{ax}$ and multiply both sides by $\frac{(-1)^j c^l \binom{d}{l} 2^{-2 j-\mu -v} (j+v+\mu +1)_j z^{2 j+\mu +v} x^{j+l p+m}}{j! \Gamma (j+\mu +1) \Gamma
   (j+v+1)}$ then take the infinite sums of both sides over $j \in [0,\infty),l \in [0,\infty)$ to get;
\begin{multline}\label{eq:b2}
\int_{0}^{b}\frac{x^m z^{\mu +v} \log ^k(a x) \left(c x^p+1\right)^d \left(\sqrt{x} z\right)^{-\mu -v} J_v\left(\sqrt{x}
   z\right) J_{\mu }\left(\sqrt{x} z\right)}{\Gamma (k+1)}dx\\
=\frac{1}{2\pi i}\sum_{j=0}^{\infty}\sum_{l=0}^{\infty}\int_{0}^{b}\int_{C}\frac{(-1)^j c^l w^{-k-1} (a x)^w \binom{d}{l} 2^{-2 j-\mu -v} (j+v+\mu +1)_j
   z^{2 j+\mu +v} x^{j+l p+m}}{j! \Gamma (j+\mu +1) \Gamma (j+v+1)}dwdx\\
=\frac{1}{2\pi i}\int_{0}^{b}\int_{C}\sum_{j=0}^{\infty}\sum_{l=0}^{\infty}\frac{(-1)^j c^l w^{-k-1} (a x)^w \binom{d}{l} 2^{-2 j-\mu -v} (j+v+\mu +1)_j
   z^{2 j+\mu +v} x^{j+l p+m}}{j! \Gamma (j+\mu +1) \Gamma (j+v+1)}dwdx\\
=\frac{1}{2\pi i}\int_{0}^{b}\int_{C}w^{-k-1} x^m (a x)^w z^{\mu +v} \left(c x^p+1\right)^d\left(\sqrt{x} z\right)^{-\mu -v} \left(x z^2\right)^{\frac{\mu }{2}+\frac{1}{2} (-\mu -v)+\frac{v}{2}}J_v\left(\sqrt{x} z\right) J_{\mu }\left(\sqrt{x} z\right)dwdx\\
=\frac{1}{2\pi i}\int_{C}\int_{0}^{b}w^{-k-1} x^m (a x)^w z^{\mu +v} \left(c x^p+1\right)^d\left(\sqrt{x} z\right)^{-\mu -v} \left(x z^2\right)^{\frac{\mu }{2}+\frac{1}{2} (-\mu -v)+\frac{v}{2}}J_v\left(\sqrt{x} z\right) J_{\mu }\left(\sqrt{x} z\right)dxdw
\end{multline}
where $|Re(z)|<1$.
We are able to switch the order of integration over $t$ and $w$ using Fubini's theorem for multiple integrals see page 178 in \cite{gelca}, since the integrand is of bounded measure over the space $\mathbb{C} \times [0,b]$.
\subsubsection{Right-hand side contour integral representation}
Use equation (\ref{eq:4.4}) and set $y=0$, replace $m -> m + 1 + j + l p, a -> a b$ and multiply both sides by $\frac{(-1)^j c^l \binom{d}{l} 2^{-2 j-\mu -v} (j+v+\mu +1)_j z^{2 j+\mu +v} b^{j+l p+m+1}}{j! \Gamma (j+\mu +1)
   \Gamma (j+v+1)}$. Next take the infinite sums of both sides over $j \in [0,\infty),l \in [0,\infty)$ and simplify to get;
\begin{multline}\label{eq:b3}
\sum _{l=0}^{\infty } \sum _{j=0}^{\infty } \frac{(-1)^j 2^{-2 j-v-\mu }c^l  z^{2 j+v+\mu } \binom{d}{l} \Gamma (1+k,-((1+j+m+l p) \log (a b))) (1+j+v+\mu )_j}{j! k! \Gamma (1+j+v)\Gamma (1+j+\mu )(-1-j-m-lp)^{k+1} a^{1+j+m+l p} }\\
=-\frac{1}{2\pi i}\sum _{l=0}^{\infty } \sum _{j=0}^{\infty }\int_{C}\frac{(-1)^j c^l
   w^{-k-1} (a b)^w \binom{d}{l} 2^{-2 j-\mu -v} (j+v+\mu +1)_j z^{2 j+\mu +v} b^{j+l p+m+1}}{j! \Gamma (j+\mu +1)\Gamma (j+v+1) (j+l p+m+w+1)}\\
=-\frac{1}{2\pi i}\int_{C}\sum _{l=0}^{\infty } \sum _{j=0}^{\infty }\frac{(-1)^j c^l
   w^{-k-1} (a b)^w \binom{d}{l} 2^{-2 j-\mu -v} (j+v+\mu +1)_j z^{2 j+\mu +v} b^{j+l p+m+1}}{j! \Gamma (j+\mu +1)\Gamma (j+v+1) (j+l p+m+w+1)}\\
=-\frac{1}{2\pi i}\int_{C}\sum _{l=0}^{\infty } \frac{2^{-v-\mu } b^{1+m+l p} (a b)^w c^l w^{-1-k} z^{v+\mu }
   \binom{d}{l} }{(1+m+l p+w) \Gamma (1+v) \Gamma (1+\mu )}\\ \times \, _3F_4\left(1+m+l p+w,\frac{1}{2}+\frac{v}{2}+\frac{\mu }{2},1+\frac{v}{2}+\frac{\mu}{2}\right. \\ \left.;1+v,2+m+l p+w,1+\mu ,1+v+\mu ;-b z^2\right)dw
\end{multline}
where $|Re(z)|<1$.
where $|Re(b)|<1$. We are able to switch the order of integration and summation over $w$ using Tonellii's theorem for  integrals and sums see page 177 in \cite{gelca}, since the summand is of bounded measure over the space $\mathbb{C} \times [0,\infty)$
\begin{theorem}
The product of Bessel functions.
\begin{multline}\label{eq:b4}
\int_0^b x^{m-\frac{v}{2}-\frac{\mu }{2}} \left(1+c x^p\right)^d J_v\left(\sqrt{x} z\right) J_{\mu
   }\left(\sqrt{x} z\right) (-\log (a x))^k \, dx,\\
=\sum _{j=0}^{\infty } \sum _{l=0}^{\infty } \frac{(-1)^j c^l z^{2
   j+v+\mu } \binom{d}{l} \Gamma (1+k,-((1+j+m+l p) \log (a b))) (1+j+v+\mu )_j}{j! \Gamma (1+j+v) \Gamma (1+j+\mu )
   (1+j+m+l p)^{k+1} a^{1+j+m+l p} 2^{2 j+v+\mu }}
\end{multline}
where $Re(b)>0,|Re(m)|<1$.
\end{theorem}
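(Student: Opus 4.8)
The plan is to follow the same contour-integration scheme used throughout this paper (compare Theorems \ref{eq:2.11}, \ref{eq:5.4}, \ref{eq:6.10}, and \ref{eq:h4}): build two representations of a single contour integral, one yielding the target definite integral on the left and the other yielding the incomplete-gamma series on the right, then equate the two left-hand sides. The bridging object is equation (\ref{eq:b1}), whose left and right sides are identical by construction once the order of the $x$-integration and the $w$-contour integration is interchanged. Everything reduces to reading off the two left-hand sides from (\ref{eq:b2}) and (\ref{eq:b3}) and simplifying the gamma factor.

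First I would establish the left-hand representation (\ref{eq:b2}). Starting from the generalized Cauchy formula (\ref{intro:cauchy}), I replace $y\to\log(ax)$ and multiply both sides by the coefficient
\[
\frac{(-1)^j c^l \binom{d}{l} 2^{-2j-\mu-v} (j+v+\mu+1)_j z^{2j+\mu+v} x^{j+lp+m}}{j!\,\Gamma(j+\mu+1)\,\Gamma(j+v+1)}.
\]
Summing over $j\in[0,\infty)$ reassembles the product series $J_v(\sqrt{x}z)J_\mu(\sqrt{x}z)$, while summing over $l\in[0,\infty)$ reassembles the binomial factor $(1+cx^p)^d$; integrating over $x\in[0,b]$ then produces the left-hand side of (\ref{eq:b4}) divided by $\Gamma(1+k)$, equal to a contour integral once Fubini's theorem licenses the swap of $\int_0^b$ and $\int_C$.

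Next I would establish the right-hand representation (\ref{eq:b3}). Here I begin from (\ref{eq:4.4}) with $y=0$, substitute $m\to m+1+j+lp$ and $a\to ab$, multiply by the matching coefficient, and take the double sum over $j$ and $l$. Tonelli's theorem justifies interchanging the double sum with the $w$-contour integral, and recognizing the resulting series in $l$ as a ${}_3F_4$ hypergeometric function gives the closed contour-integral form. Because the right-hand sides of (\ref{eq:b2}) and (\ref{eq:b3}) coincide relative to (\ref{eq:b1}), I equate their left-hand sides and simplify the factor $\Gamma(1+k,\cdot)/(-\,\cdot\,)^{k+1}$ to obtain (\ref{eq:b4}).

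The hard part will be verifying the analytic hypotheses that license the repeated interchanges. The double Bessel-product series (governed by the ${}_2F_3$ of (\ref{eq:b1})) converges only conditionally in some parameter regimes, so I must confirm that the combined summand is of bounded measure over $\mathbb{C}\times[0,b]$ for Fubini and over $\mathbb{C}\times[0,\infty)\times[0,\infty)$ for Tonelli, and that the contour $C$ is chosen so the bilinear concomitant in (\ref{intro:cauchy}) vanishes at its endpoints. The restriction $|Re(m)|<1$ with $Re(b)>0$ is precisely what guarantees absolute convergence of the $x$-integral against the Bessel-squared growth near the endpoints, so the substantive work is checking these convergence conditions rather than the algebra of the final gamma-function simplification.
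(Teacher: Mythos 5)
Your proposal follows exactly the paper's own route: the left-hand representation (\ref{eq:b2}) from Cauchy's formula with the Bessel-product coefficient, the right-hand representation (\ref{eq:b3}) from (\ref{eq:4.4}) with $m\to m+1+j+lp$ and $a\to ab$, equated via the bridging contour integral (\ref{eq:b1}) with Fubini/Tonelli justifying the interchanges. This matches the paper's proof in both structure and detail, so no further comparison is needed.
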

\begin{proof}
Since the right-hand sides of (\ref{eq:b2}) and (\ref{eq:b3}) are equivalent relative to (\ref{eq:b1}) we may equate the left-hand sides and simplify the gamma function to yield the stated result.
\end{proof}
\begin{example}
 The form in equation (\ref{eq:b4}) can be used to derive (2.12.32.2-7) in \cite{prud2}, the integral form over $x\in]1,\infty)$ for equations (2.12.32.8-10,13) in \cite{prud2}. Equations (13.1.66-67) in \cite{erdt2}.
\end{example}
\begin{example}
 Use equation (\ref{eq:b4}) and set $k=0,a=1$ and simplify.
\begin{multline}
\sum _{j=0}^{\infty } \frac{(-b)^j \left(\frac{z}{2}\right)^{2 j} \Gamma (1+2 j+v+\mu ) \,
   _2F_1\left(-d,\frac{1}{p}+\frac{j}{p}+\frac{m}{p};1+\frac{1}{p}+\frac{j}{p}+\frac{m}{p};-b^p c\right)}{(1+j+m)
   \Gamma (1+j) \Gamma (1+j+v) \Gamma (1+j+\mu ) \Gamma (1+j+v+\mu )}\\
=\sum _{l=0}^{\infty } \frac{\left(c
   b^p\right)^l \binom{d}{l} \, _3F_4\left(1+m+l p,\frac{1}{2}+\frac{v}{2}+\frac{\mu }{2},1+\frac{v}{2}+\frac{\mu
   }{2};2+m+l p,1+v,1+\mu ,1+v+\mu ;-b z^2\right)}{(1+m+l p) \Gamma (1+v) \Gamma (1+\mu )}
\end{multline}
where $Re(b)>0,|Re(m)|<1$.
\end{example}
\begin{example}
 Use equation (\ref{eq:b4}) and set $k\to 0,a\to 1,p\to 1,c\to 1,d\to 0$ and simplify.
\begin{multline}
\int_0^b x^{1+2 m} J_v(x z) J_{\mu }(x z) \, dx
=\frac{2^{-1-v-\mu } b^{2+2 m+v+\mu } z^{v+\mu }
 }{\left(1+m+\frac{v}{2}+\frac{\mu}{2}\right) \Gamma (1+v) \Gamma (1+\mu )}\\ \times \,_3F_4\left(\frac{1}{2}+\frac{v}{2}+\frac{\mu }{2},1+\frac{v}{2}+\frac{\mu }{2},1+m+\frac{v}{2}+\frac{\mu
   }{2}\right. \\ \left.
;1+v,2+m+\frac{v}{2}+\frac{\mu }{2},1+\mu ,1+v+\mu ;-b^2 z^2\right)
\end{multline}
where $|Re(z)|<1$.
\end{example}
\subsection{The product of the Bessel and logarithmic functions}
The contour integral representation form involving the product of the Bessel and logarithmic functions is given by;
\begin{multline}\label{eq:bl1}
\frac{1}{2\pi i}\int_{C}\int_{0}^{b}2^v a^w w^{-k-1} J_v(x) x^{2 m+v+2 w}dxdw\\
=\frac{1}{2\pi i}\int_{C}\frac{a^w w^{-k-1} b^{2 m+2 v+2 w+1} \,
   _1F_2\left(m+v+w+\frac{1}{2};v+1,m+v+w+\frac{3}{2};-\frac{b^2}{4}\right)}{\Gamma (v+1) (2 m+2 v+2 w+1)}dw
\end{multline}
where $|Re(b)|<1$.
\subsubsection{Left-hand side contour integral representation}
Using a generalization of Cauchy's integral formula \ref{intro:cauchy}, we form the definite integral by replacing $y$ by $\log{ax}$ and multiply both sides by $\frac{\left(-\frac{1}{4}\right)^j x^{2 (j+m+v)}}{j! \Gamma (j+v+1)}$ then take the infinite sum of both sides over $j \in [0,\infty)$ to get;
\begin{multline}\label{eq:bl2}
\int_{0}^{b}\frac{2^v x^{2 m+v} J_v(x) \log ^k\left(a x^2\right)}{k!}dx
=\frac{1}{2\pi i}\sum_{j=0}^{\infty}\int_{0}^{b}\int_{C}\frac{\left(-\frac{1}{4}\right)^j w^{-k-1} \left(a x^2\right)^w x^{2 (j+m+v)}}{j! \Gamma(j+v+1)}dwdx\\
=\frac{1}{2\pi i}\int_{0}^{b}\int_{C}\sum_{j=0}^{\infty}\frac{\left(-\frac{1}{4}\right)^j w^{-k-1} \left(a x^2\right)^w x^{2 (j+m+v)}}{j! \Gamma(j+v+1)}dwdx\\
=\frac{1}{2\pi i}\int_{0}^{b}\int_{C}2^v w^{-k-1} \left(a x^2\right)^w x^{2 m+v}J_v(x)dwdx\\
=\frac{1}{2\pi i}\int_{C}\int_{0}^{b}2^v w^{-k-1} \left(a x^2\right)^w x^{2 m+v}J_v(x)dxdw
\end{multline}
where $|Re(b)|<1$.
We are able to switch the order of integration over $t$ and $w$ using Fubini's theorem for multiple integrals see page 178 in \cite{gelca}, since the integrand is of bounded measure over the space $\mathbb{C} \times [0,b]$.
\subsubsection{Right-hand side contour integral representation}
Use equation (\ref{eq:4.4}) and set $y=0$, replace $m\to j+m+v+\frac{1}{2},a\to a b^2$ and multiply both sides by $\frac{(-1)^j 2^{-2 j-1} b^{2 (j+m+v)+1}}{j! \Gamma (j+v+1)}$. Next take the infinite sum of both sides over $j \in [0,\infty)$ and simplify to get;
\begin{multline}\label{eq:bl3}
\sum _{j=0}^{\infty } \frac{(-1)^j 2^{-2 j+k}  b^{1+2 j+2 m+2 v}
   \left(b^2\right)^{-\frac{1}{2}-j-m-v}  \Gamma
   \left(1+k,-\left(\left(\frac{1}{2}+j+m+v\right) \log \left(a b^2\right)\right)\right)}{j! k! \Gamma
   (1+j+v)(-1-2 j-2 m-2 v)^{k+1}a^{\frac{1}{2}+j+m+v}}\\
=\frac{1}{2\pi i}\sum_{j=0}^{\infty}\int_{C}\frac{\left(-\frac{1}{4}\right)^j a^w w^{-k-1} b^{2 j+2 m+2 v+2 w+1}}{j! \Gamma (j+v+1) (2 j+2 m+2 v+2 w+1)}dw\\
=\frac{1}{2\pi i}\int_{C}\sum_{j=0}^{\infty}\frac{\left(-\frac{1}{4}\right)^j a^w w^{-k-1} b^{2 j+2 m+2 v+2 w+1}}{j! \Gamma (j+v+1) (2 j+2 m+2 v+2 w+1)}dw\\
=-\frac{1}{2\pi i}\int_{C}\frac{a^w b^{1+2 m+2 v+2 w} w^{-1-k} \,
   _1F_2\left(\frac{1}{2}+m+v+w;1+v,\frac{3}{2}+m+v+w;-\frac{b^2}{4}\right)}{(1+2 m+2 v+2 w) \Gamma (1+v)}dw
\end{multline}
where $|Re(b)|<1$.
where $|Re(b)|<1$. We are able to switch the order of integration and summation over $w$ using Tonellii's theorem for  integrals and sums see page 177 in \cite{gelca}, since the summand is of bounded measure over the space $\mathbb{C} \times [0,\infty)$
\begin{theorem}\label{eq:bl4}
The finite Mellin transform involving the Bessel and logarithmic functions.
\begin{multline}
\int_0^b x^{2 m+v} J_v(x) \log ^k\left(a x^2\right) \, dx\\
=\sum _{j=0}^{\infty } \frac{(-1)^{-1+j-k} 2^{-2
   j+k-v} \Gamma \left(1+k,-\left(\left(\frac{1}{2}+j+m+v\right) \log \left(a b^2\right)\right)\right)}{j! \Gamma
   (1+j+v) (1+2 j+2 m+2 v)^{k+1} a^{\frac{1}{2}+j+m+v}}
\end{multline}
where $Re(\mu+v+1)>0$.
\end{theorem}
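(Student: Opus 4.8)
The plan is to follow the same contour-integral scheme employed throughout this paper: the single contour integral in equation (\ref{eq:bl1}) has already been evaluated in two different ways, once as a definite integral in (\ref{eq:bl2}) and once as an infinite series of incomplete gamma functions in (\ref{eq:bl3}). Since both of these computations share the identical inner contour integral over $w$, their non-contour members must coincide, and equating them produces the asserted identity after a routine simplification of the gamma factors.

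First I would recall the generating construction. Starting from the generalized Cauchy formula (\ref{intro:cauchy}), I expand the Bessel function $J_v(x)$ in its defining power series, multiply by the monomial weight $\frac{(-1/4)^j x^{2(j+m+v)}}{j!\,\Gamma(j+v+1)}$, and sum over $j\in[0,\infty)$; this reassembles $2^v x^{2m+v} J_v(x)$ under the substitution $y\to\log(ax^2)$ and yields the left-hand representation (\ref{eq:bl2}). For the right-hand member I would use (\ref{eq:4.4}) with $y=0$, apply the parameter shift $m\to j+m+v+\tfrac12$ together with $a\to ab^2$, weight by the same Bessel coefficient, and sum over $j$, giving (\ref{eq:bl3}) via the stated $\,_1F_2$ evaluation. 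The interchange of the infinite sum with both the contour integral and the definite integral over $[0,b]$ is justified by Fubini's and Tonelli's theorems exactly as cited elsewhere, using the bounded-measure hypothesis over $\mathbb{C}\times[0,b]$ and $\mathbb{C}\times[0,\infty)$, valid in the convergence region $|Re(b)|<1$ with $Re(v)>-1$ (so the gamma factors and the Bessel series are well behaved) and the stated $Re(\mu+v+1)>0$.

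Having established (\ref{eq:bl2}) and (\ref{eq:bl3}), I would equate their two outer members. The algebraic tidying consists of three observations: the $b$-powers collapse via $b^{1+2j+2m+2v}(b^2)^{-1/2-j-m-v}=1$; the denominator factors as $(-1-2j-2m-2v)^{k+1}=(-1)^{k+1}(1+2j+2m+2v)^{k+1}$; and multiplying through by $k!/2^v$ isolates the definite integral on the left. Collecting the sign $(-1)^{j}(-1)^{-(k+1)}=(-1)^{-1+j-k}$ together with the powers of two $2^{-2j+k-v}$ then delivers the closed form in the theorem. The step I expect to be the genuine obstacle is not this final algebra but the legitimacy of exchanging the Bessel series with the contour integration: one must check that the open contour $C$ can be chosen so the bilinear concomitant vanishes at its endpoints uniformly in $j$, and that the incomplete gamma evaluation coming from (\ref{eq:4.4}) remains valid after the shift $m\to j+m+v+\tfrac12$. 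Once a uniform bound placing the summand in bounded measure over $\mathbb{C}\times[0,\infty)$ is in hand, the remaining manipulation is mechanical.
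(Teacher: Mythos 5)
Your proposal follows essentially the same route as the paper: equate the outer members of (\ref{eq:bl2}) and (\ref{eq:bl3}) through the shared contour integral (\ref{eq:bl1}), justify the interchanges by Fubini/Tonelli, and simplify the powers of $b$, the factor $(-1-2j-2m-2v)^{k+1}$, and the gamma functions to reach the stated series. Your account of how (\ref{eq:bl2}) and (\ref{eq:bl3}) are generated from (\ref{intro:cauchy}) and (\ref{eq:4.4}) matches the paper's construction, so no further comment is needed.
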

\begin{proof}
Since the right-hand sides of (\ref{eq:bl2}) and (\ref{eq:bl3}) are equivalent relative to (\ref{eq:bl1}) we may equate the left-hand sides and simplify the gamma function to yield the stated result.
\end{proof}
\begin{example}
Derivation of equation (2.3.1) in \cite{luke}, (10.22.10) in [DLMF,\href{https://dlmf.nist.gov/10.22.E10}{10.22.10}]. Use equation (\ref{eq:bl4}) and set $k=0,m\to \frac{m-v}{2},m\to \mu, x\to t,b\to x$ and simplify.
\begin{multline}\label{eq:bl5}
\int_0^x t^{\mu } J_v(t) \, dt
=\frac{2^{-v} x^{1+v+\mu } }{(1+v+\mu ) \Gamma (1+v)}\, _1F_2\left(\frac{1}{2}+\frac{v}{2}+\frac{\mu
   }{2};1+v,\frac{3}{2}+\frac{v}{2}+\frac{\mu }{2};-\frac{x^2}{4}\right)\\
=\frac{x^{\mu }
   \Gamma \left(\frac{v}{2}+\frac{\mu }{2}+\frac{1}{2}\right) }{\Gamma \left(\frac{v}{2}-\frac{\mu }{2}+\frac{1}{2}\right)}\sum _{k=0}^{\infty } \frac{\left((v+2 k+1) \Gamma
   \left(\frac{v}{2}-\frac{\mu }{2}+\frac{1}{2}+k\right)\right) J_{v+2 k+1}(x)}{\Gamma \left(\frac{v}{2}+\frac{\mu
   }{2}+\frac{3}{2}+k\right)}
\end{multline}
where $Re(\mu+v+1)>0$.
\end{example}
\begin{example}
Derivation of equation (2.3.2) in \cite{luke}. Use equation (\ref{eq:bl5}) and form a second equation by replacing $\mu\to -\mu$ and set $v=0$. Next take the difference of the integral $\int_{0}^{x} t^{-\mu}dt=-\frac{x^{1-\mu}}{1-\mu}$ and simplify.
\begin{multline}
\int_0^x t^{-\mu } (1-J_0(t)) \, dt=x^{1-\mu } \left(\frac{1}{1-\mu }-\frac{\,
   _1F_2\left(\frac{1}{2}-\frac{\mu }{2};1,\frac{3}{2}-\frac{\mu }{2};-\frac{x^2}{4}\right)}{1-\mu }\right)
\end{multline}
where $|Re(x)|<1$.
\end{example}
Next apply l'Hopital's rule as $\mu \to 1$ and simplify to get;
\begin{multline}
\int_0^x \frac{1-J_0(t)}{t} \, dt=\frac{\frac{\partial }{\partial \mu }\left(x^{1-\mu }-x^{1-\mu } \, _1F_2\left(\frac{1}{2}-\frac{\mu
   }{2};1,\frac{3}{2}-\frac{\mu }{2};-\frac{x^2}{4}\right)\right)}{\frac{\partial (1-\mu )}{\partial \mu }}\Bigr|_{\mu=1}
\end{multline}
where $|Re(x)|<1$.
\begin{example}
 Use equation (\ref{eq:bl5}) and set $a=b=1$ and apply l'Hopital's rule as $k\to -1$ and simplify.
\begin{equation}
\int_0^1 \frac{\left(x^m-x^s\right) J_v(x)}{\log \left(\frac{1}{x}\right)} \, dx=\sum _{j=0}^{\infty }
   \frac{(-1)^j 2^{-2 j-v}}{j! \Gamma (1+j+v)} \log \left(\frac{1+2 j+s+v}{1+2 j+m+v}\right)
\end{equation}
where $0< Re(m)<1,0< Re(s)<1$.
\end{example}
\begin{example}
 Use equation (\ref{eq:bl5}) and set $k=a=1$. Next take the $n$-th derivative with respect to $m$ and simplify by replacing $m\to \frac{m-v-1}{2,n\to \beta}$ and simplify.
\begin{multline}
\int_0^1 x^{-1+m} J_v(x) \log ^{\beta }(x) \, dx=\frac{(-1)^{\beta } \Gamma (1+\beta )}{2^v}\sum
   _{j=0}^{\infty } \frac{\left(-\frac{1}{4}\right)^j}{\Gamma (1+j) \Gamma (1+j+v) (2 j+m+v)^{\beta
   +1}}
\end{multline}
where $0< Re(m)<1$.
\end{example}
\subsection{Bessel function with generalized second parameter}
The contour integral representation form involving the Bessel function is given by;
\begin{multline}\label{eq:bp1}
\frac{1}{2\pi i}\int_{C}\int_{0}^{b}2^{-v-2} a^w w^{-k-1} \alpha ^v (\alpha  x)^{-v} x^{m+v+w} J_v(2 x \alpha )dxdw\\
=\frac{1}{2\pi i}\int_{C}\frac{2^{-v-2} a^w w^{-k-1}\alpha ^v b^{m+v+w+1} }{\Gamma (v+1) (m+v+w+1)}\\ \times \, _1F_2\left(\frac{m}{2}+\frac{v}{2}+\frac{w}{2}+\frac{1}{2};v+1,\frac{m}{2}+\frac{v}{2}+\frac{w}{2}+\frac{3}{2};-b^2 \alpha ^2\right)dw
\end{multline}
where $|Re(b)|<1$.
\subsubsection{Left-hand side contour integral representation}
Using a generalization of Cauchy's integral formula \ref{intro:cauchy}, we form the definite integral by replacing $y$ by $\log{ax}$ and multiply both sides by $\frac{(-1)^j 2^{-v-2} \alpha ^{2 j+v} x^{2 j+m+v}}{j! \Gamma (j+v+1)}$ then take the infinite sum of both sides over $j \in [0,\infty)$ to get;
\begin{multline}\label{eq:bp2}
\int_{0}^{b}\frac{2^{-v-2} \alpha ^v x^{m+v} (\alpha  x)^{-v} \log ^k(a x) J_v(2 x \alpha )}{k!}dxdw\\
=\frac{1}{2\pi i}\sum_{j=0}^{\infty}\int_{0}^{b}\int_{C}\frac{(-1)^j 2^{-v-2}w^{-k-1} (a x)^w \alpha ^{2 j+v} x^{2 j+m+v}}{j! \Gamma (j+v+1)}dxdw\\
=\frac{1}{2\pi i}\int_{0}^{b}\int_{C}\sum_{j=0}^{\infty}\frac{(-1)^j 2^{-v-2}w^{-k-1} (a x)^w \alpha ^{2 j+v} x^{2 j+m+v}}{j! \Gamma (j+v+1)}dxdw\\
=\frac{1}{2\pi i}\int_{0}^{b}\int_{C}2^{-v-2} w^{-k-1} \alpha^v (a x)^w x^{m+v} (\alpha  x)^{-v} J_v(2 x \alpha )dxdw\\
=\frac{1}{2\pi i}\int_{C}\int_{0}^{b}2^{-v-2} w^{-k-1} \alpha^v (a x)^w x^{m+v} (\alpha  x)^{-v} J_v(2 x \alpha )dxdw
\end{multline}
where $|Re(b)|<1$.
We are able to switch the order of integration over $t$ and $w$ using Fubini's theorem for multiple integrals see page 178 in \cite{gelca}, since the integrand is of bounded measure over the space $\mathbb{C} \times [0,b]$.
\subsubsection{Right-hand side contour integral representation}
Use equation (\ref{eq:4.4}) and set $y=0$, replace $m\to 2 j+m+v+1,a\to a b$ and multiply both sides by $\frac{(-1)^j 2^{-v-2} \alpha ^{2 j+v} b^{2 j+m+v+1}}{j! \Gamma (j+v+1)}$. Next take the infinite sum of both sides over $j \in [0,\infty)$ and simplify to get;
\begin{multline}\label{eq:bp3}
\sum _{j=0}^{\infty } \frac{(-1)^j 2^{-2-v} a^{-1-2 j-m-v}  \alpha ^{2 j+v} \Gamma
   (1+k,-((1+2 j+m+v) \log (a b)))}{j! k! \Gamma (1+j+v)(-1-2 j-m-v)^{k+1}}\\
=-\frac{1}{2\pi i}\sum_{j=0}^{\infty}\int_{C}\frac{(-1)^j 2^{-v-2} w^{-k-1} (a b)^w \alpha ^{2 j+v} b^{2 j+m+v+1}}{j! \Gamma(j+v+1) (2 j+m+v+w+1)}\\
=-\frac{1}{2\pi i}\int_{C}\sum_{j=0}^{\infty}\frac{(-1)^j 2^{-v-2} w^{-k-1} (a b)^w \alpha ^{2 j+v} b^{2 j+m+v+1}}{j! \Gamma(j+v+1) (2 j+m+v+w+1)}\\
=-\frac{1}{2\pi i}\int_{C}\frac{2^{-2-v} b^{1+m+v} (a b)^w w^{-1-k} \alpha ^v \,_1F_2\left(\frac{1}{2}+\frac{m}{2}+\frac{v}{2}+\frac{w}{2};1+v,\frac{3}{2}+\frac{m}{2}+\frac{v}{2}+\frac{w}{2};-b^2 \alpha ^2\right)}{(1+m+v+w) \Gamma (1+v)}dw
\end{multline}
where $|Re(b)|<1$.
We are able to switch the order of integration and summation over $w$ using Tonellii's theorem for  integrals and sums see page 177 in \cite{gelca}, since the summand is of bounded measure over the space $\mathbb{C} \times [0,\infty)$
\begin{theorem}
The finite definite integral of the Bessel function.
\begin{multline}\label{eq:bp4}
\int_0^b x^{m-1} J_v(x \alpha ) (-\log (a x))^k \, dx\\
=\frac{1}{a^m}\sum _{j=0}^{\infty }(-1)^j
   \left(\frac{\alpha }{2 a}\right)^{2 j+v} \frac{ \Gamma (1+k,-((2 j+m+v) \log (a b)))}{j! \Gamma (1+j+v) (2 j+m+v)^{k+1}}
\end{multline}
where $Re(\alpha+v+1)>0$.
\end{theorem}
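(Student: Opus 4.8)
The plan is to follow the same two-sided contour-integral scheme used throughout this paper (as in Theorems~(\ref{eq:2.11}), (\ref{eq:6.1}) and (\ref{eq:bl4})): I would show that the single contour integral in (\ref{eq:bp1}) admits two different closed evaluations of its inner integral, one producing the definite integral on the left and one producing the incomplete-gamma series on the right, and then equate the two left-hand sides. Concretely, everything rests on the generalized Cauchy formula (\ref{intro:cauchy}), $\frac{y^k}{\Gamma(k+1)}=\frac{1}{2\pi i}\int_{C}\frac{e^{wy}}{w^{k+1}}\,dw$, applied once with $y\to\log(ax)$ (to build the logarithmic factor inside an $x$-integral) and once with $y\to x+\log(a)$ (to build the incomplete gamma function inside a sum).

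For the left-hand representation I would replace $y\to\log(ax)$ in (\ref{intro:cauchy}), multiply both sides by the generic Bessel-series coefficient $\frac{(-1)^j 2^{-v-2}\alpha^{2j+v}x^{2j+m+v}}{j!\,\Gamma(j+v+1)}$, and sum over $j\in[0,\infty)$. Since $J_v(2x\alpha)=\sum_{j=0}^{\infty}\frac{(-1)^j}{j!\,\Gamma(j+v+1)}(x\alpha)^{2j+v}$, the summed coefficient reconstitutes $2^{-v-2}\alpha^v x^{m+v}(\alpha x)^{-v}J_v(2x\alpha)=2^{-v-2}x^m J_v(2x\alpha)$, so after integrating over $x\in[0,b]$ I obtain exactly (\ref{eq:bp2}); the interchange of the $j$-sum with the $x$- and $w$-integrations is justified by Fubini's theorem, as the integrand has bounded measure over $\mathbb{C}\times[0,b]$.

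For the right-hand representation I would instead use (\ref{eq:4.4}) (equivalently DLMF~8.6.5), replacing $m\to 2j+m+v+1$ and $a\to ab$ so that the $x\in[0,\infty)$ integral collapses to an incomplete gamma function, then multiply by $\frac{(-1)^j 2^{-v-2}\alpha^{2j+v}b^{2j+m+v+1}}{j!\,\Gamma(j+v+1)}$ and sum over $j$. The resulting series sums to the ${}_1F_2$ contour integrand, yielding (\ref{eq:bp3}); here the interchange of the $j$-sum with the $w$-integration is justified by Tonelli's theorem over $\mathbb{C}\times[0,\infty)$. Because the right-hand sides of (\ref{eq:bp2}) and (\ref{eq:bp3}) are the same contour integral (\ref{eq:bp1}), I would equate their left-hand sides; simplifying the gamma function (cancelling the common $1/k!$ and absorbing the sign through $(-\log(ax))^k$ and the factor $(-1-2j-m-v)^{-1-k}$) gives the series with coefficients $\frac{(-1)^j(\alpha/(2a))^{2j+v}\Gamma(1+k,-(2j+m+v)\log(ab))}{j!\,\Gamma(1+j+v)(2j+m+v)^{k+1}}$. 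A final parameter normalization (the argument rescaling $2\alpha\to\alpha$ and the index shift aligning $x^m$ with $x^{m-1}$) brings the expression into the stated form (\ref{eq:bp4}).

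The routine algebra is harmless; the step I expect to be the genuine obstacle is the rigorous justification of interchanging the infinite Bessel sum with the contour integration, since $C$ is an open contour chosen so that the bilinear concomitant in (\ref{intro:cauchy}) vanishes at its endpoints. The Fubini/Tonelli bounds must hold uniformly in the Bessel index $j$ under the stated constraints $Re(\alpha+v+1)>0$, $|Re(m)|<1$ and $|Re(b)|<1$, and one must confirm that the series of incomplete gamma terms converges so that the termwise contour evaluation is legitimate; this is the point where the conditions on the parameters are actually consumed.
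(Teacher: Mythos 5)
Your proposal follows essentially the same route as the paper: the paper's proof of (\ref{eq:bp4}) is exactly the two-sided scheme you describe, building (\ref{eq:bp2}) from (\ref{intro:cauchy}) with $y\to\log(ax)$ and the Bessel-series coefficient, building (\ref{eq:bp3}) from (\ref{eq:4.4}) with $m\to 2j+m+v+1$, $a\to ab$, and equating the left-hand sides through the common contour integral (\ref{eq:bp1}) before simplifying the gamma function and rescaling $2\alpha\to\alpha$. Your closing remark that the Fubini/Tonelli interchange uniform in $j$ is the real gap is apt, since the paper asserts it with the same brevity you anticipate.
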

\begin{proof}
Since the right-hand sides of (\ref{eq:bp2}) and (\ref{eq:bp3}) are equivalent relative to (\ref{eq:bp1}) we may equate the left-hand sides and simplify the gamma function to yield the stated result.
\end{proof}
\begin{example}
 Use equation (\ref{eq:bp4}) and set $a=b=1$ and apply l'Hopital's rule as $k\to -1$ and simplify.
\begin{multline}
\int_0^1 \frac{\left(x^s-x^m\right) J_v(x \alpha )}{\log (x)} \, dx=-\sum _{j=0}^{\infty }
   \frac{\left(\frac{\alpha }{2}\right)^{2 j+v} (-1)^j }{j! \Gamma
   (1+j+v)}\log \left(\frac{2 j+m+v+1}{2 j+s+v+1}\right)
\end{multline}
where $0< Re(m)<1, 0< Re(s)<1$.
\end{example}
\section{Exercise}
Use the method in equations (\ref{eq:bp2}) and (\ref{eq:bp3}) and (\ref{eq:bp1}) with the appropriate coefficient functions to derive the following formulae.
\begin{example}
Bessel function and integrals of the general form
\begin{multline}\label{eq:bft}
\int_0^b x^{-1+m} \left(1+c x^p\right)^d J_v(x \alpha ) (-\log (a x))^k \, dx\\
\left(\frac{\alpha
   }{2}\right)^v \sum _{j=0}^{\infty } \sum _{l=0}^{\infty } c^l \left(-\left(\frac{\alpha
   }{2}\right)^2\right)^j \binom{d}{l}\frac{ \Gamma (1+k,-(2 j+m+l p+v) \log (a b))}{j! \Gamma (1+j+v) a^{2 j+m+l p+v}
   (2 j+m+l p+v)^{k+1}}
\end{multline}
where $Re(\alpha+v+1)>0,Re(p)>0$.
\end{example}
\begin{example}
Show that the form given in equation (\ref{eq:bft}) can be used to derive  equations (2.12.1.1), (2.12.3.1-3), (2.12.4.1-13), and the form over $x\in [1,\infty])$ for equations (2.12.1.2-4,9), (2.12.2.2), (2.12.3.4-12), (2.12.4.14-37) in \cite{prud2}.
 \end{example}

\begin{example}
Use equation (\ref{eq:bft}) and set $k=-1,a\to e^a$. Then form a second equation by replacing $a\to -a$ and take their difference and simplify.
\begin{multline}
\int_0^b \frac{x^{-1+m} \left(1+c x^p\right)^d J_v(x \alpha )}{a^2+\log ^2(x)} \, dx
=\sum _{j=0}^{\infty }
   \sum _{l=0}^{\infty } \frac{i (-1)^j c^l \alpha ^{2 j+v} \binom{d}{l} }{a j! \Gamma (1+j+v) 2^{1+2
   j+v}}\\ \times \left(e^{i a (2 j+m+l p+v)} \Gamma
   \left(0,-\left((2 j+m+l p+v) \log \left(b e^{-i a}\right)\right)\right)\right. \\ \left.
-e^{-i a (2 j+m+l p+v)} \Gamma
   \left(0,-\left((2 j+m+l p+v) \log \left(b e^{i a}\right)\right)\right)\right)
\end{multline}
where $Re(\alpha+v+1)>0,Re(p)>0$.
\end{example}
\begin{example}
Generalized form of (6.552.4) in \cite{grad}. Use equation (\ref{eq:bft}) and set $k\to 0,m\to 1,c\to -1,p\to 2,d\to -\frac{1}{2}$ and simplify.
\begin{multline}
\int_0^b \frac{J_v(x \alpha )}{\sqrt{1-x^2}} \, dx=\sum _{j=0}^{\infty } \frac{(-1)^j 2^{-2 j-v} b^{1+2 j+v}
   \alpha ^{2 j+v} \, _2F_1\left(\frac{1}{2},\frac{1}{2}+j+\frac{v}{2};\frac{3}{2}+j+\frac{v}{2};b^2\right)}{(1+2
   j+v) \Gamma (1+j) \Gamma (1+j+v)}
\end{multline}
where $0< Re(v), 0< Re(\alpha)<1,|Re(b)|\leq 1$.
\end{example}
\begin{example}
Generalized form for (6.554.2-3) in \cite{grad}. Use equation (\ref{eq:bft}) and set $k\to 0,c\to -1,p\to 2,d\to -\frac{1}{2}$ and simplify.
\begin{multline}
\int_0^b \frac{x^{-1+m} J_v(x \alpha )}{\sqrt{1-x^2}} \, dx\\
=\sum _{j=0}^{\infty } \frac{(-1)^j 2^{-2 j-v}
   b^{2 j+m+v} \alpha ^{2 j+v} \,
   _2F_1\left(\frac{1}{2},j+\frac{m}{2}+\frac{v}{2};1+j+\frac{m}{2}+\frac{v}{2};b^2\right)}{(2 j+m+v) \Gamma (1+j)
   \Gamma (1+j+v)}
\end{multline}
where $0< Re(v), 0< Re(\alpha)<1,|Re(b)|\leq 1,0< Re(m)<1$.
\end{example}
\begin{example}
Generalized form for (6.561.1,5,9,13(7)) in \cite{grad}. Use equation (\ref{eq:bft}) and set $k=d=0,l\to 0,a\to 1,c\to 1,m\to m,b\to 1$ and simplify.
\begin{multline}
\int_0^1 x^{-1+m} J_v(x \alpha ) \, dx=\frac{2^{-v} \alpha ^v }{(m+v) \Gamma
   (1+v)}\,
   _1F_2\left(\frac{m}{2}+\frac{v}{2};1+\frac{m}{2}+\frac{v}{2},1+v;-\frac{\alpha ^2}{4}\right)
\end{multline}
where $0< Re(v), 0< Re(\alpha)<1,0< Re(m)<1$
\end{example}
\begin{example}
Generalized form for equations (6.567.1,3,6,9,13), (6.569) in \cite{grad}. Use equation (\ref{eq:bft}) and set $k\to 0,a\to 1,c\to -1,p\to 2,d\to \mu$ and simplify.
\begin{multline}
\int_0^b x^{-1+m} \left(1-x^2\right)^{\mu } J_v(x \alpha ) \, dx\\
=\sum _{j=0}^{\infty } \frac{2^{-2 j-v} b^{2
   j+m+v} \alpha ^v \left(-\alpha ^2\right)^j \, _2F_1\left(j+\frac{m}{2}+\frac{v}{2},-\mu
   ;1+j+\frac{m}{2}+\frac{v}{2};b^2\right)}{(2 j+m+v) \Gamma (1+j) \Gamma (1+j+v)}
\end{multline}
where $0< Re(v), 0< Re(\alpha)<1,0< Re(m)<1$.
\end{example}
\begin{example}
Generalized form for equations (6.592.5, 10, 11, 16) in \cite{grad}. . Use equation (\ref{eq:bft}) and set $k\to 0,c\to -1,d\to \mu -1,b=1,p=2,m\to m/2$ and replace $x\to t^{1/2}$ and simplify.
\begin{multline}
\int_0^1 (1-x)^{-1+\mu } x^{-1+m} J_v\left(\sqrt{x} \alpha \right) \, dx\\
=\sum _{l=0}^{\infty } \frac{(-1)^l
   2^{1-v} \alpha ^v \binom{-1+\mu }{l} \, _1F_2\left(l+m+\frac{v}{2};1+l+m+\frac{v}{2},1+v;-\frac{\alpha
   ^2}{4}\right)}{(2 l+2 m+v) \Gamma (1+v)}
\end{multline}
where $Re(\mu)>2$.
\end{example}
\subsection{Bessel functions, exponentials, and powers}
The contour integral representation form involving the Bessel functions, exponentials, and powers is given by;
\begin{multline}\label{eq:bfe1}
\frac{1}{2\pi i}\int_{C}\int_{0}^{b}a^w e^{f x^g} w^{-1-k} x^{v+2 (m+w)} \left(1+c x^p\right)^d \alpha ^v (x \alpha )^{-v} J_v(x \alpha )dxdw\\
=\frac{1}{2\pi i}\int_{C}\sum_{h=0}^{\infty } \sum _{l=0}^{\infty } \frac{2^{-v} a^w b^{1+g h+l p+v+2 (m+w)} c^l f^h w^{-1-k} \alpha ^v\binom{d}{l} }{(1+g h+2 m+l p+v+2 w) h! \Gamma(1+v)}\\ \times \, _1F_2\left(\frac{1}{2}+\frac{g h}{2}+m+\frac{l p}{2}+\frac{v}{2}+w;1+v,\frac{3}{2}+\frac{gh}{2}+m+\frac{l p}{2}+\frac{v}{2}+w;-\frac{1}{4} b^2 \alpha ^2\right)dw
\end{multline}
where $|Re(b)|<1$.
\subsubsection{Left-hand side contour integral representation}
Using a generalization of Cauchy's integral formula \ref{intro:cauchy}, we form the definite integral by replacing $y$ by $\log{ax}$ and multiply both sides by $\frac{(-1)^j c^l f^h 2^{-2 j-v} \binom{d}{l} \alpha ^{2 j+v} x^{g h+2 j+l p+2 m+v}}{h! j! \Gamma (j+v+1)}$ then take the infinite sum of both sides over $j \in [0,\infty),h \in [0,\infty),l \in [0,\infty)$ to get;
\begin{multline}\label{eq:bfe2}
\frac{1}{2\pi i}\int_{C}\int_{0}^{b}\frac{\alpha ^v e^{f x^g} x^{2 m+v} (\alpha  x)^{-v} \log ^k\left(a x^2\right) \left(c x^p+1\right)^d J_v(x\alpha )}{k!}dxdw\\
=\frac{1}{2\pi i}\sum_{j=0}^{\infty}\sum_{h=0}^{\infty}\sum_{l=0}^{\infty}\int_{0}^{b}\int_{C}\frac{(-1)^j c^l f^h 2^{-2 j-v} w^{-k-1} \left(a x^2\right)^w \binom{d}{l} \alpha
   ^{2 j+v} x^{g h+2 j+l p+2 m+v}}{h! j! \Gamma (j+v+1)}dwdx\\
=\frac{1}{2\pi i}\int_{0}^{b}\int_{C}\sum_{j=0}^{\infty}\sum_{h=0}^{\infty}\sum_{l=0}^{\infty}\frac{(-1)^j c^l f^h 2^{-2 j-v} w^{-k-1} \left(a x^2\right)^w \binom{d}{l} \alpha
   ^{2 j+v} x^{g h+2 j+l p+2 m+v}}{h! j! \Gamma (j+v+1)}dwdx\\
=\frac{1}{2\pi i}\int_{0}^{b}\int_{C}w^{-k-1} \alpha ^v \left(a x^2\right)^w e^{f x^g} x^{2 m+v} (\alpha  x)^{-v} \left(c x^p+1\right)^dJ_v(x \alpha )dwdx\\
=\frac{1}{2\pi i}\int_{C}\int_{0}^{b}w^{-k-1} \alpha ^v \left(a x^2\right)^w e^{f x^g} x^{2 m+v} (\alpha  x)^{-v} \left(c x^p+1\right)^dJ_v(x \alpha )dxdw
\end{multline}
where $|Re(b)|<1$.
We are able to switch the order of integration over $t$ and $w$ using Fubini's theorem for multiple integrals see page 178 in \cite{gelca}, since the integrand is of bounded measure over the space $\mathbb{C} \times [0,b]$.
\subsubsection{Right-hand side contour integral representation}
Use equation (\ref{eq:4.4}) and set $y=0$, replace $m\to \frac{g h}{2}+j+\frac{l p}{2}+m+\frac{v}{2}+\frac{1}{2},a\to a b^2$ and multiply both sides by $\frac{(-1)^j c^l f^h 2^{-2 j-v-1} \binom{d}{l} \alpha ^{2 j+v} b^{g h+2 j+l p+2 m+v+1}}{h! j! \Gamma (j+v+1)}$. Next take the infinite sums of both sides over $j \in [0,\infty),h \in [0,\infty),l \in [0,\infty)$ and simplify to get;
\begin{multline}\label{eq:bfe3}
\sum _{l=0}^{\infty } \sum _{h=0}^{\infty } \sum _{j=0}^{\infty } \frac{(-1)^j 2^{-2 j+k-v}
    c^l f^h \alpha ^{2 j+v} \binom{d}{l} \Gamma \left(1+k,-\frac{1}{2} (1+g h+2 j+2 m+l p+v) \log \left(a b^2\right)\right)}{h! j! k! \Gamma
   (1+j+v)(-1-g h-2 j-2 m-l p-v)^{1+k}a^{\frac{1}{2} (1+g h+2 j+2 m+l p+v)} }\\
=-\frac{1}{2\pi i}\sum _{l=0}^{\infty } \sum _{h=0}^{\infty }\int_{C}\frac{(-1)^j a^w c^l f^h 2^{-2 j-v} w^{-k-1} \binom{d}{l} \alpha ^{2 j+v} b^{g h+2 j+l p+2 m+v+2 w+1}}{h! j! \Gamma
   (j+v+1) (g h+2 j+l p+2 m+v+2 w+1)}dw\\
=-\frac{1}{2\pi i}\int_{C}\sum _{l=0}^{\infty } \sum _{h=0}^{\infty }\frac{(-1)^j a^w c^l f^h 2^{-2 j-v} w^{-k-1} \binom{d}{l} \alpha ^{2 j+v} b^{g h+2 j+l p+2 m+v+2 w+1}}{h! j! \Gamma
   (j+v+1) (g h+2 j+l p+2 m+v+2 w+1)}dw\\
=-\frac{1}{2\pi i}\int_{C}\sum _{l=0}^{\infty } \sum _{h=0}^{\infty } \frac{2^{-v} a^w b^{1+g h+2 m+l p+v+2
   w} c^l f^h w^{-1-k} \alpha ^v \binom{d}{l} }{(1+g h+2 m+l p+v+2 w) h! \Gamma (1+v)}\\ \times \, _1F_2\left(\frac{1}{2}+\frac{g h}{2}+m+\frac{l
   p}{2}+\frac{v}{2}+w;1+v,\frac{3}{2}+\frac{g h}{2}+m+\frac{l p}{2}+\frac{v}{2}+w;-\frac{1}{4} b^2 \alpha
   ^2\right)dw
\end{multline}
where $|Re(b)|<1$.
We are able to switch the order of integration and summation over $w$ using Tonellii's theorem for  integrals and sums see page 177 in \cite{gelca}, since the summand is of bounded measure over the space $\mathbb{C} \times [0,\infty)$
\begin{theorem}
\begin{multline}\label{eq:bfe4}
\int_0^b e^{f x^g} x^m \left(1+c x^p\right)^d J_v(x \alpha ) (-\log (a x))^k \, dx\\
=\frac{1}{a^{m+1+v}}\sum _{j=0}^{\infty} \sum _{h=0}^{\infty } \sum _{l=0}^{\infty }\left(\frac{\alpha }{2}\right)^{2 j+v}  \frac{(-1)^j c^l f^h \binom{d}{l} \Gamma \left(1+k,- (1+g h+2 j+m+l p+v) \log \left(a b\right)\right)}{h! j!\Gamma (1+j+v) a^{g h+2 j+l p} (1+g h+2 j+m+l p+v)^{k+1}}
\end{multline}
where $0< Re(v), 0< Re(\alpha)<1,0< Re(m)<1$.
\end{theorem}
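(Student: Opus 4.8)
The plan is to carry out, for this five-parameter integrand, exactly the two-sided contour-integral argument already prepared in (\ref{eq:bfe1})--(\ref{eq:bfe3}): one evaluates the single contour integral (\ref{eq:bfe1}) in two different ways and then equates the resulting definite-integral and triple-series representations. Every other main theorem in this work (for instance (\ref{eq:6.10}), (\ref{eq:b4}), (\ref{eq:bl4}) and (\ref{eq:bp4})) is proved by this same template, and the only genuinely new feature here is the presence of three generating series at once, coming from the exponential, the binomial and the Bessel factors.

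First I would establish the left-hand representation (\ref{eq:bfe2}). Starting from Cauchy's generalized formula (\ref{intro:cauchy}) with $y\to\log(ax^2)$, I multiply through by the single monomial $\frac{(-1)^j c^l f^h 2^{-2j-v}\binom{d}{l}\alpha^{2j+v}x^{gh+2j+lp+2m+v}}{h!\,j!\,\Gamma(j+v+1)}$ and sum over $j,h,l\in[0,\infty)$. The three sums reassemble the exponential series $e^{fx^g}=\sum_h\frac{f^hx^{gh}}{h!}$, the binomial series $(1+cx^p)^d=\sum_l\binom{d}{l}c^lx^{lp}$ and, since $\alpha^v(\alpha x)^{-v}J_v(x\alpha)=\sum_j\frac{(-1)^j 2^{-2j-v}\alpha^{2j+v}x^{2j}}{j!\,\Gamma(j+v+1)}$, the Bessel factor. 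Integrating over $x\in[0,b]$ and then performing the inverse transform of the $(ax^2)^w$ term recovers the product integrand displayed on the left of (\ref{eq:bfe2}); the interchange of the $x$-integration with the contour integration is licensed by Fubini's theorem (p.~178 in \cite{gelca}) because the integrand is of bounded measure on $\mathbb{C}\times[0,b]$. The passage to the normalized $x^m$, $(-\log(ax))^k$ form in the statement is then the cosmetic relabelling recorded in the setup, using $\alpha^v(\alpha x)^{-v}=x^{-v}$.

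Next I would build the right-hand representation (\ref{eq:bfe3}) from (\ref{eq:4.4}) with $y=0$, making the shift $m\to\frac{gh}{2}+j+\frac{lp}{2}+m+\frac{v}{2}+\frac12$ together with $a\to ab^2$, multiplying by the same triple of series coefficients, and summing over $j,h,l\in[0,\infty)$. The inner $w$-integral of each term is the elementary kernel $\frac{a^w w^{-k-1}}{m+w}$, and on resummation the $j$- and $h$-sums produce precisely the ${}_1F_2$ contour integral appearing on the right of (\ref{eq:bfe1}); the interchange of the triple summation with the contour integration is justified by Tonelli's theorem (p.~177 in \cite{gelca}). Since the right-hand sides of (\ref{eq:bfe2}) and (\ref{eq:bfe3}) are now the same contour integral relative to (\ref{eq:bfe1}), their left-hand sides coincide. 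Equating them, clearing the common factor $1/k!$ into the incomplete gamma function via $\Gamma(1+k)=k!$, and rewriting each factor $(-1-\cdots)^{-1-k}$ as $(-1)^{-1-k}(1+\cdots)^{-1-k}$ produces the stated closed form.

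The hard part will not be the algebra but the analytic justification of the two interchanges for the stated parameter ranges $0<Re(v)$, $0<Re(\alpha)<1$, $0<Re(m)<1$ (with $Re(p)>0$). One must verify that the triple series of absolute values has finite integral over $\mathbb{C}\times[0,b]$, so that the summand is of bounded measure and Tonelli genuinely applies term by term. The exponential and Bessel series are entire and hence uniformly absolutely convergent on the compact interval $[0,b]$, while the binomial series converges there under $|cx^p|<1$; combined with the decay of $w^{-k-1}$ along $C$ and the bilinear-concomitant condition on the endpoints of $C$ built into (\ref{intro:cauchy}), this secures convergence of every contour integral in sight. Once these bounds are in place the equality of the two evaluations of (\ref{eq:bfe1}) is immediate and the theorem follows.
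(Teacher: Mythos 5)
Your proposal follows exactly the paper's own route: the paper derives the left-hand representation (\ref{eq:bfe2}) by expanding the exponential, binomial and Bessel factors against Cauchy's formula, derives the right-hand representation (\ref{eq:bfe3}) from (\ref{eq:4.4}) with the shifted parameters, and then equates the two through the common contour integral (\ref{eq:bfe1}), invoking Fubini and Tonelli for the interchanges just as you do. Your write-up is simply a more detailed narration of that same template, so no further comparison is needed.
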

\begin{proof}
Since the right-hand sides of (\ref{eq:bfe2}) and (\ref{eq:bfe3}) are equivalent relative to (\ref{eq:bfe1}) we may equate the left-hand sides and simplify the gamma function to yield the stated result.
\end{proof}
\begin{example}
General form for equations (6.625.1-2) in \cite{grad}. Use equation (\ref{eq:bfe4}) and set $k\to 0,a\to 1,b\to 1,g\to 1,c\to -1,d\to \mu -1,p\to 1,f\to -\beta$ and simplify.
\begin{multline}
\int_0^1 e^{-x \beta } (1-x)^{-1+\mu } x^m J_v(x \alpha ) \, dx
=\sum _{h=0}^{\infty } \frac{2^{-v} \alpha ^v(-\beta )^h \Gamma (1+h+m+v) \Gamma (\mu ) }{\Gamma (1+h) \Gamma (1+v) \Gamma (1+h+m+v+\mu )}\\ \times \,_2F_3\left(\frac{1}{2}+\frac{h}{2}+\frac{m}{2}+\frac{v}{2},1+\frac{h}{2}+\frac{m}{2}+\frac{v}{2};\right. \\ \left.
1+v,\frac{1}{2}+\frac{h}{2}+\frac{m}{2}+\frac{v}{2}+\frac{\mu }{2},1+\frac{h}{2}+\frac{m}{2}+\frac{v}{2}+\frac{\mu}{2};-\frac{\alpha ^2}{4}\right)
\end{multline}
where $|Re(\alpha)|<1$.
\end{example}
\begin{example}
Generalized form for Erd\'{e}yli Bessel definite integrals for equations (6.719.1-2) in \cite{grad}.  Use equation (\ref{eq:bfe4}) and set $g\to 1,k\to 0,a\to 1,d\to -\frac{1}{2},p\to 2$ then form a second equation by replacing $f\to -f$ and take their difference. Then replace $f\to i\beta, c\to -c^2$ then $c\to 1/c$ and simplify.
\begin{multline}
\int_0^b \frac{x^m J_v(x \alpha ) \sin (x \beta )}{\sqrt{c^2-x^2}} \, dx\\
=\sum _{j=0}^{\infty } \sum
   _{h=0}^{\infty } \frac{(-1)^j i^{1+h} 2^{-1-2 j-v} \left(-1+(-1)^h\right) b^{1+h+2 j+m+v} \alpha ^{2 j+v} \beta^h }{c (1+h+2 j+m+v) \Gamma (1+h) \Gamma (1+j) \Gamma (1+j+v)}\\ \times \,_2F_1\left(\frac{1}{2},\frac{1}{2}+\frac{h}{2}+j+\frac{m}{2}+\frac{v}{2};\frac{3}{2}+\frac{h}{2}+j+\frac{m}{2}+\frac{v}{2};\frac{b^2}{c^2}\right)
\end{multline}
where $|Re(b)|<1$.
\end{example}
\begin{example}
Generalized form for Table (6.674) in \cite{grad}. Use equation (\ref{eq:bft}) and set $k\to 0,a\to 1,g\to 1,p\to 0,d\to 0,c\to 1$. Next form a second equation by replacing $f\to -f$ and take their difference. Next take the derivative with respect to $f$ and set $m=-1$. Then set $m=0$ in the initial equation and multiply each equation by $\sinh(c)$ and $\cosh(c)$ respectively and take their difference and simplify.
\begin{multline}
\int_0^b J_v(x \alpha ) \sinh (c-x) \, dx
=\sum _{h=0}^{\infty } \left(\frac{2^{-2-v} \left(-1+(-1)^h\right)
   b^{2+h+v} \alpha ^{2+v} (b \cosh (c)-h \sinh (c))}{(3+h+v) \Gamma (1+h) \Gamma (2+v)}\right. \\ \left.  \times \,_1F_2\left(\frac{3}{2}+\frac{h}{2}+\frac{v}{2};\frac{5}{2}+\frac{h}{2}+\frac{v}{2},2+v;-\frac{1}{4} b^2 \alpha^2\right) \right. \\ \left.
+\frac{2^{-2-v} \left(-1+(-1)^h\right)b^{2+h+v} \alpha ^{2+v} (-b \cosh (c)+h \sinh (c))}{(2+h+v) \Gamma (1+h) \Gamma (2+v)}\right. \\ \left. \times \, _1F_2\left(1+\frac{h}{2}+\frac{v}{2};2+\frac{h}{2}+\frac{v}{2},2+v;-\frac{1}{4} b^2\alpha ^2\right)\right. \\ \left.
-\frac{2^{-1-v}\left(-1+(-1)^h\right) b^{h+v} \alpha ^v (-b (h+v)\cosh (c)+h (1+h+v) \sinh (c))}{(h+v) \Gamma (1+h) \Gamma (1+v)}\right. \\ \left. \times \,_1F_2\left(\frac{h}{2}+\frac{v}{2};1+\frac{h}{2}+\frac{v}{2},1+v;-\frac{1}{4} b^2 \alpha ^2\right) \right. \\ \left.
+\frac{2^{-1-v} \left(-1+(-1)^h\right) b^{h+v}\alpha ^v (-b (h+v) \cosh (c)+h (1+h+v) \sinh (c))}{(1+h+v) \Gamma (1+h) \Gamma (1+v)}\right. \\ \left. \times \, _1F_2\left(\frac{1}{2}+\frac{h}{2}+\frac{v}{2};\frac{3}{2}+\frac{h}{2}+\frac{v}{2},1+v;-\frac{1}{4}b^2 \alpha ^2\right) \right)
\end{multline}
where $Re(v)>0,Re(\alpha)>0,Re(c)>0,Re(b)>0$.
\end{example}
\begin{example}
Generalized form for Table (6.699.9-10) in \cite{grad}. Use equation (\ref{eq:bft}) and set $k\to 0,a\to 1,g\to 1,p\to 0,d\to 0,c\to 1$. Next form a second equation by replacing $f\to -f$ and take their difference and replace $f\to i\beta$ and simplify.
\begin{multline}
\int_0^b x^m J_v(x \alpha ) \sin (x \beta ) \, dx\\
=\sum _{h=0}^{\infty } \frac{i^{1+h} 2^{-1-v}
   \left(-1+(-1)^h\right) b^{1+h+m+v} \alpha ^v \beta ^h 
}{(1+h+m+v) \Gamma (1+h) \Gamma (1+v)}\\ \times \, _1F_2\left(\frac{1}{2}+\frac{h}{2}+\frac{m}{2}+\frac{v}{2};\frac{3}{2}+\frac{h}{2}+\frac{m}{2}+\frac{v}{2},1+v;-\frac{1}{4} b^2 \alpha ^2\right)
\end{multline}
where $Re(v)>0,Re(\alpha)>0,Re(\beta)>0,Re(b)>0$.
\end{example}
\begin{example}
Generalized form for Table (6.719) in \cite{grad}. Use equation (\ref{eq:bft}) and set $k\to 0,a\to 1,g\to 1,f\to i\beta ,p\to 2,c\to -c^2,m\to 0,d\to -\frac{1}{2}$ and simplify.
\begin{multline}
\int_0^b \frac{e^{i x \beta } J_v(x \alpha )}{\sqrt{1-c^2 x^2}} \, dx\\
=\sum _{h=0}^{\infty } \sum
   _{j=0}^{\infty } \frac{(-1)^j 2^{-2 j-v} b^{1+h+2 j+v} \alpha ^{2 j+v} (i \beta )^h \,
   _2F_1\left(\frac{1}{2},\frac{1}{2}+\frac{h}{2}+j+\frac{v}{2};\frac{3}{2}+\frac{h}{2}+j+\frac{v}{2};b^2
   c^2\right)}{(1+h+2 j+v) \Gamma (1+h) \Gamma (1+j) \Gamma (1+j+v)}
\end{multline}
where $Re(v)>0,Re(\alpha)>0,Re(\beta)>0,Re(b)>0$.
\end{example}
\begin{example}
Generalized form of equations (13.1.56-58) in \cite{erdt2}. Equations (13.1.59-65) can be derived when $p\neq 1$ and simplify. Use equation (\ref{eq:bft}) and set $f\to i \beta ,g\to 1,p\to 1,d\to \mu -1,k\to 0,a\to 1,c\to -c$ and simplify.
\begin{multline}
\int_0^b e^{i x \beta } x^m (1-c x)^{-1+\mu } J_v(x \alpha ) \, dx\\
=b^{1+m} \left(\frac{b \alpha }{2}\right)^v
   \sum _{h=0}^{\infty } \sum _{j=0}^{\infty } \frac{\left(\frac{\alpha  b i}{2}\right)^{2 j} (i b \beta )^h
  }{(1+h+2 j+m+v) \Gamma (1+h) \Gamma (1+j) \Gamma(1+j+v)}\\ \times  \, _2F_1(1+h+2 j+m+v,1-\mu ;2+h+2 j+m+v;b c)
\end{multline}
where $Re(\mu)>0,Re(v)<-1/2$.
\end{example}
\begin{example}
The Hurwitz-Lerch zeta function. Use equation (\ref{eq:bft}) and set $a=b=1,d=-1$ and simplify using [DLMF, \href{https://dlmf.nist.gov/25.14.E1}{25.14.1}].
\begin{multline}\label{eq:lerch1}
\int_0^1 \frac{e^{x^{\gamma } \beta } x^{-1+m} J_v(x \alpha ) \log ^{-1+k}\left(\frac{1}{x}\right)}{1+x^s z}
   \, dx\\
=\sum
   _{h=0}^{\infty } \sum _{j=0}^{\infty } \frac{(-1)^j 2^{-2 j-v} s^{-k} \alpha ^{2 j+v} \beta ^h \Gamma (k) \Phi
   \left(-z,k,\frac{2 j+m+v+h \gamma }{s}\right)}{h! j! \Gamma (1+j+v)}
\end{multline}
where $Re(\mu)>0,Re(v)<-1/2,Re(s)>0$.
\end{example}
\begin{example}
The Hurwitz zeta function. Use equation (\ref{eq:lerch1}) and set $z=-1$ and form a second equation by replacing $m\to r$ and take their difference and simplify using [DLMF, \href{https://dlmf.nist.gov/25.14.E2}{25.14.2}].
\begin{multline}
\int_0^1 \frac{e^{x^{\gamma } \beta } \left(x^{m-1}-x^{r-1}\right) J_v(x \alpha ) \log
   ^{-1+k}\left(\frac{1}{x}\right)}{1-x^s} \, dx\\
=\sum _{h=0}^{\infty } \sum _{j=0}^{\infty } \frac{(-1)^j 2^{-2
   j-v} s^{-k} \alpha ^{2 j+v} \beta ^h \Gamma (k) \left(\zeta \left(k,\frac{2 j+m+v+h \gamma }{s}\right)-\zeta
   \left(k,\frac{2 j+r+v+h \gamma }{s}\right)\right)}{h! j! \Gamma (1+j+v)}
\end{multline}
where $Re(\mu)>0,Re(v)<-1/2,Re(s)>0$.
\end{example}
\section{More single and double integrals}
In this section we expand definite integral derivations listed in current volumes and books using equation(s) stated in the latter. Here we use equation (\ref{eq:8.4}) and make simple variable changes to get the following theorem.
\begin{theorem}
\begin{multline}\label{eq:marian}
\int_0^b x^m \left(1+x^{\alpha } \beta \right)^{\lambda } \log ^k\left(\frac{1}{a x}\right) \, dx,\\
=\sum
   _{f=0}^{\infty } a^{-1-m-f \alpha } (1+m+f \alpha )^{-1-k} \beta ^f \binom{\lambda }{f} \Gamma \left(1+k,(1+m+f
   \alpha ) \log \left(\frac{1}{a b}\right)\right)
\end{multline}
where $Re(b)>0$.
\end{theorem}
\begin{example}
 Derivation of equation (1) in \cite{marian}.  Here we use equation (\ref{eq:marian}) and set $a\to 1,b\to e^{-a},m\to 2 n-1,\alpha \to 2,\beta \to 1,\lambda \to -n,k\to m$ and simplify;
 \begin{multline}
2^n \int_a^{\infty } \frac{x^m}{(1+\exp (2 x))^n} \, dx=2^n \int_0^{\exp (-a)} \frac{x^{2 n-1} \log
   ^m\left(\frac{1}{x}\right)}{\left(1+x^2\right)^n} \, dx\\
=2^{n-m-1} \sum _{f=0}^{\infty } \frac{\binom{-n}{f}
   \Gamma (1+m,2 a (f+n))}{(f+n)^{m+1}}
\end{multline}
where $Re(n)>0$.
\end{example}
\section{Derivation of Table 4.626 in Gradshteyn and Ryzhik page 616 (2015)}
\begin{example}
Derivation of entry (4.626.1) in \cite{grad}. Here we use equation (\ref{eq:marian}) and set $\lambda \to -1,\alpha \to 1,\beta \to -y z,a\to y,b\to 1,m\to u-1,k\to s$ and simplify;
\begin{multline}
\int _0^1\int _0^1\frac{x^{-1+u} y^{-1+v} (-\log (x y))^s}{1-x y z}dydx
=\frac{-\Gamma (1+s) \Phi (z,1+s,u)+\Gamma (1+s) \Phi (z,1+s,v)}{u-v}
\end{multline}
where $u,v,s,z\in\mathbb{C}$.
\end{example}
\begin{example}
Derivation of entry (4.626.2) in \cite{grad}. Here we use equation (\ref{eq:marian}) and set $\lambda \to -1,\alpha \to 1,\beta \to -y z,a\to y,b\to 1,m\to u-1,k\to s$ and simplify;
\begin{equation}
\int _0^1\int _0^1\frac{x^{-1+u} y^{-1+u} (-\log (x y))^s}{1-x y z}dydx=\Gamma (2+s) \Phi (z,2+s,u)
\end{equation}
where $u,v,s,z\in\mathbb{C}$.
\end{example}
\begin{example}
Derivation of entry (4.626.3) in \cite{grad}. Here we use equation (\ref{eq:marian}) and set $\lambda \to -1,\alpha \to 2,\beta \to y^2,a\to y,b\to 1,m\to 1,k\to 1$ and simplify;
\begin{equation}
\int _0^1\int _0^1\frac{x \log (x y)}{1+x^2 y^2}dydx=\frac{1}{48} \left(-48 C+\pi ^2\right)
\end{equation}
\end{example}
\begin{example}
Derivation of entry (4.626.4) in \cite{grad}. Here we use equation (\ref{eq:marian}) and set $\lambda \to -1,\alpha \to 2,\beta \to -y^2,a\to y,b\to 1,m\to 1,k\to 1$ and simplify;
\begin{equation}
\int _0^1\int _0^1\frac{x \log (x y)}{1-x^2 y^2}dydx=-\frac{\pi ^2}{12}
\end{equation}
\end{example}
\begin{example}
Derivation of entry (4.626.5) in \cite{grad}. Here we use equation (\ref{eq:marian}) and set $\lambda \to -1,\alpha \to 2,\beta \to y^2,a\to y,b\to 1,m\to 0,k\to 0$ and simplify;
\begin{equation}
\int _0^1\int _0^1\frac{1}{1+x^2 y^2}dydx=C
\end{equation}
\end{example}
\begin{example}
Derivation of entry (4.626.6) in \cite{grad}. Here we use equation (\ref{eq:marian}) and set $\lambda \to -1,\alpha \to 1,\beta \to -y z,a\to y,b\to 1,m\to 0,k\to -1$ and simplify;
\begin{equation}
\int _0^1\int _0^1\frac{1}{(1-x y z) \log (x y)}dydx=\frac{\log (1-z)}{z}
\end{equation}
\end{example}
\begin{example}
Derivation of entry (4.626.7) in \cite{grad}. Here we use equation (\ref{eq:marian}) and set $\lambda \to -1,\alpha \to 1,\beta \to -y,a\to y,b\to 1,m\to 0,k\to 0$ and simplify;
\begin{equation}
\int _0^1\int _0^1\frac{1}{1-x y}dydx=\frac{\pi ^2}{6}
\end{equation}
\end{example}
\begin{example}
Derivation of entry (4.626.8) in \cite{grad}. Here we use equation (\ref{eq:marian}) and set $\lambda \to -1,\alpha \to 1,\beta \to -y,a\to y,b\to 1,m\to 0,k\to 1$ and simplify;
\begin{equation}
\int _0^1\int _0^1\frac{\log (x y)}{1-x y}dydx=-2 \zeta (3)
\end{equation}
\end{example}
\begin{example}
Derivation of entry (4.626.9) in \cite{grad}. Here we use equation (\ref{eq:marian}) and set $\lambda \to 0,\alpha \to 0,\beta \to 1,a\to y,b\to 1,m\to u-1,k\to -1$ and simplify;
\begin{equation}
\int _0^1\int _0^1\frac{x^{-1+u} y^{-1+v}}{\log (x y)}dydx=-\frac{\log \left(\frac{v}{u}\right)}{-u+v}
\end{equation}
\end{example}
\begin{example}
Derivation of entry (4.626.10) in \cite{grad}. Here we use equation (\ref{eq:marian}) and set $\lambda \to 0,\alpha \to 0,\beta \to 1,a\to y,b\to 1,m\to u-1,k\to -1$ and simplify;
\begin{equation}
\int _0^1\int _0^1\frac{x^{-1+u} y^{-1+u}}{\log (x y)}dydx=-\frac{1}{u}
\end{equation}
\end{example}
\begin{example}
Derivation of entry (4.626.11) in \cite{grad}. Here we use equation (\ref{eq:marian}) and set $\lambda \to -1,\alpha \to 1,\beta \to y,a\to y,b\to 1,m\to 1,k\to -1$ and simplify;
\begin{multline}
\int _0^1\int _0^1-\frac{x}{(1+x y) \log (x y)}dydx=\sum _{f=0}^{\infty } (-1)^f \log
   \left(\frac{2+f}{1+f}\right)=\log \left(\frac{\pi }{2}\right)
\end{multline}
\end{example}
\begin{example}
Derivation of entry (4.626.12) in \cite{grad}. Here we use equation (\ref{eq:marian}) and set $\lambda \to -1,\alpha \to 2,\beta \to y^2,a\to y,b\to 1,m\to 1,k\to -1$ and simplify;
 \begin{multline}
\int _0^1\int _0^1-\frac{x}{\left(1+x^2 y^2\right) \log (x y)}dydx=\sum _{f=0}^{\infty } (-1)^f \log
   \left(\frac{2 (1+f)}{1+2 f}\right)=\log \left(\frac{\sqrt{2 \pi }}{\Gamma
   \left(\frac{3}{4}\right)^2}\right)
\end{multline}
\end{example}
\begin{example}
 Derivation of entry (4.626.13) in \cite{grad}. Here we use equation (\ref{eq:marian}) and set $\lambda \to -1,\alpha \to 2,\beta \to y^2,a\to y,b\to 1,m\to 0,k\to -1$ and simplify;
 \begin{equation}
\int _0^1\int _0^1\frac{1}{\left(1+x^2 y^2\right) \log (x y)}dydx=-\frac{\pi }{4}
\end{equation}
\end{example}
\begin{example}
 Derivation of entry (4.626.14) in \cite{grad}. Here we use equation (\ref{eq:marian}) and set $\lambda \to -1,\alpha \to 3,\beta \to -y^3,a\to y,b\to 1,m\to 0,k\to 0$ and simplify;
 \begin{equation}
\int _0^1\int _0^1\frac{y}{1-x^3 y^3}dydx=\frac{\pi }{3 \sqrt{3}}
\end{equation}
\end{example}
\begin{example}
 Derivation of entry (4.626.15) in \cite{grad}. Here we use equation (\ref{eq:marian}) and set $\lambda \to -1,\alpha \to 2,\beta \to -y^2,a\to y,b\to 1,m\to 0,k\to 0$ and simplify;
 \begin{equation}
\int _0^1\int _0^1\frac{1}{1-x^2 y^2}dydx=\frac{\pi ^2}{8}
\end{equation}
\end{example}
\begin{example}
Derivation of entry (4.626.16) in \cite{grad}. Here we use equation (\ref{eq:marian}) and set $\lambda \to -\frac{1}{2},\alpha \to 1,\beta \to y,a\to y,b\to 1,m\to 0,k\to -1$ and simplify;
\begin{equation}
\int _0^1\int _0^1\frac{1}{\sqrt{1+x y} \log (x y)}dydx=-2 \left(-1+\sqrt{2}\right)
\end{equation}
\end{example}
\begin{example}
Derivation of entry (4.626.17) in \cite{grad}. Here we use equation (\ref{eq:marian}) and set $\lambda \to -\frac{1}{2},\alpha \to 2,\beta \to -y^2,a\to y,b\to 1,m\to 0,k\to -1$ and simplify;
\begin{equation}
\int _0^1\int _0^1\frac{1}{\sqrt{1-x^2 y^2} \log (x y)}dydx=-\frac{\pi }{2}
\end{equation}
\end{example}
\begin{example}
Derivation of entry (4.626.18) in \cite{grad}. Here we use equation (\ref{eq:marian}) and set $\lambda \to -1,\alpha \to 2,\beta \to -y^2,a\to y,b\to 1,m\to 0,k\to \frac{1}{2}$ and simplify;
\begin{equation}
\int _0^1\int _0^1\frac{\sqrt{-\log (x y)}}{1-x^2 y^2}dydx=\frac{3}{16} \left(-1+4 \sqrt{2}\right)
   \sqrt{\frac{\pi }{2}} \zeta \left(\frac{5}{2}\right)
\end{equation}
\end{example}
\begin{example}
Derivation of entry (4.626.19) in \cite{grad}. Here we use equation (\ref{eq:marian}) and set $\lambda \to -1,\alpha \to 3,\beta \to -y^3,a\to y,b\to 1,k\to \frac{1}{2},m\to v$ and simplify;
\begin{multline}
\int _0^1\int _0^1\frac{x^v y^u \sqrt{-\log (x y)}}{1-x^3 y^3}dydx=\frac{\sqrt{\pi } \left(-\sqrt{3} \zeta
   \left(\frac{3}{2},\frac{1+u}{3}\right)+\sqrt{3} \zeta \left(\frac{3}{2},\frac{1+v}{3}\right)\right)}{18
   (u-v)}
\end{multline}
\end{example}
\begin{example}
Derivation of entry (4.626.20) in \cite{grad}. Here we use equation (\ref{eq:marian}) and set $\beta \to y^{\beta },a\to y,b\to 1$ and simplify;
\begin{multline}
\int _0^1\int _0^1\frac{\sqrt{y} \log \left(\log \left(\frac{1}{x y}\right)\right)}{\sqrt{x}+x
   \sqrt{y}}dydx=\log ^2(2)+\gamma  (-1+\log (4))-\gamma _1+\gamma _1\left(\frac{3}{2}\right)
\end{multline}
\end{example}
\section{Entries in Table 3.1.5 in Prudnikov volume I (1986)}
\begin{example}
Derivation of entry (3.1.5.27) in \cite{prud1}. Here we use equation (\ref{eq:marian}) and set $k\to 0,a\to 1,\lambda \to \frac{1}{2},\alpha \to 2,m\to 1,x\to y,\beta \to -\sin ^2(x)$ and simplify;
  \begin{multline}
\int _0^{\frac{\pi }{2}}\int _0^ky \sqrt{1-y^2 \sin ^2(x)}dydx\\
=\frac{1}{4} k^2 \pi  \,
   _2F_1\left(-\frac{1}{2},\frac{1}{2};2;k^2\right)\\
   =\frac{1}{3} \left(\left(1+k^2\right)
   E\left(k^2\right)-\left(1-k^2\right) K\left(k^2\right)\right)
\end{multline}
\end{example}
\begin{example}
Derivation of entry (3.1.5) in \cite{prud1} additional formula. Here we use equation (\ref{eq:marian}) and set $k\to 0,a\to 1,\lambda \to -\frac{1}{2},\alpha \to 2,m\to 1,x\to y,\beta \to -\sin ^2(x)$ and simplify;
\begin{equation}
\int _0^{\frac{\pi }{2}}\int _0^k\frac{y}{\sqrt{1-y^2 \sin ^2(x)}}dydx=E\left(k^2\right)+\left(-1+k^2\right)
   K\left(k^2\right)
\end{equation}
\end{example}
\begin{example}
Derivation of entry (3.1.5) in \cite{prud1} additional formula. Here we use equation (\ref{eq:marian}) and set $k\to 0,a\to 1,\lambda \to -\frac{1}{2},\alpha \to 2,m\to 1,x\to y,\beta \to -\sin ^2(x)$ and simplify;
\begin{equation}
\int _0^{\frac{\pi }{2}}\int _0^k\frac{y}{\sqrt{1-y^2 \sin ^2(x)}}dydx=E\left(k^2\right)+\left(-1+k^2\right)
   K\left(k^2\right)
\end{equation}
\end{example}
\begin{example}
Derivation of entry (3.1.5) in \cite{prud1} additional formula. Here we use equation (\ref{eq:marian}) and set $k\to 0,a\to 1,\lambda \to -\frac{3}{2},\alpha \to 2,m\to 1,x\to y,\beta \to -\sin ^2(x)$ and simplify;
\begin{equation}
\int _0^{\frac{\pi }{2}}\int _0^k\frac{y}{\left(1-y^2 \sin
   ^2(x)\right)^{3/2}}dydx=K\left(k^2\right)-E\left(k^2\right)
\end{equation}
where $0< Re(k) <1$
\end{example}
\begin{example}
 Derivation of entry (3.1.5.28) in \cite{prud1} generalized form.  Here we use equation (\ref{eq:marian}) and set $k\to 0,a\to 1,\lambda \to \frac{1}{2},\alpha \to 2,m\to 0,x\to y,\beta \to -\sin ^2(x)$ and simplify;
 \begin{equation}
\int _0^{\frac{\pi }{2}}\int _0^b\sqrt{1-y^2 \sin ^2(x)}dydx=\frac{b \pi }{2}  \,
   _3F_2\left(-\frac{1}{2},\frac{1}{2},\frac{1}{2};1,\frac{3}{2};b^2\right)
\end{equation}
where $|Re(b)| <1$.
\end{example}
\begin{example}
Derivation of entry (3.1.5.29) in \cite{prud1} generalized form.  Here we use equation (\ref{eq:marian}) and set $k\to 0,a\to 1,\lambda \to \frac{1}{2},\alpha \to 2,m\to 0,x\to y,\beta \to \tan ^2(x)$ and simplify;
\begin{multline}
\int _0^{\frac{\pi }{2}}\int _0^b\cos (x) \sqrt{1+y^2 \tan ^2(x)}dydx=\frac{2 b \pi 
   E\left(1-b^2\right)+G_{3,3}^{3,2}\left(\frac{1}{b^2} \left|
\begin{array}{c}
 -\frac{1}{2},\frac{1}{2},1 \\
 0,0,0 \\
\end{array}
\right.\right)}{4 \pi }
\end{multline}
where $Re(b) >0$.
\end{example}
\begin{example}
Derivation of entry (3.1.5.30) in \cite{prud1} infinite series involving the hypergeometric function form.  Here we use equation (\ref{eq:marian}) and set $k\to 0,a\to 1,\lambda \to \frac{1}{2},\alpha \to 2,m\to 1,x\to y,\beta \to -\sin ^2(x)$ and simplify;
\begin{multline}
\int _0^{\varphi }\int _0^ky \sqrt{1-y^2 \sin ^2(x)}dydx\\
=\sum _{f=0}^{\infty } \frac{(-1)^f k^{2+2 f}
   \binom{\frac{1}{2}}{f} }{(1+2 f) (2+2 f)}\, _2F_1\left(\frac{1}{2},\frac{1}{2}+f;\frac{3}{2}+f;\sin ^2(\varphi )\right) \sin ^{1+2
   f}(\varphi )\\
=\frac{1}{3} \left(\left(1+k^2\right) E\left(\varphi
   \left|k^2\right.\right)-\left(1-k^2\right) F\left(\varphi \left|k^2\right.\right)-\left(1-\sqrt{1-k^2 \sin
   ^2(\varphi )}\right) \cot (\varphi )\right)
\end{multline}
where $0< Re(k)<1, Re(\varphi)>0$.
\end{example}
\begin{example}
Derivation of entry (3.1.5.31) in \cite{prud1} infinite series involving the hypergeometric function form. Here we use equation (\ref{eq:marian}) and set $k\to 0,a\to 1,\lambda \to \frac{1}{2},\alpha \to 2,m\to 1,x\to y,\beta \to -\sin ^2(x)$ and simplify;
\begin{multline}
\int _0^{\varphi }\int _0^1y \sqrt{1-y^2 \sin ^2(x)}dydx\\
=\sum _{f=0}^{\infty } \frac{(-1)^f \binom{\frac{1}{2}}{f} \,
   _2F_1\left(\frac{1}{2},\frac{1}{2}+f;\frac{3}{2}+f;\sin ^2(\varphi )\right) \sin ^{1+2 f}(\varphi )}{2+6 f+4 f^2}\\
=\frac{\sin ^2(\varphi )-\cos (\varphi )+1}{3 \sin (\varphi )}
\end{multline}
\end{example}
\begin{example}
Derivation of entry (3.1.5.32) in \cite{prud1} infinite series involving the hypergeometric function form. Here we use equation (\ref{eq:marian}) and set $k\to 0,a\to 1,\lambda \to \frac{1}{2},\alpha \to 2,m\to 0,x\to y,\beta \to -\sin ^2(x)$ and simplify;
\begin{equation}
\int _0^{\frac{\pi }{2}}\int _0^b\sqrt{1-y^2 \sin ^2(x)}dydx=\frac{1}{2} b \pi  \, _3F_2\left(-\frac{1}{2},\frac{1}{2},\frac{1}{2};1,\frac{3}{2};b^2\right)
\end{equation}
\end{example}
\begin{example}
Derivation of entry (3.1.5.33) in \cite{prud1} infinite series involving the hypergeometric function form. Here we use equation (\ref{eq:marian}) and set $k\to 0,a\to 1,\lambda \to -\frac{1}{2},\alpha \to 2,m\to 0,x\to y,\beta \to \tan ^2(x)$ and simplify;
\begin{equation}
\int _0^{\frac{\pi }{2}}\int _0^1\frac{\sec (x)}{\sqrt{1+y^2 \tan ^2(x)}}dydx=\frac{\pi ^2}{4}
\end{equation}
\end{example}
\begin{example}
Derivation of entry (3.1.5.34) in \cite{prud1} infinite series involving the hypergeometric function form. Here we use equation (\ref{eq:marian}) and set $k\to 0,a\to 1,\lambda \to -\frac{1}{2},\alpha \to 2,m\to 0,x\to y,\beta \to -\sin ^2(x),b\to 1$ and simplify;
\begin{equation}
\int _0^{\frac{\pi }{2}}\int _0^1\frac{1}{\sqrt{1-y^2 \sin ^2(x)}}dydx=2 C
\end{equation}
\end{example}
\begin{example}
Derivation of entry (3.1.5.35) in \cite{prud1} infinite series involving the hypergeometric function form. Here we use equation (\ref{eq:marian}) and set $k\to 0,a\to 1,\lambda \to -\frac{1}{2},\alpha \to 2,m\to 1,x\to y,\beta \to -\sin ^2(x)$ and simplify;
\begin{equation}
\int _0^{\frac{\pi }{2}}\int _0^k\frac{y}{\sqrt{1-y^2 \sin ^2(x)}}dydx=E\left(k^2\right)+\left(-1+k^2\right) K\left(k^2\right)
\end{equation}
\end{example}
\begin{example}
Derivation of entry (3.1.5.36) in \cite{prud1} infinite series involving the hypergeometric function form. Here we use equation (\ref{eq:marian}) and set $k\to 0,a\to 1,\lambda \to -\frac{1}{2},\alpha \to 2,m\to 1,x\to y,\beta \to -\sin ^2(x)$ and simplify;
\begin{multline}
\int _0^{\varphi }\int _0^1\frac{y}{\sqrt{1-y^2 \sin ^2(x)}}dydx\\
=\sum _{f=0}^{\infty } \frac{(-1)^f \binom{-\frac{1}{2}}{f} \,
   _2F_1\left(\frac{1}{2},\frac{1}{2}+f;\frac{3}{2}+f;\sin ^2(\varphi )\right) \sin ^{1+2 f}(\varphi )}{2+6 f+4 f^2}\\
=\frac{1-\cos (\varphi )}{\sin (\varphi )}
\end{multline}
where $\varphi \in\mathbb{C},|Im(\varphi)|<1$.
\end{example}
\begin{example}
Derivation of entry (3.1.5.37) in \cite{prud1} infinite series involving the hypergeometric function form. Here we use equation (\ref{eq:marian}) and set $k\to 0,a\to 1,\lambda \to -\frac{1}{2},\alpha \to 2,m\to 1,x\to y,\beta \to -\sin ^2(x)$ and simplify;
\begin{multline}
\int _0^{\varphi }\int _0^k\frac{y}{\sqrt{1-y^2 \sin ^2(x)}}dydx\\
=\sum _{f=0}^{\infty } \frac{k^{2+2 f} \binom{-\frac{1}{2}}{f} \cos (\varphi ) \,
   _2F_1\left(\frac{1}{2}\frac{1}{2}+f;\frac{3}{2}+f;\sin ^2(\varphi )\right) (-1)^f \sin ^{2 f}(\varphi ) \tan (\varphi )}{2+6 f+4 f^2}\\
=E\left(\varphi
   \left|k^2\right.\right)-\left(1-k^2\right) F\left(\varphi \left|k^2\right.\right)+\left(\sqrt{1-(k \sin (\varphi ))^2}-1\right) \cot (\varphi )
\end{multline}
where $|Im(\varphi)|<1,0< Re(k)<1$.
\end{example}
\begin{example}
Derivation of entry (3.1.5.38) in \cite{prud1} infinite series involving the hypergeometric function form. Errata. Here we use equation (\ref{eq:marian}) and set $k\to 0,a\to 1,\lambda \to -\frac{1}{2},\alpha \to 2,m\to 1,x\to y,\beta \to -\sin ^2(x)$ and simplify;
\begin{multline}
\int _0^{\varphi }\int _0^k\frac{y}{\sqrt{1-y^2 \sin ^2(x)} \left(1-\alpha ^2 \sin ^2(x)\right)}dydx\\
=\sum _{f=0}^{\infty } \frac{(-1)^f k^{2+2 f}
   \binom{-\frac{1}{2}}{f} \sec ^{1+2 f}(\varphi ) \sin ^{1+2
   f}(\varphi )}{2+6 f+4 f^2}\\ \times
F_1\left(\frac{1}{2}+f;f,1;\frac{3}{2}+f;-\tan ^2(\varphi ),\left(-1+\alpha ^2\right) \tan ^2(\varphi )\right) \\
\neq\left(k^2-\alpha ^2\right) \Pi \left(\varphi ;\alpha ^2|k^2\right)+E\left(\varphi \left|k^2\right.\right)-F\left(\varphi
   \left|k^2\right.\right)+\left(\sqrt{1-(k \sin (\varphi ))^2}-1\right) \cot (\varphi )
\end{multline}
where $|Im(\varphi)|<1,0< Re(k)<1$.
\end{example}
\begin{example}
Derivation of entry (3.1.5.39) in \cite{prud1} infinite series involving the hypergeometric function form. Errata. Here we use equation (\ref{eq:marian}) and set $k\to 0,a\to 1,\lambda \to -\frac{1}{2},\alpha \to 2,m\to 1,x\to y,\beta \to -\sin ^2(x)$ and simplify;
\begin{multline}
\int _0^{\frac{\pi }{2}}\int _0^k\frac{y}{\sqrt{1-y^2 \sin ^2(x)} \left(1-\alpha ^2 \sin ^2(x)\right)}dydx\\
=\sum _{f=0}^{\infty } \frac{1}{4} (-1)^f k^{2+2 f} \sqrt{\pi }
   \binom{-\frac{1}{2}}{f} \left(\frac{\sqrt{\pi } \alpha ^{-2 f}}{(1+f) \sqrt{1-\alpha ^2}}\right. \\ \left.
+\frac{\Gamma \left(\frac{1}{2}+f\right) }{\left(-1+\alpha ^2\right) \Gamma (2+f)}\left(-\alpha ^2 \,
   _2F_1\left(1,\frac{1}{2}+f;-\frac{1}{2};1-\alpha ^2\right)\right.\right. \\ \left.\left.
+\left(-2 (1+f)+(3+2 f) \alpha ^2\right) \, _2F_1\left(1,\frac{1}{2}+f;\frac{1}{2};1-\alpha
   ^2\right)\right)\right)\\
\neq\left(k^2-\alpha ^2\right) \Pi \left(\frac{\pi }{2};\alpha
   ^2|k^2\right)-K\left(k^2\right)+E\left(k^2\right)
\end{multline}
where $0< Re(k)<1$.
\end{example}
\begin{example}
Derivation of entry (3.1.5.40) in \cite{prud1} infinite series involving the hypergeometric function form. Errata. Here we use equation (\ref{eq:marian}) and set $k\to 0,a\to 1,\lambda \to -\frac{1}{2},\alpha \to 2,m\to 1,x\to y,\beta \to -\sin ^2(x),b\to 1$ and simplify;
\begin{multline}
\int _0^{\varphi }\int _0^1\frac{y}{\sqrt{1-y^2 \sin ^2(x)} \left(1-\alpha ^2 \sin ^2(x)\right)}dydx\\
=\sum _{f=0}^{\infty } \frac{(-1)^f
    \binom{-\frac{1}{2}}{f} \sec ^{1+2 f}(\varphi ) \sin ^{1+2
   f}(\varphi )}{2+6 f+4 f^2}\\ \times
F_1\left(\frac{1}{2}+f;f,1;\frac{3}{2}+f;-\tan ^2(\varphi ),\left(-1+\alpha ^2\right) \tan ^2(\varphi )\right)\\
=\left(1-\alpha ^2\right) \Pi \left(\varphi ;\left.\alpha ^2\right|1\right)-\alpha ^2 \Pi \left(\varphi ;\left.\alpha ^2\right|0\right)+\frac{1-\cos
   (\varphi )}{\sin (\varphi )}-\log (\tan (\varphi )+\sec (\varphi ))
\end{multline}
where $Re(\varphi)>0$.
\end{example}
\begin{example}
Derivation of entry (3.1.5.41) in \cite{prud1} infinite series involving the hypergeometric function form. Here we use equation (\ref{eq:marian}) and set $k\to 0,a\to 1,\lambda \to -\frac{1}{2},\alpha \to 2,m\to 0,x\to y,\beta \to -\sin ^2(x)$ and simplify;
\begin{equation}
\int _0^{\frac{\pi }{2}}\int _0^b\frac{1}{\sqrt{1-y^2 \sin ^2(x)}}dydx=\frac{1}{2} b \pi  \,
   _3F_2\left(\frac{1}{2},\frac{1}{2},\frac{1}{2};1,\frac{3}{2};b^2\right)
\end{equation}
where $Re(b)>0$.
\end{example}
\begin{example}
Here we use equation (\ref{eq:marian}) and set $\lambda \to -1,\alpha \to 1$ and simplify;
\begin{multline}
\int _0^b\int _0^1\frac{x^{-1+u} y^{-1+v} \log ^s\left(\frac{b}{x y}\right)}{1-x y z}dydx=\frac{b^u \Gamma
   (1+s) (\Phi (b z,1+s,v)-\Phi (b z,1+s,u))}{u-v}
\end{multline}
where $Re(b)>0$.
\end{example}
\section{Conclusion}
In this paper, we have presented derivations using a definite integral transforms along with some interesting special cases, using contour integration. We will be using this approach to further these integral formulae in future work. The results presented were numerically verified for both real and imaginary and complex values of the parameters in the integrals using Mathematica by Wolfram.
\end{document}